\newcommand{\techRep}{true} 
\newcommand{\iftechrep}{\ifthenelse{\equal{\techRep}{true}}}
\newcommand{\geoproofs}{false}
\title{Computing the Least Fixed Point \\ of Positive Polynomial Systems \thanks{This work was partially supported by the DFG project
        {\em Algorithms for Software Model Checking}.}}
\author{Javier Esparza \and Stefan Kiefer \and Michael Luttenberger}
\institute{Institut f{\"u}r Informatik, Technische Universit{\"a}t
M{\"u}nchen, 85748 Garching, Germany
\email{\{esparza,kiefer,luttenbe\}@model.in.tum.de}}
\begin{document}
\maketitle

\begin{abstract}
We consider equation systems of the form
$X_1 = f_1(X_1, \ldots, X_n)$, $\ldots, X_n = f_n(X_1, \ldots, X_n)$
where $f_1, \ldots, f_n$ are polynomials with {\em positive} real coefficients.
In vector form we denote such an equation system by $\vX = \vf(\vX)$ and call $\vf$ a system of positive polynomials, short SPP.
Equation systems of this kind appear naturally in the analysis of stochastic models like stochastic
context-free grammars (with numerous applications to natural language processing and computational biology),
probabilistic programs with procedures, web-surfing models with back buttons, and branching processes.
The least nonnegative solution $\mu\vf$ of an SPP equation $\vX = \vf(\vX)$ is of central interest for these models.
Etessami and Yannakakis \cite{EYstacs05Extended} have suggested a particular version of Newton's method to approximate $\mu\vf$.

We extend a result of Etessami and Yannakakis and show that Newton's method starting at $\vzero$ always converges to $\mu\vf$.
We obtain lower bounds on the convergence speed of the method. For so-called {\em strongly connected} SPPs
we prove the existence of a threshold $k_\vf \in \Nat$ such that for every $i \geq 0$ the ($k_\vf+i$)-th iteration of
Newton's method has at least $i$ valid bits of $\fix{\vf}$.
The proof yields an explicit bound for $k_\vf$ depending only on syntactic parameters of $\vf$.
We further show that for arbitrary SPP equations Newton's method still converges linearly:
there exists a threshold $k_\vf$ and an $\alpha_\vf > 0$ such that for every $i \geq 0$
the ($k_\vf+ \alpha_\vf \cdot i$)-th
iteration of Newton's method has at least $i$ valid bits of $\fix{\vf}$. The proof yields an explicit
bound for $\alpha_\vf$; the bound is exponential in the number of equations in $\vX = \vf(\vX)$, but we
also show that it is essentially optimal. The proof does not yield any bound for $k_\vf$,
it only proves its existence. Constructing a bound for $k_\vf$ is still an open problem.
Finally, we also provide a geometric interpretation of Newton's method for SPPs.

\end{abstract}
\section{Introduction}\label{sec:intro}

We consider equation systems of the form
$$\begin{array}{rcl}
X_1 & = & f_1(X_1, \ldots, X_n) \\
    & \vdots & \\
X_n& = & f_n(X_1, \ldots, X_n)
\end{array}$$
\noindent where $f_1, \ldots, f_n$ are polynomials with {\em positive} real
coefficients.
In vector form we denote such an equation system by $\vX = \vf(\vX)$.
The vector $\vf$ of polynomials is called a {\em system of
positive polynomials}, or {\em SPP} for short.
Figure~\ref{fig:quadric} shows the graph of a 2-dimensional SPP equation system $\vX = \vf(\vX)$.

\begin{figure}[htp]
\begin{center}
{
 \psfrag{X1 = f1(X1,X2)}{\hspace{-6mm}$X_1 = f_1(X_1,X_2)$}
 \psfrag{X2 = f2(X1,X2)}{$X_2 = f_2(X_1,X_2)$}
 \psfrag{muf}{$\mu\vf$}
 \psfrag{0}{$0$}
 \psfrag{0.2}{$0.2$}
 \psfrag{0.4}{$0.4$}
 \psfrag{0.5}{$0.5$}
 \psfrag{0.6}{$0.6$}
 \psfrag{0.8}{$0.8$}
 \psfrag{1}{$1$}
 \psfrag{\2610.4}{$-0.4$}
 \psfrag{\2610.2}{$-0.2$}
 \psfrag{02h}{$0.2$}
 \psfrag{04h}{$0.4$}
 \psfrag{06h}{$0.6$}
 \psfrag{02v}{$0.2$}
 \psfrag{X1}{$X_1$}
 \psfrag{X2}{$X_2$}
  \scalebox{1.0}{ \includegraphics{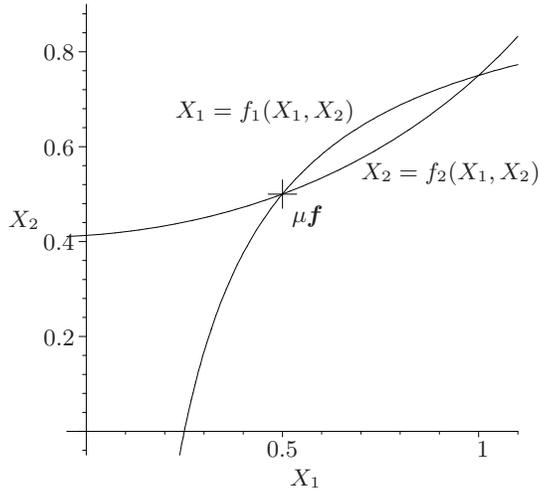}}
}
\end{center}
\caption{Graphs of the equations $X_1 = f_1(X_1,X_2)$ and $X_2 = f_2(X_1,X_2)$ with
         $f_1(X_1,X_2) = X_1X_2 + \frac{1}{4}$ and
         $f_2(X_1,X_2) = \frac{1}{6}X_1^2 + \frac{1}{9}X_1X_2 + \frac{2}{9}X_2^2 + \frac{3}{8}$.
         There are two real solutions in $\R^2$, the least one is labelled with~$\mu\vf$.}
\label{fig:quadric}
\end{figure}

Equation systems of this kind appear naturally in the analysis of
stochastic context-free grammars (with numerous applications to natural language
processing \cite{ManningSchuetze:book,geman02probabilistic} and
computational biology
\cite{Sakabikaraetal,Durbinetal:book,DowellEddy,KnudsenHein}),
probabilistic programs with procedures
\cite{EKM:prob-PDA-PCTL,BKS:pPDA-temporal,EYstacs05Extended,EY:RMC-LTL-complexity,EKM:prob-PDA-expectations,EY:RMC-LTL-QUEST,EY:RMC-RMDP},
and web-surfing models with back buttons
\cite{FaginetalSTOC,Faginetal}. More generally, they play
an important r\^ole in the theory of {\em branching
processes} \cite{Harris63,AthreyaNey:book}, stochastic processes describing the evolution of
a population whose individuals can die and reproduce.
The probability of extinction of the population is the least solution
of such a system, a result whose history goes back to \cite{WatGal1874}.

Since SPPs have positive coefficients, $\vx \leq \vx'$
implies $\vf(\vx) \leq \vf(\vx')$ for \mbox{$\vx,\vx'\in\Rp^n$}, i.e., the
functions $f_1, \ldots, f_n$ are monotonic. This allows us to apply
Kleene's theorem (see for instance \cite{Kui}), and conclude that a feasible
system $\vX = \vf(\vX)$, i.e., one having
at least one nonnegative solution, has a smallest solution $\mu\vf$. It
follows easily from standard Galois theory that $\mu\vf$
can be irrational and non-expressible by radicals. The problem
of deciding, given an SPP and a rational vector $\vv$ encoded in binary,
whether $\mu\vf \leq \vv$ holds, is known
to be in PSPACE, and to be at least as hard as two relevant problems:
SQUARE-ROOT-SUM and PosSLP. SQUARE-ROOT-SUM is a
well-known problem of computational
geometry, whose membership in NP is a long standing open question.
PosSLP is the problem of deciding, giving a division-free
straight-line program, whether it produces a positive integer (see~\cite{EYstacs05Extended}
for more details). PosSLP has been
recently shown to play a central r\^ole in understanding the
Blum-Shub-Smale model of computation,
where each single arithmetic operation over the reals can be carried out
exactly and in constant time~\cite{AllenderBKM09-journal}.

For the practical applications mentioned above the complexity of determining if
$\mu\vf$ exceeds a given bound is less relevant than the complexity of, given
$i \in \Nat$, computing $i$ {\em valid bits} of $\fix{\vf}$, i.e., computing a
vector $\vv$ such that $\abs{\fix{\vf}_j - v_j} / \abs{\fix{\vf}_j} \le
2^{-i}$ for every $1 \le j \le n$. Given an SPP $\vf$ and $i \in \Nat$, deciding whether the
first $i$ bits of a component of $\mu\vf$, say $\mu\vf_1$, are $0$, remains as hard as SQUARE-ROOT-SUM and PosSLP.
The reason is that
in~\cite{EYstacs05Extended} both problems are reduced to the following one:
given $\epsilon > 0$ and an SPP $\vf$ for which it is known that either
$\fix\vf_1 = 1$ or $\mu\vf_1 \leq \epsilon$, decide which of the two is the case. So it suffices
to take $\epsilon = 2^{-i}$.

In this paper we study the problem of computing $i$ valid bits in the Blum-Shub-Smale model.
Since the least fixed point of a feasible SPP~$\vf$
is a solution of $\vF(\vX) = \vzero$ for $\vF(\vX) = \vf(\vX) - \vX$, we can try to
apply (the multivariate version of) \emph{Newton's method}
\cite{OrtegaRheinboldt:book}:
starting at some $\xs{0} \in \mathbb{R}^n$ (we use uppercase to denote variables
and lowercase to denote values),
 compute the sequence
  $$
  \xs{k+1} := \xs{k} - (\vF'(\xs{k}))^{-1}\vF(\xs{k})
  $$
\noindent where $\vF'(\vX)$ is the Jacobian matrix of partial derivatives.
A first difficulty is that the method might not even be well-defined, because
$\vF'(\xs{k})$ could be singular for some~$k$. However, Etessami and Yannakakis
have recently studied SPPs derived from probabilistic
pushdown automata (actually, from an equivalent model called recursive
Markov chains) \cite{EYstacs05Extended}, and shown that a particular
version of Newton's method always converges,
namely a version which decomposes the SPP into
{\em strongly connected components} (SCCs)\footnote{
Loosely speaking, a subset of
variables and their associated equations form an SCC,
if the value of any variable in the subset
influences the value of all variables in the subset,
see \S~\ref{sec:prelim} for details.}
and applies Newton's method to them in a bottom-up fashion. Our first result
generalizes Etessami and Yannakakis': the ordinary Newton method converges
for arbitrary SPPs, provided that they are clean (which can be easily achieved).

While these results show that Newton's method can be an adequate algorithm for solving SPP equations,
they provide no information on the number of iterations needed to compute
$i$ valid bits. To the best of our knowledge (and perhaps surprisingly),
the rest of the literature does not contain relevant information either:
it has not considered SPPs explicitly, and the
existing results have very limited interest for SPPs, since
they do not apply even for very simple and relevant SPP
cases (see {\em Related work} below). In this paper we obtain upper bounds on the number of iterations
that Newton's method needs to produce
$i$ valid bits, first for strongly connected and then for arbitrary SPP equations.

For strongly connected SPP equations $\vX=\vf(\vX)$ we prove the existence of a
threshold $k_\vf$ such that for every $i \geq 0$ the ($k_\vf+i$)-th iteration of
Newton's method has at least $i$ valid bits of $\fix{\vf}$.
So, loosely speaking, after $k_\vf$ iterations Newton's method is guaranteed to
compute at least 1 new bit of the solution per iteration;
we say that Newton's method converges at least {\em linearly} with
{\em rate 1}. Moreover,
we show that the threshold $k_\vf$ can be chosen as
$$k_\vf = \lceil 4mn + 3n \max\{ 0, - \log \mumin \} \rceil$$
\noindent where $n$ is the number of polynomials of the strongly connected SPP, $m$ is
such that all coefficients of the SPP
can be given as ratios of $m$-bit integers, and $\mumin$ is
the minimal component of the least fixed point~$\mu\vf$.

Notice that $k_\vf$ depends on $\fix{\vf}$, which is what
Newton's method should compute. For this reason
we also obtain bounds on $k_\vf$ depending only on $m$ and $n$. We
show that for arbitrary strongly connected SPP equations $k_\vf = 4mn2^n$ is also
a valid threshold. For SPP equations coming from stochastic models,
such as the ones listed above,
we do far better. First, we show that if every procedure has a non-zero
probability of terminating (a condition that always holds for back-button processes
\cite{FaginetalSTOC,Faginetal}), then a valid threshold is $k_\vf = 2m(n+1)$.
Since one iteration requires $\bigo(n^3)$ arithmetic operations in a system of $n$ equations,
we immediately obtain an upper bound on the time complexity of Newton's method
in the Blum-Shub-Smale model: for back-button processes, $i$ valid bits can be computed
in time $\bigo(mn^4 + in^3)$.
Second, we observe that, since $\xs{k} \leq \xs{k+1} \le \fix{\vf}$ holds for every
$k \geq 0$, as Newton's method proceeds it provides better and better lower
bounds for $\mumin$ and thus for~$k_\vf$.
We exhibit an SPP for which, using this fact and
our theorem, we can prove that no component of the
solution reaches the value~1. This cannot be proved by just computing more
iterations, no matter how many.

For general SPP equations, not necessarily strongly connected, we show that Newton's
method still converges linearly. Formally,
we show the existence of a threshold $k_\vf$ and a real number $0 < \alpha_\vf$ such that for
every $i \geq 0$ the ($k_\vf+ \alpha_\vf \cdot i$)-th iteration of
Newton's method has at least $i$ valid bits of $\fix{\vf}$. So, loosely
speaking, after the first $k_\vf$ iterations Newton's method
computes new bits of $\fix{\vf}$ at a rate of at least $1/\alpha_\vf$ bits per iteration.
Unlike the strongly connected
case, the proof does not provide any bound on the threshold $k_\vf$: with respect
to the threshold the proof is non-constructive, and finding a bound on $k_\vf$ is
still an open problem. However, the proof does provide a bound for $\alpha_\vf$, it shows
$\alpha_\vf \leq n \cdot 2^n$ for an SPP with $n$ polynomials. We also exhibit a
family of SPPs for which more than $i \cdot 2^{n-1}$ iterations are needed to
compute $i$ bits. So  $\alpha_\vf \leq n \cdot 2^n$ for every system $\vf$, and
there exists a family of systems for which $2^{n-1} \leq \alpha_\vf$.

Finally, the last result of the paper concerns the geometric interpretation of
Newton's method for SPP equations. We show that, loosely speaking, the Newton
approximants stay within the hypervolume limited by the hypersurfaces
corresponding to each individual equation. This means that a simple
geometric intuition of how Newton's method works, extracted from the
case of 2-dimensional SPPs, is also correct for arbitrary dimensions. As a byproduct
we also obtain a new variant of Newton's method.

\iftechrep{\paragraph{Related work.}}{\paragraph{Related work}}

There is a large body of literature on the convergence speed
of Newton's method for arbitrary systems of differentiable
functions. A comprehensive reference is Ortega and Rheinboldt's
book \cite{OrtegaRheinboldt:book} (see also Chapter 8 of Ortega's
course \cite{Ortega:book} or Chapter~5 of \cite{Kelley:book} for a
brief summary). Several theorems (for instance Theorem 8.1.10 of
\cite{Ortega:book}) prove that the number of valid bits grows
linearly, superlinearly, or even exponentially in the number of
iterations, but only under the hypothesis that $\vF'(\vx)$ is
non-singular everywhere, in a neighborhood of $\mu \vf$, or at
least at the point $\mu \vf$ itself. However, the matrix $\vF'(\mu
\vf)$ can be singular for an SPP, even for the 1-dimensional SPP
$f(X) = 1/2 X^2 + 1/2$.

The general case in which $\vF'(\mu \vf)$ may be singular for the
solution $\mu \vf$ that the method converges to has been thoroughly
studied. In a seminal paper \cite{Reddien:NewtonSingular}, Reddien
shows that under certain conditions, the main ones being that the
kernel of $\vF'(\mu\vf)$ has dimension 1 and that the initial
point is close enough to the solution, Newton's method gains 1 bit
per iteration. Decker and Kelly obtain results for kernels of
arbitrary dimension, but they require a certain linear map
$B(\vX)$ to be non-singular for all $\vx \neq \vzero$
\cite{Decker:NewtonI}. Griewank observes in
\cite{Griewank:NullspaceNewton} that the non-singularity of
$B(\vX)$ is in fact a strong condition which, in particular, can
only be satisfied by kernels of even dimension. He presents a
weaker sufficient condition for linear convergence requiring
$B(\vX)$ to be non-singular only at the initial point $\xs{0}$,
i.e., it only requires to make ``the right guess'' for $\xs{0}$.
Unfortunately, none of these results can be directly applied to
arbitrary SPPs. The possible dimensions of the kernel of
$\vF'(\mu \vf)$ for an SPP $\vf(\vX)$ are to the best of our
knowledge unknown, and deciding this question seems as hard as
those related to the convergence rate\footnote{More precisely,
SPPs with kernels of arbitrary dimension exist, but the cases we
know of can be trivially reduced to SPPs with kernels of
dimension 1.}. Griewank's result does not apply to the decomposed
Newton's method either because the mapping $B(\xs{0})$ is always
singular for $\xs{0} = \vzero$.

Kantorovich's famous theorem (see e.g.\ Theorem 8.2.6 of
\cite{OrtegaRheinboldt:book} and~\cite{PotraPok} for an
improvement) guarantees global convergence and only requires
$\vF'$ to be non-singular at~$\xs{0}$. However, it also requires
to find a Lipschitz constant for $\vF'$ on a suitable region and
some other bounds on $\vF'$. These latter conditions are far too
restrictive for the applications mentioned above. For instance,
the stochastic context-free grammars whose associated SPPs
satisfy Kantorovich's conditions cannot exhibit two productions $X
\rightarrow aYZ$ and $W \rightarrow \varepsilon$ such that $Prob(X
\rightarrow aYZ) \cdot Prob(W \rightarrow \varepsilon) \geq 1/4$.
This class of grammars is too contrived to be of use.

Summarizing, while the convergence of Newton's method for systems
of differentiable functions has been intensely studied, the case of SPPs
does not seem to have been considered yet. The results obtained for other
classes have very limited applicability to SPPs: either they
do not apply at all, or only apply to contrived SPP subclasses.
Moreover, these results only provide information about the
growth rate of the number of accurate bits, but not about the
number itself. For the class of strongly connected SPPs, our
thresholds lead to {\em explicit} lower bounds for the number of accurate
bits depending only on syntactical parameters: the number of equations and the size of
the coefficients. For arbitrary SPPs we prove the existence of
a threshold, while finding explicit lower bounds remains an open problem.

\iftechrep{\paragraph{Structure of the paper.}}{\paragraph{Structure of the paper}}

\S~\ref{sec:prelim} defines SPPs and briefly describes their
applications to stochastic systems. \S~\ref{sec:summary} presents a short summary
of our main theorems. \S~\ref{sec:well-defined} proves some fundamental
properties of Newton's method for SPP equations. \S~\ref{sec:scSPPs}
and \S~\ref{sec:decomposed} contain our results on the convergence speed for
strongly connected and general SPP equations, respectively.
\S~\ref{sec:upper-bounds} shows that the bounds are essentially tight.
\S~\ref{sec:geo} presents our results about the geometrical interpretation
of Newton's method, and \S~\ref{sec:conclusions} contains conclusions.
\iftechrep{}{A slightly extended version of this paper is available as technical report~\cite{EKL10:SIAMtechRep}.}

\newcommand{\Vector}[1]{{\left[#1\right]}}
\newcommand{\nsmin}{\nsc{k}_{\mathit{min}}}

\section{Preliminaries}\label{sec:prelim}
In this section we introduce our notation used in the following and formalize the concepts mentioned in the introduction.

\subsection{Notation}
As usual,  $\R$ and $\N$ denote the set of real, respectively
natural numbers. We assume $0\in \N$. $\R^n$ denotes the set of
$n$-dimensional real valued column vectors and $\Rp^n$ the
subset of vectors with nonnegative components. We use bold
letters for vectors, e.g.\ $\vx\in\R^n$, where we assume that $\vx$
has the components $x_1,\ldots,x_n$. Similarly, the
$i$-th component of a function $\vf:\R^n \to \R^n$ is
denoted by~$f_i$.
We define $\vzero := (0, \ldots, 0)^\top$ and $\vone := (1, \ldots, 1)^\top$ where the superscript ${}^\top$
 indicates the transpose of a vector or a matrix.
%
Let $\norm{\cdot}$ denote some norm on~$\R^n$.
Sometimes we use explicitly the maximum norm~$\norm{\cdot}_\infty$ with $\norm{\vx}_\infty := \max_{1\le i \le n} \abs{x_i}$.

The partial order $\le$ on $\R^n$ is defined as usual
  by setting $\vx \le \vy$ if $x_i \le y_i$ for all $1 \le i \le n$.
Similarly, $\vx < \vy$ if $\vx \le \vy$ and $\vx \neq \vy$.
Finally, we write $\vx \prec \vy$ if $x_i < y_i$ for all
$1 \le i \le n$, i.e., if every component of $\vx$ is smaller than the corresponding
component of~$\vy$.

We use $X_1,\ldots,X_n$ as variable identifiers and arrange them into the vector~$\vX$.
In the following $n$ always denotes the number of variables, i.e., the dimension of~$\vX$.
While $\vx,\vy,\ldots$ denote arbitrary elements in $\R^n$,
 we write $\vX$ if we want to emphasize that a function is given w.r.t.\ these variables.
Hence, $\vf(\vX)$ represents the function itself, whereas $\vf(\vx)$ denotes its value for some~$\vx\in\R^n$.

If $S \subseteq \{1, \ldots, n\}$ is a set of components and $\vx$ a vector,
 then by $\vx_S$ we mean the vector obtained by restricting $\vx$ to the components in $S$.

Let $S \subseteq \{1, \ldots, n\}$ and $\overline{S} = \{1,\ldots,n\} \setminus S$.
Given a function $\vf(\vX)$ and a vector $\vx_S$,
 then $\vf[S / \vx_S]$ is obtained by replacing, for each $s \in S$, each occurrence of~$\vX_s$ by~$\vx_s$
 and removing the $s$-component.
In other words, if $\vf(\vX) = \vf(\vX_S,\vX_{\overline{S}})$
 then $\vf[S / \vx_S](\vy_{\overline{S}}) = \vf_{\overline{S}}(\vx_S,\vy_{\overline{S}})$.
For instance, if $\vf(X_1,X_2) = (X_1 X_2 + \frac{1}{2}, X_2^2 + \frac{1}{5})^\top$,
  then $\vf[\{2\}/\frac{1}{2}] : \R\to\R, X_1 \mapsto \frac{1}{2} X_1 + \frac{1}{2}$.

$\R^{m\times n}$ denotes the set of matrices having $m$ rows and $n$ columns.
The transpose of a vector or matrix is indicated by the superscript $^\top$.
The identity matrix of $\R^{n\times n}$ is denoted by $\Id$.

The {\em formal Neumann series} of $A\in\R^{n\times n}$ is defined by $A^\ast = \sum_{k\in\N} A^k$.
It is well-known that $A^\ast$ exists if and only if the spectral radius of $A$ is less than $1$, i.e.\
$\max\{ \abs{\lambda} \mid \mathbb{C}\ni\lambda \text{ is an eigenvalue of } A \} < 1$.
If $A^\ast$ exists then $A^\ast = ( \Id - A )^{-1}$.

The partial derivative of a function $f(\vX):\R^n\to\R$ w.r.t.\
the variable $X_i$ is denoted by $\pd{f}{X_i}$. 
The gradient $\nabla f$ of $f(\vX)$ is then defined to be the (row) vector
\[
  \nabla f := \left( \pd{f}{X_1}, \ldots, \pd{f}{X_n} \right).
\]
The {\em Jacobian} of a function $\vf(\vX)$ with $\vf : \R^n \to \R^m$ is the matrix $\vf'(\vX)$ defined by
\[
  \vf'(\vX) =
  \begin{pmatrix}
  \pd{f_1}{X_1} & \ldots & \pd{f_1}{X_n} \\
     \vdots     &        & \vdots\\
  \pd{f_m}{X_1} & \ldots & \pd{f_m}{X_n} \\
  \end{pmatrix} \:,
\]
i.e., the $i$-th row of $\vf'$ is the gradient of $f_i$.
\subsection{Systems of Positive Polynomials} \label{subsec:SPPs}
\iftechrep{}{\mbox{}\par}
\begin{definition}
A function $\vf(\vX)$ with $\vf : \Rp^n \to \Rp^n$ is a {\em system of positive polynomials (SPP)},
 if every component $f_i(\vX)$ is a polynomial in the variables $X_1,\ldots,X_n$ with coefficients in $\Rp$.
We call an SPP $\vf(\vX)$ {\em feasible} if $\vy = \vf(\vy)$ for some $\vy \in \Rp^n$.
An SPP is called {\em linear} (resp.\ {\em quadratic}) if all polynomials have degree at most $1$ (resp.\ $2$).
\end{definition}
\begin{fact}
Every SPP $\vf$ is monotone on $\Rp^n$, i.e.\ for $\vzero \le \vx \le \vy$ we have \mbox{$\vf(\vx) \le \vf(\vy)$}.
\end{fact}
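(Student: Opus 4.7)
The plan is to reduce monotonicity of $\vf$ componentwise to monotonicity of each scalar polynomial $f_i$, and then monomial by monomial. First I would observe that $\vf(\vx) \le \vf(\vy)$ in $\Rp^n$ holds if and only if $f_i(\vx) \le f_i(\vy)$ for every $1 \le i \le n$, so it is enough to prove the statement for a single polynomial in $n$ variables with nonnegative coefficients. Write such an $f_i$ as $f_i(\vX) = \sum_{\alpha} c_\alpha \vX^\alpha$, where $\alpha \in \N^n$ ranges over finitely many multi-indices, $c_\alpha \ge 0$, and $\vX^\alpha := X_1^{\alpha_1}\cdots X_n^{\alpha_n}$.

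The key step is to show that on $\Rp^n$ each monomial map $\vx \mapsto \vx^\alpha$ is monotone. Given $0 \le x_j \le y_j$, a trivial induction on $\alpha_j$ (using that multiplying both sides of an inequality between nonnegative numbers by the same nonnegative number preserves the inequality) gives $x_j^{\alpha_j} \le y_j^{\alpha_j}$; multiplying these inequalities across $j = 1,\ldots, n$ — once more invoking nonnegativity of both sides — yields $\vx^\alpha \le \vy^\alpha$. Finally, since $c_\alpha \ge 0$, we get $c_\alpha \vx^\alpha \le c_\alpha \vy^\alpha$, and summing over the finitely many $\alpha$ with $c_\alpha > 0$ gives $f_i(\vx) \le f_i(\vy)$, as required.

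There is no real obstacle, and the proof is essentially a one-liner; the only subtle point worth flagging is that the restriction to $\Rp^n$ is indispensable, since for negative arguments even raising to an even power can reverse inequalities and the product step can flip signs. This is precisely why the monotonicity statement is phrased on the nonnegative orthant, and why positivity of the coefficients is needed — both hypotheses enter in the two multiplication steps above.
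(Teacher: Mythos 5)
Your argument is correct: the paper states this as a \emph{Fact} without proof, precisely because the componentwise, monomial-by-monomial argument you give is the standard elementary justification. Nothing is missing, and your remark that nonnegativity of both the arguments and the coefficients is what makes the two multiplication steps work is exactly the right point to flag.
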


We will need the following lemma, a version of Taylor's theorem.
\begin{lemma}[Taylor]\label{lem:taylor}
 Let $\vf$ be an SPP and $\vx, \vu \ge \vzero$.
 Then
  \[ \vf(\vx) + \vf'(\vx)\vu \le \vf(\vx+\vu) \le \vf(\vx) + \vf'(\vx+\vu)\vu\;.
  \]
\end{lemma}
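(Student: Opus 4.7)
The plan is to reduce the statement componentwise to a single polynomial $f$ with nonnegative coefficients and then exploit the monotonicity of $\nabla f$ on $\Rp^n$.

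First, I would observe the crucial syntactic fact: if $f$ is a polynomial with nonnegative coefficients, then every partial derivative $\partial f / \partial X_j$ is again a polynomial with nonnegative coefficients. Consequently, each entry of the Jacobian $\vf'(\vX)$ is itself an SPP (of degree one lower), and therefore the map $\vx \mapsto \vf'(\vx)$ is monotone on $\Rp^n$ in the entrywise sense. In particular, for $\vzero \le \vx \le \vx+\vu$ we have $\vf'(\vx) \le \vf'(\vx+\vu)$ entrywise. Since $\vu \ge \vzero$, multiplying preserves the inequality:
\[
  \vf'(\vx)\vu \;\le\; \vf'(\vx+t\vu)\vu \;\le\; \vf'(\vx+\vu)\vu \qquad \text{for all } t \in [0,1].
\]

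Next, I would invoke the fundamental theorem of calculus along the segment from $\vx$ to $\vx+\vu$. For each component $f_i$,
\[
  f_i(\vx+\vu) - f_i(\vx) \;=\; \int_0^1 \nabla f_i(\vx+t\vu)\cdot \vu \,dt,
\]
and stacking these into vector form gives $\vf(\vx+\vu) - \vf(\vx) = \int_0^1 \vf'(\vx+t\vu)\vu \,dt$. Bounding the integrand above and below by $\vf'(\vx)\vu$ and $\vf'(\vx+\vu)\vu$ respectively (using the monotonicity just established) and integrating yields exactly
\[
  \vf'(\vx)\vu \;\le\; \vf(\vx+\vu)-\vf(\vx) \;\le\; \vf'(\vx+\vu)\vu,
\]
which rearranges to the claim.

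An alternative route, avoiding any integration, is to expand $\vf(\vx+\vu)$ as a finite multivariate Taylor series around $\vx$. Because every mixed partial derivative of $f_i$ is a polynomial with nonnegative coefficients evaluated at $\vx \ge \vzero$, and every monomial in $\vu \ge \vzero$ is nonnegative, all terms of order $\ge 2$ contribute a nonnegative quantity; dropping them gives the lower bound. The upper bound then follows by applying the same argument to the Taylor expansion of $\vf(\vx)$ around $\vx+\vu$ in the direction $-\vu$, after grouping even and odd order terms and using monotonicity of the derivatives. The integral argument is cleaner and I would prefer it.

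I do not foresee a real obstacle: the only subtle point is establishing that $\vf'$ is monotone on $\Rp^n$, and this is immediate from the observation that partial derivatives of polynomials with nonnegative coefficients again have nonnegative coefficients. Everything else is a routine consequence of the fundamental theorem of calculus applied componentwise.
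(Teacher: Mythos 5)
Your proposal is correct and follows essentially the same route as the paper: the paper also reduces to a single polynomial with nonnegative coefficients, writes $f(\vx+\vu)-f(\vx)=\int_0^1 f'(\vx+s\vu)\vu\,ds$, and bounds the integrand using the monotonicity of $f'$ on $\Rp^n$. No gaps.
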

\begin{proof}
 It suffices to show this for a multivariate polynomial $f(\vX)$ with nonnegative coefficients.
 Consider $g(t) = f(\vx + t \vu)$.
 We then have
 \[ f(\vx+\vu) = g(1) = g(0) + \int_{0}^1 g'(s)~ds = f(\vx) + \int_{0}^{1} f'(\vx + s \vu) \vu~ds.\]
 The result follows as $f'(\vx) \le f'(\vx + s \vu ) \le f'(\vx+\vu)$ for $s\in[0,1]$.
\qed
\end{proof}

Since every SPP is continuous, Kleene's fixed-point theorem
(see e.g.~\cite{Kui}) applies.
\begin{theorem}[Kleene's fixed-point theorem]\label{thm:kleene}
Every feasible SPP $\vf$ has a least fixed point $\mu\vf$ in $\Rp^n$
i.e., $\mu\vf = \vf(\mu\vf)$ and, in addition, $\vy = \vf(\vy)$ implies
$\mu\vf \le \vy$.
Moreover, the sequence $(\ks{k}_{\vf})_{k\in\N}$ with $\ks{k}_{\vf} = \vf^k(\vzero)$ (where $\vf^k$ denotes the $k$-fold iteration of~$\vf$)
is monotonically increasing with respect
to $\le$ (i.e.\ $\ks{k}_{\vf} \le \ks{k+1}_{\vf})$
and converges to~$\mu\vf$.
\end{theorem}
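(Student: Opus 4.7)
The plan is to carry out the standard Kleene-style argument: show that the iterates $\ks{k}_\vf = \vf^k(\vzero)$ form a monotonically increasing sequence bounded above by every fixed point, conclude that they converge to a limit, and then verify that the limit is itself a fixed point and is least among all fixed points.

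First I would establish monotonicity of the Kleene sequence by induction on~$k$. The base step $\ks{0}_\vf = \vzero \le \vf(\vzero) = \ks{1}_\vf$ is immediate since $\vf$ maps $\Rp^n$ into~$\Rp^n$. For the inductive step, assuming $\ks{k}_\vf \le \ks{k+1}_\vf$, the monotonicity of~$\vf$ on $\Rp^n$ (stated as the Fact preceding the theorem) gives $\ks{k+1}_\vf = \vf(\ks{k}_\vf) \le \vf(\ks{k+1}_\vf) = \ks{k+2}_\vf$.

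Next I would use feasibility to obtain a uniform upper bound. By hypothesis there exists some $\vy \in \Rp^n$ with $\vy = \vf(\vy)$. A second induction, again invoking monotonicity, yields $\ks{k}_\vf \le \vy$ for every $k \in \N$: the base case $\vzero \le \vy$ is obvious, and $\ks{k}_\vf \le \vy$ implies $\ks{k+1}_\vf = \vf(\ks{k}_\vf) \le \vf(\vy) = \vy$. Thus each coordinate of $(\ks{k}_\vf)_{k \in \N}$ is a monotone bounded real sequence and therefore converges; let $\vmu \in \Rp^n$ denote the componentwise limit. The same argument already shows $\vmu \le \vy$ for every fixed point~$\vy$, which will give the ``least'' part once we know $\vmu$ is itself a fixed point.

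To see that $\vmu$ is a fixed point, I would invoke continuity of~$\vf$ (as a polynomial map) and pass to the limit in the defining recurrence:
\[
  \vf(\vmu) \;=\; \vf\!\left(\lim_{k \to \infty} \ks{k}_\vf\right) \;=\; \lim_{k \to \infty} \vf(\ks{k}_\vf) \;=\; \lim_{k \to \infty} \ks{k+1}_\vf \;=\; \vmu.
\]
Combined with the upper-bound observation, this shows $\vmu = \mu\vf$ is the least fixed point, and the sequence $(\ks{k}_\vf)_{k \in \N}$ converges to it monotonically. There is no real obstacle here: the only point deserving care is that $\Rp^n$ is not a cpo in the usual sense (it has no top element), so the existence of the limit is not automatic from order-theoretic Kleene — it is the feasibility hypothesis that supplies the needed upper bound and thereby the convergence.
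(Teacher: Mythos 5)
Your proof is correct and is exactly the standard argument; the paper itself does not prove this theorem but cites it from the literature (Kuich), and your write-up supplies precisely the proof that reference would give: monotonicity of the iterates by induction, the feasibility hypothesis providing the upper bound that makes each coordinate a bounded monotone sequence, continuity of the polynomial map to pass to the limit, and the same induction showing the limit is below every fixed point. Your closing remark that feasibility (rather than order-completeness of $\Rp^n$) is what secures convergence is the right point to flag.
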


In the following we call $(\ks{k}_{\vf})_{k\in\N}$ the {\em Kleene sequence}
of $\vf(\vX)$, and drop the subscript whenever $\vf$ is clear from the context.
Similarly, we sometimes write $\vmu$ instead of $\fix{\vf}$.

An SPP $\vf(\vX)$ is {\em clean} if for all variables $X_i$ there is a $k\in\N$ such that $\ksc{k}_i > 0$.
It is easy to see that we have $\ksc{k}_i = 0$ for all $k\in\N$ if $\ksc{n}_i = 0$.
So we can ``clean'' an SPP~$\vf(\vX)$ in time linear in the size of~$\vf$
 by determining the components $i$ with $\ksc{n}_i = 0$ and removing them.



We will also need the notion of {\em dependence} between variables.
\begin{definition} \label{def:contain}
A polynomial $f(\vX)$ {\em contains} a variable~$X_i$ if $\pd{f}{X_i}(\vX)$ is not the zero-polynomial.
\end{definition}

\begin{definition} \label{def:depend}
 Let $\vf(\vX)$ be an SPP.
 A component~$i$ {\em depends directly} on a component~$k$ if $f_i(\vX)$ contains $X_k$.
 A component~$i$ {\em depends} on~$k$ if either $i$ depends directly on~$k$ or there is a component~$j$ such that
  $i$ depends on~$j$ and $j$ depends on~$k$.
 The components $\{1,\ldots,n\}$ can be partitioned into strongly connected components (SCCs)
  where an SCC $S$ is a maximal set of components such that each component in $S$ depends on each other component in $S$.
 An SCC is called {\em trivial} if it consists of a single component that does not depend on itself.
 An SPP is {\em strongly connected} (short: an {\em scSPP}) if $\{1,\ldots,n\}$ is a non-trivial SCC.
\end{definition}

\subsection{Convergence Speed}

We will analyze the convergence speed of Newton's method.
To this end we need the notion of {\em valid bits}.
\begin{definition} \label{def:valid-bits}
 Let $\vf$ be a feasible SPP.
 A vector~$\vx$ has $i$ {\em valid bits} of the least fixed point~$\mu\vf$ if
  \[
   \frac{\abs{\mu\vf_j - x_j}}{\abs{\mu\vf_j}} \le 2^{-i}
  \]
 for every $1 \le j \le n$.
 Let $(\xs{k})_{k\in\N}$ be a sequence with $\vzero \le \xs{k} \le \mu\vf$.
Then the {\em convergence order} $\beta : \N \to \N$ of the sequence~$(\xs{k})_{k\in\N}$ is defined as follows:
 $\beta(k)$ is the greatest natural number $i$ such that $\xs{k}$ has $i$ valid bits
  (or $\infty$ if such a greatest number does not exist).
 We will always mean the convergence order of the Newton sequence~$(\ns{k})_{k\in\N}$, unless explicitly stated otherwise.
\end{definition}

We say that a sequence has linear, exponential, logarithmic, etc.\ convergence order if the
function $\beta(k)$ grows linearly, exponentially, or logarithmically in $k$, respectively.

\medskip

\begin{remark}
Our definition of convergence order differs from the one commonly
used in numerical analysis (see e.g.~\cite{OrtegaRheinboldt:book}), where
``quadratic convergence'' or ``Q-quadratic convergence''
means that the error $e'$ of the new approximant (its distance to the least fixed point
according to some norm) is bounded by $c \cdot e^2$, where $e$ is the error of the old
approximant and $c > 0$ is some constant. We consider our notion more natural from
a computational point of view, since it directly relates the number of iterations to
the accuracy of the approximation. Notice that ``quadratic convergence'' implies  exponential
convergence order in the sense of Definition~\ref{def:valid-bits}. In the following we
avoid the notion of ``quadratic convergence''.
\end{remark}

\subsection{Stochastic Models} \label{subsec:stochastic-models}
As mentioned in the introduction, several problems concerning stochastic models can be reduced to problems
 about the least fixed point~$\fix{\vf}$ of an SPP~$\vf$.
In these cases, $\fix{\vf}$ is a vector of probabilities, and so $\mu\vf \le \vone$.

\subsubsection{Probabilistic Pushdown Automata}
Our study of SPPs was initially motivated by the verification of probabilistic pushdown automata.
A \emph{probabilistic pushdown automaton (pPDA)} is a tuple $\pPDA = (Q,\Gamma,\delta,\Prob)$
 where $Q$ is a finite set of \emph{control states}, $\Gamma$ is a finite \emph{stack alphabet},
 $\delta \subseteq Q \times \Gamma \times Q \times \Gamma^*$ is a  finite \emph{transition relation}
 (we write $pX \tran{} q \alpha$ instead of $(p,X,q,\alpha) \in \delta$),
 and $\Prob$ is a function which to each transition $pX \tran{} q\alpha$ assigns
 its probability $\Prob(pX \tran{} q\alpha) \in (0,1]$ so that for all $p \in Q$ and $X \in \Gamma$ we have
 $\sum_{pX \tran{} q\alpha} \Prob(pX \tran{} q\alpha) = 1$.
We write $pX \tran{x} q\alpha$ instead of $\Prob(pX \tran{} q\alpha) = x$.
A {\em configuration} of $\pPDA$ is a pair $qw$, where $q$ is a control state and $w \in \Gamma^*$ is a {\em stack content}.
A pPDA $\pPDA$ naturally induces a possibly infinite Markov chain with the
 configurations as states and transitions given by:
$pX \beta \tran{x} q \alpha \beta$ for every $\beta \in \Gamma^*$ if{}f $pX \tran{x} q\alpha$.
We assume w.l.o.g.\ that if $pX \tran{x} q\alpha$ is a transition then $|\alpha|\leq 2$.

pPDAs and the equivalent model of recursive Markov chains have been very thoroughly studied
 \cite{EKM:prob-PDA-PCTL,BKS:pPDA-temporal,EYstacs05Extended,EY:RMC-LTL-complexity,EKM:prob-PDA-expectations,EY:RMC-LTL-QUEST,EY:RMC-RMDP}.
This work has shown that the key to the analysis of pPDAs are the {\em termination probabilities} $\Pro{pXq}$,
 where $p$ and $q$ are states, and $X$ is a stack letter, defined as follows
 (see e.g.\ \cite{EKM:prob-PDA-PCTL} for a more formal definition):
$\Pro{pXq}$ is the probability that, starting at the configuration $pX$,
 the pPDA eventually reaches the configuration $q\varepsilon$ (empty stack).
It is not difficult to show that the vector of these probabilities is the least solution of the SPP equation system
 containing the equation
$$
      \Var{pXq}  =  \sum_{pX \tran{x} rYZ}
                  x \cdot \sum_{t \in Q} \Var{rYt} \cdot \Var{tZq}
                \quad + \quad \sum_{pX \tran{x} rY}
                  x \cdot \Var{rYq}
                \quad + \quad  \sum_{pX \tran{x} q \varepsilon} x
$$
 for each triple $(p,X,q)$.
Call this quadratic SPP the {\em termination SPP} of the pPDA
 (we assume that termination SPPs are clean, and it is easy to see that they are always feasible).

%
%


\subsubsection{Strict pPDAs and Back-Button Processes}

A pPDA is {\em strict} if for all $pX \in Q  \times \Gamma$ and all $q \in Q$
 the transition relation contains a pop-rule $pX \tran{x} q \epsilon$ for some $x > 0$.
Essentially, strict pPDAs model programs in which every procedure has at least one terminating execution that
 does not call any other procedure.
The termination SPP of a strict pPDA
 satisfies $\vf(\vzero) \gg \vzero$.


In~\cite{FaginetalSTOC,Faginetal} a class of stochastic processes is introduced to model the behavior of web-surfers
 who from the current webpage $A$ can decide either to follow a link to another page, say $B$, with probability $\ell_{AB}$,
 or to press the ``back button'' with nonzero probability $b_A$.
These back-button processes correspond to a very special class of strict pPDAs having one single control state
 (which in the following we omit),
 and rules of the form $A \tran{b_A} \varepsilon$ (press the back button from $A$)
 or $A \tran{\ell_{AB}} BA$ (follow the link from $A$ to $B$, remembering $A$ as destination of pressing the back button at~$B$).
The termination probabilities are given by an SPP equation system containing the equation
\[
  \Var{A} \quad = \quad b_A + \displaystyle{\sum_{A \tran{\ell_{AB}} BA} \ell_{AB}\Var{B}\Var{A}}
          \quad = \quad b_A + \Var{A} \displaystyle{\sum_{A \tran{\ell_{AB}} BA}} \ell_{AB} \Var{B}
\]
 for every webpage $A$.
In \cite{FaginetalSTOC,Faginetal} those termination probabilities are called {\em revocation} probabilities.
The revocation probability of a page~$A$ is the probability that, when currently visiting webpage~$A$
 and having $H_0 H_1 \ldots H_{n-1} H_n$ as the browser history of previously visited pages,
 then during subsequent surfing from~$A$ the random user eventually returns to webpage $H_n$ with
 $H_0 H_1 \ldots H_{n-1}$ as the remaining browser history.
%
%
%

\begin{example} \label{ex:back-button}
 Consider the following equation system.
 \[
  \begin{pmatrix}
   X_1 \\ X_2 \\ X_3
  \end{pmatrix}
  =
  \begin{pmatrix}
   0.4 X_2 X_1 + 0.6 \\
   0.3 X_1 X_2 + 0.4 X_3 X_2 + 0.3 \\
   0.3 X_1 X_3 + 0.7
  \end{pmatrix}
 \]
 The least solution of the system gives the revocation probabilities of a back-button process with three web-pages.
 For instance, if the surfer is at page 2 it can choose between following links to pages 1 and 3 with probabilities 0.3 and 0.4,
  respectively, or pressing the back button with probability 0.3.
\end{example}

\section{Newton's Method and an Overview of Our Results}
\label{sec:summary}

In order to approximate the least fixed point $\mu\vf$ of an SPP~$\vf$ we employ Newton's method:
\begin{definition} \label{def:newton-operator}
 Let $\vf$ be a clean and feasible SPP.
 The Newton operator~$\Ne_{\vf}$ is defined as follows:
 \[
  \Ne_{\vf}(\vX) := \vX + \left(\Id - \vf'(\vX)\right)^{-1} ( \vf(\vX) - \vX  )
 \]
 The sequence $(\ns{k}_\vf)_{k\in\N}$ with $\ns{k}_\vf = \Ne_{\vf}^k(\vzero)$ (where $\Ne_\vf^k$ denotes the $k$-fold iteration of~$\Ne_\vf$)
  is called {\em Newton sequence}.
 We drop the subscript of $\Ne_{\vf}$ and~$\ns{k}_\vf$ when $\vf$ is understood.
\end{definition}

The main results of this paper concern the application of Newton's method to
SPPs. We summarize them in this section.

{\bf Theorem~\ref{thm:well-defined}} states that the Newton sequence $(\ns{k})_{k\in\N}$ is well-defined
(i.e., the inverse matrices  $\left(\Id - \vf'(\ns{k})\right)^{-1}$ exist for every $k \in \N$),
monotonically increasing and bounded from above by $\mu\vf$ (i.e.\ $\ns{k} \le \vf(\ns{k}) \le \ns{k+1} \le \mu\vf$), and converges to $\mu\vf$.
This theorem generalizes the result of Etessami and Yannakakis in~\cite{EYstacs05Extended} to arbitrary
clean and feasible SPPs and to the ordinary Newton's method.

For more quantitative results on the convergence speed it is convenient to focus on
quadratic SPPs.
{\bf Theorem~\ref{thm:reduction-quadratic}}
 shows that any clean and feasible SPP can be syntactically transformed into a quadratic SPP without changing
the least fixed point and without accelerating Newton's method.
This means, one can perform Newton's method on the original (possibly non-quadratic) SPP and convergence will be at least as
fast as for the corresponding quadratic SPP.

For quadratic $n$-dimensional SPPs,
 one iteration of Newton's method involves $\bigo(n^3)$ arithmetical operations and $\bigo(n^3)$ operations in the Blum-Shub-Smale model.
Hence, a bound on the number of iterations needed to compute a given number of valid bits immediately leads to a bound on the number of operations.
In \S~\ref{sec:scSPPs} we obtain such bounds for
{\em strongly connected} quadratic SPPs. We give different thresholds
for the number of iterations, and show that when any of these thresholds is reached,
Newton's method gains at least one valid bit for each iteration. More
precisely, {\bf Theorem \ref{thm:estimate-cor}} states the following. Let $\vf$ be a quadratic, clean and feasible scSPP,
let $\mumin$ and $\mumax$ be the minimal and maximal component of $\fix{\vf}$, respectively,
and let the coefficients of~$\vf$ be given as ratios of $m$-bit integers.
Then $\beta( k_\vf + i) \ge i$ holds for all $i\in\N$ and for any of the following choices of~$k_\vf$:
\begin{itemize}
  \item[1.] $\displaystyle 4mn + \lceil 3n \max\{ 0, - \log \mumin \} \rceil$;
  \item[2.] $\displaystyle 4mn2^n$;
  \item[3.] $\displaystyle 7mn$ if $\vf$ satisfies $\vf(\vzero) \succ \vzero$;
  \item[4.] $\displaystyle 2m(n + 1)$ if $\vf$ satisfies  both $\vf(\vzero) \succ \vzero$ and $\mumax \le 1$.
\end{itemize}

We further show that Newton iteration can also be used to
obtain a sequence of {\em upper} approximations of~$\mu\vf$.
Those upper approximations converge to~$\mu\vf$, asymptotically as fast as the Newton sequence.
More precisely, {\bf Theorem~\ref{thm:proximity-2}} states the following:
Let $\vf$ be a quadratic, clean and feasible scSPP, let $\cmin$ be the smallest nonzero coefficient of~$\vf$, and
let $\mumin$ be the minimal component of~$\mu\vf$. Further,
for all Newton approximants $\ns{k}$ with $\ns{k} \succ \vzero$, let $\nsmin$ be the
smallest coefficient of~$\ns{k}$. Then
   \[
    \ns{k} \le \mu\vf \le \ns{k} + \Vector{\frac{\norm{\ns{k} - \ns{k-1}}_\infty}{\left( \cmin \cdot \min\{\nsmin, 1\} \right)^n}}
   \]
\noindent  where $\Vector{s}$ denotes the vector $\vx$ with $x_j = s$ for all $1 \le j \le n$.

In \S~\ref{sec:decomposed} we turn to general (not necessarily strongly connected) clean and feasible SPPs.
We show in {\bf Theorem \ref{thm:conv-speed-general}} that Newton's method still converges linearly.
Formally, the theorem proves that for every quadratic, clean and feasible SPP $\vf$, there is a
threshold $k_\vf \in \Nat$ and $\alpha_\vf > 0$ such that $\beta(k_\vf + \alpha_\vf \cdot i) \ge i$ for all $i\in\N$.
With respect to the threshold our proof is purely existential and does not provide any bound for $k_\vf$.
For $\alpha_\vf$ we show an upper bound of $n \cdot 2^n$, i.e., asymptotically at most
$n \cdot 2^n$ extra iterations are needed in order to get one new valid bit.
\S~\ref{sec:upper-bounds} exhibits a family of SPPs in which one new bit requires at least $2^{n-1}$ iterations, implying
that the bound on $\alpha_\vf$ is essentially tight.

Finally, \S~\ref{sec:geo} gives a geometrical interpretation of
Newton's method on quadratic SPP equations. Let $R$ be the region bounded by
the coordinate axes and by the quadrics corresponding to the individual equations.
{\bf Theorem \ref{cor:R-newton-kleene}} shows that
all Kleene and Newton approximations lie within $R$, i.e.:
$\ns{i}, \ks{i} \in R$ for every $i\in\N$.

\section{Fundamental Properties of Newton's Method}\label{sec:well-defined}

\subsection{Effectiveness} \label{sub:effectiveness}

Etessami and Yannakakis \cite{EYstacs05Extended} suggested to use Newton's method for SPPs.
More precisely, they showed that the sequence obtained by applying Newton's method to the equation system $\vX = \vf(\vX)$
 converges to~$\mu\vf$ as long as $\vf$ is strongly connected.
We extend their result to arbitrary SPPs,
 thereby reusing and extending several proofs of~\cite{EYstacs05Extended}.

In Definition~\ref{def:newton-operator} we defined the Newton operator $\Ne_{\vf}$ and the associated Newton sequence $(\ns{k})_{k\in\N}$.
In this section we prove the following fundamental theorem on the Newton sequence.

\begin{theorem} \label{thm:well-defined}
 Let $\vf$ be a clean and feasible SPP.
 Let the Newton operator $\Ne_{\vf}$ be defined as in Definition~\ref{def:newton-operator}:
 \[
 \Ne_{\vf}(\vX) := \vX + (\Id-\vf'(\vX))^{-1} ( \vf(\vX) - \vX )
 \]
 \begin{itemize}
  \item[1.] Then the Newton sequence $(\ns{k})_{k\in\N}$ with $\ns{k} = \Ne_{\vf}^k(\vzero)$
   is well-defined (i.e., the matrix inverses exist),
   monotonically increasing, bounded from above by $\mu\vf$ (i.e.\ $\ns{k} \le \vf(\ns{k}) \le \ns{k+1} \le \mu\vf$),
   and converges to $\mu\vf$.
  \item[2.]
   We have $(\Id-\vf'(\ns{k}))^{-1} = \vf'(\ns{k})^*$ for all $k\in\N$.
   \\ We also have $(\Id-\vf'(\vx))^{-1} = \vf'(\vx)^*$ for all $\vx \prec \mu\vf$.
 \end{itemize}
\end{theorem}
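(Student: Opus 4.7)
The plan is to establish the theorem in two movements: first a spectral-radius lemma that underpins part~2 and enables the Newton update to be defined, then a straightforward induction that yields part~1 and convergence.

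The heart of the argument is the following key lemma: for every $\vx$ with $\vzero \le \vx \prec \mu\vf$, the spectral radius $\rho(\vf'(\vx))$ is strictly less than $1$, so the Neumann series $\sum_{k\ge 0}\vf'(\vx)^k$ converges to $(\Id-\vf'(\vx))^{-1}$. This gives both the formula $(\Id-\vf'(\vx))^{-1}=\vf'(\vx)^*$ of part~2 and the fact that this inverse is a nonnegative matrix with $\vf'(\vx)^*\ge \Id$. I would prove the lemma by contradiction, combining Perron--Frobenius with the exact polynomial Taylor expansion. Assuming $\rho(\vf'(\vx))\ge 1$, Perron--Frobenius on the nonnegative matrix $\vf'(\vx)$ supplies a nonnegative left eigenvector $\vu^\top \vf'(\vx)=\rho\,\vu^\top$; writing $\mu\vf = \vf(\vx)+\vf'(\vx)(\mu\vf-\vx)+R$ with explicitly nonnegative remainder $R\ge\vzero$ (using that $\vf$ has nonnegative coefficients), left-multiplication by $\vu^\top$ produces the weighted inequality $\vu^\top\vf(\vx)+\vu^\top R\le \vu^\top \vx$. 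From this, together with cleanness (which gives $\mu\vf\succ\vzero$) and, if $\vf$ is not strongly connected, a bottom-up pass over its SCC decomposition, I would extract a pre-fixed point of $\vf$ strictly below $\mu\vf$, contradicting minimality.

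With the lemma in hand, I would prove part~1 by induction on $k$, maintaining the invariant $\ns{k}\le \vf(\ns{k})\le \ns{k+1}\le \mu\vf$. The base case $\ns{0}=\vzero\le\vf(\vzero)$ is immediate. For the step, either $\ns{k}=\mu\vf$---in which case $\vf(\ns{k})-\ns{k}=\vzero$ and the update freezes the sequence at $\mu\vf$, so the invariant trivially persists---or $\ns{k}\prec \mu\vf$ and the lemma applies. In the latter case the update matrix $\vf'(\ns{k})^* = \sum_{j\ge 0}\vf'(\ns{k})^j\ge \Id$ is nonnegative, so
\[
\ns{k+1}\;=\;\ns{k} + \vf'(\ns{k})^*\bigl(\vf(\ns{k})-\ns{k}\bigr)\;\ge\;\ns{k}+\bigl(\vf(\ns{k})-\ns{k}\bigr)\;=\;\vf(\ns{k})\;\ge\;\ns{k}.
\]
For the upper bound, Lemma~\ref{lem:taylor} at $\vx=\ns{k}$ and $\vu=\mu\vf-\ns{k}$ gives $(\Id-\vf'(\ns{k}))(\mu\vf-\ns{k})\ge \vf(\ns{k})-\ns{k}$; multiplying through by the nonnegative matrix $\vf'(\ns{k})^*$ yields $\mu\vf-\ns{k}\ge \ns{k+1}-\ns{k}$, i.e.\ $\ns{k+1}\le \mu\vf$. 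The sequence $(\ns{k})$ is then monotone and bounded above by $\mu\vf$, so it converges to some $\vn^*\le \mu\vf$; continuity of $\vf$ and the invariant $\vf(\ns{k})\le \ns{k+1}$ pass to the limit as $\vf(\vn^*)\le \vn^*$, and the least-pre-fixed-point characterisation of $\mu\vf$ (Knaster--Tarski/Kleene) forces $\vn^*=\mu\vf$.

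The main obstacle is the spectral-radius lemma. Lemma~\ref{lem:taylor} by itself is too weak because it is non-strict and because for a general $\vx\prec \mu\vf$ the inequality $\vf(\vx)\ge \vx$ need not hold; the route above therefore needs the full polynomial Taylor expansion with its nonnegative remainder together with Perron--Frobenius, and in the non-strongly-connected case one additionally needs to argue SCC-by-SCC to obtain strict inequality of spectral radii. Once this lemma is in place, the remaining steps---monotonicity, boundedness, continuity, and passing to the limit---are routine bookkeeping around Kleene's theorem.
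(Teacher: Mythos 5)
There is a genuine gap in your induction step for part~1. You split into the cases $\ns{k}=\mu\vf$ and $\ns{k}\prec\mu\vf$, but these are not exhaustive: the Newton approximants can satisfy $\ns{k}<\mu\vf$ with \emph{some but not all} components already equal to the corresponding components of $\mu\vf$. For instance, for $\vf(X_1,X_2)=(\tfrac12,\;X_1X_2+\tfrac14)$ one has $\mu\vf=(\tfrac12,\tfrac12)$ and $\ns{1}=(\tfrac12,\tfrac14)$, so $\ns{1}\ne\mu\vf$ and $\ns{1}\not\prec\mu\vf$. Your key lemma (spectral radius $<1$ for $\vx\prec\mu\vf$) therefore does not apply at such points, and nothing in your argument shows that $\Id-\vf'(\ns{k})$ is invertible there. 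This intermediate case is exactly where the difficulty of the theorem lies: the paper spends most of its effort on it (Lemma~\ref{lem:EY-not-terminating} and the two-case analysis in the proof of Proposition~\ref{prop:no-infty-entries}), showing that if the diagonal entry $\left[\vf'(\vX)^*\right]_{ss}$ contains a variable whose Newton sequence never terminates then monotonicity reduces to the strict case, and that if all such variables do terminate then the convergence of $(\nsc{k}_s)$ together with $\nsc{k}_s<\mu\vf_s$ forces some finite positive Newton increment, which in turn forces $\left[\vf'(\ns{k})^*\right]_{ss}<\infty$. Your proposal contains no substitute for this argument. Note also that working with the formal series $\vf'(\vx)^*$ rather than the inverse is not a mere convenience here: divergence of the star cannot be read off from the behaviour of the iterates, since $\infty$ entries could be cancelled by zero entries of $\vf(\ns{k})-\ns{k}$.

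A secondary concern is the key lemma itself. For strongly connected $\vf$ the statement and a Perron--Frobenius proof are fine (the paper proves it in Lemma~\ref{lem:no-infty-in-SCC} by a direct argument with the Kleene sequence, and uses Perron--Frobenius elsewhere for cone vectors). But for general SPPs and arbitrary $\vx\prec\mu\vf$ your sketch only yields $\vu^\top(\vf(\vx)-\vx)+\vu^\top R\le 0$ for a nonnegative left eigenvector $\vu$, and since $\vf(\vx)-\vx$ may have negative components this does not obviously produce a pre-fixed point strictly below $\mu\vf$; the ``bottom-up pass over the SCC decomposition'' would need to be worked out in detail. In the paper this second claim of part~2 is not proved directly at all: it is \emph{deduced} from the convergence of the Newton sequence (there is $k'$ with $\vx\le\ns{k'}$, and $\vf'(\ns{k'})^*$ is finite), so it cannot be used as the foundation of the induction without circularity unless you supply an independent proof.
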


The proof of Theorem~\ref{thm:well-defined} consists of three steps.
In the first proof step we study a sequence generated by a somewhat weaker version of the Newton operator and obtain the following:
\begin{proposition}
\label{prop:newton-stacs07}
 Let $\vf$ be a feasible SPP.
 Let the operator $\hNe_{\vf}$ be defined as follows:
 \[
 \hNe_{\vf}(\vX) := \vX + \sum_{d=0}^\infty \left(\vf'(\vX)^d ( \vf(\vX) - \vX ) \right) \:.
 \]
 Then the sequence $(\ns{k})_{k\in\N}$ with $\ns{k} := \hNe_{\vf}^k(\vzero)$
  is monotonically increasing, bounded from above by $\mu\vf$ (i.e.\ $\ns{k} \le \vf(\ns{k}) \le \ns{k+1} \le \mu\vf$)
  and converges to $\mu\vf$.
\end{proposition}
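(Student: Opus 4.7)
The plan is to establish, by induction on $k$, the chain
\[
 \ns{k} \le \vf(\ns{k}) \le \ns{k+1} \le \mu\vf,
\]
which simultaneously yields well-definedness (the series defining $\ns{k+1}$ converges), monotonicity, and the upper bound. The base case $k=0$ is trivial since $\ns{0}=\vzero$ and $\vf(\vzero)\ge\vzero$. Convergence to $\mu\vf$ then follows from the monotone convergence theorem together with the continuity of $\vf$ and the defining property of the least fixed point.

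The heart of the argument is the inductive step, and it will rely on Taylor's inequality (Lemma~\ref{lem:taylor}) applied at the point $\ns{k}$ with displacement $\vu:=\mu\vf-\ns{k}\ge\vzero$:
\[
 \vf(\ns{k}) + \vf'(\ns{k})(\mu\vf-\ns{k}) \;\le\; \vf(\mu\vf) \;=\; \mu\vf.
\]
Writing $A:=\vf'(\ns{k})$, $\vb:=\vf(\ns{k})-\ns{k}\ge\vzero$ (which is nonnegative by induction hypothesis), and $\vy:=\mu\vf-\ns{k}$, this rearranges to the key inequality $A\vy+\vb\le\vy$. Since all matrices and vectors involved are nonnegative, a short induction on $N$ shows that the partial sums satisfy $\sum_{d=0}^{N} A^d\vb \le \vy$ for all $N$: the base case gives $\vb\le\vy-A\vy\le\vy$, and the step uses $\sum_{d=0}^{N+1} A^d\vb = \vb + A\sum_{d=0}^{N}A^d\vb \le \vb + A\vy \le \vy$. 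Hence the monotone partial sums converge (so $\ns{k+1}$ is well-defined), and passing to the limit gives $\ns{k+1}-\ns{k} = \sum_{d=0}^{\infty} A^d\vb \le \vy$, i.e.\ $\ns{k+1}\le\mu\vf$. The lower bound $\ns{k+1}\ge\vf(\ns{k})$ is immediate since the $d=0$ term of the series equals $\vb$, and the remaining terms are nonnegative.

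It remains to re-establish $\ns{k+1}\le\vf(\ns{k+1})$ so that the induction can proceed. For this I would apply Taylor's inequality once more, this time at $\ns{k}$ with displacement $\ns{k+1}-\ns{k}\ge\vzero$:
\[
 \vf(\ns{k+1}) \;\ge\; \vf(\ns{k}) + A\,(\ns{k+1}-\ns{k}) \;=\; \vf(\ns{k}) + A\sum_{d=0}^{\infty}A^d\vb \;=\; \vf(\ns{k}) + \sum_{d=0}^{\infty}A^d\vb - \vb \;=\; \ns{k+1}.
\]
With the induction complete, the sequence $(\ns{k})$ is monotonically increasing and bounded above by $\mu\vf$, so it converges to some $\vy\le\mu\vf$. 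Taking the limit in $\ns{k}\le\vf(\ns{k})\le\ns{k+1}$ and using the continuity of $\vf$ gives $\vy\le\vf(\vy)\le\vy$, whence $\vy$ is a fixed point of $\vf$. Minimality of $\mu\vf$ forces $\vy\ge\mu\vf$, and combined with $\vy\le\mu\vf$ we conclude $\vy=\mu\vf$.

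The main obstacle is the well-definedness of the infinite series: without the Taylor-based bound $A\vy+\vb\le\vy$ one has no control over the Neumann-like sum $\sum_d A^d\vb$, since the spectral radius of $A=\vf'(\ns{k})$ may be $\ge 1$ in general. It is precisely the interplay between the least-fixed-point inequality and the nonnegativity of all the data that tames this sum and lets the induction run.
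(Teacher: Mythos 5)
Your proof is correct, but it reaches the two nontrivial claims — the upper bound $\ns{k+1}\le\mu\vf$ and the convergence to $\mu\vf$ — by a genuinely different route than the paper. For the upper bound, the paper invokes Lemma~\ref{lem:step-stays-below-mu}, which bounds the $d$-th term of the series by the $d$-th Kleene increment, $\vf'(\vx)^d(\vf(\vx)-\vx)\le\vf^{d+1}(\vx)-\vf^d(\vx)$, via the generalized Taylor theorem (Lemma~\ref{lem:gen-taylor}, itself an induction on $d$ through the iterates of $\vf$), and then telescopes. You instead apply the first-order Taylor inequality once, at displacement $\mu\vf-\ns{k}$, to obtain the single subinvariance relation $A\vy+\vb\le\vy$, and then tame the Neumann-type series $\sum_d A^d\vb$ by an elementary induction on partial sums — the classical argument that a nonnegative subinvariant vector dominates $A^*\vb$. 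This is leaner and avoids Lemma~\ref{lem:gen-taylor} entirely. For convergence, the paper proves $\ks{k}\le\ns{k}$ and sandwiches with the Kleene sequence, whereas you pass to the monotone limit, use continuity to see the limit is a fixed point, and invoke leastness of $\mu\vf$; both are valid, but the paper's comparison with Kleene iteration is reused elsewhere (it shows Newton dominates Kleene), while your argument yields only the proposition itself. Your remaining steps — the inductive verification of $\ns{k+1}\le\vf(\ns{k+1})$ via Taylor at displacement $\ns{k+1}-\ns{k}$, and $\vf(\ns{k})\le\ns{k+1}$ from the $d=0$ term — coincide with the paper's.
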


In a second proof step, we show another intermediary proposition, namely that the star of the Jacobian matrix $\vf'$ converges
 for all Newton approximants:
\begin{proposition} \label{prop:no-infty-entries}
 Let $\vf$ be clean and feasible.
 Then the matrix series $\vf'(\ns{k})^* := \Id + \vf'(\ns{k}) + \vf'(\ns{k})^2 + \cdots$
  converges in~$\Rp$ for all Newton approximants~$\ns{k}$,
  i.e., there are no $\infty$ entries.
\end{proposition}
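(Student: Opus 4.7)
The plan is to establish the equivalent statement $\rho(A) < 1$ where $A := \vf'(\ns{k})$: since $A$ is a nonnegative matrix, this is equivalent to convergence of the Neumann series $A^*$ in $\Rp$ with only finite entries.

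I would first derive the weak bound $\rho(A) \le 1$ by combining Lemma~\ref{lem:taylor} with Proposition~\ref{prop:newton-stacs07}. Writing $\vu := \mu\vf - \ns{k} \ge \vzero$, Taylor's lemma at $\ns{k}$ in direction $\vu$ gives $\mu\vf = \vf(\mu\vf) \ge \vf(\ns{k}) + A\vu$, while Proposition~\ref{prop:newton-stacs07} ensures $\ns{k} \le \vf(\ns{k})$; subtracting yields
\[
  A\vu \;\le\; \mu\vf - \vf(\ns{k}) \;\le\; \mu\vf - \ns{k} \;=\; \vu.
\]
Since $A$ is nonnegative and $\vu \ge \vzero$, the Collatz--Wielandt characterization of the spectral radius then gives $\rho(A) \le 1$.

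To upgrade this to the strict bound $\rho(A) < 1$, I would exploit Proposition~\ref{prop:newton-stacs07} once more: it guarantees $\ns{k+1} = \ns{k} + \sum_{d=0}^\infty A^d \vv \in \Rp^n$, where $\vv := \vf(\ns{k}) - \ns{k}$, so the vector $A^* \vv$ has only finite entries. If $\vv \succ \vzero$, this already finishes the proof, because $A^* \vone \le (\min_j v_j)^{-1} A^* \vv$ is then finite entrywise, and so is every entry of $A^*$.

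The main obstacle is the case where $\vv$ has zero components, i.e., coordinates at which $\ns{k}$ already satisfies its own equation; here cleanness becomes essential. My plan is to induct on the SCC decomposition of $\vf$, processing SCCs in reverse topological order: each diagonal block of $A$ indexed by an SCC $S$ is either zero (trivial SCC, handled directly) or irreducible, and in the irreducible case cleanness forces the corresponding Perron eigenvector to be strictly positive on $S$, so the refined Perron--Frobenius inequality applied to the $S$-restriction $\vu_S$ yields $\rho(A_S) < 1$ as soon as the block version of $A_S \vu_S \le \vu_S$ is not an equality. With every diagonal block strictly contractive, a standard block-triangular geometric-series estimate then controls the off-diagonal entries of $A^*$, giving $\rho(A) < 1$ overall and hence the desired finiteness of $\vf'(\ns{k})^*$.
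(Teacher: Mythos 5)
Your reduction to showing $\rho(\vf'(\ns{k}))<1$ is legitimate (for a nonnegative matrix the star is entrywise finite iff the spectral radius is below $1$), and your treatment of the case $\vv=\vf(\ns{k})-\ns{k}\succ\vzero$ is correct: Proposition~\ref{prop:newton-stacs07} makes $\vf'(\ns{k})^*\vv$ finite, and dominating $\vone$ by a multiple of $\vv$ then bounds every row sum of the star. Two things go wrong, however. First, the opening step already overreaches: from $A\vu\le\vu$ with $\vu=\mu\vf-\ns{k}\ge\vzero$ you cannot conclude $\rho(A)\le 1$ by Collatz--Wielandt unless $\vu\succ\vzero$; a vector with zero components supports no upper bound on $\rho(A)$ (take $A=\mathrm{diag}(2,0)$ and $\vu=(0,1)^\top$). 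Zero components of $\vu$ occur exactly when the Newton sequence has terminated in some coordinate, which is not excluded (linear components terminate after one step).

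Second, and more seriously, your SCC induction is conditional on the subinvariance inequality $A_{SS}\vu_S\le\vu_S$ being strict somewhere, and the case where it \emph{is} an equality is precisely the hard case, not a side case. If a non-trivial SCC $S$ satisfies $\ns{k}_S=\mu\vf_S$, then $\vu_S=\vzero$ and $\vv_S=\vzero$, so both the subinvariance argument and the column-domination argument degenerate to $0\le 0$ and say nothing about $\rho(A_{SS})$; yet $A_{SS}$ may then equal $\vf'_{SS}$ evaluated essentially at the fixed point, where $\Id-\vf'(\mu\vf)$ can in general be singular. What must be proved is that Newton's method cannot terminate at such a critical configuration, and that is the content of the paper's Case~2: Lemma~\ref{lem:EY-not-terminating} shows a component $s$ can only terminate after every variable occurring in $\left[\vf'(\vX)^*\right]_{ss}$ has terminated, and once those variables are frozen at their $\mu\vf$-values one extracts finiteness of $\left[\vf'(\ns{j})^*\right]_{ss}$ from the fact that the later Newton increments in component $s$ are strictly positive yet finite. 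Your proposal contains no substitute for this argument, so the proof does not close.
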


In the third and final step we show that Propositions \ref{prop:newton-stacs07} and~\ref{prop:no-infty-entries} imply Theorem~\ref{thm:well-defined}.

\iftechrep{\subsubsection{First Step.}}{\subsubsection{First Step}}

For the first proof step (i.e., the proof of Proposition~\ref{prop:newton-stacs07}) we will need the following generalization of Taylor's theorem.
\begin{lemma}\label{lem:gen-taylor}
 Let $\vf$ be an SPP, $d \in \N$, and $\vzero \le \vu$, and $\vzero \le \vx \le \vf(\vx)$. Then
   \[ \vf^d(\vx + \vu) \ge \vf^d(\vx) + \vf'(\vx)^d \vu \:.\]
 In particular, by setting $\vu := \vf(\vx) - \vx$ we get
  \[ \vf^{d+1}(\vx) - \vf^{d}(\vx) \ge \vf'(\vx)^d (\vf(\vx) - \vx) \:.\]
\end{lemma}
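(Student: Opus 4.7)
The plan is to prove the inequality by induction on $d$, using Taylor's lemma (Lemma~\ref{lem:taylor}) as the one-step engine and exploiting monotonicity of both $\vf$ and $\vf'$ on $\Rp^n$.

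The base case $d=0$ is the trivial identity $\vf^0(\vx+\vu) = \vx + \vu = \vf^0(\vx) + \vf'(\vx)^0 \vu$. The case $d=1$ is exactly Lemma~\ref{lem:taylor}. For the inductive step, assuming $\vf^d(\vx+\vu) \ge \vf^d(\vx) + \vf'(\vx)^d \vu$, I would write
\[
\vf^{d+1}(\vx+\vu) = \vf\bigl(\vf^d(\vx+\vu)\bigr) \ge \vf\bigl(\vf^d(\vx) + \vf'(\vx)^d \vu\bigr),
\]
where the inequality uses monotonicity of $\vf$ together with the induction hypothesis (and the fact that $\vf'(\vx)^d \vu \ge \vzero$, since $\vf'$ and $\vu$ are nonnegative). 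Applying Lemma~\ref{lem:taylor} once more with base point $\vf^d(\vx)$ and increment $\vf'(\vx)^d \vu$ gives
\[
\vf\bigl(\vf^d(\vx) + \vf'(\vx)^d \vu\bigr) \ge \vf^{d+1}(\vx) + \vf'\bigl(\vf^d(\vx)\bigr) \vf'(\vx)^d \vu.
\]

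The remaining task is to replace $\vf'(\vf^d(\vx))$ by the smaller matrix $\vf'(\vx)$ on the right-hand side. Here the hypothesis $\vx \le \vf(\vx)$ is essential: by monotonicity of $\vf$, iterating yields $\vx \le \vf(\vx) \le \vf^2(\vx) \le \cdots \le \vf^d(\vx)$. Since every entry of $\vf'$ is itself a polynomial with nonnegative coefficients, $\vf'$ is monotone on $\Rp^n$, hence $\vf'(\vx) \le \vf'(\vf^d(\vx))$ componentwise. Multiplying this matrix inequality by the nonnegative matrix $\vf'(\vx)^d$ on the right and the nonnegative vector $\vu$ gives $\vf'(\vf^d(\vx))\vf'(\vx)^d \vu \ge \vf'(\vx)^{d+1} \vu$, which closes the induction. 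The "in particular" statement is then immediate by taking $\vu := \vf(\vx) - \vx \ge \vzero$ and observing that $\vf^d(\vx+\vu) = \vf^d(\vf(\vx)) = \vf^{d+1}(\vx)$.

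I do not expect any serious obstacle: the only subtlety is the bookkeeping step of passing from $\vf'(\vf^d(\vx))$ to $\vf'(\vx)$, which is exactly the place where the assumption $\vx \le \vf(\vx)$ is used (it is not needed for Lemma~\ref{lem:taylor} itself). Everything else is a routine combination of Taylor's inequality with the monotonicity of $\vf$ and $\vf'$ on the nonnegative orthant.
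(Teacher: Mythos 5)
Your proof is correct and follows essentially the same route as the paper's: induction on $d$, one application of Lemma~\ref{lem:taylor} at base point $\vf^d(\vx)$, and the observation that $\vx \le \vf^d(\vx)$ (from $\vx \le \vf(\vx)$ and monotonicity) lets you replace $\vf'(\vf^d(\vx))$ by $\vf'(\vx)$. The only difference is that you spell out the bookkeeping more explicitly than the paper does.
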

\begin{proof}
 By induction on $d$.
 For $d=0$ the statement is trivial.
 Let $d \ge 0$.
 Then, by Taylor's theorem (Lemma~\ref{lem:taylor}), we have:
  \begin{align*}
    \vf^{d+1}(\vx + \vu)
    &  =  \vf(\vf^d(\vx + \vu)) \\
    & \ge \vf(\vf^d(\vx) + \vf'(\vx)^d \vu)                 && \text{(induction hypothesis)} \\
    & \ge \vf^{d+1}(\vx) + \vf'(\vf^d(\vx)) \vf'(\vx)^d \vu && \text{(Lemma~\ref{lem:taylor})} \\
    & \ge \vf^{d+1}(\vx) + \vf'(\vx)^{d+1} \vu              && \text{($\vf^d(\vx) \ge \vx$)}
  \end{align*}
\end{proof}

Lemma~\ref{lem:gen-taylor} can be used to prove the following.
\begin{lemma} \label{lem:step-stays-below-mu}
 Let $\vf$ be a feasible SPP.
 Let $\vzero \le \vx \le \mu\vf$ and $\vx \le \vf(\vx)$.
 Then
  \[
   \vx + \sum_{d=0}^\infty \left(\vf'(\vx)^d ( \vf(\vx) - \vx ) \right) \le \mu\vf\;.
  \]
\end{lemma}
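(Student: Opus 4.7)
The plan is to use the generalized Taylor bound from Lemma~\ref{lem:gen-taylor} as a telescoping inequality, and then invoke monotonicity of $\vf$ together with $\vx \le \fix\vf$ to bound the partial sums uniformly by $\fix\vf$.

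First, I would set $\vu := \vf(\vx) - \vx$, which is nonnegative by hypothesis, and apply Lemma~\ref{lem:gen-taylor} in the form
\[
\vf^{d+1}(\vx) - \vf^{d}(\vx) \;\ge\; \vf'(\vx)^{d}\bigl(\vf(\vx) - \vx\bigr)
\]
for every $d \ge 0$. Summing this inequality over $d = 0, 1, \ldots, D$ telescopes the left-hand side, yielding
\[
\sum_{d=0}^{D} \vf'(\vx)^{d}\bigl(\vf(\vx) - \vx\bigr) \;\le\; \vf^{D+1}(\vx) - \vx .
\]
Since $\vx \le \fix\vf$ and $\vf$ is monotone on $\Rp^n$, we have $\vf^{D+1}(\vx) \le \vf^{D+1}(\fix\vf) = \fix\vf$. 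Rearranging gives the uniform bound
\[
\vx + \sum_{d=0}^{D} \vf'(\vx)^{d}\bigl(\vf(\vx) - \vx\bigr) \;\le\; \fix\vf
\]
for every finite $D \in \N$.

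Finally, to pass to the infinite sum, I note that each term $\vf'(\vx)^{d}\bigl(\vf(\vx)-\vx\bigr)$ is componentwise nonnegative (the Jacobian of an SPP has nonnegative entries, and $\vf(\vx) \ge \vx$), so the partial sums form a componentwise monotonically increasing sequence in $\Rp^n$ bounded above by $\fix\vf - \vx$. Hence the series converges in $\Rp^n$ and its limit satisfies the same bound, which is precisely the claim.

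There is no real obstacle here; the only subtle point is justifying convergence of the infinite series before taking $D \to \infty$, which is handled by the monotone-and-bounded argument above. The essential ingredient is the generalized Taylor inequality of Lemma~\ref{lem:gen-taylor}, together with the monotonicity of $\vf$ that lets us compare $\vf^{D+1}(\vx)$ with $\fix\vf$.
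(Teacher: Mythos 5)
Your proof is correct and follows essentially the same route as the paper's: both rest on the telescoping consequence of Lemma~\ref{lem:gen-taylor} and then bound the resulting iterates by $\mu\vf$. The only (harmless) difference is that the paper identifies $\lim_{d\to\infty}\vf^d(\vx)=\mu\vf$ exactly via the Kleene sequence, whereas you use only the one-sided bound $\vf^{D+1}(\vx)\le\mu\vf$ from monotonicity and then take a monotone limit of partial sums, which is a slightly more economical way to finish.
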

\begin{proof}
 Observe that
  \begin{equation} \label{eq:proof-lem-step-stays-below-mu}
   \lim_{d\to\infty} \vf^d(\vx) = \mu\vf
  \end{equation}
  because $\vzero \le \vx \le \mu\vf$ implies $\vf^d(\vzero) \le \vf^d(\vx) \le \mu\vf$ and
  and as $(\vf^d(\vzero))_{d\in\N}$ converges to~$\mu\vf$ by Theorem~\ref{thm:kleene}, so does $(\vf^d(\vx))_{d\in\N}$.
 We have:
 \begin{align*}
  \vx + \sum_{d=0}^\infty \left( \vf'(\vx)^d (\vf(\vx) - \vx) \right)
    & \le \vx + \sum_{d=0}^\infty \left( \vf^{d+1}(\vx) - \vf^d(\vx) \right) && \text{(Lemma~\ref{lem:gen-taylor})} \\
    & =   \lim_{d \to \infty} \vf^d(\vx) \\
    & =   \mu\vf  && \text{(by~\eqref{eq:proof-lem-step-stays-below-mu})}
 \end{align*}
\qed
\end{proof}

Now we can prove Proposition~\ref{prop:newton-stacs07}.
\begin{proof}[of Proposition~\ref{prop:newton-stacs07}]
 First we prove the following inequality by induction on~$k$:
 \[
  \ns{k} \le \vf(\ns{k}) 
 \]
 The induction base ($k=0$) is easy.
 For the step, let $k \ge 0$.
 Then
 \begin{align*}
  \ns{k+1} & =   \ns{k} + \sum_{d=0}^\infty \left( \vf'(\ns{k})^d (\vf(\ns{k}) - \ns{k}) \right) \\
           & =   \vf(\ns{k}) + \sum_{d=1}^\infty \left( \vf'(\ns{k})^d (\vf(\ns{k}) - \ns{k}) \right) \\
           & \le \vf(\ns{k}) + \vf'(\ns{k}) \sum_{d=0}^\infty \left( \vf'(\ns{k})^d (\vf(\ns{k}) - \ns{k}) \right) \\
           & \le \vf\left(\ns{k} + \sum_{d=0}^\infty \left( \vf'(\ns{k})^d (\vf(\ns{k}) - \ns{k}) \right)\right)
                   && \text{(Lemma~\ref{lem:taylor})} \\
           & =   \vf(\ns{k+1}) \;.
 \end{align*}

 Now, the inequality $\ns{k} \le \mu\vf$ follows from Lemma~\ref{lem:step-stays-below-mu}
  by means of a straightforward induction proof.
 Hence, it follows $\vf(\ns{k}) \le \vf(\mu\vf) = \mu\vf$.
 Further we have
  \begin{equation} \label{eq:proof-stacs07-3}
   \begin{split}
    \vf(\ns{k}) & =   \ns{k} + (\vf(\ns{k}) - \ns{k}) \\
                & \le \ns{k} + \sum_{d=0}^\infty \left( \vf'(\ns{k})^d (\vf(\ns{k}) - \ns{k}) \right) = \ns{k+1}\;.
   \end{split}
  \end{equation}
 So it remains to show that $(\ns{k})_{k\in\N}$ converges to~$\mu\vf$.
 As we have already shown $\ns{k} \le \mu\vf$ it suffices to show that $\ks{k} \le \ns{k}$
  because $(\ks{k})_{k\in\N}$ converges to~$\mu\vf$ by Theorem~\ref{thm:kleene}.
 We proceed by induction on~$k$.
 The induction base ($k=0$) is easy.
 For the step, let $k \ge 0$.
 Then
 \begin{align*}
  \ks{k+1} & =   \vf(\ks{k}) \\
           & \le \vf(\ns{k}) && \text{(induction hypothesis)} \\
           & \le \ns{k+1}    && \text{(by~\eqref{eq:proof-stacs07-3})}
 \end{align*}
 This completes the proof of Proposition~\ref{prop:newton-stacs07} and, hence, the first step towards the proof of Theorem~\ref{thm:well-defined}.
\qed
\end{proof}

\iftechrep{\subsubsection{Second Step.}}{\subsubsection{Second Step}}

For the second proof step (i.e., the proof of Proposition~\ref{prop:no-infty-entries})
 it is convenient to move to the {\em extended reals}~$\Rinf$, i.e., we extend~$\Rp$ by an element~$\infty$
 such that addition satisfies $a + \infty = \infty + a = \infty$ for all $a \in \Rp$
 and multiplication satisfies $0 \cdot \infty = \infty \cdot 0 = 0$ and $a \cdot \infty = \infty \cdot a = \infty$ for all $a \in\Rp$.
In $\Rinf$, one can rewrite $\hNe(\ns{k}) = \ns{k} + \sum_{d=0}^\infty \left( \vf'(\ns{k})^d ( \vf(\ns{k}) - \ns{k} ) \right)$
  as $\ns{k} + \vf'(\ns{k})^* ( \vf(\ns{k}) - \ns{k} )$.
Notice that Proposition~\ref{prop:no-infty-entries} does not follow trivially from Proposition~\ref{prop:newton-stacs07},
 because $\infty$ entries of~$\vf'(\ns{k})^*$ could be cancelled out by matching $0$ entries of $\vf(\ns{k})- \ns{k}$.

For the proof of Proposition~\ref{prop:no-infty-entries} we need several lemmata.
%
The following lemma assures that a starred matrix has an $\infty$ entry if and only if it has an $\infty$ entry on the diagonal.
\begin{lemma}\label{lem:consider-only-diag}
 Let $A = (a_{ij}) \in \Rp^{n \times n}$.
 Let $A^*$ have an $\infty$ entry.
 Then $A^*$ also has an $\infty$ entry on the diagonal, i.e., $\left[A^*\right]_{ii} = \infty$ for some $1 \le i \le n$.
\end{lemma}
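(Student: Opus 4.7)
My plan is to prove this lemma by induction on the matrix dimension~$n$. The base case $n = 1$ is trivial because the sole entry of $A^*$ is already on the diagonal. For the inductive step, assume $[A^*]_{ij} = \infty$; the case $i = j$ is immediate, so suppose $i \ne j$.

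The central step is the decomposition identity
\[
[A^*]_{ij} \;=\; [A^*]_{ii} \cdot \sum_{v \ne i} a_{iv} \cdot [(A_{-i})^*]_{vj}\;,
\]
where $A_{-i}$ denotes the $(n-1) \times (n-1)$ submatrix of~$A$ obtained by removing row~$i$ and column~$i$. To derive it, I view each $[M^*]_{pq}$ as the sum over all directed walks $p = v_0 v_1 \cdots v_m = q$ of the weight $\prod_k m_{v_{k-1}v_k}$. Every walk from $i$ to $j$ with $j \ne i$ has a well-defined \emph{last} visit to~$i$, say at position~$t$; splitting there decomposes the walk uniquely into a closed walk at~$i$ (counted by $[A^*]_{ii}$), followed by a single edge $i \to v$ with $v \ne i$ (contributing the factor $a_{iv}$), followed by a walk from $v$ to $j$ that never revisits~$i$ (counted by $[(A_{-i})^*]_{vj}$). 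The identity then follows from this bijection together with distributivity in~$\Rinf$, using the convention $0 \cdot \infty = 0$ to handle vanishing edge weights.

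Since $[A^*]_{ii} \ge 1$ (the length-zero walk at~$i$ has weight~$1$), the left-hand side being $\infty$ forces either $[A^*]_{ii} = \infty$, in which case we are done, or the right-hand sum to be $\infty$. The sum contains only $n-1$ summands, so in the latter case $[(A_{-i})^*]_{vj} = \infty$ for some~$v$. By the induction hypothesis applied to the smaller matrix~$A_{-i}$, there exists $\ell \ne i$ with $[(A_{-i})^*]_{\ell\ell} = \infty$. Every closed walk at~$\ell$ that avoids~$i$ is also a closed walk at~$\ell$ in the full graph of~$A$, so $[A^*]_{\ell\ell} \ge [(A_{-i})^*]_{\ell\ell} = \infty$, completing the induction. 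The only non-routine step will be rigorously establishing the decomposition identity in extended-real arithmetic; once that bookkeeping is in place, the rest is immediate.
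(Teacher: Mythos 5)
Your proposal is correct and follows essentially the same route as the paper's proof: the same induction on $n$, the same decomposition $[A^*]_{ij} = [A^*]_{ii}\sum_{v\ne i} a_{iv}[(A_{-i})^*]_{vj}$ obtained by splitting each walk at its last visit to~$i$, and the same final appeal to monotonicity of star entries under taking submatrices. The only cosmetic difference is that the paper fixes the infinite entry w.l.o.g.\ at position $(1,n)$ rather than a general $(i,j)$.
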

\begin{proof}
 By induction on $n$.
 The base case $n=1$ is clear.
 For $n > 1$ assume w.l.o.g.\ that $\left[A^*\right]_{1n} = \infty$.
 We have
 \begin{equation}\label{eq:path-star}
  \left[A^*\right]_{1n} = \left[A^*\right]_{11} \sum_{j=2}^n a_{1j} \left[A_{[2..n,2..n]}^*\right]_{jn} \:,
 \end{equation}
 where by $A_{[2..n,2..n]}$ we mean the square matrix obtained from $A$ by erasing the first row and the first column.
 To see why \eqref{eq:path-star} holds,
  think of $\left[A^*\right]_{1n}$ as the sum of weights of paths from $1$ to $n$ in the complete graph over the vertices $\{1,\ldots,n\}$.
 The weight of a path $P$ is the product of the weight of $P$'s edges,
  and $a_{i_1i_2}$ is the weight of the edge from $i_1$ to~$i_2$.
 Each path $P$ from $1$ to $n$ can be divided into two subpaths $P_1,P_2$ as follows.
 The second subpath $P_2$ is the suffix of $P$ leading from $1$ to $n$ and not returning to $1$.
 The first subpath $P_1$, possibly empty, is chosen such that $P = P_1P_2$.
 Now, the sum of weights of all possible $P_1$ equals $\left[A^*\right]_{11}$,
  and the sum of weights of all possible $P_2$ equals $\sum_{j=2}^n a_{1j} \left[(A_{[2..n,2..n]})^*\right]_{jn}$.
 So \eqref{eq:path-star} holds.

 As $\left[A^*\right]_{1n} = \infty$, it follows that either $\left[A^*\right]_{11}$ or some $\left[(A_{[2..n,2..n]})^*\right]_{jn}$ equals $\infty$.
 In the first case, we are done.
 In the second case, by induction, there is an $i$ such that $\left[(A_{[2..n,2..n]})^*\right]_{ii} = \infty$.
 But then also $\left[A^*\right]_{ii} = \infty$,
  because every entry of $\left[(A_{[2..n,2..n]})\right]^*$ is less than or equal to the corresponding entry of $A^*$.
\qed
\end{proof}

The following lemma treats the case that $\vf$ is strongly connected (cf.~\cite{EYstacs05Extended}).
\begin{lemma} \label{lem:no-infty-in-SCC}
 Let $\vf$ be clean, feasible and non-trivially strongly connected.
 Let $\vzero \le \vx \prec \mu\vf$.
 Then $\vf'(\vx)^*$ does not have $\infty$ as an entry.
\end{lemma}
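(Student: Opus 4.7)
My plan is to reduce the claim to the spectral radius bound $\rho(\vf'(\vx)) < 1$, which is equivalent to the convergence of the matrix Neumann series $\vf'(\vx)^* = \sum_{k\ge 0}\vf'(\vx)^k$ in~$\R$, i.e., to the absence of $\infty$ entries. Since $\vf$ is clean we have $\mu\vf \succ \vzero$, and since $\vf$ is non-trivially strongly connected, the nonzero pattern of $\vf'(\mu\vf)$ coincides with the adjacency matrix of the strongly connected dependency graph, so $\vf'(\mu\vf)$ is an irreducible nonnegative matrix. The proof then splits into two steps: (i) $\rho(\vf'(\mu\vf))\le 1$, and (ii) $\rho(\vf'(\vx)) < \rho(\vf'(\mu\vf))$ for $\vzero\le\vx\prec\mu\vf$ in the nonlinear case (the purely linear case is handled separately).

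For~(i) I argue by contradiction. Assume $\rho:=\rho(\vf'(\mu\vf)) > 1$. The Perron--Frobenius theorem for irreducible nonnegative matrices provides a right eigenvector $\vv\succ\vzero$ with $\vf'(\mu\vf)\vv = \rho\vv$. Choose $\epsilon>0$ so small that $\vy_0:=\mu\vf-\epsilon\vv \succ\vzero$. Using the two bounds of Lemma~\ref{lem:taylor} (with $\vx=\vy_0$, $\vu=\epsilon\vv$) I sandwich
\[
  \vf(\vy_0) \;=\; \mu\vf - \epsilon\rho\vv + O(\epsilon^2),
\]
so that $\vf(\vy_0)-\vy_0 = (1-\rho)\epsilon\vv + O(\epsilon^2) \prec \vzero$ for sufficiently small~$\epsilon$. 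Iterating, the sequence $\vy_{k+1}:=\vf(\vy_k)$ is monotonically decreasing (by monotonicity of~$\vf$) and stays in $\Rp^n$ (since $\vf(\Rp^n)\subseteq\Rp^n$), hence converges to a nonnegative fixed point $\vy^*$ with $\vy^*\le\vy_0\prec\mu\vf$, contradicting the minimality of~$\mu\vf$.

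For~(ii), if $\vf$ has some monomial of degree at least~$2$ then some entry $\partial f_i/\partial X_j$ of~$\vf'$ is a non-constant polynomial with nonnegative coefficients. A monomial-by-monomial comparison shows that for $\vx\prec\mu\vf$ with $\mu\vf\succ\vzero$ one has $\vx^\alpha<\mu\vf^\alpha$ for every nonzero multi-index~$\alpha$; hence this entry satisfies $(\partial f_i/\partial X_j)(\vx) < (\partial f_i/\partial X_j)(\mu\vf)$, so $\vf'(\vx)\le\vf'(\mu\vf)$ entrywise with strict inequality somewhere. Combined with the irreducibility of $\vf'(\mu\vf)$, the standard strict monotonicity of the Perron root for nonnegative matrices yields $\rho(\vf'(\vx))<\rho(\vf'(\mu\vf))\le 1$. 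The remaining purely linear case $\vf(\vX)=A\vX+\vb$ is immediate: cleanness forces $\vb\ne\vzero$, and the existence of a finite $\mu\vf = A^*\vb$ already forces $\rho(A)<1$, so $\rho(\vf'(\vx))=\rho(A)<1$.

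The main obstacle is step~(i), namely converting a hypothetical supercritical Perron eigenvalue $\rho>1$ into a concrete nonnegative fixed point strictly below~$\mu\vf$. The two delicate points are (a) choosing $\epsilon$ small enough that the first-order Taylor term $-\epsilon\rho\vv$ dominates the $O(\epsilon^2)$ remainder uniformly in every component, so that the descent $\vf(\vy_0)\prec\vy_0$ is strict; and (b) combining monotonicity of~$\vf$ with the invariance $\vf(\Rp^n)\subseteq\Rp^n$ to ensure that the iterates $\vy_k$ form a monotone sequence inside $[\vzero,\vy_0]$, so that their limit is a genuine nonnegative fixed point of~$\vf$ contradicting the minimality of~$\mu\vf$.
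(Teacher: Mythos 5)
Your proof is correct, but it takes a genuinely different route from the paper's. You reduce everything to the branching-process dichotomy: first $\rho(\vf'(\mu\vf)) \le 1$ (via a Perron eigenvector $\vv \succ \vzero$ of the irreducible matrix $\vf'(\mu\vf)$ and the observation that $\rho > 1$ would let $\mu\vf - \epsilon\vv$ descend under iteration of $\vf$ to a fixed point strictly below $\mu\vf$), and then strict monotonicity of the Perron root to push the spectral radius strictly below $1$ at any $\vx \prec \mu\vf$. The paper instead avoids Perron--Frobenius theory here entirely: it picks a Kleene approximant $\vy$ with $\vx \le \vy \prec \mu\vf$, applies the generalized Taylor bound (Lemma~\ref{lem:gen-taylor}) to get $\vf'(\vy)^d(\mu\vf - \vy) \le \mu\vf - \vf^d(\vy) \to \vzero$, and concludes $\vf'(\vy)^d \to 0$ entrywise because $\mu\vf - \vy \succ \vzero$, whence $\rho(\vf'(\vy)) < 1$. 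The paper's argument is shorter and needs no irreducibility machinery or case split; yours yields the stronger structural fact $\rho(\vf'(\mu\vf)) \le 1$ (which the paper obtains separately, by a continuity argument, when proving the existence of cone vectors in Proposition~\ref{prop:cone-vector-exists}), at the cost of invoking strict monotonicity of the Perron root and treating the linear case separately. One small imprecision in your linear case: writing $\mu\vf = A^*\vb$ presupposes the convergence of $A^*$, which is what you are trying to establish; the clean way to finish is to pair a left Perron vector $\vw \succ \vzero$ of $A$ with the identity $\vw^\top\vx = \vw^\top A\vx + \vw^\top\vb$ at any fixed point $\vx \ge \vzero$ to rule out $\rho(A) \ge 1$. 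This is easily repaired and does not affect correctness.
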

\begin{proof}
 By Theorem~\ref{thm:kleene} the Kleene sequence $(\ks{i})_{i\in\N}$ converges to~$\mu\vf$.
 Furthermore, $\ks{i} \prec \mu\vf$ holds for all $i$, because, as every component depends non-trivially on itself,
  any increase in any component results in an increase of the same component in a later Kleene approximant.
 So, we can choose a Kleene approximant $\vy = \ks{i}$ such that $\vx \le \vy \prec \mu\vf$.
 Notice that $\vy \le \vf(\vy)$.
 By monotonicity of $\vf'$ it suffices to show that $\vf'(\vy)^*$ does not have $\infty$ as an entry.
 By Lemma~\ref{lem:gen-taylor} (taking $\vx := \vy$ and $\vu := \mu\vf - \vy$) we have
 \[
  \vf'(\vy)^d (\mu \vf - \vy) \le \mu \vf - \vf^d(\vy) \;.
 \]
 As $d \to \infty$, the right hand side converges to $\vzero$, because, by Kleene's theorem, $\vf^d(\vy)$ converges to~$\mu\vf$.
 So the left hand side also converges to~$\vzero$.
 Since $\mu\vf - \vy \succ \vzero$, every entry of $\vf'(\vy)^d$ must converge to $\vzero$.
 Then, by standard facts about matrices (see e.g.\ \cite{Lancaster:book}), the spectral radius of~$\vf'(\vy)$ is less than~$1$,
  i.e., $|\lambda| < 1$ for all eigenvalues $\lambda$ of~$\vf'(\vy)$.
 This, in turn, implies that the series $\vf'(\vy)^* = \Id + \vf'(\vy) + \vf'(\vy)^2 + \cdots$ converges in~$\Rp$,
  see \cite{Lancaster:book}, page~531.
 In other words, $\vf'(\vy)^*$ and hence $\vf'(\vx)^*$ do not have $\infty$ as an entry.
\qed
\end{proof}

The following lemma states that Newton's method can only terminate in a component~$s$
 after certain other components~$\ell$ have reached~$\mu\vf_\ell$.

\begin{lemma}\label{lem:EY-not-terminating}
 Let $1 \le s,\ell \le n$.
 Let the term $\left[\vf'(\vX)^*\right]_{ss}$ contain the variable $X_\ell$.
 Let $\vzero \le \vx \le \vf(\vx) \le \fix{\vf}$ and $x_s < \fix{\vf}_s$ and $x_\ell < \fix{\vf}_\ell$.
 Then $\hNe(\vx)_s < \fix{\vf}_s$.
\end{lemma}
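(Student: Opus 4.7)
Write $\delta_k := \fix{\vf}_k - x_k$ and $h_k := \hNe(\vx)_k - x_k$ (both nonnegative), so that by hypothesis $\delta_s, \delta_\ell > 0$. I proceed by contradiction, assuming $\hNe(\vx)_s = \fix{\vf}_s$, and introduce the saturation set $T = \{j : \hNe(\vx)_j = \fix{\vf}_j\} \ni s$. By Lemma~\ref{lem:step-stays-below-mu} we have $\hNe(\vx) \le \fix{\vf}$, and the inductive argument in the proof of Proposition~\ref{prop:newton-stacs07}---which uses only $\vx \le \vf(\vx)$---also yields $\hNe(\vx) \le \vf(\hNe(\vx))$. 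For each $j \in T$ these bounds squeeze $f_j(\hNe(\vx))$ between $\hNe(\vx)_j = \fix{\vf}_j$ and $f_j(\fix{\vf}) = \fix{\vf}_j$, so $f_j(\hNe(\vx)) = f_j(\fix{\vf})$. Because $f_j$ has nonnegative coefficients, $\fix{\vf} \succ \vzero$ by cleanness, and $\hNe(\vx) \le \fix{\vf}$, this equality forces $\hNe(\vx)_k = \fix{\vf}_k$ for every $X_k$ appearing in a monomial of $f_j$; hence $T$ is closed under direct dependence, and in particular contains every component on which $s$ depends.

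The assumption that $[\vf'(\vX)^*]_{ss}$ contains $X_\ell$ unfolds, via the Neumann series $\sum_{d\ge 0}[\vf'(\vX)^d]_{ss}$, into the existence of a dependence-graph cycle $s = i_0, i_1, \ldots, i_d = s$ together with an index $q_0$ such that $\partial f_{i_{q_0}}/\partial X_{i_{q_0+1}}$ contains $X_\ell$, equivalently $f_{i_{q_0}}$ has a monomial $c \cdot X_{i_{q_0+1}} X_\ell \cdot \vX^\beta$ with $c > 0$. By the closure above, every $i_q$ lies in $T$. Next I show that the Taylor remainder $R_j := f_j(\fix{\vf}) - f_j(\vx) - [\vf'(\vx)(\fix{\vf} - \vx)]_j$ vanishes for every $j \in T$: the defining equation of $\hNe$ reads componentwise as $h_j = (\vf(\vx) - \vx)_j + \sum_k [\vf'(\vx)]_{jk} h_k$, while the exact polynomial Taylor expansion of $f_j$ at $\vx$ yields $\delta_j = (\vf(\vx) - \vx)_j + \sum_k [\vf'(\vx)]_{jk} \delta_k + R_j$; subtracting and using $h_j = \delta_j$ for $j \in T$ gives $R_j = \sum_k [\vf'(\vx)]_{jk}(h_k - \delta_k)$, whose left-hand side is nonnegative (SPP coefficients are nonnegative, so $R_j \ge 0$) and right-hand side is nonpositive (since $h_k \le \delta_k$ componentwise), forcing both to be zero.

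Specializing to $j = i_{q_0}$: since $f_{i_{q_0}}$ has nonnegative coefficients, $R_{i_{q_0}}$ expands as a sum of nonnegative monomials in the entries of $\vx$ and $\fix{\vf} - \vx$, and every summand vanishes individually. From the binomial expansion of $c\cdot X_{i_{q_0+1}}X_\ell\vX^\beta$ one extracts the summand $c \cdot \delta_{i_{q_0+1}} \delta_\ell \cdot \vx^\beta$; for quadratic SPPs $\beta = \vzero$, so this reduces to $c \cdot \delta_{i_{q_0+1}} \delta_\ell = 0$ and forces $\delta_{i_{q_0+1}} = 0$ (the subcase $\ell = i_{q_0+1}$ gives the monomial $X_\ell^2$ and yields $\delta_\ell = 0$ directly, which already contradicts the hypothesis). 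To propagate this zero around the cycle, observe that any $j \in T$ with $\delta_j = 0$ also satisfies $h_j = 0$; the identity $h_j = (\vf(\vx) - \vx)_j + \sum_k [\vf'(\vx)]_{jk} h_k = 0$ then forces each nonnegative summand to vanish, giving $f_j(\vx) = x_j = \fix{\vf}_j = f_j(\fix{\vf})$, and the monomial argument of the first paragraph now yields $\delta_k = 0$ for every $k$ on which $j$ directly depends. Iterating along the cycle edges $i_{q_0+1} \to i_{q_0+2} \to \cdots \to i_d = s$ produces $\delta_s = 0$, contradicting $x_s < \fix{\vf}_s$.

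The main technical obstacle is isolating a nonnegative summand of $R_{i_{q_0}}$ that still carries $\delta_{i_{q_0+1}}\delta_\ell$ as an intact factor. In the quadratic case this is automatic, as noted above; for general polynomial SPPs one first appeals to Theorem~\ref{thm:reduction-quadratic} to reduce to a quadratic SPP without changing the least fixed point or the Newton iterates, after which the argument above applies verbatim.
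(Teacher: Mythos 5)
Your strategy is genuinely different from the paper's and, for \emph{quadratic} SPPs, it is correct. The paper argues forward: it chooses $m$ with $\vf^m(\vx)\succ\vzero$, $\vf^m(\vx)_\ell>x_\ell$ and $\vf^{m+1}(\vx)_s>\vf^m(\vx)_s$, applies Lemma~\ref{lem:gen-taylor} twice to obtain $(\vf^{b+1}(\vx)-\vf^b(\vx))_s>(\vf'(\vx)^b(\vf(\vx)-\vx))_s$ for one exponent $b$, and concludes by telescoping the Kleene sum. Your contradiction argument via the saturation set $T$, its closure under direct dependence (which needs $\fix{\vf}\succ\vzero$, i.e.\ cleanness---also used implicitly by the paper's proof), and the vanishing of the Taylor remainders $R_j$ on $T$ is a valid alternative: the identities $h=(\vf(\vx)-\vx)+\vf'(\vx)h$ and $R_j=\sum_k[\vf'(\vx)]_{jk}(h_k-\delta_k)$ are legitimate because $h\le\vdelta$ is finite by Lemma~\ref{lem:step-stays-below-mu}, and the propagation of $\delta_{i_q}=0$ around the cycle is sound.

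The gap is in the non-quadratic case, and it is real. The lemma must hold for arbitrary clean and feasible SPPs, since it feeds Proposition~\ref{prop:no-infty-entries} and hence Theorem~\ref{thm:well-defined}; but Theorem~\ref{thm:reduction-quadratic} is proved using Lemma~\ref{lem:newton-mon} and Theorem~\ref{thm:well-defined}, so invoking it here is circular. Moreover it does not preserve the Newton iterates: it yields only the inequality $\tns{k}\le(\nsc{k}_1,\ldots,\nsc{k}_n,\nsc{k}_i\nsc{k}_j)^\top$, which points in the wrong direction for transferring ``$\hNe(\cdot)_s<\fix{\vf}_s$'' back to the original system (Lemma~\ref{lem:reduction-quadratic-updates-similar} would give exact agreement of the update at points of the form $(\vz,z_iz_j)$, but you would still have to verify that the hypothesis that $\left[\vf'(\vX)^*\right]_{ss}$ contains $X_\ell$ survives the transformation). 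Fortunately the obstacle you flag can be removed inside your own argument: in the expansion of $R_{i_{q_0}}$ do not extract the summand $c\,\delta_{i_{q_0+1}}\delta_\ell\,\vx^\beta$, which is vacuously zero whenever $x_k=0$ for some $k$ in the support of $\beta$, but rather the summand in which the $\delta$-factor is taken at $i_{q_0+1}$, at $\ell$, \emph{and} at every position $k$ of $\beta$ with $x_k=0$. The remaining $\vx$-part is then strictly positive, and cleanness gives $\delta_k=\fix{\vf}_k>0$ at the added positions, so the vanishing of that single summand still forces $\delta_{i_{q_0+1}}\delta_\ell=0$. With this replacement your proof works for arbitrary SPPs and the appeal to Theorem~\ref{thm:reduction-quadratic} can be dropped.
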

\begin{proof}
 This proof follows closely a proof of~\cite{EYstacs05Extended}.
 Let $d \ge 0$ such that $\left[\vf'(\vX)^{d}\right]_{ss}$ contains~$X_\ell$.
 Let $m' \ge 0$ such that $\vf^{m'}(\vx) \succ \vzero$ and $\vf^{m'}(\vx)_\ell > x_\ell$.
 Such an $m'$ exists because with Kleene's theorem the sequence $(\vf^k(\vx))_{k\in\Nat}$ converges to~$\fix{\vf}$.
 Notice that our choice of~$m'$ guarantees $\left[\vf'(\vf^{m'}(\vx))^d\right]_{ss} > \left[\vf'(\vx)^d\right]_{ss}$.

 Now choose $m \ge m'$ such that $\vf^{m+1}(\vx)_s > \vf^m(\vx)_s$.
 Such an $m$ exists because the sequence $(\vf^k(\vx)_s)_{k\in\Nat}$ never reaches $\fix{\vf}_s$.
 This is because $s$ depends on itself (since $\left[\vf'(\vX)^*\right]_{ss}$ is not constant~$0$),
  and so every increase of the $s$-component results in
  an increase of the $s$-component in some later iteration of the Kleene sequence.

 Now we have
 \[
  \begin{array}{ll@{\qquad}l}
   \multicolumn{2}{l}{\vf^{d+m+1}(\vx) - \vf^{d+m}(\vx)} \\
    \quad & \ge \vf'(\vf^m(\vx))^d (\vf^{m+1}(\vx) - \vf^m(\vx)) & \text{(Lemma~\ref{lem:gen-taylor})}\\
    & \ge^* \vf'(\vx)^d (\vf^{m+1}(\vx) - \vf^m(\vx))\\
    & \ge \vf'(\vx)^d \vf'(\vx)^m (\vf(\vx) - \vx) & \text{(Lemma~\ref{lem:gen-taylor})}\\
    &  =  \vf'(\vx)^{d+m} (\vf(\vx) - \vx) \:.\\
  \end{array}
 \]

 The inequality marked with $*$ is strict in the $s$-component, due to the choice of $d$ and $m$ above.
 So, with $b = d+m$ we have:
  \begin{equation}\label{eq:ineq-b}
   (\vf^{b+1}(\vx) - \vf^b(\vx))_s > (\vf'(\vx)^b (\vf(\vx) - \vx))_s
  \end{equation}
 Again by Lemma~\ref{lem:gen-taylor}, inequality~\eqref{eq:ineq-b} holds for all $b\in\N$, but with $\ge$ instead of~$>$.
 Therefore:
  \[
   \begin{array}{ll@{\qquad}l}
    \fix{\vf}_s
     & = \bigl( \vx + \sum_{i=0}^\infty (\vf^{i+1}(\vx) - \vf^i(\vx)) \bigr)_s & \text{(Kleene)} \\
     & > \bigl( \vx + \vf'(\vx)^* (\vf(\vx) - \vx) \bigr)_s                   & \text{(inequality~\eqref{eq:ineq-b})} \\
     & = \bigl( \hNe(\vx) \bigr)_s                                               &
   \end{array}
  \]
\qed
\end{proof}

Now we are ready to prove Proposition~\ref{prop:no-infty-entries}.

\begin{proof}[of Proposition~\ref{prop:no-infty-entries}]
 Using Lemma~\ref{lem:consider-only-diag} it is enough to show that \mbox{$\left[\vf'(\ns{k})^*\right]_{ss} \ne \infty$} for all $s$.
 If the $s$-component constitutes a trivial SCC, then $\left[\vf'(\ns{k})^*\right]_{ss} = 0 \ne \infty$.
 So we can assume in the following that the $s$-component belongs to a non-trivial SCC, say~$S$.
 Let $\vX_L$ be the set of variables contained by the term $\left[\vf'(\vX)^*\right]_{ss}$.
 For any $t \in S$ we have $\left[\vf'(\vX)^*\right]_{ss} \ge \left[\vf'(\vX)^*\right]_{st} \left[\vf'(\vX)^*\right]_{tt} \left[\vf'(\vX)^*\right]_{ts}$.
 Neither $\left[\vf'(\vX)^*\right]_{st}$ nor $\left[\vf'(\vX)^*\right]_{ts}$ is constant zero, because $S$ is non-trivial.
 Therefore, $\left[\vf'(\vX)^*\right]_{ss}$ contains all variables that $\left[\vf'(\vX)^*\right]_{tt}$ contains, and vice versa, for all $t \in S$.
 So, $\vX_L$ is, for all $t \in S$, exactly the set of variables contained by $\left[\vf'(\vX)^*\right]_{tt}$.

 We distinguish two cases.

 \vspace{2mm}

 \noindent {\bf Case 1:}
  There is a component $\ell \in L$ such that the sequence $(\nsc{k}_\ell)_{k\in\N}$ does not terminate,
   i.e., $\nsc{k}_\ell < \mu\vf_\ell$ holds for all $k$.
  Then, by Lemma~\ref{lem:EY-not-terminating}, the sequence $(\nsc{k}_s)_{k\in\N}$ cannot reach $\mu\vf_s$ either.
  In fact, we have $\ns{k}_S \prec \mu\vf_S$.
  Let $M$ denote the set of those components that the $S$-components depend on, but do not depend on~$S$.
  In other words, $M$ contains the components that are ``lower'' in the DAG of SCCs than~$S$.
  Define $\vg(\vX_S)$ := $\vf_S(\vX) [M / \fix{\vf}_M]$.
  Then $\vg(\vX_S)$ is an scSPP with $\fix{\vg} = \fix{\vf}_S$.
  As $\ns{k}_S \prec \fix{\vg}$, Lemma~\ref{lem:no-infty-in-SCC} is applicable,
   so $\vg'(\ns{k}_S)^*$ does not have $\infty$ as an entry.
  With $\left[\vf'(\ns{k})^*\right]_{SS} \le \vg'(\ns{k}_S)^*$, we get $\left[\vf'(\ns{k})^*\right]_{ss} < \infty$, as desired.

 \vspace{3mm}

 \noindent {\bf Case 2:}
  For all components $\ell \in L$ the sequence $(\nsc{k}_\ell)_{k\in\N}$ terminates.
  Let $i\in\N$ be the least number such that $\nsc{i}_\ell = \mu\vf_\ell$ holds for all $\ell \in L$.
  By Lemma~\ref{lem:EY-not-terminating} we have $\nsc{i}_s < \mu\vf_s$.
  But as, according to Proposition~\ref{prop:newton-stacs07}, $(\nsc{k}_s)_{k\in\N}$ converges to~$\mu\vf_s$,
   there must exist a $j \ge i$ such that $0 < \left(\vf'(\ns{j})^* (\vf(\ns{j}) - \ns{j})\right)_s < \infty$.
  So there is a component $u$ with $0 < \left[\vf'(\ns{j})^*\right]_{su}(\vf(\ns{j}) - \ns{j})_u < \infty$.
  This implies $0 < \left[\vf'(\ns{j})^*\right]_{su} < \infty$, therefore also $\left[\vf'(\ns{j})^*\right]_{ss} < \infty$.
  By monotonicity of $\vf'$, we have $\left[\vf'(\ns{k})^*\right]_{ss} \le \left[\vf'(\ns{j})^*\right]_{ss} < \infty$ for all $k \le j$.
  On the other hand, since $\left[\vf'(\vX)^*\right]_{ss}$ contains only $L$-variables and $\ns{k}_L = \mu\vf_L$ holds for all $k \ge j$,
   we also have $\left[\vf'(\ns{k})^*\right]_{ss} = \left[\vf'(\ns{j})^*\right]_{ss} < \infty$ for all $k \ge j$.
\qed
\end{proof}

This completes the second intermediary step towards the proof of Theorem~\ref{thm:well-defined}.

\iftechrep{\subsubsection{Third and Final Step.}}{\subsubsection{Third and Final Step}}

Now we can use Proposition~\ref{prop:newton-stacs07} and Proposition~\ref{prop:no-infty-entries} to complete the proof of Theorem~\ref{thm:well-defined}.

\begin{proof}[of Theorem~\ref{thm:well-defined}]
 By Proposition~\ref{prop:no-infty-entries} the matrix $\vf'(\ns{k})^*$ has no $\infty$ entries.
 Then we clearly have $\vf'(\ns{k})^* (\Id - \vf'(\ns{k})) = \Id$, so $(\Id - \vf'(\ns{k}))^{-1} = \vf'(\ns{k})^*$,
  which is the first claim of part~2.\ of the theorem.
 Hence, we also have
  \begin{align*}
   \hNe(\ns{k}) & = \ns{k} + \sum_{d=0}^\infty \left(\vf'(\ns{k})^d ( \vf(\ns{k}) - \ns{k} ) \right) \\
                & = \ns{k} + \vf'(\ns{k})^* ( \vf(\ns{k}) - \ns{k} ) \\
                & = \ns{k} + (\Id - \vf'(\ns{k}))^{-1} ( \vf(\ns{k}) - \ns{k} ) \\
                & = \Ne(\ns{k}) \;,
  \end{align*}
  so we can replace $\hNe$ by~$\Ne$.
 Therefore, part~1.\ of the theorem is implied by Proposition~\ref{prop:newton-stacs07}.
 It remains to show $(\Id-\vf'(\vx))^{-1} = \vf'(\vx)^*$ for all $\vx \prec \mu\vf$.
 It suffices to show that $\vf'(\vx)^*$ has no $\infty$ entries.
 By part~1.\ the sequence $(\ns{k})_{k\in\N}$ converges to~$\mu\vf$.
 So there is a $k'$ such that $\vx \le \ns{k'}$.
 By Proposition~\ref{prop:no-infty-entries}, $\vf'(\ns{k'})^*$ has no $\infty$ entries,
  so, by monotonicity, $\vf'(\vx)^*$ has no $\infty$ entries either.
\qed
\end{proof}

\subsection{Monotonicity}
\mbox{}
\begin{lemma}[Monotonicity of the Newton operator]\label{lem:newton-mon} 
 Let $\vf$ be a clean and feasible SPP.
 Let $\vzero\le \vx \le \vy \le \vf(\vy) \le \mu\vf$ and let $\Ne_{\vf}(\vy)$ exist.
 Then
 \[
  \Ne_{\vf}(\vx) \le \Ne_{\vf}(\vy)\;.
 \]
\end{lemma}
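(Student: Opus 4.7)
The plan is to exploit the fact that $\Ne_\vf(\vx)$ is the unique solution of the affine equation $\vY = \vg_\vx(\vY) := \vf(\vx) + \vf'(\vx)(\vY - \vx)$, equivalently $\Ne_\vf(\vx) = \vf'(\vx)^{*}\bigl(\vf(\vx) - \vf'(\vx)\vx\bigr)$. I will verify that $\vw := \Ne_\vf(\vy)$ is a postfixed point of $\vg_\vx$, i.e.\ $\vg_\vx(\vw) \le \vw$, and then invert: since $\vf'(\vx)^{*} = (\Id - \vf'(\vx))^{-1}$ is a nonnegative matrix it is order-preserving, which will force $\Ne_\vf(\vx) \le \vw$. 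That $\Ne_\vf(\vx)$ is even well-defined comes essentially for free: since $\Ne_\vf(\vy)$ exists, Theorem~\ref{thm:well-defined}(2) supplies $\vf'(\vy)^{*} < \infty$; because $\vf'$ has nonnegative coefficients (hence is monotone in $\vX$), $\vx \le \vy$ yields $\vf'(\vx)^{*} \le \vf'(\vy)^{*} < \infty$.

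For the postfixed inequality I would first observe $\vw \ge \vy$: using $\vw = \vy + \vf'(\vy)^{*}(\vf(\vy) - \vy)$, the hypothesis $\vf(\vy) \ge \vy$, and $\vf'(\vy)^{*} \ge \Id$, one obtains $\vw - \vy \ge \vf(\vy) - \vy \ge \vzero$. Then, using the implicit Newton identity $\vw = \vf(\vy) + \vf'(\vy)(\vw - \vy)$ (got by multiplying the defining equation of $\Ne_\vf(\vy)$ through by $\Id - \vf'(\vy)$), I would rewrite
\begin{align*}
\vw - \vg_\vx(\vw) &= \bigl[\vf(\vy) - \vf(\vx) - \vf'(\vx)(\vy - \vx)\bigr] + \bigl(\vf'(\vy) - \vf'(\vx)\bigr)(\vw - \vy).
\end{align*}
The first bracket is $\ge \vzero$ by the lower half of Lemma~\ref{lem:taylor} applied with $\vu := \vy - \vx$. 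The second summand is the product of $\vf'(\vy) - \vf'(\vx) \ge 0$ (monotonicity of $\vf'$, since $\vy \ge \vx$) and $\vw - \vy \ge \vzero$, hence also $\ge \vzero$. This gives $\vg_\vx(\vw) \le \vw$, which rearranges to $\vf(\vx) - \vf'(\vx)\vx \le (\Id - \vf'(\vx))\vw$; multiplying on the left by the nonnegative matrix $\vf'(\vx)^{*}$ delivers the conclusion $\Ne_\vf(\vx) \le \Ne_\vf(\vy)$.

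The one subtlety I anticipate is that no hypothesis like $\vx \le \vf(\vx)$ is imposed at the lower point~$\vx$, so a direct ``Newton dominates Kleene'' argument cannot be used there, and in particular one cannot expect $\Ne_\vf(\vx) \ge \vf(\vx)$ to hold. Passing through the affine linearization $\vg_\vx$ sidesteps this entirely: it only asks that $\vf'(\vx)^{*}$ be finite and order-preserving, both of which follow from the monotonicity of the star operator on nonnegative matrices together with the assumed existence of~$\Ne_\vf(\vy)$.
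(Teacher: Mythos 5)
Your proof is correct, and it rests on exactly the same ingredients as the paper's argument --- Taylor's lemma (Lemma~\ref{lem:taylor}) applied at $\vx$ with increment $\vy-\vx$, monotonicity of $\vf'$ and of the matrix star in $\vX$, and the identity $\vf'(\vx)^{\ast}=(\Id-\vf'(\vx))^{-1}$ --- merely repackaged: the paper runs a direct chain of inequalities from $\Ne_{\vf}(\vy)$ down to $\Ne_{\vf}(\vx)$, first replacing $\vf'(\vy)^{\ast}$ by $\vf'(\vx)^{\ast}$ against the nonnegative vector $\vf(\vy)-\vy$ and then invoking Taylor, whereas you verify that $\Ne_{\vf}(\vy)$ is a postfixed point of the affine linearization $\vg_{\vx}$ and then multiply by the nonnegative matrix $\vf'(\vx)^{\ast}$. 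The two computations are algebraically equivalent (your extra step $\Ne_{\vf}(\vy)\ge\vy$ plays the role of the paper's use of $\vf(\vy)-\vy\ge\vzero$), so this is essentially the paper's proof.
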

\begin{proof}
For $\vx \le \vy$ we have $\vf'(\vx) \le \vf'(\vy)$ as every entry of $\vf'(\vX)$ is a monotone polynomial.
Hence, $\vf'(\vx)^\ast \le \vf'(\vy)^\ast$.
With this at hand we get:
\begin{align*}
 \Ne_{\vf}(\vy) & =   \vy + \vf'(\vy)^\ast ( \vf(\vy) - \vy ) && \text{(Theorem~\ref{thm:well-defined})} \\
                & \ge \vy + \vf'(\vx)^\ast ( \vf(\vy) - \vy ) && \text{($\vf'(\vy)^\ast \ge \vf'(\vx)^\ast$)} \\
                & \ge \vy + \vf'(\vx)^\ast ( \vf(\vx) + \vf'(\vx)(\vy-\vx) - \vy ) && \text{(Lemma~\ref{lem:taylor})} \\
                & =   \vy + \vf'(\vx)^\ast ( (\vf(\vx)-\vx) - (\Id - \vf'(\vx))(\vy-\vx) ) \\
                & =   \vy + \vf'(\vx)^\ast ( \vf(\vx) - \vx ) - (\vy-\vx) && \text{($\vf'(\vx)^\ast =$} \\
                  &&& \quad (\Id - \vf'(\vx))^{-1}) \\
                & =   \Ne_{\vf}(\vx)                          && \text{(Theorem~\ref{thm:well-defined})}  \\
\end{align*}
\qed
\end{proof}

\subsection{Exponential Convergence Order in the Nonsingular Case}

If the matrix \mbox{$\Id - \vf'(\mu\vf)$} is nonsingular, Newton's method has exponential convergence order
 in the sense of Definition~\ref{def:valid-bits}.
\iftechrep{
This is, in fact, a well known general property of Newton's method, see, e.g., Theorem 4.4 of~\cite{SuliMayers:book}.
For completeness, we show that Newton's method for ``nonsingular'' SPPs has exponential convergence order,
 see Theorem~\ref{thm:quadratic-convergence} below.

\begin{lemma} \label{lem:qc-new-error}
 Let $\vf$ be a clean and feasible SPP.
 Let $\vzero \le \vx \le \mu\vf$ such that $\vf'(\vx)^*$ exists.
 Then there is a bilinear function $B: \Rp^n \times \Rp^n \to \Rp^n$ with
  \[
   \mu\vf - \Ne(\vx) \le \vf'(\vx)^* B(\mu\vf - \vx, \mu\vf - \vx) \;.
  \]
\end{lemma}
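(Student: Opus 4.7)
\medskip\noindent
\textbf{Plan.} Let $\vu := \mu\vf - \vx \ge \vzero$. The idea is to rewrite the error $\mu\vf - \Ne(\vx)$ as $\vf'(\vx)^*$ applied to the second-order Taylor remainder of $\vf$ at $\vx$, and then bound that remainder from above by a bilinear form in $\vu$ whose coefficients depend only on $\vf$ and on $\mu\vf$ (not on $\vx$).

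\medskip\noindent
\textbf{Step 1 (algebraic identity for the error).} Using the hypothesis that $\vf'(\vx)^* = (\Id - \vf'(\vx))^{-1}$, one computes
\[
 \mu\vf - \Ne(\vx) \;=\; \vu - \vf'(\vx)^*\bigl(\vf(\vx) - \vx\bigr) \;=\; \vf'(\vx)^* \bigl[(\Id - \vf'(\vx))\vu - (\vf(\vx) - \vx)\bigr].
\]
Since $\vx + \vu = \mu\vf = \vf(\mu\vf) = \vf(\vx+\vu)$, the bracket simplifies to $\vf(\vx+\vu) - \vf(\vx) - \vf'(\vx)\vu$, so
\[
 \mu\vf - \Ne(\vx) \;=\; \vf'(\vx)^*\bigl(\vf(\vx+\vu) - \vf(\vx) - \vf'(\vx)\vu\bigr).
\]

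\medskip\noindent
\textbf{Step 2 (bilinear upper bound on the Taylor remainder).} Applied componentwise, the integral form of Taylor's theorem yields
\[
 f_i(\vx+\vu) - f_i(\vx) - \nabla f_i(\vx)\,\vu \;=\; \int_{0}^{1}(1-s)\, \vu^\top f_i''(\vx + s\vu)\,\vu\, ds,
\]
where $f_i''$ denotes the Hessian of $f_i$. Because $f_i$ is a polynomial with nonnegative coefficients, every entry of $f_i''$ is a monotone polynomial on $\Rp^n$, so $f_i''(\vx + s\vu) \le f_i''(\mu\vf)$ for every $s \in [0,1]$. Defining the bilinear map $B : \Rp^n \times \Rp^n \to \Rp^n$ componentwise by
\[
 B_i(\vu,\vv) \;:=\; \frac{1}{2}\, \vu^\top f_i''(\mu\vf)\, \vv,
\]
one therefore obtains $\vf(\vx+\vu) - \vf(\vx) - \vf'(\vx)\vu \le B(\vu,\vu)$. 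Since $\vf'(\vx)^*$ has only nonnegative entries, applying it to both sides and combining with Step~1 yields the claim.

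\medskip\noindent
\textbf{Main obstacle.} Step~1 is routine bookkeeping; the substantive content is Step~2. The key point, specific to SPPs, is that all higher derivatives of $\vf$ are nonnegative and monotone on $\Rp^n$, which is exactly what lets us bound the Taylor remainder \emph{uniformly} (over all $\vx\le\mu\vf$) by a single bilinear form evaluated at $\mu\vf$. In the quadratic case the Hessians $f_i''$ are constants, the inequality in Step~2 becomes an equality, and $B$ can simply be taken to be the pure quadratic part of $\vf$.
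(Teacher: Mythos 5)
Your proof is correct and follows essentially the same route as the paper: both express the error via the second-order Taylor remainder of $\vf$ at $\vx$, bound that remainder by the bilinear form built from the second derivatives evaluated at $\mu\vf$ (using monotonicity of the derivatives on $\Rp^n$), and then apply the nonnegative matrix $\vf'(\vx)^* = (\Id-\vf'(\vx))^{-1}$. The only cosmetic difference is that you factor $\vf'(\vx)^*$ out at the start, whereas the paper substitutes the Taylor bound into $\vd - \vf'(\vx)^*(\vf(\vx)-\vx)$ and simplifies afterwards.
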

\begin{proof}
 Write $\vd := \mu\vf - \vx$.
 By Taylor's theorem (cf.\ Lemma~\ref{lem:taylor}) we obtain
  \begin{equation} \label{eq:proof-lem-qc-new-error}
   \vf(\vx + \vd) \le \vf(\vx) + \vf'(\vx) \vd + B (\vd, \vd)
  \end{equation}
  for the bilinear map~$B(\vX) := \vf''(\mu\vf)(\vX,\vX)$, where $\vf''(\mu\vf)$ denotes the rank-3 tensor of the second partial derivatives evaluated at $\mu\vf$
  \cite{OrtegaRheinboldt:book}.
 We have
 \begin{align*}
  \mu\vf - \Ne(\vx)
   & = \vd - \vf'(\vx)^* (\vf(\vx) - \vx) \\
   & = \vd - \vf'(\vx)^* (\vd + \vf(\vx) - (\vx+\vd)) \\
   & = \vd - \vf'(\vx)^* (\vd + \vf(\vx) - \vf(\vx+\vd)) && \text{($\vx + \vd = \mu\vf = \vf(\mu\vf)$)} \\
   & \le \vd - \vf'(\vx)^* \left(\vd - \vf'(\vx) \vd - B(\vd, \vd) \right)
            && \text{(by~\eqref{eq:proof-lem-qc-new-error})} \\
   & = \vd - \vf'(\vx)^* \left((\Id - \vf'(\vx)) \vd  - B(\vd, \vd) \right) \\
   & = \vd - \vd + \vf'(\vx)^* B(\vd, \vd)
            && \text{($\vf'(\vx)^* = (\Id - \vf'(\vx))^{-1}$)} \\
   & = \vf'(\vx)^* B(\vd, \vd)
 \end{align*}
\qed
\end{proof}
}{} 

Define for the following lemmata $\Ds{k} := \mu\vf - \ns{k}$, i.e., $\Ds{k}$ is the error after $k$ Newton iterations.
The following lemma bounds $\norm{\Ds{k+1}}$ in terms of $\norm{\Ds{k}}^2$ if $\Id - \vf'(\mu\vf)$ is nonsingular.

\begin{lemma} \label{lem:qc-square}
 Let $\vf$ be a clean and feasible SPP such that $\Id - \vf'(\mu\vf)$ is nonsingular.
 Then there is a constant $c > 0$ such that
  \[
   \norm{\Ds{k+1}}_\infty \le c \cdot \norm{\Ds{k}}_\infty^2 \text{ for all $k\in\N$.}
  \]
\end{lemma}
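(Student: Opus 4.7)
The plan is to bootstrap off Lemma~\ref{lem:qc-new-error}, which already gives us
\[
  \Ds{k+1} \;=\; \mu\vf - \Ne(\ns{k}) \;\le\; \vf'(\ns{k})^{*}\, B(\Ds{k}, \Ds{k})
\]
for the bilinear map $B(\vX,\vY) = \vf''(\mu\vf)(\vX,\vY)$, which has nonnegative coefficients because $\vf$ is an SPP. So I essentially have to turn this matrix-vector bound into a scalar bound of the form $c \cdot \norm{\Ds{k}}_\infty^2$, and the constant $c$ will be extracted by (i) bounding $\vf'(\ns{k})^{*}$ uniformly in $k$, and (ii) using bilinearity of $B$ to pull $\norm{\Ds{k}}_\infty^2$ out.

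Step (ii) is the easy part: for any $\vd \ge \vzero$, the componentwise bound $\vd \le \norm{\vd}_\infty \vone$ together with the bilinearity and nonnegativity of $B$ immediately gives $B(\vd,\vd) \le \norm{\vd}_\infty^{\,2}\, B(\vone,\vone)$. Step (i) is where the nonsingularity hypothesis is used. Since $\ns{k} \le \mu\vf$ and every entry of $\vf'(\vX)$ is a monotone polynomial, we have $\vf'(\ns{k}) \le \vf'(\mu\vf)$, and therefore $\vf'(\ns{k})^{*} \le \vf'(\mu\vf)^{*}$ in $\Rinf$. So if I can show $\vf'(\mu\vf)^{*}$ is finite, then putting everything together yields
\[
  \Ds{k+1} \;\le\; \norm{\Ds{k}}_\infty^{2}\, \vf'(\mu\vf)^{*} B(\vone,\vone),
\]
and the result follows with $c := \norm{\vf'(\mu\vf)^{*}\, B(\vone,\vone)}_\infty$.

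The main obstacle is therefore establishing that $\vf'(\mu\vf)^{*}$ is finite, i.e.\ that the spectral radius $\rho := \rho(\vf'(\mu\vf))$ satisfies $\rho < 1$, knowing only that $\Id - \vf'(\mu\vf)$ is nonsingular. I argue by contradiction. Since $\vf'(\mu\vf)$ is nonnegative, Perron--Frobenius guarantees that $\rho$ is itself an eigenvalue; hence $\rho \ne 1$, for otherwise $\Id - \vf'(\mu\vf)$ would be singular. So either $\rho < 1$ (which is what I want) or $\rho > 1$. To rule out $\rho > 1$, I use Theorem~\ref{thm:well-defined}: for every $\vx \prec \mu\vf$ the series $\vf'(\vx)^{*}$ converges in $\Rp$, which forces $\rho(\vf'(\vx)) < 1$. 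By continuity of the spectral radius on matrices and by choosing $\vx \prec \mu\vf$ arbitrarily close to $\mu\vf$ (for instance, $\vx = \ns{k}$ for large $k$, using Theorem~\ref{thm:well-defined}.1), we get $\rho \le 1$, contradicting $\rho > 1$. Hence $\rho < 1$, $\vf'(\mu\vf)^{*}$ is finite, and the proof concludes as sketched.
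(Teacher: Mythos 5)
Your proof is correct and follows essentially the same route as the paper's: both start from Lemma~\ref{lem:qc-new-error} and reduce the claim to a uniform bound on $\vf'(\vx)^*$ over $[\vzero,\mu\vf]$ combined with the bilinearity of $B$. The only difference is how that uniform bound is obtained: the paper invokes continuity of $\vx \mapsto (\Id-\vf'(\vx))^{-1}$ on the compact set $[\vzero,\mu\vf]$, whereas you use monotonicity $\vf'(\ns{k})^* \le \vf'(\mu\vf)^*$ and then establish finiteness of $\vf'(\mu\vf)^*$ via Perron--Frobenius together with continuity of the spectral radius --- a more explicit justification of the same fact (only note that $\ns{k} \prec \mu\vf$ need not hold, so for the limiting argument take e.g.\ $\vx = (1-\varepsilon)\mu\vf$, which works since $\mu\vf \succ \vzero$ for clean SPPs).
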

\iftechrep{
\begin{proof}
 As $\Id - \vf'(\mu\vf)$ is nonsingular, we have, by Theorem~\ref{thm:well-defined}, $(\Id - \vf'(\vx))^{-1} = \vf'(\vx)^*$
  for all $\vzero \le \vx \le \mu\vf$.
 By continuity, there is a $c_1 > 0$ such that $\norm{\vf'(\vx)^*} \le c_1$ for all $\vzero \le \vx \le \mu\vf$.
 Similarly, there is a $c_2 > 0$ such that $\norm{B(\vx,\vx)} \le c_2 \norm{\vx}^2$ for all $\vzero \le \vx \le \mu\vf$,
  because $B$ is bilinear.
 So it follows from Lemma~\ref{lem:qc-new-error} that $\norm{\Ds{k+1}} \le c_1c_2 \norm{\Ds{k}}^2$.
\qed
\end{proof}
}{ 
\begin{proof}
 See \cite{EKL10:SIAMtechRep} or Theorem 4.4 of~\cite{SuliMayers:book}.
\qed
\end{proof}
}

Lemma~\ref{lem:qc-square} implies that Newton's method has an exponential convergence order in the nonsingular case.
More precisely:
\begin{theorem} \label{thm:quadratic-convergence}
 Let $\vf$ be a clean and feasible SPP such that $\Id - \vf'(\mu\vf)$ is nonsingular.
 Then there is a constant $k_\vf \in \N$ such that
  \[
   \beta( k_\vf + i) \ge 2^i \text{ for all $i\in\N$.}
  \]
\end{theorem}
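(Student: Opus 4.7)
The plan is to combine Lemma~\ref{lem:qc-square} with the convergence guaranteed by Theorem~\ref{thm:well-defined}, showing that once the Newton iterates enter a sufficiently small neighborhood of $\mu\vf$ the number of valid bits at least doubles at every subsequent iteration. Because $\vf$ is clean, every component of $\mu\vf$ is strictly positive, so $\mumin := \min_{1 \le j \le n}\mu\vf_j > 0$. By Theorem~\ref{thm:well-defined} the Newton sequence $(\ns{k})_{k\in\N}$ converges to $\mu\vf$, and hence the error $\Ds{k} := \mu\vf - \ns{k}$ satisfies $\norm{\Ds{k}}_\infty \to 0$.

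Next, let $c$ be the constant from Lemma~\ref{lem:qc-square} and set $b_k := c \cdot \norm{\Ds{k}}_\infty$. Multiplying $\norm{\Ds{k+1}}_\infty \le c\norm{\Ds{k}}_\infty^2$ by $c$ yields $b_{k+1} \le b_k^2$, which iterates immediately to $b_{k+i} \le b_k^{2^i}$ for all $k,i \in \N$. Since $b_k \to 0$, we may pick $k_\vf \in \N$ with $b_{k_\vf} \le \frac{1}{2}\min\{1,\, c\mumin\}$.

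With this choice, for every $i \ge 0$ we have $0 \le 2b_{k_\vf} \le 1$, so $(2b_{k_\vf})^{2^i} \le 2b_{k_\vf} \le c\mumin$, which rearranges to $b_{k_\vf}^{2^i} \le c\mumin \cdot 2^{-2^i}$. Consequently, for every component~$j$,
\[
  \frac{|\mu\vf_j - \nsc{k_\vf + i}_j|}{|\mu\vf_j|} \;\le\; \frac{\norm{\Ds{k_\vf + i}}_\infty}{\mumin} \;\le\; \frac{b_{k_\vf + i}}{c\mumin} \;\le\; \frac{b_{k_\vf}^{2^i}}{c\mumin} \;\le\; 2^{-2^i},
\]
which is exactly $\beta(k_\vf + i) \ge 2^i$, as claimed.

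There is no real obstacle in this argument: the substitution $b_k = c\norm{\Ds{k}}_\infty$ linearises the quadratic recurrence of Lemma~\ref{lem:qc-square}, and the rest is a careful choice of threshold. The only point worth emphasising is that the theorem asserts merely the existence of $k_\vf$; a quantitative bound would require both an explicit value for the constant $c$ from Lemma~\ref{lem:qc-square} and an explicit convergence rate for $\ns{k} \to \mu\vf$, neither of which is available at this stage.
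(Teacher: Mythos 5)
Your proof is correct and follows essentially the same route as the paper's: both rest on Lemma~\ref{lem:qc-square} plus the convergence guaranteed by Theorem~\ref{thm:well-defined} to pick a threshold beyond which the quadratic error recurrence forces doubly-exponential decay, and then convert the absolute error bound into valid bits using $\mumin > 0$. Your substitution $b_k = c\norm{\Ds{k}}_\infty$ is a slightly slicker way to linearise the recurrence than the paper's explicit induction with the constant $d$, and you fold the relative-error adjustment (the paper's extra shift by $m$) into the initial choice of $k_\vf$, but these are presentational differences only.
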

\begin{proof}
\newcommand{\tkf}{\widetilde{k}_\vf}
 We first show that there is a constant $\tkf \in \N$ such that 
  \begin{equation}
   \norm{\Ds{\tkf + i}}_\infty \le 2^{-2^i} \text{ for all $i\in\N$.} \label{eq:qc-error-decays-exponentially}
  \end{equation}
 We can assume w.l.o.g.\ that $c \ge 1$ for the $c$ from Lemma~\ref{lem:qc-square}.
 As the $\Ds{k}$ converge to $\vzero$, we can choose $\tkf \in \N$ large enough such that
  $d := -\log \norm{\Ds{\tkf}} - \log c \ge 1$.
 As $c,d \ge 1$, it suffices to show the following inequality:
 \begin{equation*} \label{eq:lem-qc-error-decays-exponentially}
  \norm{\Ds{\tkf + i}} \le \frac{2^{-d\cdot 2^i}}{c}\;.
 \end{equation*}
 We proceed by induction on~$i$.
 For $i=0$, the inequality above 
  follows from the definition of~$d$.
 Let $i\ge 0$.
 Then
 \begin{align*}
  \norm{\Ds{\tkf + i + 1}} & \le c \cdot \norm{\Ds{\tkf + i}}^2 && \text{(Lemma~\ref{lem:qc-square})} \\
                           & \le c \cdot \frac{2^{-d\cdot 2^i \cdot 2}}{c^2} && \text{(induction hypothesis)} \\
                           &  = \frac{2^{-d\cdot 2^{i+1}}}{c} \;.
 \end{align*}
 Hence, \eqref{eq:qc-error-decays-exponentially} is proved.

 Choose $m \in \N$ large enough such that 
  $2^{m+i} + \log(\mu\vf_j) \ge 2^i$ holds for all components~$j$.
 Thus
 \begin{align*}
  \Dsc{\tkf + m + i}_j / \mu\vf_j
   & \le 2^{-2^{m+i}} / \mu\vf_j && \text{(by~\eqref{eq:qc-error-decays-exponentially})} \\
   &  =  2^{-(2^{m+i} + \log(\mu\vf_j))} \\
   & \le 2^{-2^i} && \text{(choice of~$m$)}\;.
 \end{align*}
 So, with $k_\vf := \tkf + m$, the approximant~$\ns{k_\vf + i}$ has at least $2^i$ valid bits of~$\mu\vf$.
\qed
\end{proof}

This type of analysis has serious shortcomings.
In particular, Theorem~\ref{thm:quadratic-convergence} excludes the case where $\Id - \vf'(\mu\vf)$ is singular.
We will include this case in our convergence analysis in \S~\ref{sec:scSPPs} and \S~\ref{sec:decomposed}.
Furthermore, and maybe more severely, Theorem~\ref{thm:quadratic-convergence} does not give any bound on~$k_\vf$.
We solve this problem for strongly connected SPPs in \S~\ref{sec:scSPPs}.

\subsection{Reduction to the Quadratic Case} \label{sub:reduction-quadratic-case}

In this section we reduce SPPs to quadratic SPPs, i.e., to SPPs in which every polynomial $f_i(\vX)$ has degree at most~$2$,
 and show that the convergence on the quadratic SPP is no faster than on the original SPP.
In the following sections we will obtain convergence speed guarantees of Newton's method on quadratic SPPs.
Hence, one can perform Newton's method on the original SPP and, using the results of this section,
 convergence is at least as fast as on the corresponding quadratic SPP.

The idea to reduce the degree of our SPP~$\vf$ is to introduce
auxiliary variables that express quadratic subterms.
This can be done repeatedly until all polynomials in the system have
reached degree at most~$2$.
The construction is very similar to the one that transforms a
context-free grammar into another grammar in Chomsky normal form.
The following theorem shows that the transformation does not
accelerate the convergence of Newton's method.

\begin{theorem} \label{thm:reduction-quadratic}
 Let $\vf(\vX)$ be a clean and feasible SPP such that $f_s(\vX) = g(\vX) + h(\vX) X_i X_j$ for some $1\le i,j,s \le n$,
  where $g(\vX)$ and $h(\vX)$ are polynomials with nonnegative coefficients. Let
 $\widetilde{\vf}(\vX,Y)$ be the SPP given by
 \[
 \begin{array}{rcl@{\hspace{0.3cm}}l}
 \widetilde{f}_\ell(\vX,Y) & = & f_\ell(\vX) & \mbox{for every $\ell \in \{1, \ldots, s-1\}$}\\
 \widetilde{f}_s(\vX,Y) & = & g(\vX) + h(\vX)Y \\
 \widetilde{f}_\ell(\vX,Y) & = & f_\ell(\vX) & \mbox{for every $\ell \in \{s+1, \ldots, n\}$}\\
 \widetilde{f}_{n+1}(\vX,Y) & = & X_i X_j.
 \end{array}
 \]
 Then the function $b: \R^n \to \R^{n+1}$ given by $b(\vX) = ( X_1,\ldots,X_n, X_i X_j)^\top$
  is a bijection between the set of fixed points of $\vf(\vX)$ and $\widetilde{\vf}(\vX,Y)$.
 Moreover, $\tns{k} \le (\nsc{k}_1,\ldots,\nsc{k}_n, \nsc{k}_i \nsc{k}_j)^\top$ for all $k\in\N$,
  where $\tns{k}$ and $\ns{k}$ are the Newton approximants of $\widetilde{\vf}$ and $\vf$, respectively.
\end{theorem}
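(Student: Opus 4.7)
The plan is to handle the two assertions separately. The bijection is essentially a syntactic verification: if $\vx = \vf(\vx)$, then $b(\vx) = (\vx, x_ix_j)^\top$ satisfies every equation of $\widetilde{\vf}$, since the $s$-equation reduces to $g(\vx) + h(\vx)\cdot x_ix_j = f_s(\vx) = x_s$, the $(n{+}1)$-equation reads $x_ix_j = x_ix_j$, and the remaining equations are unchanged. Conversely, a fixed point $(\vx,y)$ of $\widetilde{\vf}$ must satisfy $y = x_ix_j$ by the last equation, whereupon the $s$-equation collapses back to $f_s(\vx) = x_s$. Since $b$ is evidently injective, this proves the bijection; in particular $\mu\widetilde{\vf} = b(\mu\vf)$, and one checks easily that $\widetilde{\vf}$ inherits cleanness and feasibility from $\vf$.

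For the inequality $\tns{k} \le b(\ns{k})$ I proceed by induction on $k$, the base case $k = 0$ being trivial. For the step, assuming $\tns{k} \le b(\ns{k})$, the chain
\[ \vzero \le \tns{k} \le b(\ns{k}) \le \widetilde{\vf}(b(\ns{k})) \le \mu\widetilde{\vf} \]
holds: the second inequality is the induction hypothesis; the third follows from $\ns{k} \le \vf(\ns{k})$ (Theorem~\ref{thm:well-defined}) together with the identity $\widetilde{\vf}(b(\vx)) - b(\vx) = (\vf(\vx)-\vx,\,0)^\top$; the fourth follows from $\ns{k} \le \mu\vf$ and $\mu\widetilde{\vf} = b(\mu\vf)$. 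Monotonicity of the Newton operator (Lemma~\ref{lem:newton-mon}) then yields $\tns{k+1} \le \Ne_{\widetilde{\vf}}(b(\ns{k}))$, reducing the induction to the pointwise step inequality
\[ \Ne_{\widetilde{\vf}}(b(\vx)) \le b(\Ne_\vf(\vx)) \]
for every $\vx$ with $\vzero \le \vx \le \vf(\vx) \le \mu\vf$.

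This step inequality rests on a block analysis of $\widetilde{\vf}'(b(\vx))$, written with top-left $n\times n$ block $A$, last column $\vb$ whose only nonzero entry is $h(\vx)$ in row $s$, last row $\vc^\top$ collecting the partial derivatives of $X_iX_j$ at $\vx$, and bottom-right entry $0$. Because the term $h(\vX)X_iX_j$ of $f_s$ was replaced by $h(\vX)Y$, the product rule forces $A + \vb\vc^\top = \vf'(\vx)$. Consequently the Schur complement of the bottom-right entry of $\Id - \widetilde{\vf}'(b(\vx))$ is exactly $\Id - \vf'(\vx)$, whose inverse exists by Theorem~\ref{thm:well-defined}; this both establishes that $\Ne_{\widetilde{\vf}}(b(\vx))$ is well-defined and produces an explicit formula for the inverse. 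Applying this formula to $\widetilde{\vf}(b(\vx)) - b(\vx) = (\vf(\vx)-\vx,\,0)^\top$ shows that the first $n$ components of $\Ne_{\widetilde{\vf}}(b(\vx))$ coincide with $\Ne_\vf(\vx)$, while the last component equals $x_ix_j + \vc^\top\vu$ for $\vu := \Ne_\vf(\vx) - \vx \ge \vzero$. Comparing with $b(\Ne_\vf(\vx))_{n+1} = (x_i+u_i)(x_j+u_j) = x_ix_j + \vc^\top\vu + u_iu_j$ when $i\neq j$ (and analogously with $(x_i+u_i)^2$ when $i=j$), the inequality reduces to $u_iu_j \ge 0$ (resp.\ $u_i^2 \ge 0$), which is immediate.

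The main obstacle will be the block-matrix bookkeeping, in particular verifying the identity $A + \vb\vc^\top = \vf'(\vx)$; this identity captures the fact that differentiating $h(\vX)X_iX_j$ either directly (as in $\vf$) or via the chain rule through $Y$ (as in $\widetilde{\vf}$) differs exactly by the rank-one correction $\vb\vc^\top$. Once this is in hand, everything else reduces to the elementary positivity of $u_iu_j$.
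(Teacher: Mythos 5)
Your proposal is correct and follows essentially the same route as the paper: your Schur-complement computation of $\Ne_{\widetilde{\vf}}(b(\vx))$ is exactly the paper's Lemma~\ref{lem:reduction-quadratic-updates-similar} (proved there by the same row elimination), and the induction likewise hinges on Lemma~\ref{lem:newton-mon}. The only cosmetic difference is that you induct on the full $(n{+}1)$-component inequality $\tns{k}\le b(\ns{k})$ and hence need the extra observation $u_iu_j\ge 0$ for the last coordinate, whereas the paper inducts only on the first $n$ components and recovers the last one from $\tnsc{k}_{n+1}\le\tnsc{k}_i\tnsc{k}_j$.
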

\begin{proof}
 \newcommand{\de}{\vdelta}
 \newcommand{\tde}{\widetilde{\vdelta}}
 \newcommand{\tdes}{\widetilde{\delta}}
 \newcommand{\tvf}{\widetilde{\vf}}
 We first show the claim regarding~$b$:
 if $\vx$ is a fixed point of $\vf$, then $b(\vx)=(\vx, x_i x_j)$ is a fixed point of $\tvf$.
 Conversely, if $(\vx, y)$ is a fixed point of $\tvf$,
  then we have $y = x_i x_j$ implying that $\vx$ is a fixed point of $\vf$. 
 Therefore, the least fixed point $\mu \vf$ of $\vf$ determines $\mu \tvf$, and vice versa.

 Now we show that the Newton sequence of $\vf$ converges at least as fast as the Newton sequence of $\tvf$.
 In the following we write $\vY$ for the $(n+1)$-dimensional vector of variables $(X_1, \ldots, X_n, Y)^\top$
  and, as usual, $\vX$ for $(X_1, \ldots, X_n)^\top$.
 For an $(n+1)$-dimensional vector $\vx$, we let $\vx_{[1,n]}$ denote its restriction to the $n$ first components,
  i.e., $\vx_{[1,n]} := (x_1,\ldots,x_n)^\top$.
 Note that $\vY_{[1,n]} = \vX$.
 Let $\ve_s$ denote the unit vector $(0, \ldots, 0,1,0 \ldots 0)^\top$, where the ``$1$'' is on the $s$-th place.
 We have:
 \begin{align*}
  \tvf(\vY) & =
  \begin{pmatrix}
   \vf(\vX) + \ve_s h(\vX) (Y - X_iX_j) \\
   X_iX_j
  \end{pmatrix}
  \intertext{and}
  \tvf'(\vY) & =
  \begin{pmatrix}
   \vf'(\vX) + \ve_s \pd{h(\vX) (Y - X_iX_j)}{\vX} & \quad \ve_s h(\vX) \\
   \pd{X_iX_j}{\vX}                                & \quad 0
  \end{pmatrix}
 \end{align*}
 We need the following lemma.
 \begin{lemma} \label{lem:reduction-quadratic-updates-similar}
  Let $\vz \in \Rp^{n}$, $\de  = \big(\Id - \vf'(\vz)\big)^{-1} (\vf(\vz) - \vz)$ and
      $\tde = \left(\Id - \tvf'(\vz, z_iz_j)\right)^{-1} (\tvf(\vz, z_iz_j) - (\vz,z_iz_j)^\top)$.
  Then $\de = \tde_{[1,n]}$.
 \end{lemma}

 {\em Proof of the lemma.}
  \begin{align*}
   \tvf'(\vz, z_iz_j) & =
   \begin{pmatrix}
    \vf'(\vz) + \ve_s h(\vz) \pd{(Y - X_iX_j)}{\vX}|_{\vY = (\vz,z_iz_j)} & \quad \ve_s h(\vz) \\
    \pd{X_iX_j}{\vX}|_{\vY = (\vz,z_iz_j)}                                & \quad 0
   \end{pmatrix}
   \\ & =
   \begin{pmatrix}
    \vf'(\vz) - \ve_s h(\vz) \pd{(X_iX_j)}{\vX}|_{\vX = \vz} & \quad \ve_s h(\vz) \\
    \pd{X_iX_j}{\vX}|_{\vX = \vz}                            & \quad 0
   \end{pmatrix}
  \end{align*}
  We have $(\Id - \tvf'(\vz, z_iz_j)) \tde = (\tvf(\vz, z_iz_j) - (\vz,z_iz_j)^\top)$, or equivalently:
  \[
    \begin{pmatrix}
         \Id - \vf'(\vz) + \ve_s h(\vz) \pd{(X_iX_j)}{\vX}|_{\vX = \vz} & \quad - \ve_s h(\vz) \\
          - \pd{X_iX_j}{\vX}|_{\vX = \vz}                               & \quad   1 \\
    \end{pmatrix}
    \cdot
    \begin{pmatrix}
      \tde_{[1,n]}\\
      \tdes_{n+1} \\
    \end{pmatrix}
    =
    \begin{pmatrix}
      \vf(\vz) - \vz \\
      0 \\
    \end{pmatrix}
  \] 
  Multiplying the last row by $\ve_s h(\vz)$ and adding to the first $n$ rows yields:
  \[
   \left( \Id - \vf'(\vz) \right) \tde_{[1,n]} = \vf(\vz) - \vz
  \]
  So we have $\tde_{[1,n]} = \left( \Id - \vf'(\vz) \right)^{-1} \left(\vf(\vz) - \vz\right) = \de$, which proves the lemma.
 \qed

 Now we proceed by induction on $k$ to show $\tns{k}_{[1,n]} \le \ns{k}$,
  where $\tns{k}$ is the Newton sequence for $\tvf$.
 By definition of the Newton sequence this is true for $k=0$.
 For the step, let $k \ge 0$ and define $\vu := (\tns{k}_{[1,n]}, \tnsc{k}_i\cdot \tnsc{k}_j)^\top$.
 Then we have:
 \begin{align*}
  \tns{k+1}_{[1,n]} & = \Ne_{\tvf}( \tns{k} )_{[1,n]} \\
                    & \stackrel{(\ast)}{\le} \Ne_{\tvf}( \vu )_{[1,n]}
                      && \text{(see below)} \\
                    & = \tns{k}_{[1,n]} + \left((\Id - \tvf'(\vu))^{-1} (\tvf(\vu) - \vu)\right)_{[1,n]} \\
                    & = \tns{k}_{[1,n]} + (\Id - \vf'(\tns{k}_{[1,n]}))^{-1} (\vf(\tns{k}_{[1,n]}) - \tns{k}_{[1,n]})
                      && \text{(Lemma~\ref{lem:reduction-quadratic-updates-similar})} \\
                    & = \Ne_{\vf}( \tns{k}_{[1,n]} ) \\
                    & \le \Ne_{\vf}(\ns{k})                     && \text{(induction)} \\
                    & = \ns{k+1}
 \end{align*}

 At the inequality marked with~$(*)$ we used the monotonicity of $\Ne_{\tvf}$ (Lemma~\ref{lem:newton-mon}) combined with
  Theorem~\ref{thm:well-defined}, which states $\tns{k} \le \tvf(\tns{k})$,
  hence in particular $\tnsc{k}_{n+1} \le \tnsc{k}_i \tnsc{k}_j$.
 This concludes the proof of Theorem~\ref{thm:reduction-quadratic}.
\qed
\end{proof}

\section{Strongly Connected SPPs} \label{sec:scSPPs}

In this section we study the convergence speed of Newton's method on strongly connected SPPs,
 short scSPPs, see Definition~\ref{def:depend}.

\subsection{Cone Vectors}
Our convergence speed analysis makes crucial use of the existence of {\em cone vectors}.
\begin{definition} \label{def:cone-vector}
 Let $\vf$ be an SPP.
 A vector $\vd \in \Rp^n$ is a {\em cone vector} if $\vd \succ \vzero$ and $\vf'(\mu\vf) \vd \le \vd$.
\end{definition}

\newcommand{\lemsemiconevectorexists}{
\begin{lemma} \label{lem:semi-cone-vector-exists}
 Any clean and feasible scSPP $\vf$ has a vector $\vd > \vzero$ with \mbox{$\vf'(\mu\vf) \vd \le \vd$}.
\end{lemma}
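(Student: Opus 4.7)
The plan is to apply the Perron--Frobenius theorem to the nonnegative matrix $A := \vf'(\mu\vf)$. Concretely, I will show that $A$ is irreducible with spectral radius at most~$1$, so that Perron--Frobenius supplies a strictly positive eigenvector $\vd$ for the eigenvalue $\rho(A) \le 1$, yielding $A \vd = \rho(A) \vd \le \vd$.

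First, I would argue that $A$ is irreducible in the sense of nonnegative matrix theory, i.e., its directed graph is strongly connected. Since $\vf$ is clean, the Kleene iterates eventually become positive in every component, so $\mu\vf \succ \vzero$. Hence, for any polynomial $p(\vX)$ with nonnegative coefficients, $p(\mu\vf) > 0$ iff $p$ is not identically zero. Applied to $\pd{f_i}{X_j}$, this yields $A_{ij} > 0$ exactly when $f_i$ contains $X_j$, i.e., when component~$i$ depends directly on component~$j$ (Definitions~\ref{def:contain},~\ref{def:depend}). Strong connectedness of $\vf$ therefore translates directly into strong connectedness of the graph of $A$, i.e., irreducibility of~$A$.

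Second, I would bound the spectral radius. Theorem~\ref{thm:well-defined} (part~2) gives $(\Id - \vf'(\vx))^{-1} = \vf'(\vx)^*$ for every $\vx \prec \mu\vf$, which means the Neumann series of $\vf'(\vx)$ converges, so $\rho(\vf'(\vx)) < 1$. Alternatively, Proposition~\ref{prop:no-infty-entries} gives the same conclusion along the Newton sequence $\ns{k}$, which converges to $\mu\vf$ by Theorem~\ref{thm:well-defined}. Since the spectral radius depends continuously on the matrix entries, passing to the limit yields $\rho(A) \le 1$.

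Third, I would invoke Perron--Frobenius for irreducible nonnegative matrices: there exists a vector $\vd \succ \vzero$ such that $A \vd = \rho(A) \vd$. Combined with $\rho(A) \le 1$ and $\vd \ge \vzero$, this gives $\vf'(\mu\vf) \vd = \rho(A) \vd \le \vd$, and in particular $\vd > \vzero$ as required. The only mildly delicate step is the continuity argument for $\rho$ at the boundary point $\mu\vf$ where $\Id - \vf'(\mu\vf)$ may become singular; but since $\rho$ is continuous in the matrix entries and we approach $\mu\vf$ from below with spectral radii strictly less than~$1$, the bound $\rho(A) \le 1$ follows without having to analyze the (possibly singular) behaviour at $\mu\vf$ itself.
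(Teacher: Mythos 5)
Your proof is correct, but it takes a different route from the paper's proof of this lemma. The paper argues by compactness: it forms the normalized vectors $\des{k} := \vf'(\ks{k})^*\vone / \norm{\vf'(\ks{k})^*\vone}$ along the Kleene sequence, notes that $(\Id - \vf'(\ks{k}))\des{k} \ge \vzero$ by construction, extracts a convergent subsequence in the compact set $\{\vx \ge \vzero \mid \norm{\vx}=1\}$, and passes to the limit to obtain $\vd > \vzero$ with $(\Id - \vf'(\mu\vf))\vd \ge \vzero$. That argument is elementary (it needs only Theorem~\ref{thm:well-defined}.2 and Bolzano--Weierstrass) but yields only $\vd > \vzero$, so the paper needs the separate Lemma~\ref{lem:vd>vzero-is-enough} to upgrade to $\vd \succ \vzero$ via strong connectedness. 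Your Perron--Frobenius argument buys strict positivity of $\vd$ immediately, so it subsumes both lemmata at once, at the cost of invoking the Perron--Frobenius theorem for irreducible nonnegative matrices; the paper itself records exactly this alternative as a remark after Proposition~\ref{prop:cone-vector-exists}. All the individual steps you give check out: cleanness gives $\mu\vf \succ \vzero$, so $\pd{f_i}{X_j}(\mu\vf) > 0$ iff $\pd{f_i}{X_j}$ is not identically zero, which makes the graph of $\vf'(\mu\vf)$ coincide with the dependency graph of~$\vf$ and hence irreducible; the bound $\rho(\vf'(\vx)) < 1$ for $\vx \prec \mu\vf$ from the convergent Neumann series, together with continuity of the spectral radius, gives $\rho(\vf'(\mu\vf)) \le 1$; and there is no circularity, since Theorem~\ref{thm:well-defined} is established before the cone-vector machinery.
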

\begin{proof}
 Consider the Kleene sequence $(\ks{k})_{k\in\N}$.
 Since $\vf$ is strongly connected, we have $\vzero \le \ks{k} \prec \mu\vf$ for all $k \in \N$.
 By Theorem~\ref{thm:well-defined}.2., the matrices $(\Id - \vf'(\ks{k}))^{-1} = \vf'(\ks{k})^*$ exist for all $k$.
 Let $\norm{\cdot}$ be any norm.
 Define the vectors
  \[
   \des{k} := \frac{\vf'(\ks{k})^* \vone}{\norm{\vf'(\ks{k})^* \vone}} \;.
  \]
 Notice that for all $k\in\N$ we have
  $(\Id - \vf'(\ks{k})) \des{k} = \frac{1}{\norm{\vf'(\ks{k})^*\vone}} \cdot \vone  \ge \vzero$.
 Furthermore we have $\des{k} \in C$, where $C := \{\vx \ge \vzero \mid \norm{\vx} = 1\}$ is compact.
 So the sequence $(\des{k})_{k\in\N}$ has a convergent subsequence, whose limit, say $\vd$, is also in~$C$.
 In particular $\vd > \vzero$.
 As $(\ks{k})_{k\in\N}$ converges to~$\mu\vf$ and $(\Id - \vf'(\ks{k})) \des{k} \ge \vzero$,
  it follows by continuity $(\Id - \vf'(\mu\vf)) \vd \ge \vzero$.
\qed
\end{proof}
}
\newcommand{\lemvdvzeroisenough}{
\begin{lemma}\label{lem:vd>vzero-is-enough}
 Let $\vf$ be a clean and feasible scSPP and let $\vd > \vzero$ with $\vf'(\mu\vf)\vd \le \vd$.
 Then $\vd$ is a cone vector, i.e., $\vd \succ \vzero$.
\end{lemma}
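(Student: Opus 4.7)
The plan is to argue by contradiction, exploiting strong connectedness together with the positivity of $\mu\vf$. First, I would observe that since $\vf$ is clean, every variable eventually becomes positive along the Kleene sequence, so $\mu\vf \succ \vzero$. Consequently, if a partial derivative polynomial $\pd{f_i}{X_j}$ is not identically zero, then (having nonnegative coefficients) it evaluates to a strictly positive number at $\mu\vf$; that is, $[\vf'(\mu\vf)]_{ij} > 0$ whenever component $i$ depends directly on component $j$.

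Now suppose, for contradiction, that $\vd$ has some vanishing component. Partition $\{1,\ldots,n\}$ into $Z := \{i : d_i = 0\}$ and $P := \{i : d_i > 0\}$. By hypothesis $\vd > \vzero$, so $P \neq \emptyset$, and the assumption for contradiction gives $Z \neq \emptyset$. Because $\vf$ is strongly connected, the directed dependence graph on $\{1,\ldots,n\}$ is strongly connected, so for any $i \in Z$ and $p \in P$ there is a directed path from $i$ to $p$. Such a path must cross the cut from $Z$ to $P$ at some edge: there exist $i' \in Z$ and $j' \in P$ such that $i'$ depends directly on $j'$. By the preceding paragraph, $[\vf'(\mu\vf)]_{i'j'} > 0$.

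Finally, I would read off the contradiction from the $i'$-th row of the inequality $\vf'(\mu\vf)\vd \le \vd$. Since $d_{i'} = 0$ and $\vf'(\mu\vf)\vd \ge \vzero$, we obtain
\[
  \sum_{k=1}^n [\vf'(\mu\vf)]_{i'k}\, d_k = 0,
\]
so each nonnegative summand vanishes. In particular $[\vf'(\mu\vf)]_{i'j'}\, d_{j'} = 0$, contradicting $[\vf'(\mu\vf)]_{i'j'} > 0$ and $d_{j'} > 0$. Hence $Z = \emptyset$ and $\vd \succ \vzero$, as claimed.

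There is no real obstacle here; the argument is short and the main point to get right is simply the graph-theoretic step that, in a strongly connected dependence graph, any nontrivial partition $(Z,P)$ admits a direct edge from $Z$ to $P$. The rest is just linear algebra in one scalar row, using that $\mu\vf \succ \vzero$ converts ``$X_{j'}$ appears in $f_{i'}$'' into a strictly positive Jacobian entry.
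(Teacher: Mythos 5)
Your proof is correct. It differs from the paper's argument in mechanism, though it rests on the same two facts: that cleanness gives $\mu\vf \succ \vzero$ (so every nonzero partial derivative yields a strictly positive Jacobian entry at $\mu\vf$) and that strong connectedness makes the dependence graph strongly connected. The paper argues directly: it iterates $\vf'(\mu\vf)\vd \le \vd$ to obtain $\vf'(\mu\vf)^{r}\vd \le \vd$ for all $r$, fixes one component with $d_1 > 0$, and uses a path of length $r_j$ from each $j$ to component $1$ to get $(\vf'(\mu\vf)^{r_j})_{j1} > 0$ and hence $d_j \ge (\vf'(\mu\vf)^{r_j}\vd)_j > 0$; positivity is propagated forward through powers of the Jacobian. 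You instead argue by contradiction on the cut between $Z = \{i : d_i = 0\}$ and $P = \{i : d_i > 0\}$: strong connectedness forces a direct dependence edge from $Z$ into $P$, and a single row of the inequality at that edge yields the contradiction. Your version avoids matrix powers entirely and needs only one scalar inequality, which is marginally more elementary; the paper's version is the same combinatorial content packaged as positivity of entries of $\vf'(\mu\vf)^{r}$. Either works, and your graph-theoretic step (a nontrivial partition of a strongly connected digraph admits an edge across the cut) is stated and used correctly.
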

\begin{proof}
 Since $\vf$ is an SPP, every component of $\vf'(\mu\vf)$ is nonnegative.
 So,
 \[
 \vzero \le \vf'(\mu\vf)^n \vd \le \vf'(\mu\vf)^{n-1}\vd \le
 \ldots \le \vf'(\mu\vf)\vd \le \vd.
 \]
 Let w.l.o.g.\ $d_1 > 0$.
 As $\vf$ is strongly connected, there is for all $j$ with $1 \le j \le n$ an $r_j \le n$ such that $(\vf'(\mu\vf)^{r_j})_{j1} > 0$.
 Hence, $(\vf'(\mu\vf)^{r_j} \vd)_j > 0$ for all $j$.
 With above inequality chain, it follows that $d_j \ge (\vf'(\mu\vf)^{r_j} \vd)_j > 0$.
 So, $\vd \succ \vzero$.
\qed
\end{proof}
}

\iftechrep{
We will show that any scSPP has a cone vector, see Proposition~\ref{prop:cone-vector-exists} below.
As a first step, we show the following lemma.

\lemsemiconevectorexists

\lemvdvzeroisenough

The following proposition follows immediately by combining Lemmata \ref{lem:semi-cone-vector-exists} and~\ref{lem:vd>vzero-is-enough}.
\begin{proposition} \label{prop:cone-vector-exists}
 Any clean and feasible scSPP has a cone vector.
\end{proposition}

We remark that using Perron-Frobenius theory \cite{book:BermanP} there is a simpler proof for Proposition~\ref{prop:cone-vector-exists}:
By Theorem~\ref{thm:well-defined} $\vf'(\vx)^*$ exists for all $\vx \prec \mu\vf$.
So, by fundamental matrix facts \cite{book:BermanP}, the spectral radius of $\vf'(\vx)$ is less than $1$ for all $\vx \prec \mu\vf$.
As the eigenvalues of a matrix depend continuously on the matrix, the spectral radius of $\vf'(\mu\vf)$, say~$\rho$, is at most~$1$.
Since $\vf$ is strongly connected, $\vf'(\mu\vf)$ is irreducible, and so Perron-Frobenius theory guarantees the existence
 of an eigenvector $\vd \succ \vzero$ of $\vf'(\mu\vf)$ with eigenvalue~$\rho$.
So we have $\vf'(\mu\vf) \vd = \rho \vd \le \vd$, i.e., the eigenvector~$\vd$ is a cone vector.
}{
\begin{proposition} \label{prop:cone-vector-exists}
 Any clean and feasible scSPP has a cone vector.
\end{proposition}
\begin{proof}
By Theorem~\ref{thm:well-defined} $\vf'(\vx)^*$ exists for all $\vx \prec \mu\vf$.
So, by fundamental matrix facts \cite{book:BermanP}, the spectral radius of $\vf'(\vx)$ is less than $1$ for all $\vx \prec \mu\vf$.
As the eigenvalues of a matrix depend continuously on the matrix, the spectral radius of $\vf'(\mu\vf)$, say~$\rho$, is at most~$1$.
Since $\vf$ is strongly connected, $\vf'(\mu\vf)$ is irreducible, and so Perron-Frobenius theory guarantees the existence
 of an eigenvector $\vd \succ \vzero$ of $\vf'(\mu\vf)$ with eigenvalue~$\rho$.
So we have $\vf'(\mu\vf) \vd = \rho \vd \le \vd$, i.e., the eigenvector~$\vd$ is a cone vector.
\qed
\end{proof}
}
\iftechrep{}{\par In~\cite{EKL10:SIAMtechRep} we prove Proposition~\ref{prop:cone-vector-exists} independently of Perron-Frobenius theory.}
\subsection{Convergence Speed in Terms of Cone Vectors}

Now we show that cone vectors play a fundamental role for the convergence speed of Newton's method.
The following lemma gives a lower bound of the Newton approximant~$\ns{1}$ in terms of a cone vector.

\begin{lemma}\label{lem:blub}
 Let $\vf$ be a feasible (not necessarily clean) SPP such that $\vf'(\vzero)^*$ exists.
 Let $\vd$ be a cone vector of~$\vf$.
 Let $\vzero \ge \mu\vf - \lambda \vd$ for some $\lambda \ge 0$.
 Then
 \[
   \Ne(\vzero) \ge \mu \vf - \frac{1}{2} \lambda \vd\;.
 \]
\end{lemma}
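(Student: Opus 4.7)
The plan is to express the error $\mu\vf - \Ne(\vzero)$ in a form that lets us exploit both the hypothesis $\mu\vf \le \lambda\vd$ and the cone-vector inequality $\vf'(\mu\vf)\vd \le \vd$, taking care to harvest a factor of $1/2$ from the polynomial (not merely monotone-differentiable) structure of $\vf$.

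First, since $\Ne(\vzero) = (\Id - \vf'(\vzero))^{-1}\vf(\vzero) = \vf'(\vzero)^\ast \vf(\vzero)$ by Theorem~\ref{thm:well-defined} and since $\vf'(\vzero)^\ast(\Id-\vf'(\vzero))\mu\vf = \mu\vf$, I can rewrite
\[
\mu\vf - \Ne(\vzero) \;=\; \vf'(\vzero)^\ast\bigl[\mu\vf - \vf(\vzero) - \vf'(\vzero)\mu\vf\bigr].
\]
By Lemma~\ref{lem:taylor}, the bracketed expression is nonnegative, and $\vf'(\vzero)^\ast\ge 0$, so inequalities on the bracket transfer to inequalities on the left-hand side.

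Second, the crucial quantitative step is the sharper polynomial estimate
\[
\mu\vf - \vf(\vzero) - \vf'(\vzero)\mu\vf \;\le\; \tfrac{1}{2}\bigl(\vf'(\mu\vf) - \vf'(\vzero)\bigr)\mu\vf. \qquad(\ast)
\]
Lemma~\ref{lem:taylor} alone gives only the weaker version without the factor $1/2$, which would cost us the entire conclusion. I will establish $(\ast)$ monomial by monomial: for a monomial $c\vX^\alpha$ with $|\alpha|=k$ and $f(\vX)=c\vX^\alpha$, Euler's identity for homogeneous polynomials yields $f'(\mu\vf)\mu\vf = k\cdot f(\mu\vf)$, while $f(\vzero)=0$ and $f'(\vzero)\mu\vf = k\cdot f(\mu\vf)\cdot\mathbf{1}_{k\le 1}$; hence $(\ast)$ reduces to $f(\mu\vf)\le \tfrac{k}{2}f(\mu\vf)$ for $k\ge 2$ and to $0\le 0$ for $k\in\{0,1\}$. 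Since all monomials have nonnegative coefficients and $\mu\vf\ge\vzero$, summing yields $(\ast)$. (Equivalently, $(\ast)$ is the trapezoidal-rule bound applied to the convex polynomial $t\mapsto \vf'(t\mu\vf)\mu\vf$ on $[0,1]$.)

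Third, I substitute the hypothesis $\mu\vf \le \lambda\vd$ into the right-hand side of $(\ast)$. Since the matrix $\vf'(\mu\vf)-\vf'(\vzero)$ has nonnegative entries, this yields
\[
\bigl(\vf'(\mu\vf)-\vf'(\vzero)\bigr)\mu\vf \;\le\; \lambda\bigl(\vf'(\mu\vf)-\vf'(\vzero)\bigr)\vd.
\]
The cone-vector inequality $\vf'(\mu\vf)\vd\le\vd$ then gives $(\vf'(\mu\vf)-\vf'(\vzero))\vd \le (\Id - \vf'(\vzero))\vd$, so altogether
\[
\mu\vf - \vf(\vzero) - \vf'(\vzero)\mu\vf \;\le\; \tfrac{\lambda}{2}(\Id - \vf'(\vzero))\vd.
\]

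Finally, multiplying on the left by $\vf'(\vzero)^\ast$ and using $\vf'(\vzero)^\ast(\Id - \vf'(\vzero)) = \Id$ (which holds because $\vf'(\vzero)^\ast$ exists by hypothesis) produces
\[
\mu\vf - \Ne(\vzero) \;\le\; \tfrac{\lambda}{2}\vd,
\]
which is the desired conclusion. The main obstacle is step two: the factor $1/2$ in $(\ast)$ is exactly what propagates to the ``halving'' in the lemma's conclusion, and it is not visible from the generic inequality-form Taylor theorem --- it genuinely relies on $\vf$ being a polynomial with nonnegative coefficients (via Euler's identity, or equivalently via convexity of $t\mapsto \vf'(t\mu\vf)\mu\vf$).
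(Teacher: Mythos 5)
Your proof is correct and is essentially the paper's own argument in a different order: your inequality $(\ast)$ is exactly the paper's step $\vh(\mu\vf) \le \tfrac{1}{2}\vh'(\mu\vf)\mu\vf$ for the degree-$\ge 2$ part $\vh = \vf - L\vX - \vc$ (with $L = \vf'(\vzero)$), obtained from the same Euler/homogeneity observation $k/2 \ge 1$ for $k \ge 2$, and the remaining steps (substituting $\mu\vf \le \lambda\vd$, applying the cone-vector inequality, and multiplying by $\vf'(\vzero)^\ast = L^\ast$) coincide with the paper's chain read in reverse. No gaps.
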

\begin{proof}
\newcommand{\Ts}[1]{\vec{T}^{(#1)}}
\newcommand{\Tsc}[1]{T^{(#1)}}
\newcommand{\Ls}[1]{L^{(#1)}}
\newcommand{\Xs}[1]{\vec{X}^{(#1)}}
 We write $\vf(\vX)$ as a sum
 $\vf(\vX) = \vc + \sum_{k=1}^{D} \Ts{k}(\vX,\ldots,\vX)$,
 where $D$ is the degree of~$\vf$ and, for all $k \in \{1,\ldots,D\}$ and all $i \in \{1,\ldots,n\}$,
  the component~$\Tsc{k}_i$ of~$\Ts{k}$ is the symmetric $k$-linear form associated to the degree-$k$ terms of~$f_i$.
 Let $\Ls{k}: (\R^n)^{k-1} \to \R^{n\times n}$ such that $\Ts{k}(\Xs{1}, \ldots, \Xs{k}) = \Ls{k}(\Xs{1}, \ldots, \Xs{k-1}) \cdot \Xs{k}$.
 Now we can write
  $$\vf(\vX) = \vc + \sum_{k=1}^{D} \Ls{k}(\vX,\ldots,\vX)\vX \text{ \quad and \quad} \vf'(\vX) = \sum_{k=1}^{D} k \cdot \Ls{k}(\vX,\ldots,\vX)\;.$$
 We write $L$ for $\Ls{1}$, and $\vh(\vX)$ for $\vf(\vX) - L\vX -\vc$.
 We have:
  \begin{align*}
          \frac{\lambda}{2} \vd
    &=    \frac{\lambda}{2} (L^*\vd - L^*L\vd)                   && \text{($L^* = \Id + L^*L$)}\\
    &\ge  \frac{\lambda}{2} (L^*\vf'(\fix{\vf})\vd - L^*L\vd)    && \text{($\vf'(\fix{\vf})\vd \le \vd$)}\\
    &=    \frac{\lambda}{2} L^*\vh'(\fix{\vf})\vd                && \text{($\vf'(\vx) = \vh'(\vx) + L$)}\\
    &=    L^* \frac{1}{2} \vh'(\fix{\vf}) \lambda \vd \\
    &\ge  L^* \frac{1}{2} \vh'(\fix{\vf}) \fix{\vf}            && \text{($\lambda \vd \ge \fix{\vf}$)}\\
    &=    L^* \frac{1}{2} \sum_{k=2}^{D} k \cdot \Ls{k}(\mu\vf,\ldots,\mu\vf) \fix{\vf} \\
    &\ge  L^* \sum_{k=2}^{D} \Ls{k}(\mu\vf,\ldots,\mu\vf) \fix{\vf} \\
    &=    L^* \vh(\fix{\vf}) \\
    &=    L^* (\vf(\fix{\vf}) - L \fix{\vf} - \vc)             && \text{($\vf(\vx) = \vh(\vx) + L\vx + \vc$)}\\
    &=    L^* \fix{\vf} - L^* L \fix{\vf} - L^* \vc            && \text{($\vf(\fix{\vf}) = \fix{\vf}$)}\\
    &=    \fix{\vf} - L^* \vc                                  && \text{($L^* = \Id + L^*L$)}\\
    &=    \fix{\vf} - \Ne(\vzero)                              && \text{($\Ne(\vzero) = \vf'(\vzero)^*\vf(\vzero) = L^* \vc$)}
     & \text{\qed}
  \end{align*}
\end{proof}

We extend Lemma~\ref{lem:blub} to arbitrary vectors~$\vx$ as follows.
\begin{lemma}\label{lem:blub2}
 Let $\vf$ be a feasible (not necessarily clean) SPP.
 Let $\vzero \le \vx \le \mu\vf$ and $\vx \le \vf(\vx)$ such that $\vf'(\vx)^*$ exists.
 Let $\vd$ be a cone vector of~$\vf$.
 Let $\vx \ge \mu\vf - \lambda \vd$ for some $\lambda \ge 0$.
 Then
 \[
   \Ne(\vx) \ge \mu \vf - \frac{1}{2} \lambda \vd\;.
 \]
\end{lemma}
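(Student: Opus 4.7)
My plan is to reduce Lemma~\ref{lem:blub2} to Lemma~\ref{lem:blub} by an affine shift. Define $\vg(\vY) := \vf(\vx + \vY) - \vx$. I first check that $\vg$ is an SPP: expanding $\vf(\vx + \vY)$ by the multinomial theorem, every non-constant coefficient (in $\vY$) is a sum of products of nonnegative coefficients of $\vf$ with nonnegative powers of components of $\vx$, hence nonnegative; the constant term $\vg(\vzero) = \vf(\vx) - \vx$ is nonnegative by the hypothesis $\vx \le \vf(\vx)$. This is the only place where the assumption $\vx \le \vf(\vx)$ is essential, and it is the main technical point to verify — the rest of the reduction is algebraic bookkeeping.

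Next I would check that $\vg$ satisfies the hypotheses of Lemma~\ref{lem:blub} with the same cone vector $\vd$ and the same~$\lambda$. Since $\vY \mapsto \vx + \vY$ is a bijection between nonnegative fixed points of $\vg$ and fixed points of $\vf$ above~$\vx$, and since $\vx \le \mu\vf$, we get $\mu\vg = \mu\vf - \vx$. The assumption $\vx \ge \mu\vf - \lambda\vd$ then rewrites as $\mu\vg \le \lambda\vd$, i.e.\ $\vzero \ge \mu\vg - \lambda\vd$, which is exactly the hypothesis of Lemma~\ref{lem:blub} applied to~$\vg$. Moreover, $\vg'(\vY) = \vf'(\vx + \vY)$, so $\vg'(\vzero)^* = \vf'(\vx)^*$ exists by assumption, and $\vg'(\mu\vg)\,\vd = \vf'(\mu\vf)\,\vd \le \vd$, so $\vd$ is still a cone vector for~$\vg$.

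Finally I would invoke Lemma~\ref{lem:blub} on $\vg$ to obtain
\[
 \Ne_\vg(\vzero) \ge \mu\vg - \tfrac{1}{2}\lambda\vd
\]
and translate back using $\Ne_\vg(\vzero) = \vg'(\vzero)^*\,\vg(\vzero) = \vf'(\vx)^*(\vf(\vx) - \vx)$ together with $\Ne_\vf(\vx) = \vx + \vf'(\vx)^*(\vf(\vx) - \vx)$. This gives
\[
 \Ne_\vf(\vx) = \vx + \Ne_\vg(\vzero) \ge \vx + \mu\vg - \tfrac{1}{2}\lambda\vd = \mu\vf - \tfrac{1}{2}\lambda\vd,
\]
as desired. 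Note that Lemma~\ref{lem:blub} does not require cleanness, so the fact that $\vg$ may fail to be clean is not an issue; the shift trick therefore yields the generalization in a single, clean reduction.
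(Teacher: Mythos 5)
Your reduction is exactly the paper's own proof: the paper also defines $\vg(\vX) := \vf(\vX + \vx) - \vx$, uses $\vx \le \vf(\vx)$ to see that the constant term (the only possibly negative coefficients) is nonnegative, observes $\mu\vg = \mu\vf - \vx$ and $\vg'(\mu\vg)\vd = \vf'(\mu\vf)\vd \le \vd$, and then applies Lemma~\ref{lem:blub} and translates back via $\Ne_\vf(\vx) = \vx + \Ne_\vg(\vzero)$. The argument is correct and complete.
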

\begin{proof}
 Define $\vg(\vX) := \vf(\vX + \vx) - \vx$.
 We first show that $\vg$ is an SPP (not necessarily clean).
 The only coefficients of~$\vg$ that could be negative are those of degree 0.
 But we have $\vg(\vzero) = \vf(\vx) - \vx \ge \vzero$, and so these coefficients are also nonnegative.

 It follows immediately from the definition that $\mu\vf - \vx \ge \vzero$ is the least fixed point of~$\vg$.
 Moreover, $\vg$ satisfies $\vg'( \mu\vf - \vx) \vd \le \vd$, and so $\vd$ is also a cone vector of~$\vg$.
 Finally, we have $\vzero \ge \mu\vf - \vx - \lambda \vd = \mu\vg - \lambda \vd$.
 So, Lemma~\ref{lem:blub} can be applied as follows.
 \begin{align*}
   \Ne_\vf(\vx) & =   \vx + \vf'(\vx)^*    (\vf(\vx) - \vx) \\
                & =   \vx + \vg'(\vzero)^* (\vg(\vzero) - \vzero) \\
                & =   \vx + \Ne_\vg(\vzero) \\
                & \ge \vx + \mu\vg - \frac{1}{2} \lambda \vd && \text{(Lemma~\ref{lem:blub})} \\
                & =   \mu\vf - \frac{1}{2} \lambda \vd
 \end{align*}
\qed
\end{proof}

By induction we can extend this lemma to the whole Newton sequence:
\begin{lemma} \label{lem:blubblub}
 Let $\vd$ be a cone vector of a clean and feasible SPP $\vf$ and let \mbox{$\lmax = \max_j\{ \frac{\mu\vf_j}{d_j} \}$}.
 Then
 \[
   \ns{k} \ge \mu\vf - 2^{-k} \lmax \vd\;.
 \]
\end{lemma}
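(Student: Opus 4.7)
The plan is to prove this by a straightforward induction on $k$, with Lemma~\ref{lem:blub2} doing essentially all of the work in the induction step. The base case and the inductive step are both short, so there is no serious obstacle; the real content has already been packaged into Lemma~\ref{lem:blub2}.

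For the base case $k=0$, I would observe that $\ns{0} = \vzero$ by definition of the Newton sequence, and that $\lmax \vd \ge \mu\vf$ holds componentwise, simply because $\lmax d_j \ge \mu\vf_j$ for every $j$ by the definition of $\lmax$. Hence $\mu\vf - \lmax \vd \le \vzero = \ns{0}$, which is the desired inequality.

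For the inductive step, assume $\ns{k} \ge \mu\vf - 2^{-k}\lmax \vd$. I want to invoke Lemma~\ref{lem:blub2} with $\vx := \ns{k}$ and $\lambda := 2^{-k}\lmax$. Three hypotheses of that lemma need to be checked: (i) $\vzero \le \ns{k} \le \mu\vf$ and $\ns{k} \le \vf(\ns{k})$, which are provided directly by Theorem~\ref{thm:well-defined}; (ii) $\vf'(\ns{k})^*$ exists, again by Theorem~\ref{thm:well-defined}.2; (iii) $\ns{k} \ge \mu\vf - \lambda \vd$, which is exactly the induction hypothesis. The conclusion of Lemma~\ref{lem:blub2} then reads
\[
  \ns{k+1} \;=\; \Ne(\ns{k}) \;\ge\; \mu\vf - \tfrac{1}{2}\cdot 2^{-k}\lmax\vd \;=\; \mu\vf - 2^{-(k+1)}\lmax\vd,
\]
which completes the induction.

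The only thing to double-check is that Lemma~\ref{lem:blub2} really is applicable at every step: this relies on the fact (from Theorem~\ref{thm:well-defined}) that the entire Newton sequence for a clean and feasible SPP lies between $\vzero$ and $\mu\vf$ and satisfies $\ns{k} \le \vf(\ns{k})$, together with the nonsingularity of $\Id - \vf'(\ns{k})$. Since the lemma is stated for arbitrary feasible (not necessarily clean) SPPs and the cone-vector assumption is preserved throughout (it depends only on $\vf$), there is nothing further to verify, and the induction carries through without difficulty.
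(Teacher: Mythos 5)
Your proof is correct and follows exactly the paper's own argument: induction on $k$, with the base case coming from the definition of $\lmax$ and the inductive step being a direct application of Lemma~\ref{lem:blub2} (whose hypotheses are supplied by Theorem~\ref{thm:well-defined}). Nothing is missing.
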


\begin{figure}[ht]
 \begin{center}
 {
  \psfrag{X1 = f1(X1,X2)}{$X_1 = f_1(\vX)$}
  \psfrag{X2 = f2(X1,X2)}{$X_2 = f_2(\vX)$}
  \psfrag{muf}{$\fix{\vf}=\vr(0)$}
  \psfrag{0}{$0$}
  \psfrag{0.2}{}
  \psfrag{0.4}{}
  \psfrag{0.6}{}
  \psfrag{\2610.4}{$-0.4$}
  \psfrag{\2610.2}{$-0.2$}
  \psfrag{02h}{$0.2$}
  \psfrag{04h}{$0.4$}
  \psfrag{06h}{$0.6$}
  \psfrag{02v}{$0.2$}
  \psfrag{X1}{$X_1$}
  \psfrag{X2}{$X_2$}
  \psfrag{glmax}{$\vr(\lmax)$}
   \scalebox{0.9}{ \includegraphics{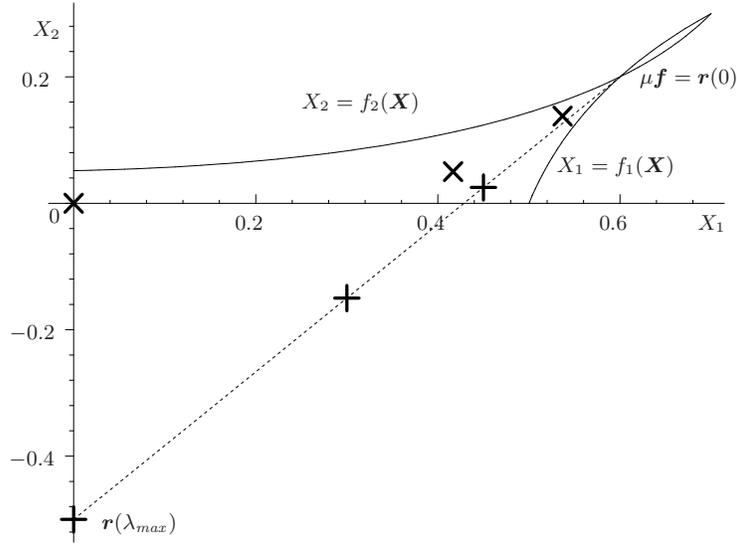}}
 }
 \end{center}
 \caption{Illustration of Lemma~\ref{lem:blubblub}:
          The points (shape: $\boldsymbol{+}$) on the ray~$r$ along a cone vector are lower bounds
           on the Newton approximants (shape: $\boldsymbol{\times}$).}
 \label{fig:blubblub}
\end{figure}
Before proving the lemma we illustrate it by a picture.
The dashed line in Figure~\ref{fig:blubblub} is the ray $\vr(t) = \mu\vf - t\vd$ along a cone vector~$\vd$.
Notice that $\vr(0)$ equals $\mu\vf$ and $\vr(\lmax)$ is the greatest point on the ray that is below $\vzero$.
The figure also shows the Newton iterates $\ns{k}$ for $0 \le k \le 2$ (shape: $\boldsymbol{\times}$) and the corresponding
 points $\vr(2^{-k} \lmax)$ (shape: $\boldsymbol{+}$) located on the ray $\vr$.
Observe that $\ns{k} \ge \vr(2^{-k}\lmax)$, as claimed by Lemma~\ref{lem:blubblub}.

\begin{proof}[of Lemma~\ref{lem:blubblub}]
 By induction on~$k$.
 For the induction base ($k=0$) we have for all components~$i$:
  \[
   \left( \mu\vf - \lmax \vd \right)_i = \left( \mu\vf - \max_j \left\{ \frac{\mu\vf_j}{d_j} \right\} \vd \right)_i
    \le \mu\vf_i - \frac{\mu\vf_i}{d_i} d_i = 0\;,
  \]
 so $\ns{0} = \vzero \ge \mu\vf - \lmax \vd$.

 For the induction step, let $k \ge 0$.
 By induction hypothesis we have $\ns{k} \ge \mu\vf - 2^{-k} \lmax \vd$.
 So we can apply Lemma~\ref{lem:blub2} to get
  \[
   \ns{k+1} = \Ne(\ns{k}) \ge \mu\vf - \frac{1}{2} 2^{-k} \lmax \vd = \mu\vf - 2^{-(k+1)} \lmax \vd\;.
  \]
 \qed
\end{proof}

The following proposition guarantees a convergence order of the Newton sequence in terms of a cone vector.

\begin{proposition} \label{prop:blubblubblub}
 Let $\vd$ be a cone vector of a clean and feasible SPP $\vf$ and let
  $\lmax = \max_j \left\{ \frac{\mu\vf_j}{d_j} \right\}$ and
  $\lmin = \min_j \left\{ \frac{\mu\vf_j}{d_j} \right\}$.
 Let $k_{\vf,\vd} = \left\lceil \log \frac{\lmax}{\lmin} \right\rceil$.
 Then $\beta(k_{\vf,\vd} + i) \ge i$ for all $i\in\N$.
\end{proposition}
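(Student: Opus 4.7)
The plan is to derive this directly from Lemma~\ref{lem:blubblub}, which gives the componentwise lower bound $\ns{k} \ge \mu\vf - 2^{-k}\lmax \vd$. Since Theorem~\ref{thm:well-defined} also guarantees $\ns{k} \le \mu\vf$, the absolute error in component $j$ is sandwiched as $0 \le \mu\vf_j - \nsc{k}_j \le 2^{-k}\lmax d_j$.

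To pass from absolute to relative error (the quantity controlling $\beta$), I would divide by $\mu\vf_j$:
\[
\frac{\mu\vf_j - \nsc{k}_j}{\mu\vf_j} \;\le\; 2^{-k}\lmax \cdot \frac{d_j}{\mu\vf_j} \;=\; \frac{2^{-k}\lmax}{\mu\vf_j/d_j} \;\le\; \frac{2^{-k}\lmax}{\lmin},
\]
where the last inequality uses the definition $\lmin = \min_j \mu\vf_j/d_j$. Note that $\lmin > 0$ because $\vf$ is clean (so $\mu\vf \succ \vzero$) and $\vd \succ \vzero$ (since $\vd$ is a cone vector), which also ensures the division is legitimate.

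Now I would substitute $k = k_{\vf,\vd} + i$. Since $k_{\vf,\vd} = \lceil \log(\lmax/\lmin)\rceil \ge \log(\lmax/\lmin)$, we get $2^{-k_{\vf,\vd}} \le \lmin/\lmax$, and therefore
\[
\frac{\mu\vf_j - \nsc{k_{\vf,\vd}+i}_j}{\mu\vf_j} \;\le\; \frac{\lmax}{\lmin}\cdot 2^{-k_{\vf,\vd}}\cdot 2^{-i} \;\le\; 2^{-i},
\]
uniformly in $j$. By Definition~\ref{def:valid-bits} this means $\ns{k_{\vf,\vd}+i}$ has at least $i$ valid bits of $\mu\vf$, i.e., $\beta(k_{\vf,\vd}+i) \ge i$.

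There is no real obstacle: the work has already been done in Lemma~\ref{lem:blubblub}, and the proposition is essentially just the bookkeeping step that converts the geometric bound along the cone-vector ray into the relative-error notion of valid bits. The only subtlety worth flagging explicitly is that cleanness of $\vf$ together with $\vd \succ \vzero$ is what makes the ratio $\lmax/\lmin$ finite and positive, so that $k_{\vf,\vd}$ is a well-defined natural number.
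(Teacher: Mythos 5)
Your proof is correct and follows essentially the same route as the paper's: both invoke Lemma~\ref{lem:blubblub} for the bound $\mu\vf - \ns{k} \le 2^{-k}\lmax\vd$, then use the definition of $k_{\vf,\vd}$ and the inequality $\lmin d_j \le \mu\vf_j$ to convert it into a relative-error bound of $2^{-i}$. The only difference is cosmetic (you divide by $\mu\vf_j$ first, the paper keeps everything in absolute terms until the last step), and your remark on why $\lmin > 0$ is a valid, if implicit, detail of the paper's argument.
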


\begin{proof}
 For all $1 \le j \le n$ the following holds.
 \begin{align*}
  \bigl( \fix{\vf} - \ns{k_{\vf,\vd} + i} \bigr)_j
   & \le 2^{-(k_{\vf,\vd} + i)} \lmax d_j
        && \text{(Lemma~\ref{lem:blubblub})} \\
   & \le \frac{\lmin}{\lmax} 2^{-i} \lmax d_j
        && \text{(def.\ of $k_{\vf,\vd}$)} \\
   &  =  \lmin d_j \cdot 2^{-i} \\
   & \le \fix{\vf}_j \cdot 2^{-i}
        && \text{(def.\ of $\lmin$)}
 \end{align*}
 Hence, $\ns{k_{\vf,\vd} + i}$ has $i$ valid bits of~$\mu\vf$.
\qed
\end{proof}

\subsection{Convergence Speed Independent from Cone Vectors}

The convergence order provided by Proposition~\ref{prop:blubblubblub} depends on a cone vector~$\vd$.
While Proposition~\ref{prop:cone-vector-exists} guarantees the existence of a cone vector for scSPPs,
 it does not give any information on the magnitude of its components.
So we do not have any bound yet on the ``threshold'' $k_{\vf,\vd}$ from Proposition~\ref{prop:blubblubblub}.
The following theorem solves this problem.

\begin{theorem} \label{thm:estimate}
 Let $\vf$ be a quadratic, clean and feasible scSPP.
 Let $\cmin$ be the smallest nonzero coefficient of $\vf$
  and let $\mumin$ and $\mumax$ be the minimal and maximal component of $\fix{\vf}$, respectively.
 Let
  \[ k_\vf = \left\lceil \log \frac{\mumax}{\mumin \cdot \left(\cmin \cdot \min\{\mumin, 1\}\right)^n} \right\rceil\;.
  \]
 Then
  \[
   \beta( k_\vf + i) \ge i \text{ for all $i\in\N$.}
  \]
\end{theorem}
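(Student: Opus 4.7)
The plan is to exhibit a specific cone vector $\vd$, estimate the max-to-min ratio of its components, and then invoke Proposition~\ref{prop:blubblubblub}.

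First, by Proposition~\ref{prop:cone-vector-exists} there is a cone vector $\vd \succ \vzero$ with $\vf'(\mu\vf)\vd \le \vd$. I would rescale $\vd$ so that $\max_j d_j = 1$, attained at some index $j_0$. A routine induction using $\vf'(\mu\vf) \ge 0$ and $\vf'(\mu\vf)\vd \le \vd$ yields $\vf'(\mu\vf)^r \vd \le \vd$ for all $r \ge 0$, which gives
\[
 d_j \;\ge\; [\vf'(\mu\vf)^r\vd]_j \;\ge\; [\vf'(\mu\vf)^r]_{jj_0}\,d_{j_0} \;=\; [\vf'(\mu\vf)^r]_{jj_0} \qquad \text{for every } r \ge 0.
\]

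Next, I would bound the individual nonzero entries of $\vf'(\mu\vf)$ from below by $\delta := \cmin \cdot \min\{\mumin,1\}$. Since $\vf$ is clean we have $\mu\vf \succ \vzero$, and since $\vf$ is quadratic each nonzero entry $[\vf'(\mu\vf)]_{ij}$ comes either from a linear monomial $a X_j$ in $f_i$ (contributing $a \ge \cmin$) or from a bilinear monomial $a X_j X_k$ in $f_i$ (contributing $a\mu\vf_k \ge \cmin\mumin$), and in both cases is at least $\delta$. Since $[\vf'(\mu\vf)]_{ij}$ is nonzero precisely when $i$ depends directly on $j$, strong connectivity furnishes, for every $j$, a simple path from $j$ to $j_0$ in the dependence graph of some length $r_j \le n$. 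Interpreting $[\vf'(\mu\vf)^{r_j}]_{jj_0}$ as a sum of weights over walks of length $r_j$ and keeping only our chosen path then yields $[\vf'(\mu\vf)^{r_j}]_{jj_0} \ge \delta^{r_j} \ge \delta^n$, where the second inequality uses $\delta \le 1$; this in turn is a consequence of $\vf'(\mu\vf)$ being irreducible and nonnegative with spectral radius at most $1$ (so its minimum row sum is at most $1$, while every row contains a nonzero entry of size at least~$\delta$). Combined with the chain of the first paragraph, $\min_j d_j \ge \delta^n$.

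Finally, plugging $\max_j d_j = 1$ and $\min_j d_j \ge \delta^n$ into Proposition~\ref{prop:blubblubblub} gives
\[
 \frac{\lmax}{\lmin} \;\le\; \frac{\mumax/\delta^n}{\mumin} \;=\; \frac{\mumax}{\mumin \cdot \delta^n},
\]
so $k_{\vf,\vd} \le k_\vf$. The Newton sequence is monotone and bounded above by $\mu\vf$ (Theorem~\ref{thm:well-defined}), so $\beta$ is nondecreasing, and Proposition~\ref{prop:blubblubblub} delivers $\beta(k_\vf + i) \ge \beta(k_{\vf,\vd} + i) \ge i$ for every $i \in \N$, as claimed. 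I expect the chief obstacle to be the entrywise bound $[\vf'(\mu\vf)^{r_j}]_{jj_0} \ge \delta^n$: it combines the algebraic observation that every nonzero derivative entry is at least $\delta$ (using both quadraticity and cleanness) with a combinatorial path-counting argument in the strongly connected dependence graph, and the exponent $n$ in the final threshold is exactly the worst-case simple-path length.
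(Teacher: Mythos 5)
Your proof is correct, and it follows the paper's overall architecture — produce a cone vector $\vd$ via Proposition~\ref{prop:cone-vector-exists}, bound $\dmax/\dmin$ by $\left(\cmin\cdot\min\{\mumin,1\}\right)^{-n}$, and feed the resulting bound on $\lmax/\lmin$ into Proposition~\ref{prop:blubblubblub} — but you prove the key quantitative estimate (the paper's Lemma~\ref{lem:dmin-over-dmax}) by a genuinely different route. The paper fixes the indices realizing $\dmax$ and $\dmin$, takes a single dependence chain between them, and applies an averaging/pigeonhole argument to $\log\frac{\dmax}{\dmin}=\sum_j\log\frac{d_{r_j}}{d_{r_{j+1}}}$ to extract one edge $(s,t)$ with $\frac{\dmin}{\dmax}\ge\left(\frac{d_s}{d_t}\right)^n$, and only then uses the cone inequality entrywise, $\delta\le\vf'_{st}(\vmu)\le d_s/d_t$; this never needs to know that $\delta:=\cmin\cdot\min\{\mumin,1\}\le 1$ (it falls out as a byproduct). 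You instead iterate the cone inequality to $\vf'(\mu\vf)^r\vd\le\vd$ and lower-bound a single matrix-power entry along a path of length $r_j\le n$, which is arguably more transparent but forces you to establish $\delta\le 1$ separately in order to pass from $\delta^{r_j}$ to $\delta^n$; your justification via the spectral radius of $\vf'(\mu\vf)$ being at most $1$ (hence minimum row sum at most $1$, while every row of the irreducible derivative matrix has an entry $\ge\delta$ since $\mu\vf\succ\vzero$ by cleanness) is valid and uses only facts the paper has already made available. The remaining steps — $\lmax\le\mumax/\dmin$, $\lmin\ge\mumin/\dmax$, $k_{\vf,\vd}\le k_\vf$, and monotonicity of the Newton sequence to transfer the bound from $k_{\vf,\vd}$ to $k_\vf$ — match the paper's proof of Theorem~\ref{thm:estimate} exactly.
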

Before we prove Theorem~\ref{thm:estimate} we give an example.
\begin{example} \label{ex:thm-estimate}
 As an example of application of Theorem~\ref{thm:estimate} consider the scSPP equation of the back button process of
 Example~\ref{ex:back-button}.
 \[
  \begin{pmatrix}
   X_1 \\ X_2 \\ X_3
  \end{pmatrix}
  =
  \begin{pmatrix}
   0.4 X_2 X_1 + 0.6 \\
   0.3 X_1 X_2 + 0.4 X_3 X_2 + 0.3 \\
   0.3 X_1 X_3 + 0.7
  \end{pmatrix}
 \]


 We wish to know if there is a component $s \in \{1,2,3\}$ with $\mu\vf_s = 1$.
 Notice that $\vf(\vone) = \vone$, so $\mu\vf \le \vone$.
 Performing $14$ Newton steps (e.g.\ with Maple) yields an approximation $\ns{14}$ to~$\mu\vf$ with
 \[
  \begin{pmatrix}
   0.98 \\
   0.97 \\
   0.992
  \end{pmatrix}
  \le
  \ns{14}
  \le
  \begin{pmatrix}
   0.99 \\
   0.98 \\
   0.993
  \end{pmatrix} \:.
 \]
 We have $\cmin = 0.3$.
 In addition, since Newton's method converges to $\fix{\vf}$ from below, we know $\mumin \ge 0.97$.
 Moreover, $\mumax \le 1$, as $\vone = \vf(\vone)$ and so $\fix{\vf} \leq \vone$.
 Hence $\displaystyle k_\vf \le \left\lceil \log \frac{1}{0.97 \cdot (0.3 \cdot 0.97)^3} \right\rceil = 6$.
 Theorem~\ref{thm:estimate} then implies that $\ns{14}$ has 8 valid bits of $\fix{\vf}$.
 As $\fix{\vf} \leq \vone$, the absolute errors are bounded by the relative errors, and since $2^{-8} \le 0.004$ we know:
 \[
  \fix{\vf} \le
  \ns{14}
  +
  \begin{pmatrix}
   2^{-8}\\
   2^{-8}\\
   2^{-8}
  \end{pmatrix}
  \le
  \begin{pmatrix}
   0.994\\
   0.984\\
   0.997\\
  \end{pmatrix}
  \prec
  \begin{pmatrix}
   1\\
   1\\
   1\\
  \end{pmatrix}
 \]
 So Theorem~\ref{thm:estimate} yields a proof that $\mu\vf_s < 1$ for all three components~$s$.

 Notice also that the Newton sequence converges much faster than the Kleene sequence $(\ks{k})_{k\in\Nat}$.
 We have $\ks{14} \prec \bigl(0.89, 0.83, 0.96 \bigr)^\top$,
 so $\ks{14}$ has no more than $4$ valid bits in any component,
  whereas $\ns{14}$ has, in fact, more than $30$ valid bits in each component.
\qed
\end{example}

For the proof of Theorem~\ref{thm:estimate} we need the following lemma.
\begin{lemma} \label{lem:dmin-over-dmax}
 Let $\vd$ be a cone vector of a quadratic, clean and feasible scSPP~$\vf$.
 Let $\cmin$ be the smallest nonzero coefficient of~$\vf$ and $\mumin$ the minimal component of~$\mu\vf$.
 Let $\dmin$ and $\dmax$ be the smallest and the largest component of~$\vd$, respectively.
 Then
  \[
   \frac{\dmin}{\dmax} \ge \left( \cmin \cdot \min\{\mumin, 1\} \right)^n\;.
  \]
\end{lemma}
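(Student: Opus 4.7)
The plan is to exploit the cone-vector inequality $\vf'(\mu\vf)\vd \le \vd$ coordinate by coordinate and chain the resulting per-edge estimates along a path in the dependency graph. First I would obtain a uniform lower bound on each nonzero Jacobian entry at $\mu\vf$: since $\vf$ is quadratic, whenever component $i$ depends directly on component $k$ the polynomial $\partial f_i / \partial X_k$ has degree at most~$1$, has nonnegative coefficients, and carries at least one coefficient that is $\ge \cmin$. If a constant term (coming from a linear $X_k$-monomial of $f_i$) is present, evaluation at $\mu\vf$ contributes $\ge \cmin$; otherwise the derivative is $\ge \cmin \cdot X_\ell$ for some $\ell$, contributing $\ge \cmin \cdot \mumin$. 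In either case $[\vf'(\mu\vf)]_{ik} \ge c$, where I set $c := \cmin \cdot \min\{\mumin,1\}$.

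Next I would read $\vf'(\mu\vf)\vd \le \vd$ componentwise: since every summand is nonnegative, $d_i \ge \sum_{k} [\vf'(\mu\vf)]_{ik} d_k \ge [\vf'(\mu\vf)]_{ik} d_k$ for each single $k$. Together with the first step this gives $d_i \ge c \cdot d_k$ whenever $i$ depends directly on $k$. Now pick indices $j^\ast, i^\ast$ achieving $\dmin, \dmax$ respectively; by strong connectedness there is a simple path $j^\ast = p_0, p_1, \ldots, p_r = i^\ast$ in the dependency graph with each $p_{\ell - 1}$ directly dependent on $p_\ell$ and $r \le n$. Iterating the previous inequality along the path yields
\[
  \dmin \;=\; d_{j^\ast} \;\ge\; c^{r}\, d_{i^\ast} \;=\; c^{r}\, \dmax.
\]

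To replace the path length $r$ by the syntactic parameter $n$ I would finally observe that $c \le 1$. Because $\vf$ is non-trivially strongly connected, every component lies on some directed cycle in the dependency graph; chaining the per-edge inequality around such a cycle of length $\ell \ge 1$ gives $d_i \ge c^{\ell} d_i$ with $d_i > 0$ (a cone vector has strictly positive entries), forcing $c \le 1$. Since $0 < c \le 1$ and $r \le n$, we have $c^{r} \ge c^{n}$, so $\dmin / \dmax \ge c^{n} = (\cmin \cdot \min\{\mumin,1\})^{n}$, as required. The only slightly delicate point is the case analysis for the Jacobian entry in the first step (constant versus linear contribution), which is exactly what forces the factor $\min\{\mumin,1\}$ into the bound; the rest is routine graph-theoretic chaining, and the dichotomy between trivial and non-trivial SCCs is precisely what guarantees the cycle needed for $c \le 1$.
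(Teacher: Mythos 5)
Your proof is correct and follows essentially the same route as the paper's: both rest on the per-edge bound $\left[\vf'(\mu\vf)\right]_{ik} \ge \cmin\cdot\min\{\mumin,1\}$ for nonzero Jacobian entries (using quadraticity and $\mu\vf \succ \vzero$) and on chaining the cone-vector inequality $\vf'(\mu\vf)\vd\le\vd$ along a dependency path between the components realizing $\dmin$ and $\dmax$. The only difference is in the final accounting: the paper applies a pigeonhole step to the telescoping sum for $\log(\dmax/\dmin)$ to isolate a single edge $(s,t)$ and raises that one ratio to the $n$-th power, whereas you multiply the per-edge bounds directly to get $\dmin \ge c^{r}\dmax$ and then pad the exponent from $r$ to $n$ via your (correct) cycle argument showing $c \le 1$.
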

\begin{proof}
 In what follows we shorten $\mu\vf$ to~$\vmu$.
 Let w.l.o.g.\ $d_1 = \dmax$ and $d_n = \dmin$.
 We claim the existence of indices $s,t$ with $1 \le s,t \le n$
  such that $\vf'_{st}(\vmu) \ne 0$ and
  \begin{equation} \label{eq:proof-thm-proximity-1}
   \frac{\dmin}{\dmax} \ge \left( \frac{d_s}{d_t} \right)^n\;.
  \end{equation}
 To prove that such $s,t$ exist, we use the fact that $\vf$ is strongly connected,
  i.e., that there is a sequence $1 = r_1, r_2, \ldots, r_q = n$ with $q \le n$
   such that $\vf'_{r_{j+1}r_j}(\vX)$ is not constant zero.
 As $\vmu \succ \vzero$, we have $\vf'_{r_{j+1}r_j}(\vmu) \ne 0$.
 Furthermore
 \begin{align*}
   \frac{d_1}{d_n} & = \frac{d_{r_1}}{d_{r_2}} \cdots \frac{d_{r_{q-1}}}{d_{r_q}}
    \text{\ , and so} \\
   \log \frac{d_1}{d_n} & = \log \frac{d_{r_1}}{d_{r_2}} +
                                           \cdots + \log \frac{d_{r_{q-1}}}{d_{r_q}}\;.
 \end{align*}
 So there must exist a $j$ such that
  \begin{align*}
   \log \frac{d_1}{d_n} & \le (q-1) \log \frac{d_{r_j}}{d_{r_{j+1}}} \le n \log \frac{d_{r_j}}{d_{r_{j+1}}} \text{\quad, and so} \\
   \frac{d_n}{d_1}      & \ge \left( \frac{d_{r_{j+1}}}{d_{r_j}} \right)^n \;.
  \end{align*}
 Hence one can choose $s = r_{j+1}$ and $t = r_j$.

 As $\vd$ is a cone vector we have $\vf'(\vmu) \vd \le \vd$ and thus $\vf'_{st}(\vmu) d_t \le d_s$.
 Hence
 \begin{equation} \label{eq:proof-thm-proximity-2}
  \vf'_{st}(\vmu) \le \frac{d_s}{d_t}\;.
 \end{equation}
 On the other hand, since $\vf$ is quadratic, $\vf'$ is a linear mapping such that
 \begin{align*}
  \vf'_{st}(\vmu) & = 2 ( b_1 \cdot \mu_1 + \cdots + b_n \cdot \mu_n ) + \ell
 \end{align*}
 where $b_1, \ldots, b_n$ and $\ell$ are coefficients of quadratic, respectively linear, monomials of $\vf$.
 As $\vf'_{st}(\vmu) \ne 0$, at least one of these coefficients must be nonzero and so greater than or equal to $\cmin$.
 It follows $\vf'_{st}(\vmu) \ge \cmin \cdot \min\{\mumin, 1\}$.
 So we have
  \begin{align*}
   \left( \cmin \cdot \min\{\mumin, 1\} \right)^n
    & \le \left( \vf'_{st}(\vmu) \right)^n \\
    & \le \left( \frac{d_s}{d_t} \right)^n && \text{(by~\eqref{eq:proof-thm-proximity-2})} \\
    & \le \frac{\dmin}{\dmax} && \text{(by~\eqref{eq:proof-thm-proximity-1})}\;.
  \end{align*}
\qed
\end{proof}

Now we can prove Theorem~\ref{thm:estimate}.
\begin{proof}[of Theorem~\ref{thm:estimate}]
 By Proposition~\ref{prop:cone-vector-exists}, $\vf$ has a cone vector~$\vd$.
 Let
  $\dmax = \max_j \{d_j\}$ and
  $\dmin = \min_j \{d_j\}$ and
  $\lmax = \max_j \left\{ \frac{\mu\vf_j}{d_j} \right\}$ and
  $\lmin = \min_j \left\{ \frac{\mu\vf_j}{d_j} \right\}$.
 We have:
 \begin{align*}
  \frac{\lmax}{\lmin} & \le \frac{\mumax \cdot \dmax}{\mumin \cdot \dmin}
                        && \text{(as $\lmax \le \frac{\dmax}{\mumin}$ and $\lmin \ge \frac{\dmin}{\mumax}$)} \\
                      & \le \frac{\mumax}{\mumin \cdot \left(\cmin \cdot \min\{\mumin, 1\}\right)^n}
                        && \text{(Lemma~\ref{lem:dmin-over-dmax})\;.}
 \end{align*}
 So the statement follows with Proposition~\ref{prop:blubblubblub}.
\qed
\end{proof}

The following consequence of Theorem~\ref{thm:estimate} removes some of the parameters
 on which the $k_\vf$ from Theorem~\ref{thm:estimate} depends.
\begin{theorem} \label{thm:estimate-cor}
Let $\vf$ be a quadratic, clean and feasible scSPP,
let $\mumin$ and $\mumax$ be the minimal and maximal component of $\fix{\vf}$, respectively,
and let the coefficients of~$\vf$ be given as ratios of $m$-bit integers.
Then
  \[
   \beta( k_\vf + i) \ge i \text{ for all $i\in\N$}
  \]
 holds for any of the following choices of~$k_\vf$.
 \begin{itemize}
  \item[1.]
   $\displaystyle \lceil 4mn + 3n \max\{ 0, - \log \mumin \} \rceil$;
  \item[2.]
   $\displaystyle 4mn2^n$;
  \item[3.]
   $\displaystyle 7mn$ whenever $\vf(\vzero) \succ \vzero$;
  \item[4.]
   $\displaystyle 2mn + m$ whenever both $\vf(\vzero) \succ \vzero$ and $\mumax \le 1$.
 \end{itemize} 
\end{theorem}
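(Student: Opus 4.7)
My plan is to derive each of the four bounds from Theorem~\ref{thm:estimate} by controlling the quantities $\mumax$, $\mumin$, and $\cmin$ using only the syntactic parameters $m$ and~$n$, under the hypotheses of each item. The starting point is the formula
\[
  k_\vf \;=\; \bigl\lceil \log \mumax - \log \mumin - n \log \cmin - n \log \min\{\mumin, 1\} \bigr\rceil
\]
from Theorem~\ref{thm:estimate}. First I would record two uniform simplifications: since the coefficients are ratios of $m$-bit integers, the smallest nonzero coefficient satisfies $\cmin \ge 2^{-m}$, hence $-n\log\cmin \le mn$; and the two $\mumin$-terms telescope into
\[
  -\log \mumin - n \log \min\{\mumin,1\} \;\le\; (n+1)\,\max\{0,-\log\mumin\},
\]
so that
\[
  k_\vf \;\le\; \log \mumax + mn + (n+1)\,\max\{0,-\log\mumin\} + 1.
\]

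For items~3 and~4, the hypothesis $\vf(\vzero) \succ \vzero$ together with monotonicity gives $\mu\vf \ge \vf(\vzero)$, so every component of $\mu\vf$ is at least~$\cmin \ge 2^{-m}$; in particular $\max\{0,-\log\mumin\} \le m$. Item~4 then plugs in $\log\mumax \le 0$ (given by the additional hypothesis $\mumax\le 1$) to obtain $k_\vf \le mn + (n+1)m + 1 = 2mn + m + 1$, where a careful treatment of the ceiling recovers exactly $2mn+m$. Item~3 proceeds similarly once a polynomial bound $\log\mumax \le O(mn)$ is in place. For items~1 and~2 we do not have access to the strictness hypothesis, so $\mumin$ must be carried as a parameter (item~1) or bounded exponentially via $\log(1/\mumin) = O(m\cdot 2^n)$ (item~2); the latter is a standard consequence of quantifier-elimination bounds for polynomial systems with $m$-bit coefficients, and a matching exponential upper bound on $\log\mumax$ follows analogously, yielding $k_\vf \le 4mn\cdot 2^n$.

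The main obstacle is bounding $\log\mumax$ in items~1 and~3, where only a polynomial bound in $m,n$ (with a controlled $\mumin$-dependent slack in item~1) is allowed. The key idea is to exploit Lemma~\ref{lem:dmin-over-dmax}, which shows that any cone vector~$\vd$ (guaranteed by Proposition~\ref{prop:cone-vector-exists}) satisfies $\dmin/\dmax \ge (\cmin\cdot\min\{\mumin,1\})^n$. Combining this with the cone inequality $\vf'(\mu\vf)\vd \le \vd$ and the quadratic, strongly connected structure of $\vf$ forces $\mumax/\mumin$ itself to be controlled by $(\cmin\cdot\min\{\mumin,1\})^{-n}$ up to a factor accounting for the constant terms of $\vf$. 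Tracking this yields, in item~1, a bound of the form $\log\mumax \le 3mn + (2n-1)\max\{0,-\log\mumin\}$, which combined with the displayed inequality above gives $k_\vf \le 4mn + 3n\,\max\{0,-\log\mumin\}$ after taking the ceiling. Under the strictness hypothesis this simplifies further to $\log\mumax \le 4mn$, and item~3 follows by plugging in $\max\{0,-\log\mumin\}\le m$ and collecting terms into $7mn$ (using $n\ge 1$).

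Everything beyond this cone-vector/eigenvalue argument is routine arithmetic: having bounded each of $\log\mumax$, $-\log\mumin$, and $-\log\cmin$ appropriately, items~1--4 reduce to substituting into the displayed inequality and verifying the four stated inequalities, which I would do as a final step.
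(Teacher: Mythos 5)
Your skeleton matches the paper's: reduce everything to the formula of Theorem~\ref{thm:estimate}, bound $-\log\cmin$ by $m$, bound $-\log\mumin$ by $m$ under the hypothesis $\vf(\vzero)\succ\vzero$, and then the whole problem collapses to bounding $\log\mumax$ in items~1 and~3. You correctly identify that last step as the main obstacle, and your arithmetic targets ($\log\mumax \le 3mn + (2n-1)\max\{0,-\log\mumin\}$, collapsing to $O(mn)$ under strictness) are consistent with what is needed. But the tool you propose for it does not do the job. Lemma~\ref{lem:dmin-over-dmax} bounds the ratio $\dmin/\dmax$ of the components of a \emph{cone vector} $\vd$, i.e.\ a vector with $\vf'(\mu\vf)\vd\le\vd$; it says nothing about the components of $\mu\vf$ itself, and in particular it is blind to the constant terms of $\vf$, which is exactly where the size of $\mumax$ lives. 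The strong-connectivity chain argument you allude to does give $\mumax/\mumin \le (\cmin\min\{\mumin,1\})^{-n}$, but a bound on the \emph{ratio} cannot be converted into a bound on $\mumax$ alone without an additional idea. The missing idea (Lemma~\ref{lem:parameter-relations}.5 in the paper) is that in a strictly quadratic scSPP the maximal component depends on itself \emph{through a quadratic monomial}: following a monomial chain from $X_1$ to some degree-2 monomial and from each of its two variables back to $X_1$ yields $\mumax \ge \min(\cmin^{3n-2},1)\cdot\min(\mumin^{2n-2},1)\cdot\mumax^2$, hence $\mumax \le \bigl(\min(\cmin^{3n-2},1)\min(\mumin^{2n-2},1)\bigr)^{-1}$. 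This self-quadratic-dependence argument is the crux of items~1 and~3 and is absent from your proposal; the phrase ``up to a factor accounting for the constant terms'' is precisely the unproved content.

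Two smaller gaps. First, the argument above only applies to \emph{strictly} quadratic systems; the linear case must be disposed of separately (Newton's method reaches $\mu\vf$ after one iteration when all $f_i$ have degree at most~$1$), and you do not mention it. Second, for item~2 you invoke a generic $\log(1/\mumin)=O(m\cdot 2^n)$ from quantifier-elimination bounds, but an $O(\cdot)$ estimate with an unspecified constant cannot deliver the exact threshold $4mn2^n$; the paper instead proves the precise inequality $\mumin \ge \cmin^{2^n-1} \ge 2^{-m(2^n-1)}$ by an elementary induction that peels off one component at a time (each newly reached component has a nonzero constant term in the reduced system, whose smallest coefficient is at least the square of the previous bound times $\cmin$), and then item~2 follows from item~1 by substitution. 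With Lemma~\ref{lem:parameter-relations} in place your remaining arithmetic goes through essentially as you describe.
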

Items 3.\ and 4.\ of Theorem~\ref{thm:estimate-cor} apply in particular to termination SPPs of strict pPDAs
 (\S~\ref{subsec:stochastic-models}), i.e., they satisfy $\vf(\vzero) \gg \vzero$ and $\mumax \le 1$.

To prove Theorem~\ref{thm:estimate-cor} we need some relations between the parameters of~$\vf$.
We collect them in the following lemma.
\begin{lemma} \label{lem:parameter-relations}
 Let $\vf$ be a quadratic, clean and feasible scSPP.
 With the terminology of Theorem~\ref{thm:estimate} and Theorem~\ref{thm:estimate-cor} the following relations hold.
 \begin{itemize}
  \item[1.]
   $\cmin \ge 2^{-m}$.
  \item[2.]
   If $\vf(\vzero) \succ \vzero$ then $\mumin \ge \cmin$.
  \item[3.]
   If $\cmin > 1$ then $\mumin > 1$.
  \item[4.]
   If $\cmin \le 1$ then $\mumin \ge \cmin^{2^n-1}$.
  \item[5.]
   If $\vf$ is strictly quadratic, i.e.\ nonlinear, then the following inequalities hold: $\cmin \le 1$
    and $\mumax \cdot \cmin^{3n-2} \cdot \min\{\mumin^{2n-2},1\} \le 1$.
 \end{itemize}
\end{lemma}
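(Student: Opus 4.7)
I will address the five items in turn. Items~1 and~2 are immediate: for~1, any nonzero ratio $p/q$ with $|p|,|q|\le 2^m$ satisfies $|p/q|\ge 2^{-m}$, so $\cmin\ge 2^{-m}$; for~2, Kleene monotonicity yields $\mu\vf\ge\vf(\vzero)$, and the hypothesis $\vf(\vzero)\succ\vzero$ makes each component of $\vf(\vzero)$ a nonzero constant coefficient of~$\vf$, so each is $\ge\cmin$ and thus $\mumin\ge\cmin$.

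For Item~3 my plan is an argument by contradiction that relies on Item~5 (whose proof below is independent of Item~3). Assume $\cmin>1$. Item~5 tells us that a strictly quadratic clean feasible scSPP must have $\cmin\le 1$, so $\vf$ cannot be strictly quadratic and is therefore purely linear, say $\vf(\vX)=B\vX+\vf(\vzero)$. Non-trivial strong connectedness makes $B$ irreducible, and every row of~$B$ has at least one nonzero entry, since each component is on a dependency cycle of length $\ge 1$. Hence the minimum row sum of $B$ is $\ge\cmin>1$, so its Perron eigenvalue exceeds $1$; but feasibility of the linear SPP requires $(\Id-B)^{-1}=B^*$ to exist, which forces the Perron eigenvalue to be strictly less than~$1$. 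This contradiction shows that $\cmin>1$ is impossible for clean feasible scSPPs, so Item~3 holds vacuously.

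Item~4 is proved by a recursive argmin argument. Let $s$ be an argmin of $\mu\vf$, so $\mumin=f_s(\mu\vf)$. If $f_s$ has a constant term, then $\mumin\ge\cmin\ge\cmin^{2^n-1}$ using $\cmin\le 1$. Otherwise $f_s$ uses only other variables, and the worst (smallest) estimate arises from a pure quadratic monomial $cX_jX_k$ in~$f_s$ giving $\mumin\ge\cmin\,\mu\vf_j\mu\vf_k$. Viewing $\mu\vf_j$ and $\mu\vf_k$ as minima of a sub-SPP of dimension $\le n-1$ (derived by removing~$s$ and inheriting cleanliness, feasibility, and a suitable connectedness) and invoking the induction hypothesis $L_{n-1}\ge\cmin^{2^{n-1}-1}$ yields $\mumin\ge\cmin\cdot(\cmin^{2^{n-1}-1})^2=\cmin^{2^n-1}$. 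The linear-monomial case yields the stronger bound $\cmin^n\ge\cmin^{2^n-1}$. The delicate point is the construction of the sub-SPP and verifying that it still satisfies the inductive hypotheses.

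For Item~5, strict quadraticity means the homogeneous degree-$2$ part~$\vq$ of~$\vf$ is not identically zero. The upper bound $\cmin\le 1$ comes from a spectral argument. Euler's identity for quadratic polynomials gives $\vf'(\mu\vf)\,\mu\vf=\mu\vf+\vq(\mu\vf)-\vf(\vzero)$; since the Jacobian $\vf'(\mu\vf)$ is irreducible (by SC) with spectral radius $\rho\le 1$ by Theorem~\ref{thm:well-defined}.2 and continuity, pairing with a left Perron eigenvector $\vd\succ\vzero$ yields $\vd^\top\vq(\mu\vf)\le\vd^\top\vf(\vzero)$; because $\vq(\mu\vf)$ is nonzero in at least one component with lower bound $\ge\cmin\mumin^2$, a careful coefficient comparison with $\vf(\vzero)$ forces $\cmin\le 1$. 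For the product bound I would trace a simple cycle through the argmax~$t$ in the irreducible dependency graph (of length at most~$n$): each linear step contributes a factor of $\cmin$ and each quadratic step a factor of $\cmin\min\{\mumin,1\}$ to a lower bound on $\mu\vf_t=\mumax$, and summing linear and quadratic contributions across the cycle together with a final leg from the argmax produces the exponents $3n-2$ and $2n-2$. The main obstacle throughout Items~4 and~5 is the careful exponent book-keeping and the preservation of structural hypotheses (cleanliness, feasibility, strong connectedness) under reductions to sub-systems and cycle decompositions.
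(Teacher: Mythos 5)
Items~1 and~2 are correct and coincide with the paper's proof, and the product bound in Item~5 (a path from the maximal component to a quadratic monomial, then paths back) is essentially the paper's cycle argument with the right exponents. The trouble lies in your treatment of $\cmin\le 1$ and of Item~4, and it is structural. You have inverted the paper's logical order: the paper proves Item~3 \emph{first}, by a short self-contained induction on the Kleene sequence (if $\cmin>1$, every component of every $\ks{k}$ is either $0$ or $>1$, and cleanness plus monotone convergence then give $\mu\vf\succ\vone$), and only afterwards deduces $\cmin\le 1$ in Item~5 from Item~3 combined with the product inequality. You instead derive Item~3 from Item~5, so everything rests on your Euler/Perron derivation of $\cmin\le 1$ --- and that derivation is not actually carried out. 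Pairing the Euler identity with a left Perron vector $\vd$ of $\vf'(\mu\vf)$ only yields that the $\vd$-weighted sum of the degree-two terms at $\mu\vf$ is at most $\vd^\top\vf(\vzero)\le\vd^\top\mu\vf$, hence $d_{i_0}\cdot\cmin\cdot\mumin^2\le\vd^\top\mu\vf$; this does not force $\cmin\le 1$ unless one already knows $\mumin>1$, which is exactly Item~3. (A row-sum bound on $\vf'(\mu\vf)$ likewise only gives $\cmin\cdot\min\{\mumin,1\}\le 1$.) So the quadratic case of your ``Item~3 is vacuous'' claim is circular. Your linear-case argument ($\rho$ of the coefficient matrix is $<1$ because its star exists, versus a minimum row sum $\ge\cmin$) is correct and a genuinely different observation, but it does not cover the case that matters.

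The recursion in Item~4 also does not close. If you remove the argmin component $s$ and substitute $\mu\vf_s=\mumin$ into the remaining polynomials, the smallest nonzero coefficient of the reduced system can drop to $\cmin\cdot\mumin^2$, so the inductive bound for the $(n-1)$-dimensional system already contains the unknown $\mumin$; unwinding the resulting self-referential inequality produces an \emph{upper} bound on $\mumin$, not a lower bound. Deleting instead every monomial that mentions $X_s$ destroys cleanness. The paper's induction runs bottom-up: it maintains a set $S_k$ of $k$ components already known to satisfy $\mu\vf_s\ge\cmin^{2^k-1}$, substitutes those (already bounded) values so that the reduced coefficients are at least $\cmin\cdot(\cmin^{2^k-1})^2=\cmin^{2^{k+1}-1}$, and then picks as the next component one whose constant term in the reduced system is positive --- such a component exists by cleanness, and its $\mu$-value is at least that constant term. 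Choosing the next component by a positive constant term rather than by argmin is precisely what makes the exponent-doubling recursion well-founded; this is the missing idea. (A smaller point: when $f_s$ has no constant term it may still contain $X_s$; what cleanness actually guarantees is that $f_s$ has some monomial free of $X_s$, and that is the monomial you must use.)
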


\begin{proof}
 We show the relations in turn.
 \begin{itemize}
  \item[1.]
   The smallest nonzero coefficient representable as a ratio of $m$-bit numbers is $\frac{1}{2^m}$.
  \item[2.]
   As $\vf(\vzero) \succ \vzero$, in all components~$i$ there is a nonzero coefficient $c_i$ such that $f_i(\vzero) = c_i$.
   We have $\mu\vf \ge \vf(\vzero)$, so $\mu\vf_i \ge f_i(\vzero) = c_i \ge \cmin > 0$ holds for all~$i$.
   Hence $\mumin > 0$.
  \item[3.]
   Let $\cmin > 1$.
   Recall the Kleene sequence $(\ks{k})_{k\in\N}$ with $\ks{k} = \vf^k(\vzero)$.
   We first show by induction on $k$ that for all $k\in\N$ and all components $i$ either $\ksc{k}_i = 0$ holds or $\ksc{k}_i > 1$.
   For the induction base we have $\ks{0} = \vzero$.
   Let $k \ge 0$.
   Then $\ksc{k+1}_i = f_i(\ks{k})$ is a sum of products of numbers which are either coefficients of~$\vf$
    (and hence by assumption greater than $1$) or which are equal to~$\ksc{k}_j$ for some~$j$.
   By induction, $\ksc{k}_j$ is either $0$ or greater than~$1$.
   So, $\ksc{k+1}_i$ must be $0$ or greater than~$1$.

   By Theorem~\ref{thm:kleene}, the Kleene sequence converges to~$\mu\vf$.
   As $\vf$ is clean, we have $\mu\vf \succ \vzero$, and so there is a $k\in\N$ such that $\ks{k} \succ \vone$.
   The statement follows with $\mu\vf \ge \ks{k}$.
  \item[4.]
   Let $\cmin \le 1$.
   We prove the following stronger statement by induction on~$k$:
    For every $k$ with $0 \le k \le n$ there is a set $S_k \subseteq \{1, \ldots, n\}$, $|S_k| = k$, such that
     $\mu\vf_s \ge \cmin^{2^k-1}$ holds for all $s \in S_k$.
   The induction base ($k = 0$) is trivial.
   Let $k \ge 0$.
   Consider the SPP $\widehat{\vf}(\vX_{\{1,\ldots,n\} \setminus S_k})$
    that is obtained from~$\vf(\vX)$ by removing the $S_k$-components from $\vf$ and
    replacing every $S_k$-variable in the polynomials by the corresponding component of~$\mu\vf$.
   Clearly, $\mu\widehat{\vf} = (\mu\vf)_{\{1,\ldots,n\} \setminus S_k}$.
   \newcommand{\hcmin}{\widehat{c}_{\mathit{min}}}
   By induction, the smallest nonzero coefficient $\hcmin$ of~$\widehat{\vf}$
    satisfies $\hcmin \ge \cmin (\cmin^{2^k-1})^2 = \cmin^{2^{k+1}-1}$.
   Pick a component $i$ with ${\widehat{f}}_i(\vzero) > 0$.
   Then $\mu\widehat{\vf}_i \ge \widehat{f}_i(\vzero) \ge \hcmin \ge \cmin^{2^{k+1}-1}$.
   So set $S_{k+1} := S_k \cup \{i\}$.
  \item[5.]
   Let w.l.o.g.\ $\mumax = \mu\vf_1$.
   The proof is based on the idea that $X_1$ indirectly depends quadratically on itself.
   More precisely, as $\vf$ is strongly connected and strictly quadratic,
    component~$1$ depends (indirectly) on some component, say~$i_r$, such that $f_{i_r}$ contains a degree-2-monomial.
   The variables in that monomial, in turn, depend on~$X_1$.
   This gives an inequality of the form $\mu\vf_1 \ge C \cdot {\mu\vf_1}^2$,
    implying $\mu\vf_1 \cdot C \le 1$.

   We give the details in the following.
   As $\vf$ is strongly connected and strictly quadratic there exists a sequence of variables
    $X_{i_1}, \ldots, X_{i_r}$
   and a sequence of monomials
    $m_{i_1}, \ldots, m_{i_r}$
    ($1 \le r \le n$)
    with the following properties:
   $$\begin{array}{ll}
    \text{ --\quad} X_{i_1}  = X_1, \\
    \text{ --\quad} m_{i_u} \text{ is a monomial appearing in $f_{i_u}$ }                    & (1 \le u \le r), \\
    \text{ --\quad} m_{i_u}  = c_{i_u} \cdot X_{i_{u+1}}                                       & (1 \le u \le r), \\
    \text{ --\quad} m_{i_r}  = c_{i_r} \cdot X_{j_1} \cdot X_{k_1} \text{ for some variables $X_{j_1}, X_{k_1}$.}\\
   \end{array}$$
   Notice that
    \begin{equation} \label{eq:mumax1}
     \begin{split}
       \mumax = \mu\vf_1 & \ge c_{i_1} \cdot \ldots \cdot c_{i_r} \cdot \mu\vf_{j_1} \cdot \mu\vf_{k_1} \\
                              & \ge \min(\cmin^{n}, 1) \cdot \mu\vf_{j_1} \cdot \mu\vf_{k_1} \:. \\
     \end{split}
    \end{equation}

   Again using that $\vf$ is strongly connected, there exists a sequence of variables
    $X_{j_1}, \ldots, X_{j_s}$
   and a sequence of monomials
    $m_{j_1}, \ldots, m_{j_{s-1}}$
    ($1 \le s \le n$)
    with the following properties:
   $$\begin{array}{ll}
    \text{ --\quad} X_{j_s}  = X_1, \\
    \text{ --\quad} m_{j_u} \text{ is a monomial appearing in $f_{j_u}$ } (1 \le u \le s-1), \\
    \text{ --\quad} m_{j_u}  = c_{j_u} \cdot X_{j_{u+1}}
         \text{ or $m_{j_u}   = c_{j_u} \cdot X_{j_{u+1}} \cdot X_{j'_{u+1}}$ } \\
         \text{\hspace{20mm} for some variable $X_{j'_{u+1}} \quad (1 \le u \le s-1)$.}
   \end{array}$$
   Notice that
    \begin{equation} \label{eq:mumax2}
     \begin{split}
       \mu\vf_{j_1} & \ge c_{j_1} \cdot \ldots \cdot c_{j_{s-1}} \cdot \min(\mumin^{s-1}, 1) \cdot \mu\vf_1 \\
                         & \ge \min(\cmin^{n-1}, 1) \cdot \min(\mumin^{n-1}, 1) \cdot \mu\vf_1 \:. \\
      \end{split}
    \end{equation}
   Similarly, there exists a sequence of variables $X_{k_1}, \ldots, X_{k_t}$ ($1 \le t \le n$) with $X_{k_t} = X_1$ showing
    \begin{equation} \label{eq:mumax3}
      \mu\vf_{k_1} \ge \min(\cmin^{n-1}, 1) \cdot \min(\mumin^{n-1}, 1) \cdot \mu\vf_1 \:.
    \end{equation}
   Combining (\ref{eq:mumax1}) with (\ref{eq:mumax2}) and (\ref{eq:mumax3}) yields
    \begin{equation*}
      \mumax \ge \min(\cmin^{3n-2}, 1) \cdot \min(\mumin^{2n-2}, 1) \cdot \mumax^2 \:,
    \end{equation*}
   or
    \begin{equation} \label{eq:mumax4}
      \mumax \cdot \min(\cmin^{3n-2}, 1) \cdot \min(\mumin^{2n-2}, 1) \le 1 \:.
    \end{equation}
   Now it suffices to show $\cmin \le 1$.
   Assume for a contradiction $\cmin > 1$.
   Then, by statement 3., $\mumin > 1$.
   Plugging this into (\ref{eq:mumax4}) yields $\mumax \le 1$.
   This implies $\mumax < \mumin$, contradicting the definition of $\mumax$ and $\mumin$.
 \end{itemize}
\qed
\end{proof}

Now we are ready to prove Theorem~\ref{thm:estimate-cor}.
\begin{proof}[of Theorem~\ref{thm:estimate-cor}]
 \begin{itemize}
  \item[1.]
   First we check the case where $\vf$ is linear, i.e., all polynomials $f_i$ have degree at most $1$.
   In this case, Newton's method reaches $\fix{\vf}$ after one iteration, so the statement holds.
   Consequently, we can assume in the following that $\vf$ is strictly quadratic,
    meaning that $\vf$ is quadratic and there is a polynomial in~$\vf$ of degree $2$.

   By Theorem~\ref{thm:estimate} it suffices to show
   \[ \log \frac{\mumax}{\mumin \cdot \cmin^n \cdot \min\{\mumin^n, 1\}}
      \le 4mn + 3n \max\{ 0, - \log \mumin \}\:.
   \]
   We have
   \begin{align*}
        & \ \log \frac{\mumax}{\mumin \cdot \cmin^n \cdot \min\{\mumin^n, 1\}} \\
    \le & \ \log \frac{1}{\cmin^{4n-2} \cdot \min\{\mumin^{3n-1},1\}}
             && \text{(Lemma~\ref{lem:parameter-relations}.5.)} \\
    \le & \ 4n \cdot \log \frac{1}{\cmin} - \log (\min\{\mumin^{3n-1}, 1\})
             && \text{(Lemma~\ref{lem:parameter-relations}.5.: $\cmin \le 1$)} \\
    \le & \ 4mn                           - \log (\min\{\mumin^{3n-1}, 1\})
             && \text{(Lemma~\ref{lem:parameter-relations}.1.)}\:.
   \end{align*}
   If $\mumin \ge 1$ we have $- \log (\min\{\mumin^{3n-1}, 1\}) \le 0$, so we are done in this case.
   If $\mumin \le 1$ we have
    $- \log (\min\{\mumin^{3n-1}, 1\}) = - (3n-1) \log \mumin \le 3n \cdot (-\log \mumin)$.
  \item[2.]
   By statement 1.\ of this theorem, it suffices to show that $4mn + 3n \max\{ 0, - \log \mumin \} \le 4mn2^n$.
   This inequality obviously holds if $\mumin \ge 1$.
   So let $\mumin \le 1$.
   Then, by Lemma~\ref{lem:parameter-relations}.3., $\cmin \le 1$.
   Hence, by Lemma~\ref{lem:parameter-relations} parts 4.\ and~1., $\mumin \ge \cmin^{2^n-1} \ge 2^{-m(2^n-1)}$.
   So we have an upper bound on $-\log \mumin$ with $-\log \mumin \le m(2^n-1)$ and get:
   \begin{align*}
    4mn + 3n \max\{ 0, - \log \mumin \}
    & \le 4mn + 3n m (2^n-1) \\
    & \le 4mn + 4n m (2^n-1) = 4mn2^n
   \end{align*}
  \item[3.]
   Let $\vf(\vzero) \succ \vzero$.
   By statement 1.\ of this theorem it suffices to show that $4mn + 3n \max\{ 0, - \log \mumin \} \le 7mn$ holds.
   By Lemma~\ref{lem:parameter-relations} parts 2.\ and~1., we have $\mumin \ge \cmin \ge 2^{-m}$, so $-\log \mumin \le m$.
   Hence, $4mn + 3n \max\{ 0, - \log \mumin \} \le 4mn + 3nm = 7mn$.
  \item[4.]
   Let $\vf(\vzero) \succ \vzero$ and $\mumax \le 1$.
   By Theorem~\ref{thm:estimate} it suffices to show that
    $\displaystyle\log \frac{\mumax}{\mumin \cdot \cmin^n \cdot \min\{\mumin^n, 1\}} \le 2mn + m$.
   We have:
   \begin{align*}
     & \log \frac{\mumax}{\mumin \cdot \cmin^n \cdot \min\{\mumin^n, 1\}} \\
     & \le -n\log \cmin - (n+1)\log \mumin && \text{(as $\mumin \le \mumax \le 1$)} \\
     & \le -(2n+1) \log \cmin              && \text{(Lemma~\ref{lem:parameter-relations}.2.)} \\
     & \le 2mn + m                         && \text{(Lemma~\ref{lem:parameter-relations}.1.)}
   \end{align*}
 \end{itemize}
\qed
\end{proof}

\subsection{Upper Bounds on the Least Fixed Point Via Newton Approximants} \label{sub:upper-bounds}
By Theorem~\ref{thm:well-defined} each Newton approximant $\ns{k}$ is a lower bound on~$\mu\vf$.
Theorem~\ref{thm:estimate} and Theorem~\ref{thm:estimate-cor} give us upper bounds on the error $\Ds{k} := \mu\vf - \ns{k}$.
Those bounds can directly transformed into upper bounds on~$\mu\vf$,
 as $\mu\vf = \ns{k} + \Ds{k}$, cf.\ Example~\ref{ex:thm-estimate}.

Theorem~\ref{thm:estimate} and Theorem~\ref{thm:estimate-cor} allow to compute bounds on~$\Ds{k}$
 even before the Newton iteration has been started.
However, this may be more than we actually need.
In practice, we may wish to use an iterative method that yields guaranteed lower {\em and upper} bounds on~$\mu\vf$
 that improve during the iteration.
The following theorem and its corollary can be used to this end.
\begin{theorem} \label{thm:proximity}
 Let $\vf$ be a quadratic, clean and feasible scSPP.
 Let $\vzero \le \vx \le \mu\vf$ and $\vx \le \vf(\vx)$ such that $\vf'(\vx)^*$ exists.
 Let $\cmin$ be the smallest nonzero coefficient of~$\vf$ and $\mumin$ the minimal component of~$\mu\vf$.
 Then
  \[
   \frac{\norm{\Ne(\vx) - \vx}_\infty}{\norm{\mu\vf - \Ne(\vx)}_\infty} \ge \left( \cmin \cdot \min\{\mumin, 1\} \right)^n\;.
  \]
\end{theorem}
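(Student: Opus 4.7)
My plan is to combine the existence of a cone vector (Proposition~\ref{prop:cone-vector-exists}) with the halving bound from Lemma~\ref{lem:blub2} and the geometric estimate $\dmin/\dmax \ge (\cmin \cdot \min\{\mumin,1\})^n$ supplied by Lemma~\ref{lem:dmin-over-dmax}. The key intuition is that Lemma~\ref{lem:blub2} says one Newton step roughly halves the distance to $\mu\vf$ measured along the cone vector; therefore the increment $\Ne(\vx) - \vx$ and the remaining gap $\mu\vf - \Ne(\vx)$ are of the same order (up to a factor determined by the shape of $\vd$), so their ratio is controlled by $\dmin/\dmax$.

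Concretely, fix a cone vector $\vd$ of $\vf$, which exists by Proposition~\ref{prop:cone-vector-exists}. Let $\dmin = \min_j d_j$, $\dmax = \max_j d_j$, and set
\[
\lambda := \max_{1 \le j \le n} \frac{(\mu\vf - \vx)_j}{d_j}\,,
\]
so that $\lambda \ge 0$ (since $\vx \le \mu\vf$) and $\vx \ge \mu\vf - \lambda\vd$, with equality achieved in some coordinate $i$, i.e.\ $(\mu\vf - \vx)_i = \lambda d_i$. The hypotheses of Lemma~\ref{lem:blub2} hold ($\vzero \le \vx \le \mu\vf$, $\vx \le \vf(\vx)$, and $\vf'(\vx)^*$ exists), so
\[
\Ne(\vx) \;\ge\; \mu\vf - \tfrac{1}{2}\lambda\vd \,.
\]

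From this inequality I would immediately read off the upper bound
\[
\norm{\mu\vf - \Ne(\vx)}_\infty \;\le\; \tfrac{1}{2}\lambda\dmax \,.
\]
For the matching lower bound on the increment, I would evaluate in the distinguished coordinate $i$:
\[
(\Ne(\vx) - \vx)_i \;=\; (\mu\vf - \vx)_i - (\mu\vf - \Ne(\vx))_i \;\ge\; \lambda d_i - \tfrac{1}{2}\lambda d_i \;=\; \tfrac{1}{2}\lambda d_i \;\ge\; \tfrac{1}{2}\lambda \dmin\,,
\]
which gives $\norm{\Ne(\vx) - \vx}_\infty \ge \tfrac{1}{2}\lambda\dmin$. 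Dividing the two estimates, the factor $\lambda/2$ cancels and
\[
\frac{\norm{\Ne(\vx) - \vx}_\infty}{\norm{\mu\vf - \Ne(\vx)}_\infty} \;\ge\; \frac{\dmin}{\dmax} \;\ge\; \bigl(\cmin \cdot \min\{\mumin,1\}\bigr)^n
\]
by Lemma~\ref{lem:dmin-over-dmax}, finishing the proof.

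The only delicate point is making sure the argument is not vacuous, i.e.\ that $\lambda > 0$ (otherwise $\vx = \mu\vf$ and the right-hand side of the theorem is undefined as $0/0$). If $\lambda = 0$ then both numerator and denominator of the ratio are zero, so the inequality is to be interpreted trivially; I would dispatch this degenerate case explicitly at the start. Apart from that, all steps are one-line consequences of the previously established lemmata, so I expect no real obstacle beyond correctly identifying the coordinate $i$ that realises $\lambda$.
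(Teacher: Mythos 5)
Your proof is correct and follows essentially the same route as the paper's: a cone vector from Proposition~\ref{prop:cone-vector-exists}, the halving bound of Lemma~\ref{lem:blub2} evaluated at the coordinate realising $\lambda$, and Lemma~\ref{lem:dmin-over-dmax} to bound $\dmin/\dmax$. The only step worth making explicit is that your upper bound $\norm{\mu\vf - \Ne(\vx)}_\infty \le \frac{1}{2}\lambda\dmax$ is not read off from $\Ne(\vx) \ge \mu\vf - \frac{1}{2}\lambda\vd$ alone — since the $\infty$-norm is a maximum of absolute values you also need $\Ne(\vx) \le \mu\vf$, which the paper obtains from Lemma~\ref{lem:step-stays-below-mu}.
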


We prove Theorem~\ref{thm:proximity} at the end of the section.
The theorem can be applied to the Newton approximants:
\begin{theorem} \label{thm:proximity-2}
 Let $\vf$ be a quadratic, clean and feasible scSPP.
 Let $\cmin$ be the smallest nonzero coefficient of~$\vf$ and $\mumin$ the minimal component of~$\mu\vf$.
 For all Newton approximants $\ns{k}$ with $\ns{k} \succ \vzero$, let $\nsmin$ be the smallest coefficient of~$\ns{k}$.
 Then
   \[ 
    \ns{k} \le \mu\vf \le \ns{k} + \Vector{\frac{\norm{\ns{k} - \ns{k-1}}_\infty}{\left( \cmin \cdot \min\{\nsmin, 1\} \right)^n}}
   \] 
 where $\Vector{s}$ denotes the vector $\vx$ with $x_j = s$ for all $1 \le j \le n$.
\end{theorem}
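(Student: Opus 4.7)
The plan is to reduce Theorem~\ref{thm:proximity-2} directly to Theorem~\ref{thm:proximity} applied at $\vx := \ns{k-1}$, together with a monotonicity argument that lets us replace $\mumin$ by $\nsmin$ in the denominator.

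First I would verify the hypotheses of Theorem~\ref{thm:proximity} at $\vx = \ns{k-1}$. By Theorem~\ref{thm:well-defined} we have $\vzero \le \ns{k-1} \le \vf(\ns{k-1}) \le \ns{k} \le \mu\vf$, and the same theorem guarantees $\vf'(\ns{k-1})^* = (\Id - \vf'(\ns{k-1}))^{-1}$ exists. Since $\Ne(\ns{k-1}) = \ns{k}$ by definition of the Newton sequence, Theorem~\ref{thm:proximity} yields
\[
 \norm{\mu\vf - \ns{k}}_\infty \;\le\; \frac{\norm{\ns{k} - \ns{k-1}}_\infty}{\left( \cmin \cdot \min\{\mumin, 1\} \right)^n}\;.
\]
The lower bound $\ns{k} \le \mu\vf$ of the statement is immediate from Theorem~\ref{thm:well-defined}. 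For the upper bound, since $\mu\vf - \ns{k} \ge \vzero$ componentwise, each component is at most $\norm{\mu\vf - \ns{k}}_\infty$, so $\mu\vf \le \ns{k} + \Vector{\norm{\mu\vf - \ns{k}}_\infty}$.

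The one substitution still needed is to turn the bound involving $\mumin$ into one involving $\nsmin$. Here I would use that $\ns{k} \le \mu\vf$ (Theorem~\ref{thm:well-defined}) implies componentwise $\nsc{k}_j \le \mu\vf_j$ for every $j$, hence $\nsmin \le \mumin$, and therefore $\min\{\nsmin, 1\} \le \min\{\mumin, 1\}$. Consequently
\[
 \left( \cmin \cdot \min\{\nsmin, 1\} \right)^n \;\le\; \left( \cmin \cdot \min\{\mumin, 1\} \right)^n,
\]
so replacing $\mumin$ by $\nsmin$ only enlarges the right-hand side of the error bound, preserving validity. Combining this with the previous display yields the claimed inequality.

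There is no real obstacle; the content of the proof lies in Theorem~\ref{thm:proximity}, and the present statement is essentially a reformulation making the bound computable from quantities available during the iteration (the increment $\ns{k} - \ns{k-1}$ and the smallest component of the current iterate $\ns{k}$), rather than from the unknown~$\mu\vf$. The only point requiring care is that $\ns{k} \succ \vzero$ is assumed so that $\nsmin > 0$ and the denominator is nonzero; this is exactly the hypothesis of the theorem.
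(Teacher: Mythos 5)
Your proposal is correct and follows essentially the same route as the paper: apply Theorem~\ref{thm:proximity} at $\vx=\ns{k-1}$ (hypotheses supplied by Theorem~\ref{thm:well-defined}), then weaken the bound by replacing $\mumin$ with $\nsmin$ using $\ns{k}\le\mu\vf$. The paper's proof is just a terser version of the same argument.
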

\begin{proof}[of Theorem~\ref{thm:proximity-2}]
 Theorem~\ref{thm:proximity} applies, due to Theorem~\ref{thm:well-defined}, to the Newton approximants with $\vx = \ns{k-1}$.
 So we get
  \begin{align*}
   \norm{\mu\vf - \ns{k}}_\infty & \le \frac{\norm{\ns{k} - \ns{k-1}}_\infty}{\left( \cmin \cdot \min\{\mumin, 1\} \right)^n} \\
                                 & \le \frac{\norm{\ns{k} - \ns{k-1}}_\infty}{\left( \cmin \cdot \min\{\nsmin, 1\} \right)^n}
                                     && \text{(as $\ns{k} \le \mu\vf$)}\;.
  \end{align*}
 Hence the statement follows from $\ns{k} \le \mu\vf$.
\qed
\end{proof}

\begin{example} \label{ex:thm-proximity-1}
 Consider again the equation $\vX = \vf(\vX)$ from Examples \ref{ex:back-button} and~\ref{ex:thm-estimate}:
 \[
  \begin{pmatrix}
   X_1 \\ X_2 \\ X_3
  \end{pmatrix}
  =
  \begin{pmatrix}
   0.4 X_2 X_1 + 0.6 \\
   0.3 X_1 X_2 + 0.4 X_3 X_2 + 0.3 \\
   0.3 X_1 X_3 + 0.7
  \end{pmatrix}
 \]
 Again we wish to verify that there is no component $s \in \{1,2,3\}$ with $\mu\vf_s = 1$.
 Performing $10$ Newton steps yields an approximation $\ns{10}$ to~$\mu\vf$ with
 \[
  \begin{pmatrix}
   0.9828 \\
   0.9738 \\
   0.9926
  \end{pmatrix}
  \prec
  \ns{10}
  \prec
  \begin{pmatrix}
   0.9829 \\
   0.9739 \\
   0.9927
  \end{pmatrix} \:.
 \]
 Further, it holds $\norm{\ns{10} - \ns{9}}_\infty \le 2 \cdot 10^{-6}$.
 So we have
  \[
   \frac{\norm{\ns{10} - \ns{9}}_\infty}{\left( \cmin \cdot \min\{\nsc{10}_{\mathit{min}}, 1\} \right)^3} \le
   \frac{2 \cdot 10^{-6}}{\left( 0.3 \cdot 0.97 \right)^3} \le 0.00009
  \]
 and hence by Theorem~\ref{thm:proximity-2}
 \[
  \ns{10} \le
  \fix{\vf} \le
  \ns{10}
  +
  \Vector{0.00009}
  \le
  \begin{pmatrix}
   0.983\\
   0.974\\
   0.993\\
  \end{pmatrix}
 \]
 In particular we know that $\mu\vf_s < 1$ for all three components~$s$.
\qed
\end{example}
\begin{example} \label{ex:thm-proximity-2}
 Consider again the SPP~$\vf$ from Example~\ref{ex:thm-proximity-1}.
 \newcommand{\us}[1]{\vu^{(#1)}}
 \newcommand{\plower}{p_{\mathit{lower}}}
 \newcommand{\pupper}{p_{\mathit{upper}}}
 Setting
  \[
   \us{k} := \ns{k} + \Vector{\frac{\norm{\ns{k} - \ns{k-1}}_\infty}{\left( 0.3 \cdot \nsmin \right)^3}}\;,
  \]
 Theorem~\ref{thm:proximity-2} guarantees
  \[
   \ns{k} \le \mu\vf \le \us{k}\;.
  \]
 Let us measure the tightness of the bounds $\ns{k}$ and $\us{k}$ on~$\mu\vf$ in the first component.
 Let
  \begin{align*}
   \plower(k) & := - \log_{2} (\mu\vf_1  - \nsc{k}_1) \qquad \text{and} \\
   \pupper(k) & := - \log_{2} (u^{(k)}_1 - \mu\vf_1) \;.
  \end{align*}
 Roughly speaking, $\nsc{k}_1$ and $u^{(k)}_1$ have $\plower(k)$ and $\pupper(k)$ valid bits of~$\mu\vf_1$, respectively.
 Figure~\ref{fig:proximity} shows $\plower(k)$ and $\pupper(k)$ for $k \in \{1,\ldots, 11\}$.

 It can be seen that the slope of~$\plower(k)$ is approximately $1$ for $k=2,\ldots,6$.
 This corresponds to the linear convergence of Newton's method according to Theorem~\ref{thm:estimate}.
 Since $\Id - \vf'(\mu\vf)$ is non-singular%
\footnote{In fact, the matrix is ``almost'' singular, with $\det(\Id - \vf'(\mu\vf)) \approx 0.006$.},
 Newton's method actually has, asymptotically, an exponential convergence order, cf.\ Theorem~\ref{thm:quadratic-convergence}.
 This behavior can be observed in Figure~\ref{fig:proximity} for $k \ge 7$.
 For $\pupper$, we roughly have (using $\ns{k} \approx \mu\vf$):
  \[ \pupper(k) \approx \plower(k-1) + \log \left( 0.3 \cdot \nsmin \right)^3 \approx
                        \plower(k-1) - 5\;.
  \]
 \begin{figure}[ht]
 \begin{center}
 {
  \psfrag{\26110}{$-10$}
  \psfrag{0}{$0$}
  \psfrag{1}{$1$}
  \psfrag{2}{$2$}
  \psfrag{3}{$3$}
  \psfrag{4}{$4$}
  \psfrag{5}{$5$}
  \psfrag{6}{$6$}
  \psfrag{7}{$7$}
  \psfrag{8}{$8$}
  \psfrag{9}{$9$}
  \psfrag{10}{$10$}
  \psfrag{11}{$11$}
  \psfrag{12}{$12$}
  \psfrag{20}{$20$}
  \psfrag{30}{$30$}
  \psfrag{40}{$40$}
  \psfrag{50}{$50$}
  \psfrag{60}{$60$}
  \psfrag{plower}{$\plower(k)$}
  \psfrag{pupper}{$\pupper(k)$}
  \psfrag{k}{$k$}
   \scalebox{1.0}{ \includegraphics{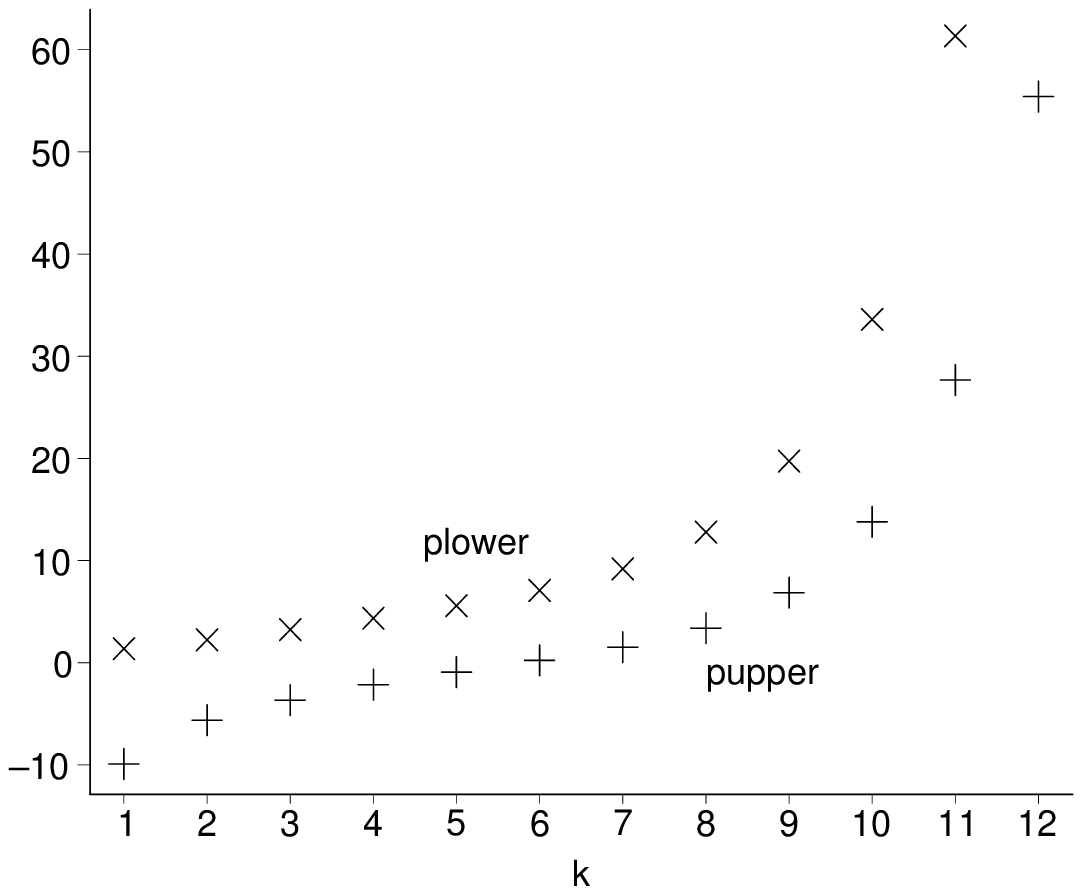}}
 }
 \end{center}
 \caption{Number of valid bits of the
          lower (shape: $\boldsymbol{\times}$) and upper (shape: $\boldsymbol{+}$) bounds on~$\mu\vf_1$,
          see Example~\ref{ex:thm-proximity-2}.
 }
 \label{fig:proximity}
\end{figure}
\qed
\end{example}

The proof of Theorem~\ref{thm:proximity} uses techniques similar to those of the proof of Theorem~\ref{thm:estimate},
 in particular Lemma~\ref{lem:dmin-over-dmax}.
\begin{proof}[of Theorem~\ref{thm:proximity}]
 By Proposition~\ref{prop:cone-vector-exists}, $\vf$ has a cone vector~$\vd$.
 Let $\dmin$ and $\dmax$ be the smallest and the largest component of~$\vd$, respectively.
 Let $\lmax := \max_j \{ \frac{\mu\vf_j - x_j}{d_j} \}$,
  and let w.l.o.g.\ $\lmax = \frac{\mu\vf_1 - x_1}{d_1}$.
 We have $\vx \ge \mu\vf - \lmax \vd$, so we can apply Lemma~\ref{lem:blub2} to obtain $\Ne(\vx) \ge \mu\vf - \frac{1}{2} \lmax \vd$.
 Thus
  \[
   \norm{\Ne(\vx) - \vx}_\infty \ge \left( \Ne(\vx) - \vx \right)_1
     \ge \mu\vf_1 - \frac{1}{2} \lmax d_1 - x_1 = \frac{1}{2} \lmax d_1 \ge \frac{1}{2} \lmax \dmin\;.
  \]
 On the other hand, with Lemma~\ref{lem:step-stays-below-mu} we have $\vzero \le \mu\vf - \Ne(\vx) \le \frac{1}{2} \lmax \vd$ and so
  $\norm{\mu\vf - \Ne(\vx)}_\infty \le \frac{1}{2} \lmax \dmax$.
 Combining those inequalities we obtain
  \[
   \frac{\norm{\Ne(\vx) - \vx}_\infty}{\norm{\mu\vf - \Ne(\vx)}_\infty} \ge \frac{\dmin}{\dmax}\;.
  \]
 Now the statement follows from Lemma~\ref{lem:dmin-over-dmax}.
\qed
\end{proof}

\section{General SPPs} \label{sec:decomposed}

In \S~\ref{sec:scSPPs} we considered {\em strongly connected} SPPs, see Definition~\ref{def:depend}.
However, it is not always guaranteed that the SPP~$\vf$ is strongly connected.
In this section we analyze the convergence speed of two variants of Newton's method that both compute approximations of~$\mu\vf$,
 where $\vf$ is a clean and feasible SPP that is not necessarily strongly connected (``general SPPs'').

The first one was suggested by Etessami and Yannakakis \cite{EYstacs05Extended} and is called {\em Decomposed Newton's Method (DNM)}.
It works by running Newton's method separately on each SCC, see \S~\ref{sub:DNM}.
The second one is the regular Newton's method from \S~\ref{sec:well-defined}.
We will analyze its convergence speed in \S~\ref{sub:conv-speed-general}.

The reason why we first analyze DNM is that our convergence speed results about Newton's method for general SPPs
 (Theorem~\ref{thm:conv-speed-general}) build on our results about DNM (Theorem~\ref{thm:dnm-error}).
From an efficiency point of view it actually may be advantageous to run Newton's method separately on each SCC.
For those reasons DNM deserves a separate treatment.

\subsection{Convergence Speed of the Decomposed Newton's Method (DNM)} \label{sub:DNM}

\newcommand{\SCC}{\mathcal{SCC}}

DNM, originally suggested in~\cite{EYstacs05Extended}, works as follows.
It starts by using Newton's method for each bottom SCC, say~$S$, of the SPP~$\vf$.
Then the corresponding variables $\vX_S$ are substituted for the obtained approximation for $\mu\vf_S$,
 and the corresponding equations $\vX_S = \vf_S(\vX)$ are removed.
The same procedure is then applied to the new bottom SCCs, until all SCCs have been processed.

Etessami and Yannakakis did not provide a particular criterion for the number of Newton iterations to be applied in each SCC.
Consequently, they did not analyze the convergence speed of DNM.
We will treat those issues in this section, thereby taking advantage of our previous analysis of scSPPs.

We fix a quadratic, clean and feasible SPP~$\vf$ for this section.
We assume that we have already computed the DAG (directed acyclic graph) of SCCs.
This can be done in linear time in the size of~$\vf$.
To each SCC~$S$ we can associate its {\em depth}~$t$: it is the longest path in the DAG of SCCs from $S$ to a top SCC.
Notice that $0 \le t \le n-1$.
We write $\SCC(t)$ for the set of SCCs of depth~$t$.
We define the height $h(\vf)$ as the largest depth of an SCC
 and the width $w(\vf) := \max_t |\SCC(t)|$ as the largest number of SCCs of the same depth.
Notice that $\vf$ has at most $(h(\vf) + 1) \cdot w(\vf)$ SCCs.
Further we define the component sets $[t] := \bigcup_{S \in \SCC(t)} S$ and $[\mathord{>}t] := \bigcup_{t' > t} [t']$ and similarly $[<t]$.

\begin{figure} [ht]
        \centering
        \fbox{\parbox{11cm}{ \flushleft \iftechrep{\vspace{-3mm}}{}
            \textbf{function} DNM $\left(\vf, i\right)$              \hfill /* \emph{The parameter $i$ controls the precision.} */\\
            \ind \textbf{for} $t$ \textbf{from} $h(\vf)$ \textbf{downto} $0$ \\
            \indd \textbf{forall} $S \in \SCC(t)$                     \hfill /* \emph{for all SCCs $S$ of depth $t$} */ \\
            \inddd $\rhos{i}_S$ := $\Ne_{\vf_S}^{i\cdot 2^t}(\vzero)$ \hfill /* \emph{perform $i \cdot 2^t$ Newton iterations}  */ \\
            \inddd $\vf_{[<t]}$ := $\vf_{[<t]}[S/\rhos{i}_S]$     \hfill /* \emph{apply $\rhos{i}_S$ in the upper SCCs} */ \\
            \textbf{return} $\rhos{i}$  \\ 
        }}
        \caption{Decomposed Newton's Method (DNM) for computing an approximation $\rhos{i}$ of~$\fix{\vf}$.}
        \label{fig:DNM}
\end{figure}

Figure~\ref{fig:DNM} shows our version of~DNM.
We suggest to run Newton's method in each SCC~$S$ for a number of steps that depends (exponentially) on the depth of~$S$
 and (linearly) on a parameter~$i$ that controls the precision.

\begin{proposition} \label{prop:DNM-how-many-iterations}
 The function \textup{DNM}$\left(\vf, i\right)$ of Figure~\ref{fig:DNM} runs at most
  \mbox{$i \cdot w(\vf) \cdot 2^{h(\vf) + 1} \le i \cdot n \cdot 2^n$} iterations of Newton's method.
\end{proposition}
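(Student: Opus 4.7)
The proof is a direct counting argument, and I do not expect any serious obstacle. The plan is to read off the total number of Newton iterations from the pseudocode of Figure~\ref{fig:DNM}, upper-bound the inner counts using the definitions of $w(\vf)$ and $h(\vf)$, sum a geometric series in $t$, and finally replace $w(\vf)$ and $h(\vf)$ by their trivial bounds in terms of~$n$.

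More concretely, inspecting the algorithm shows that during one call of \textup{DNM}$(\vf,i)$, the innermost statement ``$\rhos{i}_S := \Ne_{\vf_S}^{i\cdot 2^t}(\vzero)$'' is executed exactly once for every SCC~$S$, and performs $i \cdot 2^t$ Newton iterations, where $t$ is the depth of~$S$. Hence the total number of Newton iterations equals
\[
  \sum_{t=0}^{h(\vf)} \, |\SCC(t)| \cdot i \cdot 2^t.
\]
By the definition of $w(\vf) = \max_t |\SCC(t)|$, each factor $|\SCC(t)|$ is at most $w(\vf)$, so this sum is bounded by
\[
  i \cdot w(\vf) \cdot \sum_{t=0}^{h(\vf)} 2^t \;=\; i \cdot w(\vf) \cdot (2^{h(\vf)+1} - 1) \;\le\; i \cdot w(\vf) \cdot 2^{h(\vf)+1},
\]
which is the first inequality of the proposition.

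For the second inequality I would use the two trivial size bounds: every SCC contains at least one component, so $w(\vf) \le n$; and any path in the DAG of SCCs has length at most (number of SCCs)$-1 \le n-1$, so $h(\vf) \le n-1$. Combining these gives $w(\vf) \cdot 2^{h(\vf)+1} \le n \cdot 2^n$, completing the proof.
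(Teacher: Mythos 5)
Your proposal is correct and matches the paper's own proof essentially verbatim: both sum $\sum_{t=0}^{h(\vf)} |\SCC(t)| \cdot i \cdot 2^t$, bound each $|\SCC(t)|$ by $w(\vf)$, sum the geometric series, and finish with $w(\vf) \le n$ and $h(\vf) < n$. Nothing further is needed.
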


\begin{proof}
 The number of iterations is $\sum_{t=0}^{h(\vf)} \abs{\SCC(t)} \cdot i \cdot 2^t$.
 This can be estimated as follows.
  \begin{align*}
    \sum_{t=0}^{h(\vf)} \abs{\SCC(t)} \cdot i \cdot 2^t & \le w(\vf) \cdot i \cdot \sum_{t=0}^{h(\vf)} 2^t \\
                                                        & \le w(\vf) \cdot i \cdot 2^{h(\vf) + 1} \\
                                                        & \le i \cdot n \cdot 2^n && \text{(as $w(\vf) \le n$ and $h(\vf) < n$)}
  \end{align*}
\qed
\end{proof}

The following theorem states that DNM has linear convergence order.
\begin{theorem}\label{thm:dnm-error}
 Let $\vf$ be a quadratic, clean and feasible SPP.
 Let $\rhos{i}$ denote the result of calling \textup{DNM}$(\vf, i)$ (see Figure~\ref{fig:DNM}).
 Let $\beta_\vrho$ denote the convergence order of~$(\rhos{i})_{i\in\N}$.
 Then there is a $k_\vf \in \Nat$ such that $\beta_\vrho(k_\vf + i) \ge i$ for all $i \in \N$.
\end{theorem}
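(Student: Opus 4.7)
The plan is to prove the theorem by downward induction on the depth~$t$ in the DAG of SCCs, establishing that for every SCC~$S$ of depth~$t$ and every sufficiently large~$i$ the DNM approximant satisfies
\[
  \norm{\mu\vf_S - \rhos{i}_S}_\infty \;\le\; c_t \cdot 2^{-(i \cdot 2^t - \kappa_t)}
\]
with constants $c_t > 0$ and $\kappa_t \in \N$ depending only on~$\vf$. Specialising at $t=0$ gives $\norm{\mu\vf_S - \rhos{i}_S}_\infty \le c_0 \cdot 2^{-(i - \kappa_0)}$ for every top SCC~$S$; absorbing $\kappa_0$ together with $\log\bigl(c_0 / \min_j \mu\vf_j\bigr)$ into a single global threshold~$k_\vf$ then yields $\beta_\vrho(k_\vf + i) \ge i$ as required.

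For the base case $t = h(\vf)$, a bottom SCC has no dependencies outside itself, so $\vf_S$ is itself a clean, feasible, quadratic scSPP with $\mu(\vf_S) = \mu\vf_S$. Since DNM runs $i \cdot 2^{h(\vf)}$ iterations of Newton's method on~$\vf_S$ starting at~$\vzero$, Theorem~\ref{thm:estimate-cor} applied to~$\vf_S$ yields the bound directly, with $\kappa_{h(\vf)}$ chosen as the maximum of the individual thresholds over the finitely many bottom SCCs.

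For the inductive step, fix $t < h(\vf)$ and an SCC~$S$ of depth~$t$, and let $\vy := \rhos{i}_{[\mathord{>}t]}$ denote the approximants already computed at strictly greater depths. Consider the two auxiliary scSPPs
\begin{align*}
  \vh(\vX_S) & := \vf_S(\vX)\bigl[[\mathord{>}t] / \mu\vf_{[\mathord{>}t]}\bigr], \\
  \vg(\vX_S) & := \vf_S(\vX)\bigl[[\mathord{>}t] / \vy\bigr],
\end{align*}
so that $\mu\vh = \mu\vf_S$ while DNM performs $i \cdot 2^t$ Newton steps on~$\vg$. Splitting the error as
\[
  \mu\vf_S - \rhos{i}_S \;=\; (\mu\vh - \mu\vg) \;+\; (\mu\vg - \rhos{i}_S),
\]
the Newton-error term $\mu\vg - \rhos{i}_S$ is controlled by Theorem~\ref{thm:estimate-cor} applied to~$\vg$: for $i$ large enough the coefficients of~$\vg$ lie in a compact neighbourhood of those of~$\vh$, so the corresponding threshold $\kappa'_t$ can be chosen uniformly in~$i$, delivering $i \cdot 2^t - \kappa'_t$ valid bits of~$\mu\vg$.

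The remaining and main obstacle is the perturbation bound on $\mu\vh - \mu\vg$. Monotonicity of SPPs in their coefficients immediately gives $\vzero \le \mu\vh - \mu\vg$. For the upper bound I would expand $\vh - \vg$ by Taylor's theorem (Lemma~\ref{lem:taylor}) about $\mu\vh$ and, in the nonsingular case where $\Id - \vh'(\mu\vh)$ is invertible, invoke Theorem~\ref{thm:well-defined} to obtain a linear Lipschitz bound of the form $\mu\vh - \mu\vg \le \vh'(\mu\vh)^* \cdot L \cdot (\mu\vf_{[\mathord{>}t]} - \vy)$. The hard part is the singular case, in which $\Id - \vh'(\mu\vh)$ fails to be invertible and the perturbation can decay strictly slower than linearly in $\mu\vf_{[\mathord{>}t]} - \vy$; this is precisely the reason DNM uses $i \cdot 2^t$ rather than $i$ iterations at depth~$t$, since the exponential doubling buys enough slack to absorb any sub-linear (e.g.\ polynomial-root) loss in the perturbation rate. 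Once the perturbation bound is in place, plugging in the induction hypothesis at depth $t+1$ shows that the perturbation term is dominated by the Newton-error term; setting $\kappa_t := \max\{\kappa_{t+1}, \kappa'_t\} + O(1)$ closes the induction, and telescoping through the finitely many depths produces the global threshold~$k_\vf$.
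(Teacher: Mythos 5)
Your proposal reproduces the architecture of the paper's proof: the same downward induction on the depth~$t$, the same splitting of the error $\mu\vf_{[t]} - \rhos{i}_{[t]}$ into a propagation part $\mu\vf_{[t]} - \tvmu_{[t]}^{(i)}$ and a Newton-approximation part $\tvmu_{[t]}^{(i)} - \rhos{i}_{[t]}$, and the same device of using monotonicity of the substituted coefficients to get thresholds for Theorem~\ref{thm:estimate}/\ref{thm:estimate-cor} that are uniform in~$i$. The non-constructiveness of the resulting $k_\vf$ is also consistent with the paper.

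However, there is a genuine gap at exactly the point you flag as ``the main obstacle'': you never establish the propagation bound, and the heuristic you offer in its place is not correct. The doubling of the iteration count from depth $t+1$ to depth $t$ does \emph{not} absorb ``any sub-linear loss''; it absorbs precisely a square-root loss per level, since $\sqrt{2^{c - i\cdot 2^{t+1}}} = 2^{c/2 - i\cdot 2^t}$. If the perturbation of the least fixed point decayed like, say, a cube root of the perturbation of the lower components, the induction would not close with the factor $2^t$. So the crux of the whole theorem is to prove the quantitative bound $\norm{\mu\vf_{[t]} - \tvmu_{[t]}^{(i)}} \le C_\vf \cdot \sqrt{\norm{\mu\vf_{[\mathord{>}t]} - \rhos{i}_{[\mathord{>}t]}}}$ (Lemma~\ref{lem:propagation-error} in the paper), and this is where the real work lies. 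The paper obtains it by first proving a lower bound on the residual of a quadratic scSPP near its least fixed point, $\norm{\vf(\mu\vf - \vdelta) - (\mu\vf - \vdelta)} \ge C_\vf \norm{\vdelta}^2$ (Lemmata~\ref{lem:miss-lower-bound-quadratic} and~\ref{lem:miss-lower-bound}), via a compactness argument on the unit sphere combined with the cone-vector machinery of \S~\ref{sec:scSPPs}; inverting this quadratic lower bound is what yields the square root, and the argument uses essentially that $\vf$ is quadratic. Your Taylor-expansion sketch covers only the case where $\Id - \vf'(\mu\vf)$ is nonsingular (where one even gets a linear bound); in the singular case you would need to supply an argument of the above kind, and without it the inductive step does not go through.
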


Theorem~\ref{thm:dnm-error} can be interpreted as follows:
Increasing $i$ by one yields asymptotically at least one additional bit in each component
 and, by Proposition~\ref{prop:DNM-how-many-iterations}, costs at most $n \cdot 2^n$ additional Newton iterations.
Notice that for simplicity we do not take into account here that
 the cost of performing a Newton step on a single SCC is not uniform,
 but rather depends on the size of the SCC
 (e.g.\ cubically if Gaussian elimination is used for solving the linear systems).

For the proof of Theorem~\ref{thm:dnm-error},
 let $\Ds{i}$ denote the error when running DNM with parameter~$i$, i.e., $\Ds{i}$ := $\mu\vf - \rhos{i}$.
Observe that the error $\Ds{i}$ can be understood as the sum of two errors:
\[ \Ds{i} :=
   \mu\vf - \rhos{i} = (\vmu - \tvmu^{(i)}) + (\tvmu^{(i)} - \rhos{i}) \:,
\]
where $\tvmu_{[t]}^{(i)}$ := $\fix{\big(\vf_{[t]}[[\mathord{>}t] / \rhos{i}_{[\mathord{>}t]}]\big)}$,
 i.e., $\tvmu_{[t]}^{(i)}$ is the least fixed point of $\vf_{[t]}$
 after the approximations from the lower SCCs have been applied.
So, $\Ds{i}_{[t]}$ consists of the {\em propagation error} $(\mu\vf_{[t]} - \tvmu_{[t]}^{(i)})$
 (resulting from the error at lower SCCs)
 and the {\em approximation error} $(\tvmu_{[t]}^{(i)} - \rhos{i}_{[t]})$
 (resulting from the newly added error of Newton's method on level~$t$).

The following lemma gives a bound on the propagation error.
\newcommand{\stmtlempropagationerror}{
 There is a constant $C_\vf > 0$ such that
 \[ \norm{\mu\vf_{[t]} - \tvmu_{[t]}} \le C_\vf \cdot \sqrt{\norm{\mu\vf_{[\mathord{>}t]} - \vrho_{[\mathord{>}t]}}}
 \]
 holds for all $\vrho_{[\mathord{>}t]}$ with $\vzero \le \vrho_{[\mathord{>}t]} \le \mu\vf_{[\mathord{>}t]}$,
  where $\tvmu_{[t]} = \fix{\big(\vf_{[t]}[[\mathord{>}t] / \vrho_{[\mathord{>}t]}]\big)}$.
}
\begin{lemma}[Propagation error]\label{lem:propagation-error}
 \stmtlempropagationerror
\end{lemma}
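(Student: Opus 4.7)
The plan is to reduce the claim to the case of a single non-trivial SCC $S$ at depth $t$ and prove a square-root bound separately for each such $S$. Since the DAG of SCCs has only finitely many components, taking a maximum of the per-SCC constants and using equivalence of finite-dimensional norms yields a single $C_\vf$. Fix such an $S$ and abbreviate $\vmu := \mu\vf_S$, $\tvmu := \tvmu_{[t]} {\restriction} S$, $\vz := \vmu - \tvmu \ge \vzero$ and $\vepsilon := \mu\vf_{[\mathord{>}t]} - \vrho_{[\mathord{>}t]} \ge \vzero$; both inequalities follow from monotonicity together with Theorem~\ref{thm:kleene}. Write also $M := \vf'_{S,S}(\vmu, \mu\vf_{[\mathord{>}t]})$ and $N := \vf'_{S,[\mathord{>}t]}(\vmu, \mu\vf_{[\mathord{>}t]})$. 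The function $\vg(\vY) := \vf_S(\vY, \mu\vf_{[\mathord{>}t]})$ is a clean, feasible scSPP with $\mu\vg = \vmu$, so by Proposition~\ref{prop:cone-vector-exists} it admits a cone vector.

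Since $\vf$ is quadratic, the Taylor expansion of $\vf_S$ at $(\vmu, \mu\vf_{[\mathord{>}t]})$ with increment $(-\vz,-\vepsilon)$ is exact, and bilinearity absorbs the signs in the quadratic term, yielding
\begin{equation*}
(\Id - M)\vz \;=\; N\vepsilon \;-\; \tfrac12\,\vf''_S\bigl((\vz,\vepsilon),(\vz,\vepsilon)\bigr).
\end{equation*}
The quadratic term is a nonnegative vector, so in particular $(\Id - M)\vz \le N\vepsilon$, recovering what the upper inequality of Lemma~\ref{lem:taylor} gives directly. If the spectral radius $\rho(M)$ is strictly below one, then $M^* = (\Id-M)^{-1}$ exists with nonnegative entries (Theorem~\ref{thm:well-defined}), giving the linear bound $\vz \le M^{*}N\vepsilon$, which is stronger than the desired square-root bound on the bounded parameter set.

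The genuinely delicate case is $\rho(M) = 1$, the regime responsible for the $\sqrt{\cdot}$ in the statement (as exhibited by $X = X^2 + Y$ near $Y = 1/4$). Because $S$ is a non-trivial SCC, $M$ is irreducible, so by Perron-Frobenius there is a left eigenvector $\vw \succ \vzero$ with $\vw^\top M = \vw^\top$. Premultiplying the exact Taylor identity by $\vw^\top$ annihilates the left-hand side, giving the key scalar identity
\begin{equation*}
\vw^\top N\vepsilon \;=\; \tfrac12\,\vw^\top \vf''_S\bigl((\vz,\vepsilon),(\vz,\vepsilon)\bigr).
\end{equation*}
Discarding the nonnegative cross and $(\vepsilon,\vepsilon)$-pieces of the quadratic form gives $\vw^\top N\vepsilon \ge \tfrac12\,\vz^\top H\vz$ with $H := \sum_{i\in S} w_i\,[\vf''_S]_{i,SS}$ symmetric nonnegative. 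One then observes that if $\vf_S$ contains no quadratic $S$-$S$ monomial, then $\vf_S(\cdot,\mu\vf_{[\mathord{>}t]})$ is affine in $\vX_S$ and feasibility forces $\rho(M) < 1$, contradicting the critical assumption; hence $H$ is nonzero and, by combining the location of its nonzero entries with the irreducibility of $M$, one extracts a constant $c > 0$ with $\vz^\top H\vz \ge c\,\norm{\vz}^2$. This yields $\norm{\vz}^2 \le (2\norm{\vw^\top N}/c)\,\norm{\vepsilon}$ and hence $\norm{\vz} \le C_\vf\sqrt{\norm{\vepsilon}}$.

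The main obstacle is the coercivity estimate $\vz^\top H\vz \ge c\,\norm{\vz}^2$ in the critical case, because $H$ is only nonnegative and the bound must hold uniformly over all feasible $\vrho_{[\mathord{>}t]}$. A cleaner alternative that avoids isolating $c$ by hand is a compactness argument: the map $\vrho_{[\mathord{>}t]} \mapsto \tvmu_{[t]}$ is continuous on the compact set $\{\vrho_{[\mathord{>}t]} : \vzero \le \vrho_{[\mathord{>}t]} \le \mu\vf_{[\mathord{>}t]}\}$, so the ratio $\norm{\mu\vf_{[t]} - \tvmu_{[t]}}^2 / \norm{\mu\vf_{[\mathord{>}t]} - \vrho_{[\mathord{>}t]}}$ is continuous away from $\vrho_{[\mathord{>}t]} = \mu\vf_{[\mathord{>}t]}$ and, by the Taylor analysis just described, is bounded near that limit point in both the non-critical and the critical regimes. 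Taking $C_\vf$ to be the square root of the supremum of this ratio and then the maximum of such constants over all SCCs at depth $t$ completes the proof.
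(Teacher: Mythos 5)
Your architecture is sound and runs parallel to the paper's: reduce to a single SCC, use the exact second-order Taylor identity at the fixed point, split on $\rho(M)<1$ versus $\rho(M)=1$, and in the critical case kill the linear part with a left Perron eigenvector so that the quadratic term must absorb the perturbation, which is what produces the square root. (The paper packages this differently, as a uniform residual lower bound $\norm{\vf(\mu\vf-\vdelta)-(\mu\vf-\vdelta)}\ge C\norm{\vdelta}^2$ in Lemmata~\ref{lem:miss-lower-bound-quadratic} and~\ref{lem:miss-lower-bound}, combined with a Lipschitz estimate on the perturbation in Lemma~\ref{lem:prop-error-top}.) However, the step you yourself flag as the main obstacle is a genuine gap, and neither of your two proposed resolutions closes it. The coercivity claim $\vz^\top H\vz\ge c\,\norm{\vz}^2$ is false for a merely nonnegative symmetric $H$: one needs all diagonal entries of $H$ positive just to handle $\vz=\ve_j$, i.e.\ every $S$-variable would have to occur squared in some component, which need not happen (if the quadratic $S$-$S$ part consists only of cross terms such as $X_1X_2$, then $\ve_1^\top H\ve_1=0$). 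Irreducibility of the Jacobian $M$ gives no control over the Hessian aggregate $H$. The compactness fallback is circular: continuity of the ratio yields a finite supremum only on sets bounded away from $\vrho_{[\mathord{>}t]}=\mu\vf_{[\mathord{>}t]}$, and boundedness \emph{near} that point is exactly the assertion to be proved; you justify it ``by the Taylor analysis just described'', which rests on the unproven coercivity.

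What is true, and what the paper exploits, is that positivity of the quadratic form is only needed along the directions that can actually occur in the limit. If the ratio were unbounded, one extracts a sequence of error directions converging to some $\vd^*>\vzero$ in the kernel of $\Id-M$; by irreducibility such a vector is strictly positive in every component (Lemma~\ref{lem:vd>vzero-is-enough}), and on strictly positive directions the quadratic part \emph{is} strictly positive once the linear terms have been folded away so that every component of the system is genuinely quadratic (the reduction $\vf\mapsto L^*B(\vX,\vX)+L^*\vc$ in Lemma~\ref{lem:miss-lower-bound}). The paper then reaches a contradiction with the minimality of the fixed point by producing a point $\vy<\mu\vg$ with $\vg(\vy)\prec\vy$. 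To repair your critical case you must replace the uniform coercivity of $H$ by this limiting-direction argument (which is precisely the non-constructive core of the paper's proof and the reason no explicit $C_\vf$ is obtained); as written, the proof does not go through. Two smaller points: trivial SCCs at depth $t$ need the separate (easy, Lipschitz) treatment given in Lemma~\ref{lem:prop-error-top}, and your observation that absence of quadratic $S$-$S$ monomials forces $\rho(M)<1$ deserves a one-line justification via $\vw^\top\vc=0$ contradicting cleanness.
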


Roughly speaking, Lemma~\ref{lem:propagation-error} states that if $\rhos{i}_{[\mathord{>}t]}$ has $k$ valid bits of $\mu\vf_{[\mathord{>}t]}$,
 then $\tvmu_{[t]}^{(i)}$ has at least about $k/2$ valid bits of $\mu\vf_{[t]}$.
In other words, (at most) one half of the valid bits are lost on each level of the DAG due to the propagation error.
The proof of Lemma~\ref{lem:propagation-error} is technically involved
 and, unfortunately, not constructive in that we know nothing about~$C_\vf$ except for its existence.
Therefore, the statements in this section are independent of a particular norm.
The proof of Lemma~\ref{lem:propagation-error} can be found in Appendix~\ref{app:proof-lem-propagation-error}.

The following lemma gives a bound on the error $\norm{\Ds{i}_{[t]}}$ on level $t$,
 taking both the propagation error and the approximation error into account.
\begin{lemma}\label{lem:dnm-error}
 There is a $C_\vf > 0$ such that
 $\displaystyle \norm{\Ds{i}_{[t]}} \le 2^{C_\vf - i \cdot 2^t}$ for all $i\in\N$.
\end{lemma}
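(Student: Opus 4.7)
The plan is to prove the bound by reverse induction on the depth $t$, from $t = h(\vf)$ down to $t = 0$, choosing a single constant $C_\vf$ at the end that is large enough to subsume the base case and to make the recursive step close.

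\textbf{Base case} ($t = h(\vf)$). SCCs of maximal depth have no dependencies on other SCCs, so DNM simply runs $i \cdot 2^{h(\vf)}$ iterations of ordinary Newton's method directly on each such scSPP $\vf_S$. Theorem~\ref{thm:estimate-cor} supplies a threshold $k_S$ such that $k_S + j$ iterations yield at least $j$ valid bits, and hence
\[
  \norm{\mu\vf_S - \rhos{i}_S} \;\le\; \norm{\mu\vf_S} \cdot 2^{k_S - i \cdot 2^{h(\vf)}} \:.
\]
Taking the maximum over the finitely many bottom SCCs gives a bound of the required form.

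\textbf{Inductive step} ($t < h(\vf)$). Split the error into the propagation and approximation contributions,
\[
  \Ds{i}_{[t]} \;=\; \bigl(\mu\vf_{[t]} - \tvmu^{(i)}_{[t]}\bigr) \;+\; \bigl(\tvmu^{(i)}_{[t]} - \rhos{i}_{[t]}\bigr) \:.
\]
The induction hypothesis gives $\norm{\Ds{i}_{[\mathord{>}t]}} \le 2^{C_\vf - i \cdot 2^{t+1}}$, so Lemma~\ref{lem:propagation-error} bounds the propagation term by
\[
  \norm{\mu\vf_{[t]} - \tvmu^{(i)}_{[t]}} \;\le\; C \cdot 2^{C_\vf/2 - i \cdot 2^t}
\]
for a constant $C$ depending only on $\vf$. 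For the approximation term, each $S \in \SCC(t)$ is processed by $i \cdot 2^t$ Newton iterations on $\vf_S[[\mathord{>}t]/\rhos{i}_{[\mathord{>}t]}]$, which remains a clean, feasible scSPP whose syntactic parameters $\cmin$, $\mumin$, $\mumax$ can be chosen uniformly in $i$: as $i$ grows they converge to those of $\vf_S[[\mathord{>}t]/\mu\vf_{[\mathord{>}t]}]$, and in the limit $\mumin$ is strictly positive because $\vf$ is clean. Consequently Theorem~\ref{thm:estimate-cor} applies with a uniform threshold, giving
\[
  \norm{\tvmu^{(i)}_{[t]} - \rhos{i}_{[t]}} \;\le\; 2^{c' - i \cdot 2^t}
\]
for a constant $c'$ depending only on $\vf$. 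Adding the two bounds yields $\norm{\Ds{i}_{[t]}} \le 2^{C_\vf/2 + c'' - i \cdot 2^t}$ for a suitable $c''$, which is at most $2^{C_\vf - i \cdot 2^t}$ whenever $C_\vf \ge 2 c''$. Picking $C_\vf$ large enough to dominate both the base case and this recursion closes the induction.

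The main obstacle is verifying that Theorem~\ref{thm:estimate-cor} applies uniformly to all perturbed scSPPs encountered in the recursion; the delicate point is that the $\mumin$ of $\vf_S[[\mathord{>}t]/\rhos{i}_{[\mathord{>}t]}]$ must stay bounded below independently of $i$. This follows from the monotone convergence $\rhos{i}_{[\mathord{>}t]} \nearrow \mu\vf_{[\mathord{>}t]}$ combined with continuity of the fixed-point map, but it is precisely this continuity argument that blocks any explicit quantification of $C_\vf$, matching the non-constructive nature of Lemma~\ref{lem:propagation-error} itself.
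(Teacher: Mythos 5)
Your proposal follows essentially the same route as the paper's proof: the same reverse induction on the depth~$t$, the same split of $\Ds{i}_{[t]}$ into propagation and approximation errors, Lemma~\ref{lem:propagation-error} for the former, and a Newton convergence bound with an $i$-uniform threshold for the perturbed systems $\vf_S[[\mathord{>}t] / \rhos{i}_{[\mathord{>}t]}]$ for the latter, with the final $C_\vf$ absorbing the finitely many level-dependent constants. One correction: the coefficients of those perturbed systems involve the approximants $\rhos{i}$ and are \emph{not} ratios of $m$-bit integers for any $i$-independent $m$, so the theorem to invoke there is Theorem~\ref{thm:estimate} — whose threshold depends only on $\cmin$, $\mumin$ and $\mumax$, precisely the quantities you bound uniformly via monotonicity — rather than Theorem~\ref{thm:estimate-cor}; this is what the paper does, and Theorem~\ref{thm:estimate-cor} remains appropriate only for the unperturbed bottom SCCs in the base case.
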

\begin{proof}
 Let $\widetilde{\vf}_{[t]}^{(i)} := \vf_{[t]}[[>\!t] / \rhos{i}_{[\mathord{>}t]}]$.
 Observe that the coefficients of~$\widetilde{\vf}_{[t]}^{(i)}$ and thus its least fixed point~$\tvmu_{[t]}^{(i)}$
  are monotonically increasing with~$i$, because $\rhos{i}_{[\mathord{>}t]}$ is monotonically increasing as well.
 Consider an arbitrary depth~$t$ and choose real numbers $\cmin > 0$ and $\mumin > 0$ and an integer $i_0$ such that,
  for all $i \ge i_0$, $\cmin$ and $\mumin$ are lower bounds on the smallest nonzero coefficient of~$\widetilde{\vf}_{[t]}^{(i)}$
  and the smallest coefficient of~$\tvmu_{[t]}^{(i)}$, respectively.
 Let $\mumax$ be the largest component of~$\mu\vf_{[t]}$.
 Let $\widetilde{k} := \left\lceil n \cdot \log \frac{\mumax}{\cmin \cdot \mumin \cdot \min\{\mumin, 1\}} \right\rceil$.
 Then it follows from Theorem~\ref{thm:estimate} that performing $\widetilde{k} + j$ Newton iterations
  ($j \ge 0$) on depth~$t$ yields $j$ valid bits of~$\tvmu_{[t]}^{(i)}$ for any $i \ge i_0$.
 In particular, $\widetilde{k} + i \cdot 2^t$ Newton iterations give $i \cdot 2^t$ valid bits of~$\tvmu_{[t]}^{(i)}$
  for any $i \ge i_0$.
 So there exists a constant~$c_1 > 0$ such that, for all $i \ge i_0$,
  \begin{equation}
   \norm{\tvmu_{[t]}^{(i)} - \rhos{i}_{[t]}} \le 2^{c_1 - i \cdot 2^t} \:, \label{eq:approximation-error}
  \end{equation}
 because DNM (see Figure~\ref{fig:DNM}) performs $i \cdot 2^t$ iterations to compute~$\rhos{i}_S$ where $S$ is an SCC of depth~$t$.
 Choose $c_1$ large enough such that Equation~\eqref{eq:approximation-error} holds for all $i \ge 0$ and all depths~$t$.

 Now we can prove the theorem by induction on $t$.
 In the base case ($t = h(\vf)$) there is no propagation error,
  so the claim of the lemma follows from~\eqref{eq:approximation-error}.
 Let $t < h(\vf)$. Then
  \begin{align*}
   \norm{\Ds{i}_{[t]}} &  =  \norm{\mu\vf_{[t]} - \tvmu_{[t]}^{(i)} + \tvmu_{[t]}^{(i)} - \rhos{i}_{[t]}} \\
        & \le \norm{\mu\vf_{[t]} - \tvmu_{[t]}^{(i)}}  + \norm{\tvmu_{[t]}^{(i)} - \rhos{i}_{[t]}} \\
        & \le \norm{\mu\vf_{[t]} - \tvmu_{[t]}^{(i)}}  + 2^{c_1 - i \cdot 2^t}
                                 &&  \text{ (by \eqref{eq:approximation-error})} \\
        & \le c_2 \cdot \sqrt{\norm{\Ds{i}_{[\mathord{>}t]}}}       + 2^{c_1 - i \cdot 2^t}
                                 &&  \text{ (Lemma~\ref{lem:propagation-error})} \\
        & \le c_2 \cdot \sqrt{2^{c_3 - i \cdot 2^{t+1}}}+ 2^{c_1 - i \cdot 2^t}
                                 &&  \text{ (induction hypothesis)} \\
        & \le 2^{c_4 - i \cdot 2^t}
  \end{align*}
  for some constants $c_2, c_3, c_4 > 0$.
\qed
\end{proof}

Now Theorem~\ref{thm:dnm-error} follows easily.
\begin{proof}[of Theorem~\ref{thm:dnm-error}]
 From Lemma~\ref{lem:dnm-error} we deduce that for each component $j \in [t]$ there is a $c_j$ such that
  \[ (\fix{\vf}_j - \rhosc{i}_j) / \fix{\vf}_j \le 2^{c_j - i \cdot 2^t} \le 2^{c_j - i} \: .
  \]
 Let $k_\vf \ge c_j$ for all $1 \le j \le n$.
 Then
  \[ (\fix{\vf}_j - \rhosc{i + k_\vf}_j) / \fix{\vf}_j \le 2^{c_j - (i + k_\vf)} \le 2^{-i} \:.
  \]
\qed
\end{proof}

Notice that, unfortunately, we cannot give a bound on~$k_\vf$,
 mainly because Lemma~\ref{lem:propagation-error} does not provide a bound on~$C_\vf$.

\subsection{Convergence Speed of Newton's Method} \label{sub:conv-speed-general}

We use Theorem~\ref{thm:dnm-error} to prove the following theorem for the regular
 (i.e.\ not decomposed) Newton sequence~$(\ns{i})_{i\in\N}$.

\begin{theorem} \label{thm:conv-speed-general}
Let $\vf$ be
a quadratic, clean and feasible SPP. There is a threshold $k_\vf \in \Nat$ such that
$\beta(k_\vf + i \cdot n \cdot 2^n) \ge \beta(k_\vf + i \cdot (h(\vf) + 1) \cdot 2^{h(\vf)}) \ge i$ for all $i\in\N$.
\end{theorem}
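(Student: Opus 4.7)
The plan is to reduce the claim to Theorem~\ref{thm:dnm-error} via a \emph{domination claim}: $\ns{K} \ge \rhos{i}$ whenever $K \ge i \cdot (h(\vf)+1) \cdot 2^{h(\vf)}$, where $\rhos{i}$ is the output of the variant $\overline{\textup{DNM}}(\vf,i)$ which at each depth $t$ runs $i \cdot 2^t$ Newton iterations on \emph{all} SCCs of depth $t$ simultaneously. Because SCCs of the same depth are mutually independent, the proof of Theorem~\ref{thm:dnm-error} adapts verbatim to $\overline{\textup{DNM}}$, yielding a threshold $\overline{k}_\vf$ with $\beta_{\overline\vrho}(\overline{k}_\vf + i) \ge i$; moreover the total iteration count of $\overline{\textup{DNM}}(\vf,i)$ is $\sum_{t=0}^{h(\vf)} i \cdot 2^t \le i \cdot (h(\vf)+1) \cdot 2^{h(\vf)}$. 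Combining with the domination claim gives the theorem with $k_\vf := \overline{k}_\vf \cdot (h(\vf)+1) \cdot 2^{h(\vf)}$, the outer inequality $\beta(k_\vf + i \cdot n \cdot 2^n) \ge \beta(k_\vf + i \cdot (h(\vf)+1) \cdot 2^{h(\vf)})$ following from monotonicity of $\beta$ and $(h(\vf)+1) \cdot 2^{h(\vf)} \le n \cdot 2^n$.

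The domination claim rests on a structural observation: $\ns{k}_{[\ge t]} = \Ne_{\vf_{[\ge t]}}^k(\vzero)$ for every depth $t$, i.e., the restriction of the regular Newton sequence to the components $[\ge t]$ coincides with regular Newton on the sub-SPP $\vf_{[\ge t]}$. This holds because $\vf_{[\ge t]}$ depends only on $\vX_{[\ge t]}$, so $\vf'$ is block-triangular with $\vf'_{[\ge t],[\ge t]}$ on the diagonal; the inverse $(\Id - \vf'(\ns{k}))^{-1}$ inherits this form, making the $[\ge t]$-block of a regular Newton step on $\vf$ a Newton step for $\vf_{[\ge t]}$. Using this, I prove the domination claim by induction on $t$ from $h(\vf)$ down to $0$, with intermediate claim $\ns{K}_{[\ge t]} \ge \rhos{i}_{[\ge t]}$ whenever $K \ge i \cdot \sum_{t'=t}^{h(\vf)} 2^{t'}$. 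The base case $t = h(\vf)$ is immediate since $\vf_{[h(\vf)]}$ is a disjoint union of scSPPs on which regular Newton and $\overline{\textup{DNM}}$ iterates coincide. In the inductive step, set $K_0 := K - i \cdot 2^t$; by hypothesis $\ns{K_0}_{[>t]} \ge \rhos{i}_{[>t]}$, and this persists for all $k \ge K_0$ by monotonicity of the Newton sequence (Theorem~\ref{thm:well-defined}). It then remains to show that $i \cdot 2^t$ further regular Newton iterations carry $\ns{K_0}_{[t]}$ above $\rhos{i}_{[t]} = \Ne_\vg^{i \cdot 2^t}(\vzero)$, where $\vg(\vX_{[t]}) := \vf_{[t]}(\vX_{[t]}, \rhos{i}_{[>t]})$.

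The key tool is the single-step comparison $\ns{k+1}_{[t]} \ge \Ne_\vg(\ns{k}_{[t]})$, valid whenever $\ns{k}_{[>t]} \ge \rhos{i}_{[>t]}$: expanding the block-triangular inverse, the $[t]$-component of the regular Newton step reads $(\Id - \vf'_{[t],[t]}(\ns{k}))^{-1}\bigl((\vf_{[t]}(\ns{k}) - \ns{k}_{[t]}) + \sum_{t' > t} \vf'_{[t],[t']}(\ns{k})\,\Delta_{[t']}\bigr)$, and each ingredient dominates its counterpart for $\vg$ by monotonicity of $\vf_{[t]}$ and $\vf'_{[t],[t]}$ in $\vX_{[>t]}$ together with nonnegativity of all SPP coefficients and of the Newton updates $\Delta_{[t']}$. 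Iterating this comparison $i \cdot 2^t$ times via monotonicity of $\Ne_\vg$ in its starting point (Lemma~\ref{lem:newton-mon}) yields the inductive step. The main obstacle is precisely this multistep propagation: $\ns{k}_{[t]}$ need not satisfy the premise $\vy \le \vg(\vy)$ of Lemma~\ref{lem:newton-mon}, because $\ns{k}_{[t]}$ may overshoot $\mu\vg \le \mu\vf_{[t]}$ in some components (while $\rhos{i}_{[t]}$ remains below $\mu\vg$); this is resolved by a componentwise case split, exploiting that $\rhos{i}_{[t]} \le \mu\vg$ so that components which have already overshot automatically dominate $\rhos{i}_{[t]}$ and only the remaining components need to be compared via Newton-operator monotonicity.
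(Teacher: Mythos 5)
Your overall strategy coincides with the paper's: the paper proves this theorem via Lemma~\ref{lem:do-not-fix-values} (a regular Newton step dominates, in the components of an SCC, the Newton step of the system obtained by freezing the lower components) and Lemma~\ref{lem:undecomposed-at-least-as-good} (a downward induction on the depth), and then chains the result with Theorem~\ref{thm:dnm-error} exactly as in your first paragraph. Your only structural deviation is to compare $\ns{K}$ directly with $\rhos{i}$, freezing the lower components at $\rhos{i}_{[\mathord{>}t]}$, rather than factoring through the intermediate sequence $\tns{i}$ that performs the same number of iterations in every SCC; your block-triangular computation of the $[t]$-part of a Newton step is correct and is essentially Lemma~\ref{lem:do-not-fix-values}.

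The gap is in your last step. The obstacle you identify is real---Lemma~\ref{lem:newton-mon} cannot be applied with $\vy = \ns{k}_{[t]}$ and $\vg = \vf_{[t]}[[\mathord{>}t]/\rhos{i}_{[\mathord{>}t]}]$, since the premise $\vy \le \vg(\vy)$ may fail (only $\ns{k}_{[t]} \le \vf_{[t]}(\ns{k})$ is guaranteed, and $\vg$ decreases the lower arguments)---but the componentwise case split does not repair it. The operator $\Ne_\vg$ couples all components of $[t]$ through $(\Id - \vg'(\cdot))^{-1}$, so knowing that the overshooting components of $\ns{k}_{[t]}$ already dominate $\rhos{i}_{[t]}$ gives no control over the value of $\Ne_\vg$ in the remaining components; moreover $(\vg(\vy))_s < y_s$ can occur even in a non-overshooting component~$s$, because the defect of $\vg$ at $\vy$ in component $s$ depends on all of $\vy$. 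There is no componentwise version of Lemma~\ref{lem:newton-mon} to invoke. The repair is to prove $\ns{k+1}_{[t]} \ge \Ne_\vg(\vz)$ directly for every $\vz$ with $\vzero \le \vz \le \ns{k}_{[t]} =: \vy$ for which $\vg'(\vz)^*$ exists, by inlining the proof of Lemma~\ref{lem:newton-mon}: with $A := \vf'(\ns{k})_{[t][t]}$ and $B := \vg'(\vz)$ one has $A \ge B$, hence $A^* \ge B^*$, and
\[
 \ns{k+1}_{[t]} \;\ge\; \vy + A^*\bigl(\vf_{[t]}(\ns{k}) - \vy\bigr)
 \;\ge\; \vy + B^*\bigl(\vf_{[t]}(\ns{k}) - \vy\bigr)
 \;\ge\; \vy + B^*\bigl(\vg(\vz) + B(\vy-\vz) - \vy\bigr)
 \;=\; \Ne_\vg(\vz)\,,
\]
where the second inequality uses only $\vf_{[t]}(\ns{k}) \ge \ns{k}_{[t]}$ (Theorem~\ref{thm:well-defined}), the third uses $\vf_{[t]}(\ns{k}) \ge \vg(\vy)$ together with Taylor's theorem (Lemma~\ref{lem:taylor}) applied at $\vz$, and the final equality uses $B^*(\Id - B) = \Id$. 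Taking $\vz = \Ne_\vg^j(\vzero)$ closes your inner induction without ever needing $\vy \le \vg(\vy)$ or $\vy \le \mu\vg$. With this substitution in place of the case split, your argument is sound.
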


In the rest of the section we prove this theorem by a sequence of lemmata.
The following lemma states that a Newton step is not faster on an SCC, if the values of the lower SCCs are fixed.
\begin{lemma}\label{lem:do-not-fix-values}
 Let $\vf$ be a clean and feasible SPP.
 Let $\vzero \le \vx \le \vf(\vx) \le \fix{\vf}$ such that $\vf'(\vx)^*$ exists.
 Let $S$ be an SCC of~$\vf$ and let $L$ denote the set of components that are not in~$S$, but on which a variable in~$S$ depends.
 Then $(\Ne_\vf(\vx))_S \ge \Ne_{\vf_S[L / \vx_L]}(\vx_S)$.
\end{lemma}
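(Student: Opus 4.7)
The plan is to reduce the statement to a comparison between two Neumann series: one for the full Jacobian $\vf'(\vx)$ and one for its principal $S$-submatrix. Write $\vg(\vX_S) := \vf_S[L/\vx_L](\vX_S)$. By the definition of $L$, variables in $S$ depend only on components in $S \cup L$, so $\vg$ is indeed a function of $\vX_S$ alone. The first step is to note two identities: $\vg(\vx_S) = \vf_S(\vx)$, and $\vg'(\vx_S) = \vf'(\vx)_{SS}$, where the subscript $SS$ denotes the principal submatrix with rows and columns restricted to $S$. In particular $(\vf(\vx)-\vx)_S = \vg(\vx_S) - \vx_S \ge \vzero$, using $\vx \le \vf(\vx)$.

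The second step is to compare the Neumann series $(\vf'(\vx)_{SS})^*$ and $(\vf'(\vx)^*)_{SS}$. Writing $A := \vf'(\vx) \ge 0$, I would show $(A^k)_{SS} \ge (A_{SS})^k$ entrywise by induction on~$k$; the base case is trivial, and for the step the $(i,j)$-entry ($i,j \in S$) satisfies
\[
  (A^{k+1})_{ij} \;=\; \sum_\ell A_{i\ell}(A^k)_{\ell j} \;\ge\; \sum_{\ell \in S} A_{i\ell}(A^k)_{\ell j} \;\ge\; \sum_{\ell \in S} A_{i\ell}(A_{SS})^k_{\ell j} \;=\; \bigl(A_{SS}(A_{SS})^k\bigr)_{ij},
\]
using nonnegativity throughout. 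Since the partial sums of $A^*$ converge by hypothesis and entrywise dominate those of $(A_{SS})^*$, the latter also converges, and $(A_{SS})^* \le (A^*)_{SS}$. Combinatorially this simply says that weighted length-$k$ paths that stay inside $S$ form a subset of all weighted length-$k$ paths from $S$ to $S$.

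Finally, I would put the pieces together. By Theorem~\ref{thm:well-defined}, $(\Id-\vf'(\vx))^{-1} = \vf'(\vx)^*$, so $\Ne_\vf$ admits its Neumann-series form, and the same holds for $\Ne_\vg$ by the argument above. Using the nonnegativity of $\vf'(\vx)^*$ and of $\vf(\vx)-\vx$, dropping contributions from columns outside $S$ can only decrease the sum:
\begin{align*}
 (\Ne_\vf(\vx))_S & = \vx_S + [\vf'(\vx)^*(\vf(\vx)-\vx)]_S \\
                  & \ge \vx_S + (\vf'(\vx)^*)_{SS} (\vf(\vx)-\vx)_S \\
                  & \ge \vx_S + (\vf'(\vx)_{SS})^* (\vf(\vx)-\vx)_S \\
                  & = \vx_S + \vg'(\vx_S)^* (\vg(\vx_S) - \vx_S) \;=\; \Ne_\vg(\vx_S).
\end{align*}

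The only mildly delicate step is the submatrix-versus-submatrix comparison $(A^k)_{SS} \ge (A_{SS})^k$; everything else is bookkeeping. I do not expect any serious obstacle, since nonnegativity of $\vf'$ and of $\vf(\vx)-\vx$ aligns all inequalities in the same direction.
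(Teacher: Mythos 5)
Your proof is correct and follows essentially the same route as the paper's: both write $(\Ne_\vf(\vx))_S$ via the Neumann series and discard the nonnegative contribution of the $L$-columns. The only (harmless) difference is that the paper uses the exact identity $\bigl(\vf'(\vx)^*\bigr)_{SS} = \bigl(\vf'(\vx)_{SS}\bigr)^*$, which holds because any dependency path between two $S$-components stays inside the SCC~$S$, whereas you prove and use only the one-sided bound $\bigl(\vf'(\vx)_{SS}\bigr)^* \le \bigl(\vf'(\vx)^*\bigr)_{SS}$ --- which suffices since $(\vf(\vx)-\vx)_S \ge \vzero$.
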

\begin{proof}
  \begin{align*}
    (\Ne_\vf(\vx))_S  &  =  \bigl( \vf'(\vx)^* (\vf(\vx) - \vx) \bigr)_S \\
                      &  =  \vf'(\vx)^*_{SS} (\vf(\vx) - \vx)_S + \vf'(\vx)^*_{SL} (\vf(\vx) - \vx)_L \\
                      & \ge \vf'(\vx)^*_{SS} (\vf(\vx) - \vx)_S \\
                      &  = \bigl( (\vf_S[L / \vx_L])'(\vx_S) \bigr)^* (\vf_S [L / \vx_L] (\vx_S) - \vx_S) \\
                      &  = \Ne_{\vf_S[L / \vx_L]}(\vx_S)
  \end{align*}
\qed
\end{proof}

Recall Lemma~\ref{lem:newton-mon} which states that the Newton operator~$\Ne$ is monotone.
This fact and Lemma~\ref{lem:do-not-fix-values}
 can be combined to the following lemma stating that $i \cdot (h(\vf) + 1)$ iterations of the regular Newton's method
 ``dominate'' a decomposed Newton's method that performs $i$ Newton steps in each SCC.

\begin{lemma}\label{lem:undecomposed-at-least-as-good}
 Let $\tns{i}$ denote the result of a decomposed Newton's method which performs $i$ iterations
  of Newton's method in each SCC.
 Let $\ns{i}$ denote the result of $i$ iterations of the regular Newton's method.
 Then $\ns{i \cdot (h(\vf) + 1)} \ge \tns{i}$.
\end{lemma}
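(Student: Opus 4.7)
The plan is to induct on SCC depth downwards from $h(\vf)$. Writing $k_t := (h(\vf)-t+1)\cdot i$ and $[\mathord{\ge}t] := [t] \cup [\mathord{>}t]$, the outer induction statement is $\ns{k_t}_{[\mathord{\ge}t]} \ge \tns{i}_{[\mathord{\ge}t]}$ for $t = h(\vf), h(\vf)-1, \ldots, 0$; specialising to $t=0$ gives the lemma, since $k_0 = (h(\vf)+1)i$ and $[\mathord{\ge}0] = \{1,\ldots,n\}$.

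For the base case $t = h(\vf)$, components in $[h(\vf)]$ lie in bottom SCCs and therefore depend only on $[h(\vf)]$ itself, so the off-diagonal blocks of $\vf'(\vx)$ linking $[h(\vf)]$-rows to $[\mathord{<}h(\vf)]$-columns vanish. The regular Newton step restricted to $[h(\vf)]$ is then exactly the parallel isolated Newton step on each bottom SCC, giving $\ns{i}_{[h(\vf)]} = \tns{i}_{[h(\vf)]}$.

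For the inductive step $t \to t-1$, the $[\mathord{\ge}t]$-components are handled by monotonicity of the Newton sequence (Theorem~\ref{thm:well-defined}) together with the outer induction hypothesis and the fact that DNM leaves already-processed SCCs untouched, yielding $\ns{k_{t-1}}_{[\mathord{\ge}t]} \ge \ns{k_t}_{[\mathord{\ge}t]} \ge \tns{i}_{[\mathord{\ge}t]}$. Now fix an SCC $S$ of depth $t-1$ and let $L \subseteq [\mathord{\ge}t]$ be the components outside $S$ on which $S$ depends. By definition of DNM, $\tns{i}_S = \vrho^{(i)}_S$ where $\vrho^{(0)}_S := \vzero$ and $\vrho^{(j+1)}_S := \Ne_{\vf_S[L/\tns{i}_L]}(\vrho^{(j)}_S)$. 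I would then subinduct on $j \in \{0, \dots, i\}$ to show $\ns{k_t+j}_S \ge \vrho^{(j)}_S$ by chaining
\begin{align*}
\ns{k_t+j+1}_S
 & \ge \Ne_{\vf_S[L/\ns{k_t+j}_L]}\bigl(\ns{k_t+j}_S\bigr) & & \text{(Lemma~\ref{lem:do-not-fix-values})} \\
 & \ge \Ne_{\vf_S[L/\ns{k_t+j}_L]}\bigl(\vrho^{(j)}_S\bigr) & & \text{(starting-point monotonicity)} \\
 & \ge \Ne_{\vf_S[L/\tns{i}_L]}\bigl(\vrho^{(j)}_S\bigr) = \vrho^{(j+1)}_S & & \text{(SPP monotonicity of $\Ne$),}
\end{align*}
where the last step uses $\ns{k_t+j}_L \ge \ns{k_t}_L \ge \tns{i}_L$ from the outer induction. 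Setting $j=i$ then yields $\ns{k_{t-1}}_S \ge \tns{i}_S$ and closes the induction.

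The hard part will be justifying the middle two inequalities cleanly. SPP-monotonicity of $\Ne_{\vg}(\vx) = \vx + \vg'(\vx)^*(\vg(\vx)-\vx)$ holds whenever $\vg_1 \le \vg_2$, $\vg_1(\vx) \ge \vx$, and both stars $\vg_i'(\vx)^*$ are finite, because both factors of the correction term are then componentwise nondecreasing in $i$ and nonnegative; finiteness at $\vrho^{(j)}_S$ is inherited as a submatrix of $\vf'(\ns{k_t+j})^*$, which is finite by Proposition~\ref{prop:no-infty-entries}. The starting-point monotonicity is subtler: Lemma~\ref{lem:newton-mon} nominally demands $\ns{k_t+j}_S \le \mu\bigl(\vf_S[L/\ns{k_t+j}_L]\bigr)$, which is not immediate because $\ns{k_t+j}_S$ could in principle exceed the least fixed point of the restricted system. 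However, inspection of the proof of Lemma~\ref{lem:newton-mon} reveals that the algebraic derivation only uses $\vrho^{(j)}_S \le \ns{k_t+j}_S$ (subinductive hypothesis) and finiteness of $\bigl(\vf_S[L/\ns{k_t+j}_L]\bigr)'(\ns{k_t+j}_S)^*$ (again a submatrix of $\vf'(\ns{k_t+j})^*$), so the conclusion transfers to our setting. This point, and the care needed to apply it uniformly across all SCCs at depth $t-1$, is the principal technical obstacle.
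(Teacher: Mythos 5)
Your proof is correct and follows essentially the same route as the paper's: induction on SCC depth combined with Lemma~\ref{lem:do-not-fix-values}, Lemma~\ref{lem:newton-mon}, and monotonicity of the Newton operator in the coefficients of the restricted system. Your per-iteration sub-induction merely unfolds what the paper compresses into an iterated application of Lemma~\ref{lem:do-not-fix-values} with the lower components frozen, and your observation that the derivation in Lemma~\ref{lem:newton-mon} needs only $\vzero \le \vx \le \vy \le \vf(\vy)$ and finiteness of the matrix stars (not $\vf(\vy) \le \mu\vf$) is an accurate reading of that proof and correctly closes the one genuinely delicate step.
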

\begin{proof}
 Let $h = h(\vf)$.
 Let $[t]$ and $[\mathord{>}t]$ again denote the set of components of depth $t$ and~$>t$, respectively.
 We show by induction on the depth~$t$:
  \[ \ns{i \cdot (h + 1 - t)}_{[t]} \ge \tns{i}_{[t]}
  \]
 The induction base ($t = h$) is clear, because for bottom SCCs the two methods are identical.
 Let now $t < h$.
 Then
  \begin{align*}
   \ns{i \cdot (h + 1 - t)}_{[t]}
    &  =  \Ne_\vf^i(\ns{i \cdot (h - t)})_{[t]} \\
    & \ge \Ne_{\vf_{[t]} [[\mathord{>}t] / \ns{i \cdot (h - t)}_{[\mathord{>}t]}]}^i ( \ns{i \cdot (h - t)}_{[t]})
              && \text{(Lemma~\ref{lem:do-not-fix-values})} \\
    & \ge \Ne_{\vf_{[t]} [[\mathord{>}t] / \tns{i}_{[\mathord{>}t]}]}^i ( \ns{i \cdot (h - t)}_{[t]})
              && \text{(induction hypothesis)} \\
    & \ge \Ne_{\vf_{[t]} [[\mathord{>}t] / \tns{i}_{[\mathord{>}t]}]}^i ( \vzero_{[t]} )
              && \text{(Lemma~\ref{lem:newton-mon})} \\
    &  =  \tns{i}_{[t]}
              && \text{(definition of~$\tns{i}$)}
  \end{align*}
 Now, the lemma itself follows by using Lemma~\ref{lem:newton-mon} once more.
\qed
\end{proof}

As a side note, observe that above proof of Lemma~\ref{lem:undecomposed-at-least-as-good} implicitly benefits from
 the fact that SCCs of the same depth are independent.
So, SCCs with the same depth are handled in parallel by the regular Newton's method.
Therefore, $w(\vf)$, the width of $\vf$, is irrelevant here (cf.\ Proposition~\ref{prop:DNM-how-many-iterations}).

Now we can prove Theorem~\ref{thm:conv-speed-general}.

\begin{proof}[of Theorem~\ref{thm:conv-speed-general}]
 Let $k_2$ be the $k_\vf$ of Theorem~\ref{thm:dnm-error},
  and let  \mbox{$k_1 = k_2 \cdot (h(\vf)+1) \cdot 2^{h(\vf)}$}.
 Then we have
  \begin{align*}
   \ns{k_1 + i \cdot (h(\vf) + 1) \cdot 2^{h(f)}}
    &  =  \ns{(k_2 + i) \cdot (h(\vf)+1) \cdot 2^{h(\vf)}} \\
    & \ge \tns{(k_2 + i) \cdot 2^{h(\vf)}} && \text{(Lemma~\ref{lem:undecomposed-at-least-as-good})} \\
    & \ge \rhos{k_2 + i}\;,
  \end{align*}
 where the last step follows from the fact that
  \textup{DNM}$(\vf, k_2 + i)$ runs at most $(k_2 + i) \cdot 2^{h(\vf)}$ iterations in every SCC.
 By Theorem~\ref{thm:dnm-error}, $\rhos{k_2 + i}$ and hence $\ns{k_1 + i \cdot (h(\vf) + 1) \cdot 2^{h(f)}}$
  have $i$ valid bits of $\fix{\vf}$.
 Therefore, Theorem~\ref{thm:conv-speed-general} holds with $k_\vf = k_1$.
\qed
\end{proof}

\section{Upper Bounds on the Convergence} \label{sec:upper-bounds}

In this section we show that the lower bounds on the convergence order of Newton's method
 that we obtained in the previous section are essentially tight,
 meaning that an exponential (in~$n$) number of iterations may be needed per bit.

More precisely, we expose a family $\left(\vf^{(n)}\right)_{n \ge 1}$ of SPPs with $n$ variables,
 such that more than $k \cdot 2^{n-1}$ iterations are needed for $k$ valid bits.
Consider the following system.
\begin{equation}\label{eq:upper-bound}
 \vX = \vf^{(n)}(\vX) =
   \begin{pmatrix}
     \frac{1}{2} + \frac{1}{2}X_1^2\\
     \frac{1}{4}X_1^2 + \frac{1}{2}X_1X_2 + \frac{1}{4}X_2^2\\
     \vdots\\
     \frac{1}{4}X_{n-1}^2 + \frac{1}{2}X_{n-1}X_n + \frac{1}{4}X_n^2
   \end{pmatrix}
 \end{equation}
The only solution of~\eqref{eq:upper-bound} is $\mu\vf^{(n)} = (1,\hdots,1)^\top$.
Notice that each component of~$\vf^{(n)}$ is an SCC.
We prove the following theorem.
\begin{theorem} \label{thm:upper-bound}
 The convergence order of Newton's method applied to the SPP~$\vf^{(n)}$ from~\eqref{eq:upper-bound} (with $n \ge 2$) satisfies
 \[
  \beta(k \cdot 2^{n-1}) < k \text{ for all $k \in \{1,2,\ldots\}$.}
 \]
 In particular, $\beta(2^{n-1}) = 0$.
\end{theorem}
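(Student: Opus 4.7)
The plan is to track the component-wise error $e^{(k)}_i := 1 - \nsc{k}_i$ and show that at component~$n$ it loses at most one bit every $2^{n-1}$ iterations; since $\mu\vf^{(n)} = \vone$, this is exactly the claim $e^{(k\cdot 2^{n-1})}_n > 2^{-k}$. The key structural observation is that $\vf^{(n)\prime}(\vx)$ is lower bidiagonal, so the Newton update can be computed by forward substitution. For the top component this yields $\nsc{k+1}_1 = \tfrac{1}{2}(1 + \nsc{k}_1)$, hence $e^{(k)}_1 = 2^{-k}$, and for $i \ge 2$ a short computation produces the explicit recurrence
\[
  e^{(k+1)}_i \;=\; \frac{e^{(k)}_{i-1} + e^{(k)}_i}{2} \;+\; \frac{e^{(k+1)}_{i-1}\,\bigl(2 - e^{(k)}_{i-1} - e^{(k)}_i\bigr)}{e^{(k)}_{i-1} + e^{(k)}_i}\,.
\]

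The heart of the proof is the geometric invariant
\[
  e^{(k)}_i \;\ge\; 2\sqrt{e^{(k)}_{i-1}} - e^{(k)}_{i-1} \qquad (k\ge 0,\ 2\le i \le n),
\]
which says that $(\nsc{k}_{i-1}, \nsc{k}_i)$ always lies below the least-fixed-point curve $X_i = (1 - \sqrt{1-X_{i-1}})^2$ of the one-parameter equation $X_i = \tfrac{1}{4}(X_{i-1}+X_i)^2$. I would prove it by induction on $k$, the base case reducing to $1 \ge 1$. For the step, fix $i$ and abbreviate $a := e^{(k)}_{i-1}$, $b := e^{(k)}_i$, $a' := e^{(k+1)}_{i-1}$, viewing the right-hand side of the recurrence above as a function $F_{a,a'}(b)$. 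At the boundary $b_0 := 2\sqrt{a} - a$ of the invariant region, the formula collapses to $F_{a,a'}(b_0) = \sqrt{a} + a'/\sqrt{a} - a'$, and the bound $F_{a,a'}(b_0) \ge 2\sqrt{a'} - a'$ is exactly the AM--GM inequality $\sqrt{a} + a'/\sqrt{a} \ge 2\sqrt{a'}$. To extend to all $b \ge b_0$, compute $F_{a,a'}'(b) = \tfrac{1}{2} - \tfrac{2a'}{(a+b)^2}$, which is non-negative as soon as $a + b \ge 2\sqrt{a'}$; the invariant yields $a+b \ge 2\sqrt{a}$, and Newton monotonicity (Theorem~\ref{thm:well-defined} together with Lemma~\ref{lem:newton-mon}) gives $a' \le a$, so $a+b \ge 2\sqrt{a} \ge 2\sqrt{a'}$. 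Hence $F_{a,a'}$ is non-decreasing on the invariant region, and $e^{(k+1)}_i = F_{a,a'}(b) \ge F_{a,a'}(b_0) \ge 2\sqrt{a'} - a'$. Carrying out this algebraic verification is the main technical obstacle.

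The conclusion is then quick. Information propagates one component per Newton step from $\ns{0} = \vzero$, so $\nsc{k}_i > 0$ iff $k \ge i$; and since each equation has a double root at $X_i = 1$, one checks (by an easy induction using the invariant) that $\nsc{k}_i < 1$ for every finite~$k$. Hence for $k = k'\cdot 2^{n-1}$ with $k' \ge 1$ and $n \ge 2$, we have $k \ge 2^{n-1} \ge n-1$, so $0 < e^{(k)}_{i-1} < 1$ for every $i \le n$, and the invariant sharpens to the strict bound
\[
  e^{(k)}_i \;\ge\; \sqrt{e^{(k)}_{i-1}}\bigl(2 - \sqrt{e^{(k)}_{i-1}}\bigr) \;>\; \sqrt{e^{(k)}_{i-1}}\,.
\]
Iterating this strict inequality from $i = 2$ up to $i = n$, starting from $e^{(k)}_1 = 2^{-k}$, yields $e^{(k)}_n > 2^{-k/2^{n-1}}$. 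Substituting $k = k' \cdot 2^{n-1}$ gives $e^{(k' \cdot 2^{n-1})}_n > 2^{-k'}$, i.e., $\nsc{k' \cdot 2^{n-1}}_n < 1 - 2^{-k'}$, so $\beta(k' \cdot 2^{n-1}) < k'$ as required.
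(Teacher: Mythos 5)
Your proof is correct, and its skeleton is the same as the paper's: the exact identity $1-\nsc{k}_1 = 2^{-k}$ for the top component, the invariant $e^{(k)}_i \ge 2\sqrt{e^{(k)}_{i-1}} - e^{(k)}_{i-1} > \sqrt{e^{(k)}_{i-1}}$, and the telescoping $e^{(k)}_n > \bigl(e^{(k)}_1\bigr)^{2^{-(n-1)}}$. The difference lies entirely in how the invariant is established. The paper gets it in one line: by Theorem~\ref{thm:well-defined} every Newton approximant satisfies $\ns{k} \le \vf(\ns{k})$ and $\ns{k} \le \vone$, so $\nsc{k}_{i}$ is bounded by the least root of the one-variable quadratic $X_i = \frac{1}{4}(\nsc{k}_{i-1})^2 + \frac{1}{2}\nsc{k}_{i-1}X_i + \frac{1}{4}X_i^2$, which is exactly $1 - \bigl(2\sqrt{e^{(k)}_{i-1}} - e^{(k)}_{i-1}\bigr)$. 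You instead re-derive the explicit bidiagonal Newton recurrence and prove the invariant by induction on $k$ via the boundary value $F_{a,a'}(b_0)$, AM--GM, and monotonicity of $F_{a,a'}$; I checked the recurrence and the calculus and they are right (including the needed facts $a' \le a$ from monotonicity of the Newton sequence and $0 < e^{(k)}_j$ throughout, so no division by zero). What your route buys is an exact description of the iteration, self-contained up to the Newton update formula; what it costs is the entire inductive argument, which the already-available inequality $\ns{k} \le \vf(\ns{k}) \le \vone$ makes unnecessary. One small point worth tightening: the strict step $e^{(k)}_i > \sqrt{e^{(k)}_{i-1}}$ needs $e^{(k)}_{i-1} < 1$, which you correctly reduce to $\nsc{k}_j > 0$ for $j \le n-1$; the cleanest justification is $\ns{k} \ge \ks{k}$ together with $\ksc{k}_j > 0$ for $k \ge j$, rather than an appeal to how "information propagates".
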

\begin{proof}
 We write $\vf := \vf^{(n)}$ for simplicity.
 Let
  \[\Ds{i} := \mu\vf - \ns{i} = (1, \ldots, 1)^\top - \ns{i}\;.
  \]
 Notice that $(\nsc{i}_1)_{i\in\Nat} = (0,\frac{1}{2},\frac{3}{4},\frac{7}{8},\ldots)$
  which is the same sequence as obtained by applying Newton's method
  to the 1-dimensional system $X_1 =\frac{1}{2} + \frac{1}{2}X_1^2$.
 So we have $\Delta^{(i)}_1 = 2^{-i}$, i.e., after $i$ iterations we have exactly $i$ valid bits in the first component.

 We know from Theorem~\ref{thm:well-defined} that for all~$j$ with $1 \le j \le n-1$ we have
  $\nsc{i}_{j+1} \le f_{j+1}(\ns{i}) = \frac{1}{4}(\nsc{i}_j)^2 + \frac{1}{2}\nsc{i}_{j}\nsc{i}_{j+1} + \frac{1}{4}(\nsc{i}_{j+1})^2$
  and $\nsc{i}_{j+1} \le 1$.
 It follows that $\nsc{i}_{j+1}$ is at most the least solution of
  $X_{j+1} = \frac{1}{4}(\nsc{i}_{j})^2 + \frac{1}{2}\nsc{i}_{j}X_{j+1} + \frac{1}{4}(X_{j+1})^2$,
  and so $\Delta^{(i)}_{j+1} \ge 2 \sqrt{\Delta^{(i)}_j} - \Delta^{(i)}_j > \sqrt{\Delta^{(i)}_j}$.

 By induction it follows that $\Delta^{(i)}_{j+1} > (\Delta^{(i)}_1)^{2^{-j}}$.
 In particular,
  \[
   \Delta^{(k \cdot 2^{n-1})}_n > \left(\Delta^{(k \cdot 2^{n-1})}_1\right)^{2^{-(n-1)}}
     = 2^{-k\cdot 2^{n-1} \cdot 2^{-(n-1)}} = 2^{-k}.
  \]
 Hence, after $k \cdot 2^{n-1}$ iterations we have fewer than $k$ valid bits.
\qed
\end{proof}

Notice that the proof exploits that an error in the first component gets ``amplified'' along the DAG of SCCs.
One can also show along those lines that computing $\mu\vf$ is an {\em ill-conditioned} problem:
Consider the SPP $\vg^{(n,\varepsilon)}$ obtained from~$\vf^{(n)}$ by replacing the first component by $1 - \varepsilon$
 where $0 \le \varepsilon < 1$.
If $\varepsilon = 0$ then $(\mu\vg^{(n,\varepsilon)})_n = 1$,
 whereas if $\varepsilon = \frac{1}{2^{2^{n-1}}}$ then $(\mu\vg^{(n,\varepsilon)})_n < \frac{1}{2}$.
In other words, to get $1$ bit of precision of~$\mu\vg$ one needs exponentially in~$n$ many bits in~$\vg$.
Note that this observation is independent from any particular method to compute or approximate the least fixed point.

\newcommand{\qiat}[1]{q_i(#1)}
\newcommand{\fiat}[1]{f_i(#1)}
\newcommand{\hiat}[1]{h_i(#1)}
\newcommand{\nqiat}[1]{q'_i(#1)}
\newcommand{\nfiat}[1]{f'_i(#1)}
\newcommand{\nhiat}[1]{h'_i(#1)}
\newcommand{\nqoat}[1]{q'_1(#1)}
\newcommand{\nqnat}[1]{q'_n(#1)}
\newcommand{\nfoat}[1]{f'_1(#1)}
\newcommand{\nfnat}[1]{f'_n(#1)}
\newcommand{\Jfat}[1]{\vf'(#1)}
\newcommand{\Jqat}[1]{\vq'(#1)}
\newcommand{\Jpiat}[1]{\vp'_i(#1)}
\newcommand{\nqix}{\nqiat{\vx}}
\newcommand{\nqox}{\nqoat{\vx}}
\newcommand{\nqnx}{\nqnat{\vx}}
\newcommand{\nfix}{\nfiat{\vx}}
\newcommand{\Jqx}{\Jqat{\vx}}
\newcommand{\Jfx}{\Jfat{\vx}}
\newcommand{\Jpix}{\Jpiat{\vx}}

\section{Geometrical Aspects of SPPs}\label{sec:geo}
As shown in \S~\ref{sub:reduction-quadratic-case} we can
assume that $\vf$ consists of quadratic polynomials. For quadratic
polynomials the locus of zeros is also called a {\em quadric
surface}, or more commonly {\em quadric}. Quadrics are one of the
most fundamental class of hypersurfaces. It is therefore natural
to study the quadrics induced by a quadratic SPP $\vf$, and how
the Newton sequence is connected to these surfaces.

Let us write $\vq$ for $\vf-\vX$.
Every component $q_i$ of $\vq$ is also a quadratic polynomial
each defining a quadric denoted by
\[
  Q_i := \{ \vx \in \R^n \mid q_i(\vx) = f_i(\vx) - x_i = 0 \}.
\]
Finding $\mu\vf$ thus corresponds
to finding the least non-negative point of intersection of these $n$ quadrics $Q_i$.
\begin{example}\label{geo:ex1}
Consider the SPP $\vf$ given by
\[
  \vf(X,Y) = \begin{pmatrix} \frac{1}{2}X^2 + \frac{1}{4}Y^2 + \frac{1}{4} \\ \frac{1}{4}X + \frac{1}{4}XY + \frac{1}{4}Y^2 + \frac{1}{4} \end{pmatrix}
\]
leading to
\[
    q_1(X,Y) = \frac{1}{2}X^2 + \frac{1}{4}Y^2 + \frac{1}{4} - X \text{ and } q_2(X,Y) =  \frac{1}{4}X + \frac{1}{4}XY + \frac{1}{4}Y^2 + \frac{1}{4} - Y.
\]
Using standard techniques from linear algebra one can show that $q_1$ defines an ellipse while $q_2$
describes a parabola (see Figure~\ref{fig:geo-ex1}).
\qed
\end{example}
\begin{figure}[ht]
\begin{center}
\begin{tabular}{cc}
\scalebox{0.3}{
  {
  \psfrag{X}{$X$}
  \psfrag{Y}{$Y$}
  \psfrag{muf}{$\mu\vf$}
  \includegraphics{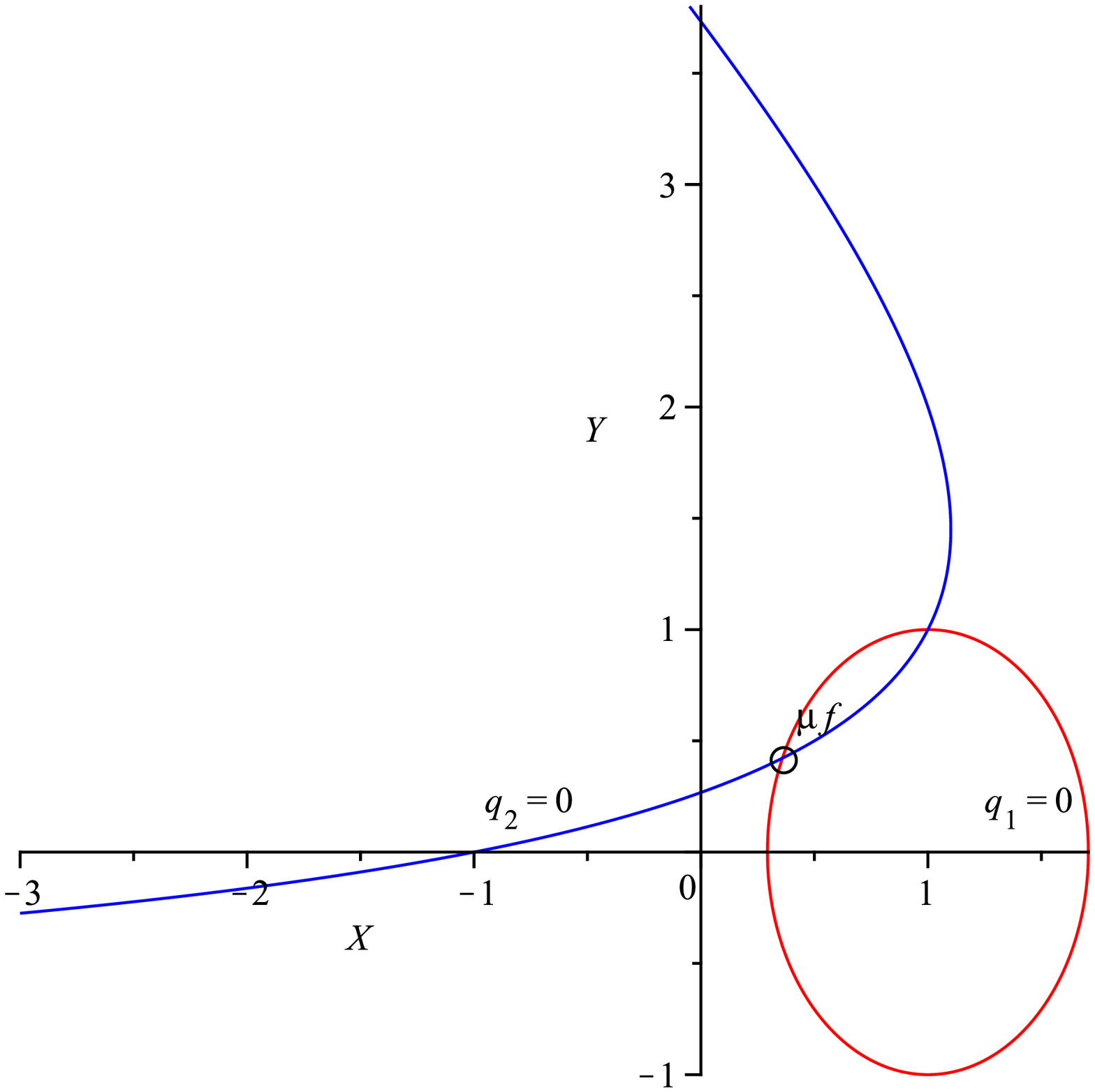}
  }
}
&
\scalebox{0.3}{
  {
  \psfrag{X}{$Y$}
  \psfrag{Y}{$Y$}
  \psfrag{muf}{$\mu\vf$}
  \includegraphics{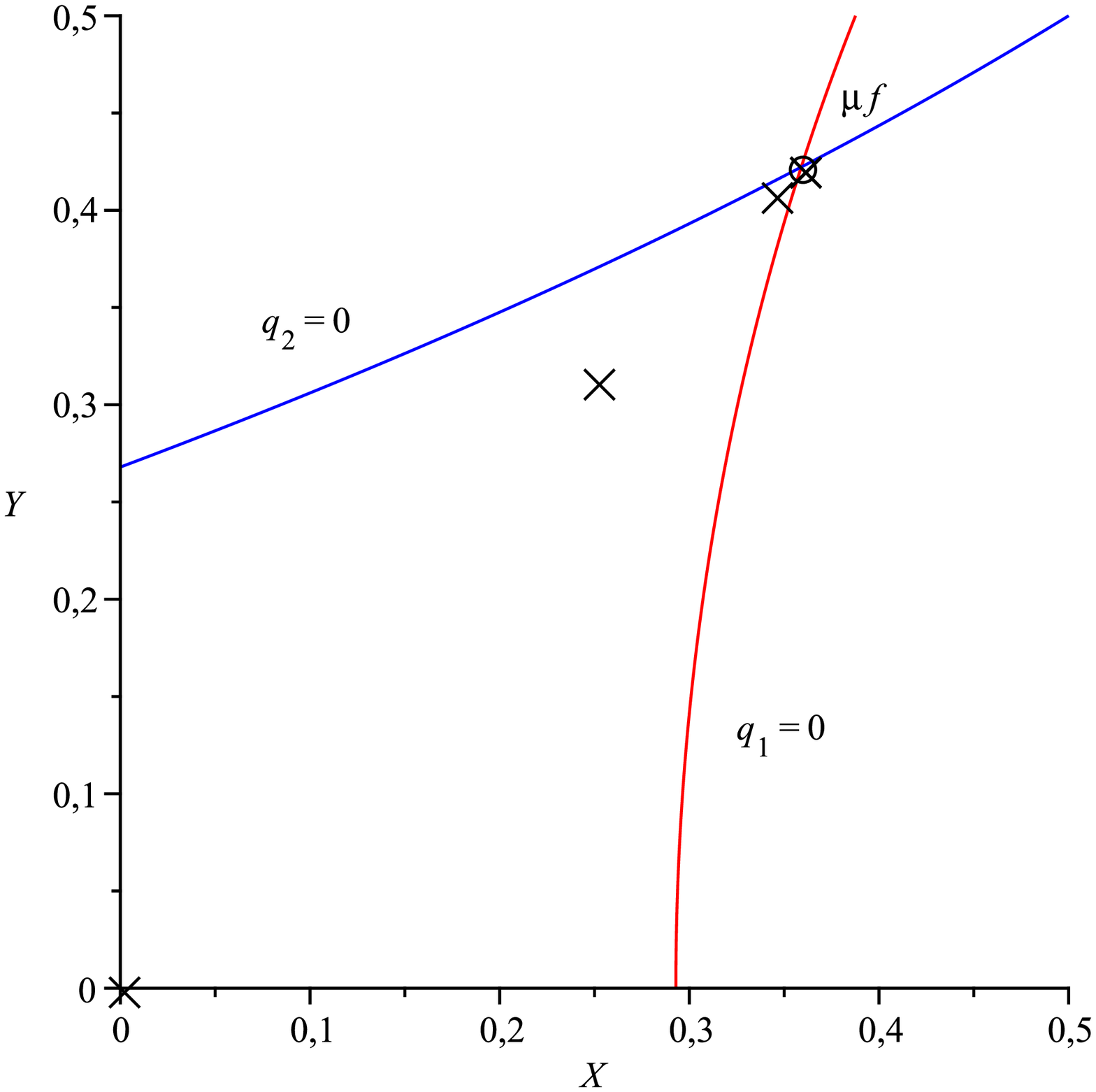}
  }
}\\
(a) & (b)\\
\end{tabular}
\end{center}
\caption{(a) The quadrics induced by the SPP from
Example~\ref{geo:ex1} with ``$q_1=0$'' an ellipse, and ``$q_2=0$'' a
parabola. (b) Close-up view of the region important for
determining $\mu\vf$. The crosses show the Newton approximants of
$\mu\vf$.} \label{fig:geo-ex1}
\end{figure}

Figure~\ref{fig:geo-ex1} shows the two quadrics induced by the SPP
$\vf$ discussed in the example above. In
Figure~\ref{fig:geo-ex1}~(a) one can recognize one of
the two quadrics as an ellipse while the other one is a parabola. In
this example the Newton approximants (depicted as crosses) stay
within the region enclosed by the coordinate axes and the two
quadrics as shown in Figure~\ref{fig:geo-ex1}~(b).

In this section we want to show that the above picture in
principle is the same for all clean and feasible scSPPs.
That is, we show that the Newton (and
Kleene) approximants always stay in the region enclosed by the
coordinate axes and the quadrics. We characterize this region and
study some of the properties of the quadrics restricted to this
region. This eventually leads to a generaliztion of Newton's method (Theorem~\ref{thm:tang-approx}).
We close the section by showing that this new method converges at least as fast as Newton's method.
%
\ifthenelse{\equal{\geoproofs}{false}}{All missing proofs can
be found in the appendix.}{}

Let us start with the properties of the quadrics $Q_i$.
We restrict our attention to the region $[\vzero,\mu\vf)$. For this we
set
\[
  M_i := Q_i \cap [\vzero,\mu\vf) = \{ \vx \in [\vzero,\mu\vf) \mid q_i(\vx) = 0 \}.
\]
We start by showing that for every $\vx \in M_i$ the gradient $\nqix$ in $\vx$ at $M_i$
does not vanish. As $\nqix$ is perpendicular to the tangent plane
in $\vx$ at $M_i$, this means that the normal of the tangent plane is determined by $\nqix$
(up to orientation). See Figure~\ref{fig:normals} for an example.
This will later allow us to apply the implicit function theorem.
\begin{figure}[ht]
\begin{center}
\scalebox{0.3}{\includegraphics{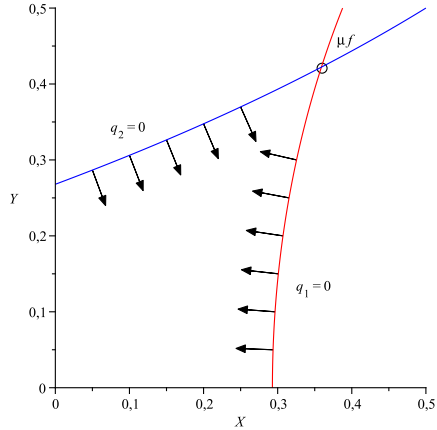}}
\end{center}
\caption{The normals (scaled down) of the quadrics from Example~\ref{geo:ex1}.}
\label{fig:normals}
\end{figure}

\begin{lemma}\label{lem:normal}
For every quadric $q_i$ induced by a clean and feasible scSPP~$\vf$ we have
\[
    \nqix = \left(\pdat{q_i}{X_1}{\vx},\pdat{q_i}{X_2}{\vx},\ldots,\pdat{q_i}{X_n}{\vx}\right) \neq \vzero \text{ and } \pdat{q_i}{X_i}{\vx} < 0 \quad \forall \vx \in [\vzero,\mu\vf).
\]
\end{lemma}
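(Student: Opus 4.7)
The plan is to prove the second inequality first and derive the first from it, because a vector with a strictly negative component cannot be zero. Writing $q_i = f_i - X_i$, the second inequality reads $[\vf'(\vx)]_{ii} - 1 < 0$, so the whole lemma reduces to the pointwise bound $[\vf'(\vx)]_{ii} < 1$ on the diagonal of the Jacobian, for every $\vx \in [\vzero, \mu\vf)$.

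The first reduction is monotonicity: every entry of $\vf'(\vX)$ is a polynomial with nonnegative coefficients, hence monotone on $\Rp^n$, so $[\vf'(\vx)]_{ii} \le [\vf'(\mu\vf)]_{ii}$ for all $\vzero \le \vx \le \mu\vf$. It is therefore enough to establish the strict inequality $[\vf'(\mu\vf)]_{ii} < 1$ at the least fixed point itself.

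The central tool is a cone vector, supplied by Proposition~\ref{prop:cone-vector-exists}: a vector $\vd \succ \vzero$ with $\vf'(\mu\vf)\vd \le \vd$. Reading off the $i$-th row gives
\[
   [\vf'(\mu\vf)]_{ii}\, d_i \;+\; \sum_{j \neq i} [\vf'(\mu\vf)]_{ij}\, d_j \;\le\; d_i,
\]
so the desired bound follows as soon as the off-diagonal sum is strictly positive. Here strong connectivity with $n \ge 2$ is decisive: component $i$ must depend directly on some $j \neq i$ (if $f_i$ contained only $X_i$ or were constant, then $i$ could not reach, via chains of direct dependencies, any component different from $i$), which makes $[\vf'(\vX)]_{ij}$ a nonzero polynomial with nonnegative coefficients; cleanness yields $\mu\vf \succ \vzero$ and hence $[\vf'(\mu\vf)]_{ij} > 0$, which paired with $d_j > 0$ produces the required strict positivity.

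The main obstacle is the one-dimensional case $n = 1$, where the cone-vector argument degenerates and $[\vf'(\mu\vf)]_{11}$ may in fact equal $1$ (exactly when the parabola $f - X$ is tangent to the $X$-axis at $\mu f$). I would then argue directly on $f(X) = aX^2 + bX + c$ with $a,b,c \ge 0$: if $a = 0$, feasibility and cleanness force $b < 1$, so $f'(x) = b < 1$ identically; if $a > 0$, then $g(X) := f(X) - X$ is a convex parabola whose smaller zero is $\mu f$, so $g'(\mu f) \le 0$ and, since $g'$ is strictly increasing, $g'(x) < g'(\mu f) \le 0$ for every $x < \mu f$, giving $f'(x) < 1$. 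Combining both regimes completes the proof.
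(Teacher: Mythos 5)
Your proof is correct, and it takes a genuinely different route from the paper's. The paper derives both claims in one stroke from the invertibility result of Theorem~\ref{thm:well-defined}.2: for $\vx \in [\vzero,\mu\vf)$ the matrix $\Id - \vf'(\vx)$ is invertible with inverse $\vf'(\vx)^\ast \ge 0$, hence $\vq'(\vx)^{-1} = -\vf'(\vx)^\ast$ exists and is entrywise nonpositive; full rank gives $\nabla q_i|_{\vx} \neq \vzero$, and expanding the $i$-th row of $\vq'(\vx)\,\vq'(\vx)^{-1} = \Id$ against the nonnegative off-diagonal entries $\pd{q_i}{X_j} = \pd{f_i}{X_j} \ge 0$ forces $\pdat{q_i}{X_i}{\vx} < 0$. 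You instead reduce everything to the scalar bound $[\vf'(\vx)]_{ii} < 1$, push it to the fixed point by monotonicity of $\vf'$, and settle it there with a cone vector (Proposition~\ref{prop:cone-vector-exists}) plus the observation that, for $n \ge 2$, strong connectivity and $\mu\vf \succ \vzero$ make the off-diagonal part of the $i$-th row of $\vf'(\mu\vf)\vd \le \vd$ strictly positive; the case $n=1$, where $[\vf'(\mu f)]_{11} = 1$ is genuinely possible, is handled by an elementary parabola argument that is also correct (cleanness forces the constant term $c>0$, so both roots of $f - X$ are positive and $\mu f$ is the smaller one). Each approach buys something: the paper's argument is shorter, needs no case split on $n$, and actually establishes the stronger fact that the $n$ gradients are jointly linearly independent (which is reused in Lemma~\ref{lem:tangents-invertable}), whereas your argument yields the strictly stronger pointwise conclusion $[\vf'(\mu\vf)]_{ii} < 1$ \emph{at} the least fixed point for $n \ge 2$, which the paper's proof does not give (it only controls $\vx \prec \mu\vf$). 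Both rely on previously established machinery with no circularity, so either is admissible at this point in the development.
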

\ifthenelse{\equal{\geoproofs}{true}}{%
\begin{proof}
As shown by Etessami and Yannakakis in \cite{EYstacs05Extended} under the above preconditions
it holds for all $\vx\in [\vzero,\mu\vf)$ that
$\bigl(\text{Id} - \Jfx\bigl)$ is invertible with
\[
  \bigl(\text{Id} - \Jfx\bigl)^{-1} = \Jfx^\ast.
\]
Thus, we have
\[
    \Jqx^{-1} = \bigl(\Jfx - \text{Id}\bigr)^{-1} = - \bigl( \Jfx^\ast \bigr),
\]
implying that $\nqix \neq \vzero$ for all $\vx\in [\vzero,\mu\vf)$ as $\Jqx$ has to have full rank $n$
in order for $\Jqx^{-1}$ to exist.
Furthermore, it follows that all entries of $\Jqx^{-1}$ are non-positive as $\Jfx^\ast$ is non-negative.
Now, as $q_i(\vX) = f_i(\vX)- X_i$ and $f_i(\vX)$ is a polynomial with non-negative coefficients, it holds that
\[
 \nqix \cdot \ve_j = \pdat{q_i}{X_j}{\vx} = \pdat{f_i}{X_j}{\vx} \ge 0
\]
for all $j \neq i$ and $\vx\ge \vzero$.
With every entry of $\Jqx^{-1}$ non-positive, and
\[
\nqix \cdot \Jqx^{-1} = \ve_i^\top,
\]
we conclude $\pdat{q_i}{X_i}{\vx} < 0$.
\qed
\end{proof}
}{%
}

In the following, for $i\in\{1,\ldots,n\}$ we write $\vx_{-i}$ for the vector
$(x_1,\ldots,x_{i-1},x_{i+1},\ldots,x_n)$ and define $(\vx_{-i},x_i)$ to also
denote the original vector $\vx$.

We next show that there exists a complete parametrization of ``the lower part'' of $M_i$.
With ``lower part'' we refer to the set
\[
S_i := \{ \vx \in M_i \mid \forall \vy \in M_i : ( \vx_{-i} = \vy_{-i} ) \Rightarrow x_i \le y_i \}\,,
\]
i.e., the points $\vx \in M_i$ such that there is no point $\vy$ with the same non-$i$-components but smaller $i$-component.
Taking a look at Figure~\ref{fig:geo-ex1}, the surfaces $S_1$ and $S_2$ are those
parts of $M_1$, resp.~$M_2$, which delimit that part of $\Rp^2$ shown in Figure~\ref{fig:geo-ex1}~(b).

If $\vx \in S_i$ then $x_i$ is the least non-negative root of the (at most) quadratic
polynomial $q_i(X_i, \vx_{-i})$.
As we will see, these roots can also be represented by the following functions:
\begin{definition}\label{def:h}
For a clean and feasible scSPP $\vf$ we
define for all $k\in\N$ the polynomial $h_{i}^{(k)}$ by
\[
h_i^{(0)}(\vX_{-i}) := f_i[i/0](\vX_{-i}), \quad h_i^{(k+1)}(\vX_{-i}) := f_i[i/h_i^{(k)}(\vX_{-i})](\vX_{-i})
\]
The function $h_i(\vX)$ is then defined pointwise by
\[
    h_i(\vx_{-i}) := \lim_{k\to\infty} h_i^{(k)}(\vx_{-i})
\]
for all $\vx_{-i}\in[\vzero,\mu\vf_{-i}]$.
\end{definition}

We show in the appendix (see Proposition~\ref{prop:h}) that the function $h_i$
is well-defined and exists.
We therefore can parameterize the surface $S_i$ w.r.t.\ the remaining variables $\vX_{-i}$,
i.e., $h_i$ is the ``height'' of the surface $S_i$ above the ``ground'' $X_i = 0$.
\ifthenelse{\equal{\geoproofs}{true}}{%
\begin{prf}
Let $\vzero \le \vx \le \vy \le \mu\vf$.
Using the monotonicity of $f_i$ over $\Rp^n$ we proceed by induction on $k$.
\begin{itemize}
\item[(a)]
  For $k=0$ we have
  \[
    0 \le h_i^{(0)}(\vx_{-i}) = f_i(0,\vx_{-i}) \le f_i( \mu\vf) = \mu\vf_i.
  \]
  We then get
  \[
    0 \le h_i^{(k+1)}(\vx_{-i}) = f_i( h_i^{(k)}(\vx_{-i}), \vx_{-i} ) \le f_i( \mu\vf ) = \mu\vf_i.
  \]
\item[(b)]
  For $k=0$ we have
  \[
    h_i^{(0)}(\vx_{-i}) = f_i( 0, \vx_{-i} ) \le f_i( h_i^{(0)}(\vx_{-i}),\vx_{-i} ) = h_i^{(1)}(\vx_{-i}).
  \]
  Thus
  \[
   h_i^{(k+1)}(\vx_{-i}) = f_i( h_i^{(k)}(\vx_{-i}), \vx_{-i} ) \le f_i( h_i^{(k+1)}(\vx_{-i}),\vx_{-i} ) = h_i^{(k+2)}(\vx_{-i})
  \]
  follows.
\item[(c)]
  As $\vx \le \vy$, we have for $k=0$
  \[
    h_i^{(0)}(\vx_{-i}) = f_i(0,\vx_{-i}) \le f_i(0,\vy_{-i}) = h_i^{(0)}(\vy_{-i}).
  \]
  Hence, we get
  \[
    h_i^{(k+1)}(\vx_{-i}) = f_i( h_i^{(k)}(\vx_{-i}),\vx_{-i}) \le f_i( h_i^{(k)}(\vy_{-i}),\vy_{-i}) = h_i^{(k+1)}(\vy_{-i}).
  \]
\item[(d)]
  As the sequence $(h_i^{(k)}(\vx_{-i}))_{k\in\N}$ is monotonically increasing and bounded from above
  by $\mu\vf_i$, the sequence converges.
  Thus, for every $\vx$ the value
  \[
    h_i(\vx_{-i}) = \lim_{k\to\infty} h_i^{(k)}(\vx_{-i})
  \]
  is well-defined, i.e., $h_i$ is a map from $[\vzero,\mu\vf_{-i}]$ to $[0,\mu f_i]$.

  If $f_i$ depends on at least one other variable except $X_i$,
  then $h_i$ is a non-constant power series in this variable with non-negative coefficients.
  For $\vx_{-i} \in [\vzero,\mu\vf_{-i})$ we thus always have
  \[
    h_i( \vx_{-i} ) < h_i( \mu\vf_{-i} )  = \mu\vf_i
  \]
  as $\vx_{-i} \prec \mu\vf_{-i}$.
\item[(e)]
  This follows immediately from (b).
\item[(f)]
  As $f_i$ is continuous, we have
 \[ f_i( h_i(\vx_{-i}), \vx_{-i} ) = f_i( \lim_{k\to\infty} h_i^{(k)}(\vx_{-i}), \vx_{-i}) = \lim_{k\to\infty} h_i^{(k+1)}(\vx_{-i}) = h_i(\vx_{-i}),
 \]
 where the last equality holds because of (b).
\item[(g)]
  Using induction similar to (a) replacing $\mu\vf$ by $\vx$,
  one gets $h_i^{(k)}(\vx_{-i}) \le x_i$ for all $k\in\N$ as $f_i(\vx_{-i}) = x_i$.
  Thus, $h_i(\vx_{-i}) \le x_i$ follows similarly to (d).
\item[(h)]
  By definition, we have $\mu \vf = \lim_{k\to\infty} \vf^k(\vzero)$.
  For $k=0$, we have
  \[ (\vf^{0}(\vzero))_i = 0 \le f_i(0,\mu\vf_{-i}) = h_i^{(0)}(\mu\vf_{-i}). \]
  We thus get by induction
  \[
    (\vf^{(k+1)}(\vzero))_i = f_i( \vf^{k}(\vzero) ) \le f_i( h_i^{(k)}(\mu\vf_{-i}), \mu\vf_{-i}) = h_i^{(k+1)}(\mu\vf_{-i}).
  \]
  Thus, we may conclude $\mu\vf_i \le h_i(\mu\vf_{-i})$.
  As $\mu\vf_i = f_i(\mu\vf)$, we get by virtue of (g) that $h_i(\mu\vf_{-i}) \le \mu\vf_i$, too.
\end{itemize}
\end{prf}
}{%
}

By the preceding proposition the map
\[
 \vp_i : [\vzero,\mu\vf_{-i}) \to [\vzero,\mu\vf] : \vx_{-i} \mapsto ( x_1, \ldots, x_{i-1}, h_i(\vx_{-i}), x_{i+1}, \ldots, x_n )
\]
gives us a pointwise parametrization of $S_i$.
We want to show that $\vp_i$ is continuously differentiable. For this it suffices to show
that $h_i$ is continuously differentiable which follows easily from the implicit function theorem (see e.g.~\cite{OrtegaRheinboldt:book}).

%

\begin{lemma}\label{lem:h-diff}
$h_i$ is continuously differentiable with
\[
    \pdat{h_i}{X_j}{\vx_{-i}} = \frac{\pdat{f_i}{X_j}{\vx}}{-\pdat{q_i}{X_i}{\vx}}
    = \frac{\pdat{q_i}{X_j}{\vx}}{-\pdat{q_i}{X_i}{\vx}} \text{ for } \vx \in S_i \text{ and } j \neq i.
\]
In particular, $\pd{h_i}{X_j}$ is monotonically increasing with $\vx$.
\end{lemma}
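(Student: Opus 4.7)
My plan is to apply the implicit function theorem to $q_i$ at points of $S_i$. By Lemma~\ref{lem:normal}, $\pd{q_i}{X_i}(\vx) < 0$ for every $\vx \in [\vzero,\mu\vf)$, and in particular for every $\vx \in S_i$. Since $q_i$ is a polynomial (hence $C^\infty$) and the partial derivative with respect to the ``solved'' variable $X_i$ does not vanish, the implicit function theorem produces, locally around each $\vx \in S_i$, a unique continuously differentiable function $\widetilde h_i$ with $q_i(\vx_{-i}, \widetilde h_i(\vx_{-i})) = 0$. To identify $\widetilde h_i$ with the $h_i$ of Definition~\ref{def:h}, I would use that on $[0,\mu\vf_i)$ the function $x_i \mapsto q_i(x_i, \vx_{-i})$ is strictly decreasing (because $\pd{q_i}{X_i} < 0$ there), so it has at most one root in that interval; by the proposition about $h_i$ (existence of $h_i$ as the limit of $h_i^{(k)}$, and $h_i(\vx_{-i}) < \mu\vf_i$ for $\vx_{-i} \prec \mu\vf_{-i}$), $h_i(\vx_{-i})$ is exactly that root, so $h_i \equiv \widetilde h_i$ locally and hence globally on $[\vzero,\mu\vf_{-i})$.

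For the derivative formula, I would differentiate the identity $q_i(\vx_{-i}, h_i(\vx_{-i})) = 0$ with respect to $X_j$ (for $j \ne i$) and solve:
\[
    \pdat{q_i}{X_j}{\vx} + \pdat{q_i}{X_i}{\vx} \cdot \pdat{h_i}{X_j}{\vx_{-i}} = 0,
\]
which (since $\pd{q_i}{X_i}(\vx) \neq 0$) yields
\[
    \pdat{h_i}{X_j}{\vx_{-i}} = \frac{\pdat{q_i}{X_j}{\vx}}{-\pdat{q_i}{X_i}{\vx}}.
\]
The second equality in the statement follows from $q_i = f_i - X_i$, since for $j \ne i$ the $-X_i$ term contributes nothing, so $\pd{q_i}{X_j} = \pd{f_i}{X_j}$.

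For monotonicity, recall that if $\vx_{-i}^{(1)} \le \vx_{-i}^{(2)}$ in $[\vzero,\mu\vf_{-i})$, then by part (c) of the proposition on $h_i$ we have $h_i(\vx_{-i}^{(1)}) \le h_i(\vx_{-i}^{(2)})$, so the corresponding lifted points $\vx^{(1)}, \vx^{(2)} \in S_i$ satisfy $\vx^{(1)} \le \vx^{(2)}$. Now the numerator $\pd{f_i}{X_j}$ is a polynomial in $\vX$ with non-negative coefficients, hence monotonically increasing in $\vx$; and the denominator is $-\pd{q_i}{X_i}(\vx) = 1 - \pd{f_i}{X_i}(\vx)$, which is strictly positive on $[\vzero,\mu\vf)$ (again by Lemma~\ref{lem:normal}) and monotonically decreasing in $\vx$ because $\pd{f_i}{X_i}$ is a monotone polynomial. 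A non-negative increasing numerator divided by a positive decreasing denominator is monotonically increasing, which gives the final claim.

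The main technical obstacle is not the computation itself but rather justifying that the locally defined implicit function really coincides with the globally defined $h_i$; once one has the uniqueness argument (strict monotonicity of $q_i(\cdot, \vx_{-i})$ in $X_i$ on $[0,\mu\vf_i)$) this is smooth. A secondary point worth double-checking is the boundary behavior: the formula is asserted for $\vx \in S_i$, i.e.\ $\vx_{-i} \in [\vzero,\mu\vf_{-i})$, and Lemma~\ref{lem:normal} supplies $\pd{q_i}{X_i} \ne 0$ precisely on this half-open region, so the implicit function theorem applies throughout the stated domain and no extension to the closure is needed.
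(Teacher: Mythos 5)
Your proof is correct and follows essentially the same route as the paper's: Lemma~\ref{lem:normal} plus the implicit function theorem, identification of the local implicit function with $h_i$, implicit differentiation for the formula, and the increasing-numerator/decreasing-positive-denominator argument for monotonicity. The only difference is in the identification step, where you use strict monotonicity of $x_i \mapsto q_i(x_i,\vx_{-i})$ on $[0,\mu\vf_i)$ to get uniqueness of the root, which is a cleaner shortcut than the paper's case analysis on the discriminant of the quadratic.
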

\ifthenelse{\equal{\geoproofs}{true}}{%
\begin{proof}
By Lemma~\ref{lem:normal} the implicit function theorem is applicable for every $\vx\in S_i$.
We therefore find for every $\vx\in S_i$ a local parametrization
$h_{\vx} : U \mapsto V$ with $h_{\vx}(\vx_{-i}) = x_i$.
Thus $h_{\vx}(\vx_{-i})$ is the least non-negative solution of $q_i(X_i,\vx_{-i}) = 0$.
By continuity of $q_i$ it is now easily shown that for all $\vy_{-i} \in U$ it has to hold
that $h_{\vx}(\vy_{-i})$ is also the least non-negative solution of $q_i(X_i,\vy_{-i})=0$ (see below).
By uniqueness we therefore have $h_{\vx} = h_i$ and that $h_i$ is continuously differentiable
for all $\vx_{-i} \in [\vzero,\mu\vf_{-i})$.

For every $\vx_{-i}\in [\vzero,\mu\vf_{-i})$ we can solve the (at most) quadratic equation
$q_i(X_i,\vx_{-i}) = 0$. We already know that $h_i(\vx_{-i})$ is the least non-negative solution of
this equation. So, if there exists another solution, it has to be real, too.

Assume first that this equation has two distinct solutions for some fixed $\vx_{-i}\in [\vzero,\mu\vf_{-i})$.
Solving $q_i(X_i,\vx_{-i})=0$ thus leads to an expression of the form
\[
  \frac{-b(\vx_{-i}) \pm \sqrt{b(\vx_{-i})^2 - 4a\cdot c(\vx_{-i})}}{2a}
\]
for the solutions where $b,c$ are (at most) quadratic polynomials in $\vX_{-i}$, $c$ having non-negative coefficients,
and $a$ is a positive constant (leading coefficient of $X_i^2$ in $q_i(\vX)$).
As $b$ and $c$ are continuous, the discriminant $b(\cdot)^2 - 4a\cdot c(\cdot)$ stays positive
for some open ball around $\vx_{-i}$ included inside of $U$ (it is positive in $\vx_{-i}$ as we assume
that we have two distinct solutions).
By making $U$ smaller, we may assume that $U$ is this open ball.
One of the two solutions has than to be the least non-negative solution.
As $h_{\vx}$ is the least non-negative solution for $\vx_{-i}$, and $h_{\vx}$ is continuous,
this also has to hold for some open ball centered at $\vx_{-i}$, w.l.o.g.\ $U$ is this ball.
So, $h_{\vx}$ and $h_i$ coincide on $U$.

We turn to the case that $q_i(X_i,\vx_{-i})=0$ has only a single solution, i.e.\ $h_i(\vx_{-i})$.
Note that $q_i(\vX)$ is linear in $X_i$ if, and only if, $q_i(X_i,\vx_{-i})$ is linear in $X_i$.
Obviously, if $q_i$ linear in $X_i$, then $h_i$ and $h_{\vx}$ coincide on $U$.
Thus, consider the case that $q_i(\vX)$ is quadratic in $X_i$, but $q_i(X_i,\vx_{-i})$ has only
a single solution.
This means that $\vx_{-i}$ is a root of the discriminant, i.e.\ $b(\vx_{-i}) - 4ac(\vx_{-i})=0$.
As $h_i(\vy_{-i})$ is a solution of $q_i(X_i,\vy_{-i})=0$ for all $\vy_{-i}\in U$,
the discriminant is non-negative on $U$.
If it equal to zero on $U$, then we again have that $h_i$ is equal to $h_{\vx}$ on $U$.
Therefore assume that is positive in some point of $U$.
As the discriminant is continuous, the solutions change continuously with $\vx_{-i}$.
But this implies that for some $\vy_{-i}\in U$ there are at least two $y_i,y_i^\ast\in V$
such that $(\vy_{-i},y_i)$ and $(\vy_{-i},y_i^\ast)$ are both located on the quadric $q_i(\vX)=0$.
But this contradicts the uniqueness of $h_{\vx}$ guaranteed by the implicit function theorem.

\michael{(Sollte auch mit dem Satz ueber implizite Funktionen gehen.)}
Assume now that $\vx \in S_i$. We then have
\[
    q_i( \vx ) = q_i(\vx_{-i}, h_i(\vx_{-i})) = 0,
\]
or equivalently
\[
  f_i(\vx_{-i},h_i(\vx_{-i})) = h_i(\vx_{-i}).
\]
Calculating the gradient of both in $\vx$ yields
\[
    \nfix \cdot \Jpiat{\vx_{-i}} = \nhiat{\vx_{-i}}.
\]
For the Jacobian of $\vp_i$ we obtain
\[
 \Jpiat{\vx_{-i}} = \begin{pmatrix} \ve_1^\top \\ \vdots \\ \ve_{i-1}^{\top} \\ \nhiat{\vx_{-i}} \\ \ve_{i+1}^\top \\ \vdots \\ \ve_{n}^{\top} \end{pmatrix}.
\]
This leads to
\[
  \pdat{f_i}{X_j}{\vx} + \pdat{f_i}{X_i}{\vx} \cdot \pdat{h_i}{X_j}{\vx_{-i}} = \pdat{h_i}{X_j}{\vx_{-i}}
\]
which solved for $\pd{h_i}{X_j}$ yields
\[
    \pdat{h_i}{X_j}{\vx_{-i}} = \frac{\pdat{f_i}{X_j}{\vx}}{-\pdat{q_i}{X_i}{\vx}}.
\]
As $\pdat{q_i}{X_i}{\vx} < 0$ and both $\pd{f_i}{X_j}$ and $\pd{q_i}{X_i}$ monotonically increase with $\vx$,
it follows that $\pd{h_i}{X_j}$ also monotonically increases with $\vx$. Finally, for $j\neq i$ we have
that $\pd{q_i}{X_j} = \pd{f_i}{X_j}$ as $q_i = f_i - X_i$.
\qed
\end{proof}
}{%
}

\begin{corollary}
The map
\[
 \vp_i: [\vzero,\mu\vf_{-i}) \to [\vzero,\mu\vf] : \vx_{-i} \mapsto ( x_1, \ldots, x_{i-1}, h_i(\vx_{-i}), x_{i+1}, \ldots, x_n )
\]
is continuously differentiable and a local parametrization of the manifold $S_i$.
%
%
\end{corollary}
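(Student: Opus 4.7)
The plan is to verify the two claims separately, since both follow readily from what has already been established.

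First, the continuous differentiability of $\vp_i$ reduces to that of~$h_i$. All components of $\vp_i$ except the $i$-th are linear projections of~$\vX_{-i}$ and hence trivially continuously differentiable. The $i$-th component is~$h_i$, whose continuous differentiability on $[\vzero,\mu\vf_{-i})$ is exactly the content of Lemma~\ref{lem:h-diff}. Writing out the Jacobian displayed in the proof of that lemma,
\[
 \Jpiat{\vx_{-i}} = \begin{pmatrix} \ve_1^\top \\ \vdots \\ \ve_{i-1}^{\top} \\ \nhiat{\vx_{-i}} \\ \ve_{i+1}^\top \\ \vdots \\ \ve_{n}^{\top} \end{pmatrix},
\]
we see at once that its $n-1$ rows are linearly independent because of the $n-1$ distinct unit-vector rows, so $\vp_i$ is an immersion at every point.

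Second, for the parametrization claim I would show that $\vp_i$ is a homeomorphism onto~$S_i$ whose differential has maximal rank, which by definition makes it a (global, and hence in particular local) parametrization of the manifold~$S_i$. The key point is that $\vp_i$ actually lands in~$S_i$: by Proposition~\ref{prop:h}, $h_i(\vx_{-i})$ is a fixed point of the map $t \mapsto f_i(t,\vx_{-i})$, i.e.\ $q_i(\vp_i(\vx_{-i}))=0$, and by construction it is the least nonnegative such value, hence $\vp_i(\vx_{-i})\in S_i$ for every $\vx_{-i}\in[\vzero,\mu\vf_{-i})$. Conversely, for $\vx \in S_i$, the value $x_i$ is by definition the least nonnegative root of $q_i(X_i,\vx_{-i})=0$, and hence $x_i=h_i(\vx_{-i})$, so $\vx = \vp_i(\vx_{-i})$. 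This shows $\vp_i$ is a bijection onto~$S_i$, and its inverse is just the projection $\vx \mapsto \vx_{-i}$, which is manifestly continuous. Combined with the rank computation above, this yields the claim.

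There is no real obstacle here: every ingredient has been proved earlier (cleanness and feasibility entered through Lemma~\ref{lem:normal}, which justified the implicit function theorem used in Lemma~\ref{lem:h-diff}; the fixed-point characterization of~$h_i$ comes from Proposition~\ref{prop:h}). The only point that deserves a remark is that the corollary in fact yields a global parametrization of $S_i$ by $[\vzero,\mu\vf_{-i})$, which is stronger than the local statement asserted; stating it locally matches the usual definition of a manifold chart and is all that is needed for the subsequent geometric development.
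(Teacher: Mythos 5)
Your proposal is correct and follows essentially the same route as the paper, which treats the corollary as an immediate consequence of Lemma~\ref{lem:h-diff} (continuous differentiability of $h_i$ via the implicit function theorem) together with Proposition~\ref{prop:h} (which identifies $h_i(\vx_{-i})$ as the least nonnegative root of $q_i(X_i,\vx_{-i})=0$, so that $\vp_i$ is a bijection onto~$S_i$ with the projection as inverse). The only quibble is a harmless miscount: the Jacobian of $\vp_i$ has $n$ rows and $n-1$ columns, of which $n-1$ rows are unit vectors, giving maximal rank $n-1$ as you intend.
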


\begin{example}
For the SPP $\vf$ defined in Example~\ref{geo:ex1} we can simply solve
$q_1(X,Y)$ for $X$ leading to
\[
    h_1(Y) = 1 - \sqrt{\frac{1}{2}(1 - Y^2)}.
\]
The important point is that by the previous result we know that this function has to be defined on $[0,\mu\vf_2]$,
and differentiable on $[0,\mu\vf_2)$.
Similarly, we get
\[
 h_2(X) = 2-\frac{1}{2}X - \frac{1}{2} \sqrt{X^2 - 12X + 12}.
\]
%
\end{example}

Figure~\ref{fig:geo-ex1}~(b) conveys the impression that the surfaces $S_i$ are convex
w.r.t.\ the parameterizations $\vp_i$.
As we have seen, the functions $h_i$ are monotonically increasing.
Thus, in the case of two dimensions the functions $h_i$ even have to be
strictly monotonically increasing (as $\vf$ is strongly-connected),
so that the surfaces $S_i$ are indeed convex.
(Recall that a surface $S$ is convex in a point $\vx\in S$ if
$S$ is located completely on one side of the tangent plane at $S$ in $\vx$.)
But in the case of more than two variables this no longer needs to hold.

\begin{example}
The equation
\[
  Z = \frac{1}{8}X^2 + \frac{3}{4}XY + \frac{1}{8}Y^2 + \frac{1}{4}
\]
is an admissible part of any SPP. It defines the hyperbolic
paraboloid depicted in Figure~\ref{fig:hyppar} which is clearly not
convex.
\begin{figure}[ht]
\begin{center}
\begin{tabular}{ccc}
  \scalebox{0.2}{\includegraphics{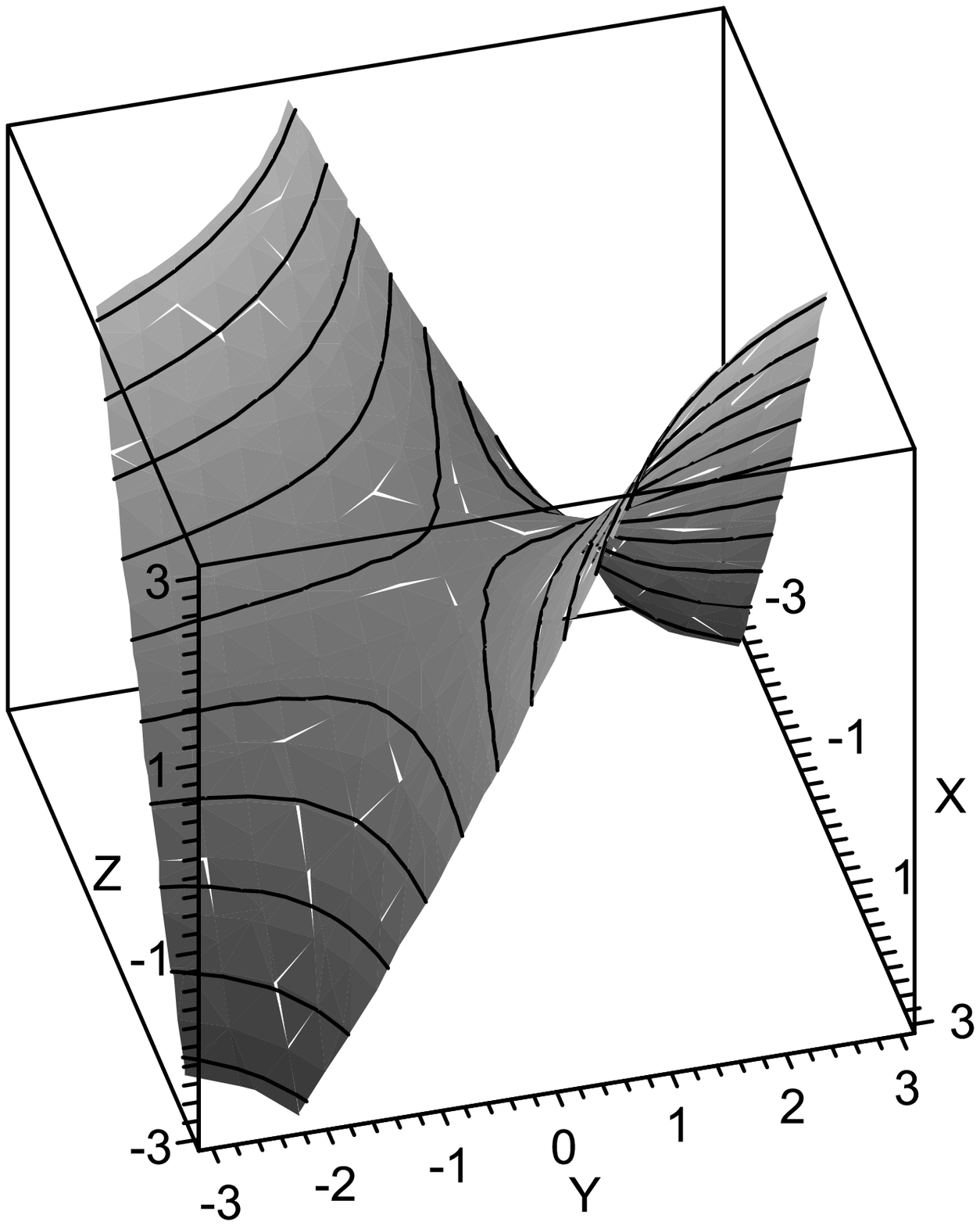}} & \scalebox{0.2}{\includegraphics{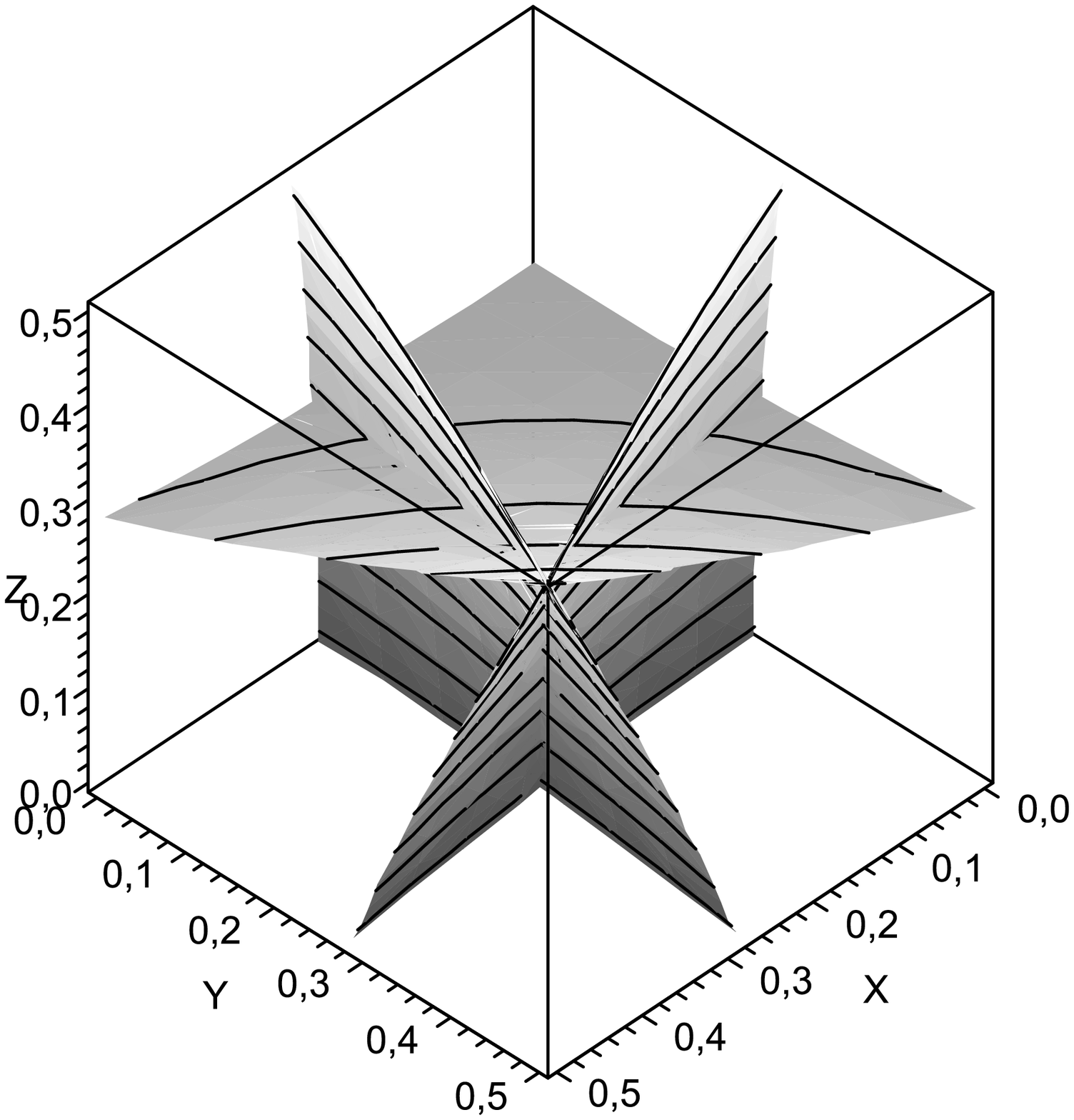}} & \scalebox{0.2}{\includegraphics{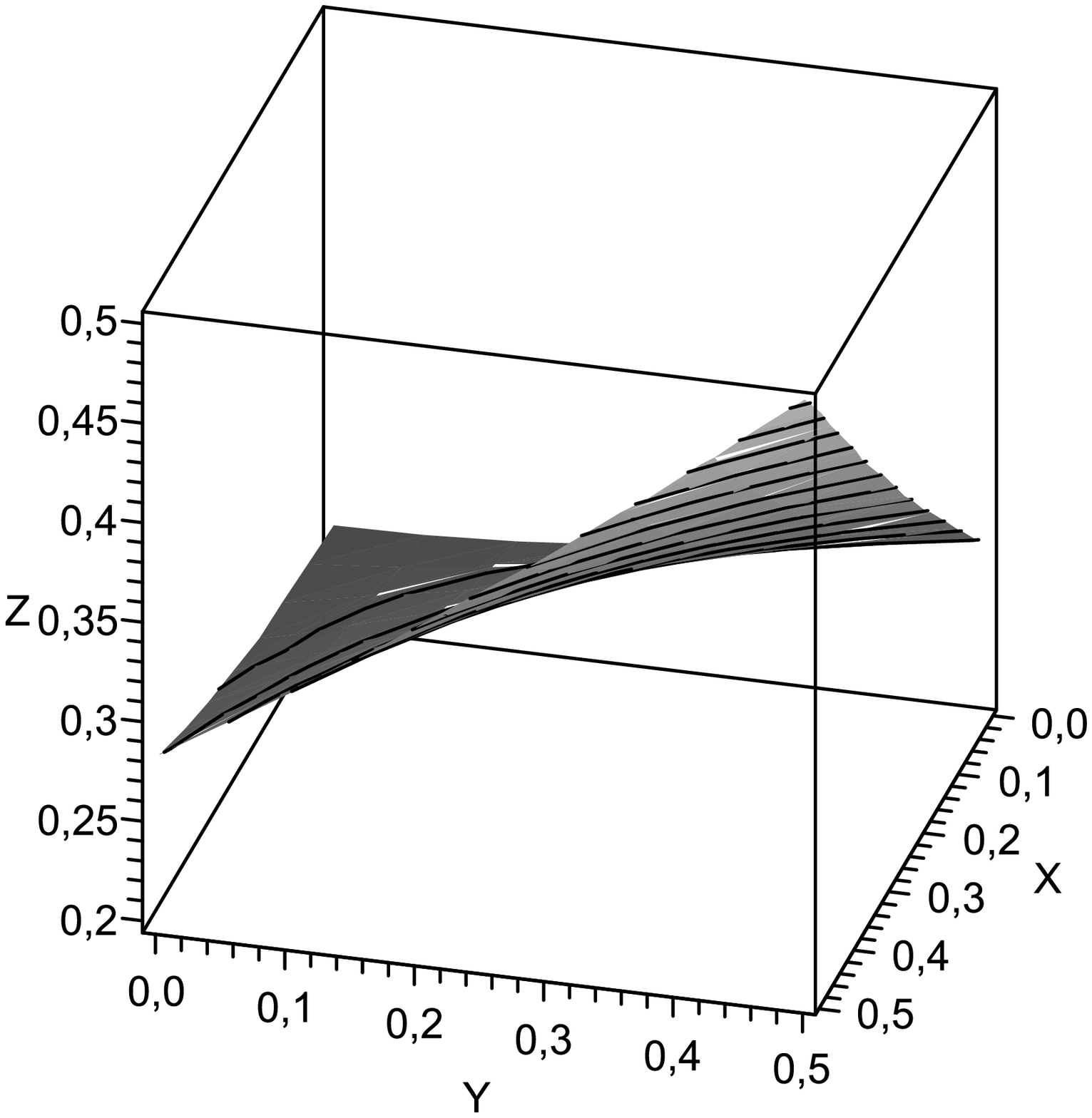}}\\
  (a) & (b) & (c)\\
\end{tabular}
\end{center}
\caption{(a) The hyperbolic paraboloid defined by $Z =
\frac{1}{8}X^2 + \frac{3}{4}XY + \frac{1}{8}Y^2+\frac{1}{4}$ for
$X,Y,Z\in[-10,10]$. (b) A visualization of an SPP consisting of
three copies of the quadric of (a) with $\mu\vf = (\frac{1}{2},\frac{1}{2},\frac{1}{2})$ the
upper apex. (c) One of the three quadrics of (b) over
$[\vzero,\mu\vf]$. Clearly, even limited to this range the surface
is not convex.} \label{fig:hyppar}
\end{figure}
\end{example}

Still, as shown in Lemma~\ref{lem:taylor} it holds for all $\vzero\le \vx \le \vy$ that
\[
    \vx + \Jfx \cdot \vy \le \vf(\vx+\vy).
\]
It now follows (see the following lemma) that the surfaces $S_i$ have the property that for every $\vx\in [\vzero,\mu\vf)$ the
``relevant'' part of $S_i$ for determining $\mu\vf$, i.e.\ $S_i\cap[\vx,\mu\vf]$, is located on
the same side of the tangent plane at $S_i$ in $\vx$ (see Figure~\ref{fig:convex}).
\begin{figure}[ht]
\begin{center}
\scalebox{0.3}{\includegraphics{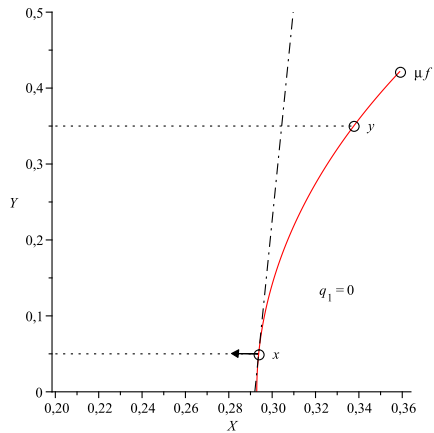}}
\end{center}
\caption{The graphic shows the quadric defined by $q_1=0$ with the tangent and normal in $\vx$ at $S_1$. Every point $\vy$ of $S_1$
above $\vx$ is located on the same side of the the tangent. More precisely, we have $\nabla q_1|_{\vx} \cdot (\vy-\vx)\le 0$.}
\label{fig:convex}
\end{figure}

\begin{lemma}\label{lem:upconv}
For all $\vx\in S_i$ we have
\[
    \forall \vy \in S_i \cap [\vx,\mu\vf] : \nqix \cdot (\vy-\vx) \le 0.
\]
In particular
\[
\forall \vy \in S_i \cap [\vx,\mu\vf] : y_i \ge x_i + \sum_{j\neq i} \pdat{h_i}{X_j}{\vx_{-i}} \cdot (y_j - x_j).
\]
\end{lemma}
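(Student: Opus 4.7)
The plan is to reduce the claim to Taylor's inequality (Lemma~\ref{lem:taylor}) together with the explicit formula for the partial derivatives of $h_i$ from Lemma~\ref{lem:h-diff}.

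First I would unpack what membership in $S_i$ means: both $\vx$ and $\vy$ satisfy $q_i(\vx)=q_i(\vy)=0$, equivalently $f_i(\vx)=x_i$ and $f_i(\vy)=y_i$. Since $\vx \le \vy$ and $f_i$ is a polynomial with nonnegative coefficients, Lemma~\ref{lem:taylor} applied componentwise to $f_i$ (the first inequality in that lemma, with $\vu = \vy - \vx$) yields
\[
  y_i \;=\; f_i(\vy) \;\ge\; f_i(\vx) + \nabla f_i|_{\vx}\cdot(\vy-\vx) \;=\; x_i + \nabla f_i|_{\vx}\cdot(\vy-\vx).
\]
Because $q_i = f_i - X_i$, we have $\nabla q_i|_{\vx} = \nabla f_i|_{\vx} - \ve_i^\top$, and substituting gives
\[
  \nabla q_i|_{\vx}\cdot(\vy-\vx) \;=\; \nabla f_i|_{\vx}\cdot(\vy-\vx) - (y_i - x_i) \;\le\; 0,
\]
which is the first inequality of the lemma.

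For the ``In particular'' part, I would split the sum defining $\nqix \cdot (\vy - \vx)$ into the $i$-component and the remaining components:
\[
  \pdat{q_i}{X_i}{\vx}\,(y_i - x_i) \;+\; \sum_{j\neq i}\pdat{q_i}{X_j}{\vx}\,(y_j - x_j) \;\le\; 0.
\]
By Lemma~\ref{lem:normal}, $\pdat{q_i}{X_i}{\vx} < 0$ on $[\vzero,\mu\vf)$, so dividing by $-\pdat{q_i}{X_i}{\vx} > 0$ preserves the direction of the inequality and yields
\[
  y_i - x_i \;\ge\; \sum_{j\neq i} \frac{\pdat{q_i}{X_j}{\vx}}{-\pdat{q_i}{X_i}{\vx}}\,(y_j - x_j) \;=\; \sum_{j\neq i} \pdat{h_i}{X_j}{\vx_{-i}}\,(y_j - x_j),
\]
where the last equality is the formula from Lemma~\ref{lem:h-diff}.

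There is essentially no obstacle here; the key observation is just that Taylor's lower bound plus the identity $\nabla q_i = \nabla f_i - \ve_i^\top$ already encodes the desired ``one-sided'' geometric fact, and Lemma~\ref{lem:h-diff} then converts the gradient inequality into the tangent-plane form. One minor point to double-check is that when $\vx \in S_i$ with $\vx_{-i} = \mu\vf_{-i}$ one might need $\vy = \vx$, but in that case both inequalities are trivial, so the argument goes through uniformly for all $\vx \in S_i$.
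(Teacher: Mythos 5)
Your proof is correct and follows essentially the same route as the paper's: the paper expands $q_i(\vy)$ explicitly as $q_i(\vx)+\nabla q_i|_{\vx}(\vy-\vx)+(\vy-\vx)^\top A(\vy-\vx)$ with $A\ge 0$, which is exactly the content of the first inequality of Lemma~\ref{lem:taylor} that you invoke, and the second claim is obtained in both cases by solving for $y_i$ using $\pdat{q_i}{X_i}{\vx}<0$ and the derivative formula of Lemma~\ref{lem:h-diff}.
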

\ifthenelse{\equal{\geoproofs}{true}}{%
\begin{prf}
Let $\vx\in S_i$, i.e., $f_i(\vx) = x_i$.
We want to show that
\[
    \nqix \cdot( \vy - \vx ) \le 0
\]
for all $\vy\in S_i \cap [\vx,\mu\vf)$.
As $f_i$ is quadratic in $\vX$, we may write
\[
\begin{array}{lcl}
    0 & = & q_i(\vy) \\
      & = & - y_i + f_i(\vy)\\
      & = & - y_i + \underbrace{f_i(\vx)}_{= x_i} + \nfix \cdot (\vy - \vx) + \underbrace{(\vy-\vx)^\top \cdot A \cdot (\vy-\vx)}_{\ge 0} \\
      & \ge & -y_i + x_i + \nfix \cdot (\vy-\vx)\\
      & =   & \nfix\cdot (\vy-\vx) - \ve_i^\top \cdot(\vy-\vx)\\
      & = & \nqix\cdot(\vy-\vx)
\end{array}
\]
where $A$ is a symmetric square-matrix with non-negative components such that the quadric terms of $f_i$ are given by
$\vX^\top A \vX$.

The second claim is easily obtained by solving this inequality for $y_i$ and recalling that by Lemma~\ref{lem:h-diff}
we have $\pdat{h_i}{X_j}{\vx_{-i}} = \frac{\pdat{q_i}{X_j}{\vp_i(\vx_{-i})}}{-\pdat{q_i}{X_i}{\vp_i(\vx_{-i})}}$
and $\pdat{q_i}{X_i}{\vp_i(\vx_{-i})} < 0$.
\end{prf}
}{%
}

Consider now the set
\[
R := \bigcap_{i=1}^n \{ \vx\in [\vzero,\mu\vf) \mid x_i \le h_i(\vx_{-i}) \},
\]
i.e., the region of $[\vzero,\mu\vf)$ delimited by the coordinate axes and the surfaces $S_i$.
Note that the gradient $\nqix$ for $\vx\in S_i$ points from $S_i$ into $R$ (see Figure~\ref{fig:normals}).
\begin{proposition}\label{prop:R}
It holds
\[
    \vx \in R \Leftrightarrow \vx\in[\vzero,\mu\vf) \wedge \vq(\vx) \ge \vzero.
\]
\end{proposition}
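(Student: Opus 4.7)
The plan is to reduce the biconditional to a pointwise statement and then take the conjunction over coordinates. Fix $\vx \in [\vzero, \mu\vf)$; then in particular $\vx_{-i} \prec \mu\vf_{-i}$ for every $i$. The claim reduces to showing, for each $i \in \{1,\ldots,n\}$,
\[
 x_i \le h_i(\vx_{-i}) \iff q_i(\vx) \ge 0.
\]
Since both $\vx \in R$ and $\vq(\vx) \ge \vzero$ correspond to conjunctions of these per-coordinate conditions over $\vx \in [\vzero, \mu\vf)$, the proposition follows immediately from this equivalence.

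The key observation is that, with $\vx_{-i}$ fixed, the univariate polynomial $t \mapsto q_i(t, \vx_{-i})$ is \emph{strictly decreasing} on the interval $[0, \mu\vf_i)$. Indeed, Lemma~\ref{lem:normal} gives $\pdat{q_i}{X_i}{\vy} < 0$ for every $\vy \in [\vzero, \mu\vf)$, and since $\vx_{-i} \prec \mu\vf_{-i}$ every point $(t, \vx_{-i})$ with $t \in [0, \mu\vf_i)$ lies in $[\vzero, \mu\vf)$. Moreover, by Proposition~\ref{prop:h}(f) we have $f_i(h_i(\vx_{-i}), \vx_{-i}) = h_i(\vx_{-i})$, i.e.\ $q_i$ vanishes at $h_i(\vx_{-i})$, and by Proposition~\ref{prop:h}(d) this zero lies strictly below $\mu\vf_i$ (it actually lies in $[0, \mu\vf_i)$).

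Combining the two facts, the strictly decreasing function $t \mapsto q_i(t, \vx_{-i})$ on $[0, \mu\vf_i)$ has a unique zero at $h_i(\vx_{-i})$, is strictly positive to the left of it, and strictly negative to the right. Since the point of interest $x_i$ also lies in $[0, \mu\vf_i)$ (as $\vx \prec \mu\vf$), we read off directly
\[
 q_i(x_i, \vx_{-i}) \ge 0 \iff x_i \le h_i(\vx_{-i}),
\]
which is exactly the per-coordinate equivalence claimed. Taking the conjunction over $i$ completes the proof.

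There is no real obstacle: the proposition is essentially the one-dimensional intermediate-value-theorem picture applied slice by slice, with all necessary inputs (strict sign of $\pdat{q_i}{X_i}{\cdot}$, existence and location of the zero $h_i(\vx_{-i})$) already supplied by Lemma~\ref{lem:normal} and Proposition~\ref{prop:h}. The only mild care needed is to verify that the segment $\{(t, \vx_{-i}) : t \in [0, \mu\vf_i)\}$ stays inside $[\vzero, \mu\vf)$ so that Lemma~\ref{lem:normal} is applicable uniformly along it, which follows immediately from $\vx \prec \mu\vf$.
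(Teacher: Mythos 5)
Your proof is correct and follows essentially the same route as the paper: both reduce the claim to a per-coordinate statement along the $X_i$-axis and combine Lemma~\ref{lem:normal} with the fact that $q_i$ vanishes at height $h_i(\vx_{-i})$ (Proposition~\ref{prop:h}). Your appeal to strict monotonicity of $t\mapsto q_i(t,\vx_{-i})$ on the whole segment is in fact slightly cleaner than the paper's execution, which expands the restriction $g(t)=q_i(\vp_i(\vx_{-i})+t\ve_i)$ as an explicit quadratic and, for the direction $(\Leftarrow)$, derives a contradiction from the sign of $\pdat{q_i}{X_i}{\cdot}$ at a hypothetical second root.
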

\ifthenelse{\equal{\geoproofs}{true}}{%
\begin{prf}
Let $\vx \in R$ and $i\in\{1,\ldots,n\}$.
Consider the function
\[
    g(t) := q_i(\vp_{i}(\vx_{-i}) + t \ve_i).
\]
As $q_i$ is a quadratic polynomial in $\vX$ there exists a symmetric square-matrix $A$ with non-negative entries, a vector $\vb$,
and a constant $c$ such that
\[
    q_i(\vX) = \vX^\top A \vX + \vb^\top \vX + c.
\]
It then follows that
\[
    q_i(\vX+\vY) = q_i(\vX) + \nqiat{\vX} \vY + \vY^\top A \vY.
\]
With $q_i(\vp_{i}(\vx_{-i}))=0$ this implies
\[
 g(t) = \nqiat{\vp_{i}(\vx_{-i})} t\ve_i + t^2 \underbrace{\ve_i^\top A \ve_i}_{:=a \ge 0} = t\cdot \left( \pdat{q_i}{X_i}{\vp_{i}(\vx_{-i})} + a\cdot t\right).
\]
As $\vp_i(\vx_{-i}) \prec \mu\vf$ ($\vf$ is strongly connected and $\vx\in [\vzero,\mu\vf)$), we know
that $\pdat{q_i}{X_i}{\vp_{i}(\vx_{-i})} < 0$. Thus, $g(t)$ has at most two zeros, one at $0$, the other
for some $t^\ast \ge 0$.

For the direction $(\Rightarrow)$ we only have to show that $x_i \le h_i(\vx_{-i})$ implies that $q_i(\vx) \ge 0$.
This now easily follows as $x_i \le h_i(\vx_{-i})$ implies that there is a $t' \le 0$ with $p_{i}(\vx_{-i}) + t\ve_i = \vx$.
But for this $t' \le 0$ we have $q_i(\vx) = g(t') \ge 0$.

Consider therefore the other direction $(\Leftarrow)$, that is $\vx\in [\vzero,\mu\vf)$ with $\vq(\vx) \ge \vzero$.
Assume that $\vx\not\in R$, i.e., for at least one $i$ we have $x_i > h_i(\vx_{-i})$.
As $q_i(\vx)\ge 0$ there has to be a $t'' > 0$ with $p_{i}(\vx_{-i}) + t''\ve_i = \vx$ and $g(t'') \ge 0$.
This implies that $a>0$ has to hold as otherwise $g(t)$ would be linear in $t$ and negative for $t>0$.
But then the second root $t^\ast$ of $g(t)$ has to be positive.
Set $\vx^\ast = p_i(\vx_{-i}) + t^\ast \ve_i$ with $q_i(\vx^\ast) = 0$, too.

A calculation similar to the one from above leads to
\[
  g(t+t^\ast) = q_i(\vx^\ast + t\ve_i) = t\cdot \left( \pdat{q_i}{X_i}{\vx^\ast} + a\cdot t\right).
\]
It follows that $\pdat{q_i}{X_i}{\vx^\ast}$ has to be greater than zero for $-t^\ast$ to be a root (as $a>0$).
But we have shown that $\pdat{q_i}{X_i}{\vx} < 0$ for all $\vx\in[\vzero,\mu\vf)$.
%
\end{prf}
}{%
}

From this last result it now easily follows that $R$ is indeed the region of $[\vzero,\mu\vf)$
where all Newton and Kleene steps are located in.
\begin{theorem}\label{cor:R-newton-kleene}
Let $\vf$ be a clean and feasible scSPP.
All Newton and Kleene steps starting from $\vzero$ lie within $R$, i.e.\
\[
 \ns{i}, \ks{i} \in R \quad (\forall i\in\N).
\]
\end{theorem}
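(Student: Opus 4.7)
By the characterization in Proposition~\ref{prop:R}, it suffices to show that each iterate $\vz \in \{\ns{i}, \ks{i}\}$ satisfies simultaneously (a) $\vzero \le \vz$ with $\vz \prec \mu\vf$, and (b) $\vq(\vz) = \vf(\vz) - \vz \ge \vzero$. Condition~(b) essentially comes for free from the fundamental results already proved: for the Kleene sequence we have $\vf(\ks{i}) = \ks{i+1} \ge \ks{i}$ by Theorem~\ref{thm:kleene}, while Theorem~\ref{thm:well-defined} guarantees $\ns{i} \le \vf(\ns{i}) \le \ns{i+1} \le \mu\vf$, delivering both $\vq(\ns{i}) \ge \vzero$ and the upper bound $\ns{i} \le \mu\vf$ needed for~(a). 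Thus the whole work is concentrated on proving the strict componentwise inequality $\vz \prec \mu\vf$.

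For the Kleene iterates I would invoke the standard observation already used in the proof of Lemma~\ref{lem:no-infty-in-SCC}: because $\vf$ is non-trivially strongly connected, every component depends on itself through some cycle, so any strict increase in one component of $\ks{i}$ propagates back into the same component in some later Kleene step. Combined with the monotone convergence $\ks{i} \uparrow \mu\vf$ and with cleanness of $\vf$ (which makes $\mu\vf \succ \vzero$), this rules out $\ks{i}_j = \mu\vf_j$ at any finite stage and gives $\ks{i} \prec \mu\vf$.

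For the Newton iterates I would proceed by induction on~$i$. The base case $\ns{0} = \vzero \prec \mu\vf$ uses cleanness. For the step, assume $\ns{k} \prec \mu\vf$ and fix a component~$s$. Since $\vf$ is a non-trivial scSPP, the diagonal entry $[\vf'(\vX)^*]_{ss}$ encodes cycles in the dependency graph passing through $s$, and in the quadratic setting of this section it contains at least one variable $X_\ell$ (some partial derivative along such a cycle is non-constant because at least one monomial along the cycle is genuinely of degree~$2$). By the induction hypothesis $\ns{k}_s < \mu\vf_s$ and $\ns{k}_\ell < \mu\vf_\ell$, and by Theorem~\ref{thm:well-defined} we already have $\vzero \le \ns{k} \le \vf(\ns{k}) \le \mu\vf$. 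These are precisely the hypotheses of Lemma~\ref{lem:EY-not-terminating}, whose conclusion gives $\ns{k+1}_s = \hNe(\ns{k})_s < \mu\vf_s$. Since $s$ was arbitrary, $\ns{k+1} \prec \mu\vf$, closing the induction. The main obstacle is the combinatorial verification that $[\vf'(\vX)^*]_{ss}$ really does contain a variable, i.e.\ that somewhere along the self-cycle of~$s$ a non-constant partial derivative appears; once this is in hand, Proposition~\ref{prop:R} together with (a) and (b) immediately yields $\ns{i}, \ks{i} \in R$ for every $i \in \N$.
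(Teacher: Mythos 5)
Your overall route is the same as the paper's: combine Proposition~\ref{prop:R} with the two facts that each iterate $\vz$ satisfies $\vq(\vz)\ge\vzero$ and lies in $[\vzero,\mu\vf)$. The paper's own proof is two sentences long and simply \emph{asserts} $\ks{i},\ns{i}\in[\vzero,\mu\vf)$, so your proposal actually supplies the justification the paper omits. Your argument for the Kleene half (every component depends on itself, so no component can reach $\mu\vf_j$ at a finite stage; this is the observation already used in the proof of Lemma~\ref{lem:no-infty-in-SCC}) and your induction for the Newton half via Lemma~\ref{lem:EY-not-terminating} are both sound, given the hypotheses $\vzero\le\ns{k}\le\vf(\ns{k})\le\mu\vf$ from Theorem~\ref{thm:well-defined} and $\hNe(\ns{k})=\Ne(\ns{k})=\ns{k+1}$.

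The one step you flag as an obstacle --- that $[\vf'(\vX)^*]_{ss}$ contains some variable $X_\ell$ --- deserves two comments. First, it closes easily when $\vf$ is \emph{strictly} quadratic: some $f_{i_0}$ contains a monomial $cX_jX_k$, so $\pd{f_{i_0}}{X_j}$ contains $X_k$; by strong connectedness there is a cycle through $s$ that uses the edge from $i_0$ to $j$, hence some power $[\vf'(\vX)^d]_{ss}$, and therefore $[\vf'(\vX)^*]_{ss}$, contains $X_k$. Second, the obstacle is genuine in the remaining case: if $\vf$ is linear, then $\vf'$ is a constant matrix, $[\vf'(\vX)^*]_{ss}$ contains no variable, and in fact $\ns{1}=\vf'(\vzero)^*\vf(\vzero)=\mu\vf$, so $\ns{1}\not\prec\mu\vf$ and $\ns{1}\notin R$ as $R$ is defined --- the claim (and the paper's one-line assertion) fails there. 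So the theorem implicitly relies on the strictly quadratic situation that \S~\ref{sec:geo} is really about; with that hypothesis made explicit, your argument is complete.
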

\begin{prf}
For an scSPP we have
$\ks{i},\ns{i}\in [\vzero,\mu\vf)$ for all $i$. Further, $\ks{i}
\le \ks{i+1} = \vf(\ks{i})$ and $\ns{i} \le \vf(\ns{i})$ holds for
all $i$, too.
\end{prf}

In the rest of this section we will use the results regarding $R$ and the surfaces $S_i$
for interpreting Newton's method geometrically and for obtaining a generalization of Newton's method.

The preceding results suggest another way of determining $\mu\vf$ (see Figure~\ref{fig:tangents}):
Let $\vx$ be some point inside of $R$.
We may move from $\vx$ onto one of the surface $S_i$ by going upward along the line $\vx + t\cdot \ve_i$
which gives us the point $\vp_i(\vx_{-i}) = (\vx_{-i}, h_i(\vx_{-i}))$.
As $\vx \in R$, we have $\vx, \vp_i(\vx_{-i}) \le \mu\vf$.
Consider now the tangent plane
$$T_i|_{\vx} = \left\{ \vy\in\R^n \mid \nqiat{\vp_i(\vx_{-i})} \cdot \bigl( \vy - \vp_i(\vx_{-i})\bigr) = 0 \right\}$$
 at $S_i$ in $\vp_i(\vx_{-i})$.
 \begin{figure}[ht]
\begin{center}
  \scalebox{0.3}{\includegraphics{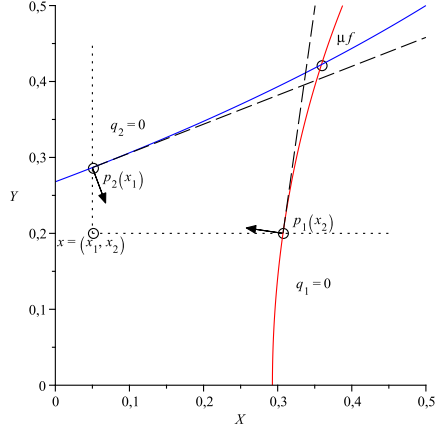}}
\end{center}
\caption{Given a point $\vx$ inside of $R$ the intersection of the tangents at the quadrics in the points
$p_1(x_2)$, resp. $p_2(x_1)$ is also located inside of $R$, yielding a better approximation of $\mu\vf$.}
\label{fig:tangents}
\end{figure}
Recall that by Lemma~\ref{lem:upconv} we have
\[
\forall \vy \in S_i \cap [\vp_{i}(\vx_{-i}),\mu\vf) : \nqiat{\vp_{i}(\vx_{-i})} \cdot (\vy-\vp_{i}(\vx_{-i})) \le 0,
\]
i.e., the part of $S_i$ relevant for determining $\mu\vf$ is located completely below (w.r.t. $\nqiat(\vp_i(\vx_{-i}))$)
 this tangent plane.
By continuity this also has to hold for $\vy=\mu\vf$.
Hence, when taking the intersection of all the tangent planes $T_1$ to $T_n$ this gives us
again a point $\Ta(\vx)$ inside of $R$.
That this point $\Ta(\vx)$ exists and is uniquely determined is shown in the following lemma.

\begin{lemma}\label{lem:tangents-invertable}
Let $\vf$ be a clean and feasible scSPP.
Let $\vx^{(1)},\ldots,\vx^{(n)}\in [\vzero,\mu\vf)$.
Then the matrix
\[
    \begin{pmatrix}
      \nqoat{\vx^{(1)}}\\
      \vdots\\
      \nqnat{\vx^{(n)}}
      \end{pmatrix}
\]
is regular, i.e., the vectors $\{ \nqiat{\vx^{(i)}} \mid i = 1,\ldots,n \}$ are linearly independent.
\end{lemma}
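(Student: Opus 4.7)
The plan is to rewrite the matrix under consideration as $A - \Id$, where $A \in \Rp^{n \times n}$ is defined by taking its $i$-th row to be the $i$-th row of $\vf'(\vx^{(i)})$. Since $q_i = f_i - X_i$, we have $\nqiat{\vx^{(i)}} = \nfiat{\vx^{(i)}} - \ve_i^\top$, so the matrix in the lemma is indeed $A - \Id$. Regularity of $A - \Id$ is equivalent to the existence of $(\Id - A)^{-1}$, which I will establish by showing that the Neumann series $A^* = \sum_{k \ge 0} A^k$ converges entrywise in $\Rp$.

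To compare $A$ with a single matrix at which Theorem~\ref{thm:well-defined} applies, I would define $\vy \in \Rp^n$ componentwise by $y_j := \max_{1 \le i \le n} x^{(i)}_j$. Since each $\vx^{(i)}$ lies in $[\vzero,\mu\vf)$ and hence satisfies $\vx^{(i)} \prec \mu\vf$, a finite componentwise maximum still satisfies $\vy \prec \mu\vf$, so $\vy \in [\vzero,\mu\vf)$. Every entry of $\vf'$ is a polynomial with nonnegative coefficients and therefore monotone on $\Rp^n$; from $\vx^{(i)} \le \vy$ we conclude that the $i$-th row of $A$ is dominated entrywise by the $i$-th row of $\vf'(\vy)$. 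Collecting these row-wise inequalities yields $0 \le A \le \vf'(\vy)$.

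By Theorem~\ref{thm:well-defined}, the Neumann series $\vf'(\vy)^*$ exists in $\Rp^{n \times n}$ and equals $(\Id - \vf'(\vy))^{-1}$. A straightforward induction on $k$, based on the monotonicity of multiplication of nonnegative matrices, gives $0 \le A^k \le \vf'(\vy)^k$ for every $k \in \N$. Summing these inequalities termwise shows $A^* \le \vf'(\vy)^*$ entrywise, so $A^*$ has no $\infty$-entries. Hence $A^*(\Id - A) = \Id$, the matrix $\Id - A$ is invertible, and consequently so is $A - \Id$; this is exactly the claim.

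I do not anticipate a substantial obstacle here: the proof is essentially a monotonicity comparison on top of Theorem~\ref{thm:well-defined}. The only delicate point is verifying that the componentwise maximum $\vy$ stays in the domain $[\vzero,\mu\vf)$, which relies on the convention used throughout \S~\ref{sec:geo} (and implicitly in Lemma~\ref{lem:normal}) that $[\vzero,\mu\vf) = \{\vx \in \Rp^n : \vx \prec \mu\vf\}$.
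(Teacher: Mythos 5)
Your proof is correct and follows essentially the same route as the paper: form the componentwise maximum $\vy$ of the points $\vx^{(i)}$, use monotonicity of $\vf'$ to dominate the row-assembled matrix $A$ by $\vf'(\vy)$, and invoke the existence of $\vf'(\vy)^*$ (Theorem~\ref{thm:well-defined}) to conclude that $A^*$ converges and hence $\Id - A$ is invertible. No gaps.
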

\ifthenelse{\equal{\geoproofs}{true}}{%
\begin{prf}
%
%
%
Define $\vx\in [\vzero,\mu\vf)$ by setting
\[
        x_i := \max\{ x^{(j)}_i \mid j = 1,\ldots,n \}.
\]
We then have $\vx^{(i)} \le \vx$ for all $i$, and $\vx\prec \mu\vf$.
As mentioned above, we therefore have that $\Jqx$ is regular with
\[
    \Jqx^{-1} = - \sum_{k\in\N} \Jfx^k.
\]
%
As $\vx^{(i)} \le \vx$ it follows that
\[
    \begin{pmatrix}
    \nfoat{\vx^{(1)}} \\
    \vdots\\
    \nfnat{\vx^{(n)}}
    \end{pmatrix}
    \le
    \Jfx.
\]
Hence, we also have
\[
  \sum_{k=0}^l \begin{pmatrix}
    \nfoat{\vx^{(1)}} \\
    \vdots\\
    \nfnat{\vx^{(n)}}
    \end{pmatrix}^l \le \sum_{k=0}^l \Jfx
\]
implying that
\[
  \begin{pmatrix}
    \nfoat{\vx^{(1)}} \\
    \vdots\\
    \nfnat{\vx^{(n)}}
    \end{pmatrix}^{\ast}
    \text{ and, thus, }
    \begin{pmatrix}
    \nqoat{\vx^{(1)}} \\
    \vdots\\
    \nqnat{\vx^{(n)}}
    \end{pmatrix}^{-1}
    \text{ exist.}
\]
So, the vectors $\{ \nqoat{\vx^{(1)}}, \ldots, \nqnat{\vx^{(n)}} \}$ have to be linearly independent.
\end{prf}
}{%
}

By this lemma the normals at the quadrics in the points $\vp_i(\vx_{-i})$ for $\vx\in [\vzero,\mu\vf)$
are linearly independent. Thus, there exists a unique point of intersection of tangent planes at the quadrics
in these points.
\ifthenelse{\equal{1}{1}}{
Of course, in general the values $h_i(\vx_{-i})$ can be irrational. The following definition takes this
in account by only requiring that underapproximations $\eta_i$ of $h_i(\vx_{-i})$ are known.
\begin{definition}\label{def:tang-approx}
Let $\vx\in R$.
For $i=1,\ldots,n$ fix some $\eta_i \in [x_i,h_i(\vx_{-i})]$, and set $\veta = (\eta_1,\ldots,\eta_n)$.
We then let $\Ta_{\veta}(\vx)$ denote the solution of
\[
 \nqiat{(\vx_{-i},\eta_i)}(\vX - (\vx_{-i},\eta_i)) = - q_i((\vx_{-i},\eta_i)) \quad (i=1,\ldots,n).
\]
We drop the subscript and simply write $\Ta$ in the case of $\eta_i=h_{i}(\vx_{-i})$ for $i=1,\ldots,n$.
\end{definition}

Note that the operator $\Ta_{\vx}$ is the Newton operator $\Ne$.

\begin{theorem}{\label{thm:tang-approx}}
Let $\vf$ be a clean and feasible scSPP.
Let $\vx\in R$.
For $i=1,\ldots,n$ fix some $\eta_i \in [x_i,h_i(\vx_{-i})]$, and set $\veta = (\eta_1,\ldots,\eta_n)$.
We then have
\[
\vx \le \Ne(\vx) \le \Ta_{\veta}(\vx) \le \Ta(\vx) \le \mu\vf
\]
Further, the operator $\Ta$ is monotone on $R$, i.e., for any $\vy\in R$ with $\vx\le \vy$ it holds that $\Ta(\vx)\le \Ta(\vy)$.
\end{theorem}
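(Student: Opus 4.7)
The two outer bounds follow from earlier results. Since $\vx \in R$, Proposition~\ref{prop:R} gives $\vf(\vx) - \vx \ge \vzero$, and Theorem~\ref{thm:well-defined} writes $\Ne(\vx) = \vx + \vf'(\vx)^*(\vf(\vx) - \vx)$ with $\vf'(\vx)^*$ having nonnegative entries, so $\vx \le \Ne(\vx)$. For $\Ta(\vx) \le \mu\vf$, I would apply Lemma~\ref{lem:upconv} with $\vy := \mu\vf$ on each surface $S_i$, obtaining $\nabla q_i(\vp_i(\vx_{-i})) \cdot (\mu\vf - \vp_i(\vx_{-i})) \le 0$; subtracting the defining equation of $\Ta(\vx)$ yields $M(\mu\vf - \Ta(\vx)) \le \vzero$, where $M$ is the matrix whose $i$-th row is $\nabla q_i(\vp_i(\vx_{-i}))$. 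By the Neumann-series argument in the proof of Lemma~\ref{lem:tangents-invertable}, $-M^{-1}$ has nonnegative entries, so $\mu\vf - \Ta(\vx) \ge \vzero$.

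For the middle inequalities, I use that $\Ta_\vx(\vx) = \Ne(\vx)$ (by the remark preceding the theorem) and $\Ta_\vh(\vx) = \Ta(\vx)$ for $\vh := (h_1(\vx_{-1}), \ldots, h_n(\vx_{-n}))$; it then suffices to prove that $\veta \mapsto \Ta_\veta(\vx)$ is nondecreasing on the box $B := \prod_i [x_i, h_i(\vx_{-i})]$. Writing $\vt := \Ta_\veta(\vx)$ as $(\Id - F)^{-1}\vg$, with $F_{ik} := \partial f_i/\partial X_k(\vz_i)$, $g_i := f_i(\vz_i) - F_i \vz_i$, and $\vz_i := (\vx_{-i}, \eta_i)$, a short implicit-differentiation calculation yields
\[
  \frac{\partial \vt}{\partial \eta_i} \;=\; \bigl(\vec{h}_i^\top (\vt - \vz_i)\bigr)\,(F^* \ve_i),
\]
where $\vec{h}_i \in \Rp^n$ has entries $\partial^2 f_i/\partial X_k\partial X_i(\vz_i)$. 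This partial is nonnegative provided $\vt \ge \vz_i$ componentwise for every $i$, and I verify this invariant in two steps. First, from $\vt = \vx + F^* \vr$ with $r_i = (f_i(\vx) - x_i) - \tfrac12 A_{i,ii}(\eta_i - x_i)^2$, the function $\eta_i \mapsto r_i$ is concave and nonnegative at both endpoints of $[x_i, h_i(\vx_{-i})]$---equal to $q_i(\vx) \ge 0$ at $\eta_i = x_i$ and to $(h_i(\vx_{-i}) - x_i)(-\partial q_i/\partial X_i(\vp_i(\vx_{-i}))) > 0$ at $\eta_i = h_i(\vx_{-i})$ by Lemma~\ref{lem:normal}---so $\vr \ge \vzero$ on $B$ and hence $\vt \ge \vx$. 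Second, since $\vz_i \in R$ (by monotonicity of $f_j$ for $j \ne i$ and by the characterization of $h_i$ as the least nonnegative root of $q_i(\vx_{-i},\cdot) = 0$), we have $q_i(\vz_i) \ge 0$; combining this with $\vt \ge \vx$ and the signs from Lemma~\ref{lem:normal}, the defining equation $\nabla q_i(\vz_i)(\vt - \vz_i) = -q_i(\vz_i)$ rearranges into
\[
  t_i - \eta_i \;=\; \frac{q_i(\vz_i) + \sum_{k \ne i} (\partial q_i/\partial X_k(\vz_i))(t_k - x_k)}{-\partial q_i/\partial X_i(\vz_i)} \;\ge\; 0,
\]
yielding $\vt \ge \vz_i$ throughout $B$ and hence monotonicity of $\Ta_\veta(\vx)$ on $B$.

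Monotonicity of $\Ta$ in $\vx$ follows by an analogous implicit-differentiation argument along a suitable path from $\vx$ to $\vy$ inside $R$: now the base points $\vp_i(\vx(s)_{-i})$ move on the surfaces $S_i$, so I would use Lemma~\ref{lem:h-diff} to express the derivatives of $h_i$ in terms of gradients of $q_i$, while Lemma~\ref{lem:tangents-invertable} ensures that the defining matrix $M(s)$ remains invertible throughout the path. The main technical obstacle lies in this last claim: the partial-derivative formula now involves simultaneous contributions from the motion of $\vz_i(s)$ along $S_i$ and from the change in $\nabla q_i(\vz_i(s))$, so verifying nonnegativity of the combined sign requires carefully combining Lemma~\ref{lem:upconv} (to bound the motion of $\Ta(\vx(s))$ relative to the moving tangent planes) with the nonnegativity of $-M(s)^{-1}$, and also checking that the chosen path stays inside $R$---something that is not obvious since $R$ need not be convex in general.
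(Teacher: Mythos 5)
Your chain $\vx \le \Ne(\vx) \le \Ta_{\veta}(\vx) \le \Ta(\vx) \le \mu\vf$ is essentially sound, and in places it takes a genuinely different route from the paper's. For the monotonicity in $\veta$, the paper rewrites the shifted tangent system as a \emph{linear SPP} $\vf_{\veta}$ with $\mu\vf_{\veta} = \Ta_{\veta}(\vx) - \vx$ and shows by an explicit derivative computation that all of its coefficients increase as $\eta_i \to h_i(\vx_{-i})$, so the least fixed points increase; your implicit differentiation of $\Ta_{\veta}(\vx)$ itself, supported by the two invariants $\Ta_\veta(\vx)\ge\vx$ (via concavity of $r_i$ in $\eta_i$ and nonnegativity at both endpoints) and $(\Ta_\veta(\vx))_i\ge\eta_i$, reaches the same conclusion, and the sign driving your argument, $q_i((\vx_{-i},\eta_i))\ge 0$, is exactly the quantity whose nonnegativity drives the paper's coefficient monotonicity. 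For $\Ta(\vx) \le \mu\vf$, your linear-algebra argument ($M(\mu\vf - \Ta(\vx)) \le \vzero$ together with $-M^{-1}\ge 0$ from the Neumann-series argument of Lemma~\ref{lem:tangents-invertable}) replaces the paper's Kleene-iteration argument showing $\vf_{\vh}(\mu\vf-\vx)\le\mu\vf-\vx$; both are valid, yours being slightly more direct.

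However, the second assertion of the theorem --- monotonicity of $\Ta$ on $R$ --- is not actually proved in your proposal: you sketch a path-differentiation argument and then yourself list the unresolved obstacles (the sign of the combined derivative along the path, and whether a monotone path from $\vx$ to $\vy$ can be kept inside $R$, which is indeed unclear since $R$ need not be convex). This is a genuine gap, and the path approach is the wrong tool here. The paper's proof is a static comparison that needs no path at all: for $\vx\le\vy$ in $R$ it compares, for each $i$, the two tangent hyperplanes at $\vp_i(\vx_{-i})$ and at $\vp_i(\vy_{-i})$ viewed as affine functions of $\vu_{-i}$. Lemma~\ref{lem:upconv} gives $y_i \ge x_i + \sum_{j\neq i}\pdat{h_i}{X_j}{\vx_{-i}}(y_j-x_j)$, i.e.\ the base point for $\vy$ lies above the tangent for $\vx$, and Lemma~\ref{lem:h-diff} gives $\pdat{h_i}{X_j}{\vy_{-i}} \ge \pdat{h_i}{X_j}{\vx_{-i}}$, i.e.\ the slopes only grow; together these show that over $[\vy_{-i},\mu\vf_{-i}]$ the $\vy$-tangent dominates the $\vx$-tangent in every component, and the comparison of the intersection points $\Ta(\vx)\le\Ta(\vy)$ follows (for instance via the linear-SPP representation of the two systems, whose coefficients and constant terms are then componentwise ordered). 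You should replace your path argument with this direct comparison.
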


By Theorem~\ref{thm:tang-approx}, replacing the Newton operator $\Ne$ by~$\Ta$ gives a variant of Newton's method which converges at least as fast.
}{
\begin{definition}\label{def:tangent-method}
For $\vx \in R$ define $\Ta(\vx)$ to be the unique solution of the equation system
\[
    \begin{array}{rcl}
    \nqoat{\vp_1(\vx_{-1})}\cdot \vX & = & \nqoat{\vp_1(\vx_{-1})}\cdot \vp_1(\vx_{-1})\\
                                                                          & \vdots &\\
  \nqnat{\vp_n(\vx_{-n})}\cdot\vX & = & \nqnat{\vp_n(\vx_{-n})}\cdot \vp_n(\vx_{-n})\\
    \end{array}.
\]
\end{definition}
\begin{theorem}\label{thm:tangent-method}
Let $\vf$ be a clean and feasible scSPP.
We have
\[
 \vx \le \Ta(\vx) \text{ and } \Ne(\vx) \le \Ta(\vx) \le \mu\vf.
\]
Further, for $\vy\in R$ with $\vx \le \vy$
\[
  \Ta(\vx) \le \Ta(\vy).
\]
\end{theorem}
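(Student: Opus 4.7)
My plan is to recast $\Ne(\vx)$ and $\Ta(\vx)$ in a common \emph{normalized} form $\vx + B^*\vec\alpha$, where $B$ is a non-negative matrix with zero diagonal and $\vec\alpha$ is a non-negative vector, and then to reduce all four claims to \emph{componentwise} inequalities $B \ge B'$ and $\vec\alpha \ge \vec\alpha'$ that are preserved by the Neumann-series operation $(B,\vec\alpha) \mapsto B^*\vec\alpha$. Concretely, set $D(\vw) := \mathrm{diag}\bigl(-\pdat{q_i}{X_i}{\vw}\bigr)$, which is strictly positive by Lemma~\ref{lem:normal}, and factor $\Jqx = -D(\vx)(\Id - B^{\Ne}(\vx))$ with $B^{\Ne}_{ii}(\vx) = 0$ and $B^{\Ne}_{ij}(\vx) = \pdat{f_i}{X_j}{\vx}/D_{ii}(\vx)$ for $j \ne i$; this rewrites the Newton update as
\[
  \Ne(\vx) = \vx + (B^{\Ne}(\vx))^*\vec\alpha^{\Ne}(\vx), \qquad \alpha^{\Ne}_i(\vx) := q_i(\vx)/D_{ii}(\vx) \ge 0.
\]
The analogous factorization of the coefficient matrix from Definition~\ref{def:tangent-method}, combined with the identity $\pdat{h_i}{X_j}{\vx_{-i}} = \pdat{f_i}{X_j}{\vp_i(\vx_{-i})}/D_{ii}(\vp_i(\vx_{-i}))$ from Lemma~\ref{lem:h-diff}, gives
\[
  \Ta(\vx) = \vx + (B^{\Ta}(\vx))^*\vec\alpha^{\Ta}(\vx), \qquad B^{\Ta}_{ij}(\vx) := \pdat{h_i}{X_j}{\vx_{-i}} \text{ for } j\ne i,\ B^{\Ta}_{ii}(\vx) := 0,\ \alpha^{\Ta}_i(\vx) := h_i(\vx_{-i}) - x_i.
\]
Both Neumann series converge with non-negative sums, by Theorem~\ref{thm:well-defined} for $B^{\Ne}$ and by Lemma~\ref{lem:tangents-invertable} for $B^{\Ta}$.

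With this setup, $\vx \le \Ta(\vx)$ is immediate: $\vx \in R$ forces $\vec\alpha^{\Ta}(\vx) \ge \vzero$ by definition of~$R$, and $(B^{\Ta}(\vx))^* \ge \vzero$. For $\Ne(\vx) \le \Ta(\vx)$ I verify the two componentwise inequalities separately. The inequality $\vec\alpha^{\Ta}(\vx) \ge \vec\alpha^{\Ne}(\vx)$ follows by applying the lower bound of Lemma~\ref{lem:taylor} in the direction $(h_i(\vx_{-i}) - x_i)\ve_i$ and using $f_i(\vp_i(\vx_{-i})) = h_i(\vx_{-i})$, which rearranges exactly to $h_i(\vx_{-i}) - x_i \ge q_i(\vx)/D_{ii}(\vx)$. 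The inequality $B^{\Ta}(\vx) \ge B^{\Ne}(\vx)$ entrywise follows from $\vp_i(\vx_{-i}) \ge \vx$ together with monotonicity of $\pdat{f_i}{X_j}{\cdot}$ (which enlarges the numerators) and of $\pdat{f_i}{X_i}{\cdot}$ (which shrinks the positive denominator $D_{ii}$). Finally, for $\Ta(\vx) \le \mu\vf$ observe that $\mu\vf \in S_i \cap [\vp_i(\vx_{-i}), \mu\vf]$, so Lemma~\ref{lem:upconv} gives $\nqiat{\vp_i(\vx_{-i})}(\mu\vf - \vp_i(\vx_{-i})) \le 0$ for every $i$; in matrix form this reads $J(\mu\vf - \Ta(\vx)) \le \vzero$ for $J$ the matrix of Definition~\ref{def:tangent-method}, and premultiplying by $J^{-1} = -(B^{\Ta})^*D^{-1} \le \vzero$ concludes.

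For the monotonicity claim $\Ta(\vx) \le \Ta(\vy)$ when $\vzero \le \vx \le \vy$ with $\vy \in R$, the crux is a convexity-like inequality for the height vector $\vec h(\vx) := (h_1(\vx_{-1}), \ldots, h_n(\vx_{-n}))$. By monotonicity of $h_i$ we have $\vp_i(\vx_{-i}) \le \vp_i(\vy_{-i}) \le \mu\vf$, so Lemma~\ref{lem:upconv} gives $\nqiat{\vp_i(\vx_{-i})}(\vp_i(\vy_{-i}) - \vp_i(\vx_{-i})) \le 0$; expanding this, dividing by the negative diagonal entry, and invoking Lemma~\ref{lem:h-diff} yields
\[
  \vec h(\vy) \ge \vec h(\vx) + B^{\Ta}(\vx)(\vy - \vx).
\]
Combining this with $B^{\Ta}(\vy) \ge B^{\Ta}(\vx) \ge \vzero$, $\vec h(\vy) - \vy \ge \vzero$ (since $\vy \in R$), and the Neumann identity $(B^{\Ta})^*(\Id - B^{\Ta}) = \Id$, a short chain of inequalities gives
\[
  \Ta(\vy) - \vy \;\ge\; (B^{\Ta}(\vx))^*\bigl((\vec h(\vx) - \vx) - (\Id - B^{\Ta}(\vx))(\vy - \vx)\bigr) \;=\; (\Ta(\vx) - \vx) - (\vy - \vx),
\]
so $\Ta(\vy) \ge \Ta(\vx)$ after adding $\vy$.

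The main obstacle is the inequality $\Ne(\vx) \le \Ta(\vx)$: in the raw linear systems $\Jqx(\Ne(\vx) - \vx) = -\vq(\vx)$ versus $J(\Ta(\vx) - \vx) = \vec c$, moving from $\vx$ to the surface points $\vp_i(\vx_{-i})$ \emph{enlarges} the Jacobian entries (which favours $\Ta$) but \emph{shrinks} the right-hand side in absolute value (Lemma~\ref{lem:taylor} forces $-c_i \le q_i(\vx)$, the opposite direction), so a direct comparison after matrix inversion is inconclusive. The row-wise normalization is precisely what disentangles these two competing effects and exposes that both the matrix $B^{\Ta}$ and the vector $\vec\alpha^{\Ta}$ individually dominate their Newton counterparts.
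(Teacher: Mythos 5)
Your proposal is correct, and its core coincides with the paper's own argument: the paper, too, normalizes each row of the defining linear system by $-\pdat{q_i}{X_i}{\cdot}$, recognizes the normalized system as a \emph{linear SPP} $\vX = B\vX + \vec{\alpha}$ with $B \ge 0$, $\vec{\alpha}\ge\vzero$ and least fixed point $B^*\vec{\alpha}$, and then compares coefficients; your $(B,\vec{\alpha})$-factorization is exactly that normalization written in matrix form. The execution differs in two places, both in your favour for the statement at hand. For $\Ne(\vx)\le\Ta(\vx)$ the paper interpolates through the whole family $\Ta_{\veta}$ and proves that every coefficient of the linear SPP grows monotonically in $\eta_i$, which for the constant term requires an explicit derivative computation in $\delta_i=\eta_i-x_i$; your endpoint comparison $h_i(\vx_{-i})-x_i \ge q_i(\vx)/\bigl(-\pdat{q_i}{X_i}{\vx}\bigr)$, obtained in one line from the lower bound of Lemma~\ref{lem:taylor} together with $f_i(\vp_i(\vx_{-i}))=h_i(\vx_{-i})$, is shorter and avoids that calculus, at the price of not producing the intermediate operators $\Ta_{\veta}$ (which the paper needs for its stronger interpolated statement but which this theorem does not). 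For monotonicity the paper argues geometrically, comparing the two tangent planes pointwise over a common $\vu_{-i}$ and then asserting that the intersection points are ordered; your algebraic chain is the exact analogue of the proof of Lemma~\ref{lem:newton-mon} with $\vf$ replaced by the tangent system, using the displayed inequality of Lemma~\ref{lem:upconv} in place of Taylor's theorem and the monotonicity of $\pd{h_i}{X_j}$ from Lemma~\ref{lem:h-diff}, and it makes fully explicit the final step that the paper leaves implicit. The only points to flesh out are the routine ones the paper also glosses over: convergence of the two Neumann series (your appeals to Theorem~\ref{thm:well-defined} and to the argument behind Lemma~\ref{lem:tangents-invertable} suffice) and the fact that $\vp_i(\vx_{-i})\prec\mu\vf$ so that Lemma~\ref{lem:normal} applies at those points.
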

\ifthenelse{\equal{\geoproofs}{true}}{%
\begin{proof}
As $\vx \le \Ne(\vx)$ holds, we only need to show that $\Ne(\vx) \le \Ta(\vx) \le \mu\vf$. We refer the reader to the proof of Theorem~\ref{thm:tang-approx} for
this.

We turn to the monotonicity of $\Ta$.
Let $\vy \in R$ with $\vx \le \vy$.
Assume first that $\vx$ and $\vy$ are located on the surface $S_i$, i.e.\
\[
 h_i(\vx_{-i}) = x_i \text{ and } h_i(\vy_{-i}) = y_i.
\]
The tangent $T_i|_{\vx}$ at $S_i$ in $\vx$ is spanned by the partial derivatives of $\vp_i$
in $\vx$. The part $T_i|_{\vx}\cap [\vx,\mu\vf]$ relevant for $\Ta(\vx)$ can therefore be parameterized by
\[
    \vx + \sum_{j\neq i} \pdat{\vp_{i}}{X_j}{\vx} \cdot ( u_j - x_j ) \text{ with } \vu_{-i} \in [\vx_{-i},\mu\vf_{-i}].
\]
Similarly for $T_i|_{\vy}$.

In particular, for $\vu_{-i}\in [\vy_{-i},\mu\vf_{-i}]$ both points on the tangents defined by $\vu_{-i}$
differ only in the $i$-th coordinate being (the remaining coordinates are simply $\vu_{-i}$)
\[
    t_{\vy} = y_i + \sum_{j\neq i} \pdat{h_i}{X_j}{\vy} \cdot ( u_j - y_j) \text{, resp. }
    t_{\vx} = x_i + \sum_{j\neq i} \pdat{h_i}{X_j}{\vx} \cdot ( u_j - x_j).
\]
By Lemma~\ref{lem:upconv} we have
\[
    y_i \ge x_i + \sum_{j\neq i} \pdat{h_i}{X_j}{\vx} \cdot ( y_j - x_j).
\]
From Lemma~\ref{lem:h-diff} it follows that $\pdat{h_i}{X_j}{\vy} \ge \pdat{h_i}{X_j}{\vx}$.
Thus $t_{\vy} \ge t_{\vx}$ immediately follows.

Now for $\vx,\vy\in R$ with $\vx\le \vy$ we can apply this result to the tangents at $S_i$ in $\vp_i(\vx_{-i})$,
resp. $\vp_i(\vy_{-i})$, and $\Ta(\vx) \le \Ta(\vy)$ follows.
\qed
\end{proof}
}{%
}
By Theorem~\ref{thm:tangent-method}, replacing the Newton operator $\Ne$ by~$\Ta$ gives a variant of Newton's method which converges at least as fast.
}

We do not know whether this variant is substantially faster.
See Figure~\ref{fig:geo-newton} for a geometrical interpretation of both methods.
\begin{figure}[htp]
\begin{center}
\begin{tabular}{cc}
  \scalebox{0.3}{\includegraphics{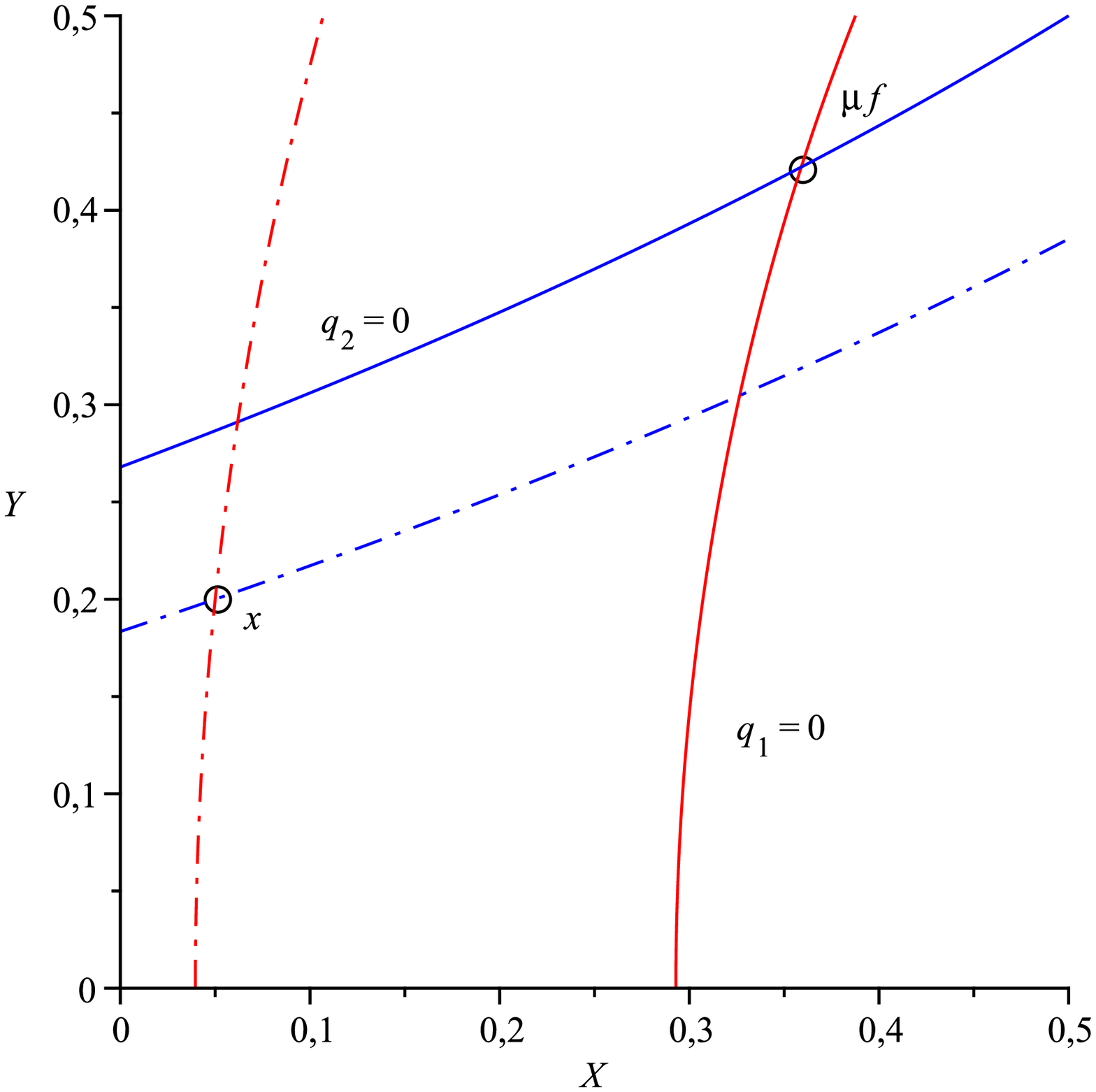}} & \scalebox{0.3}{\includegraphics{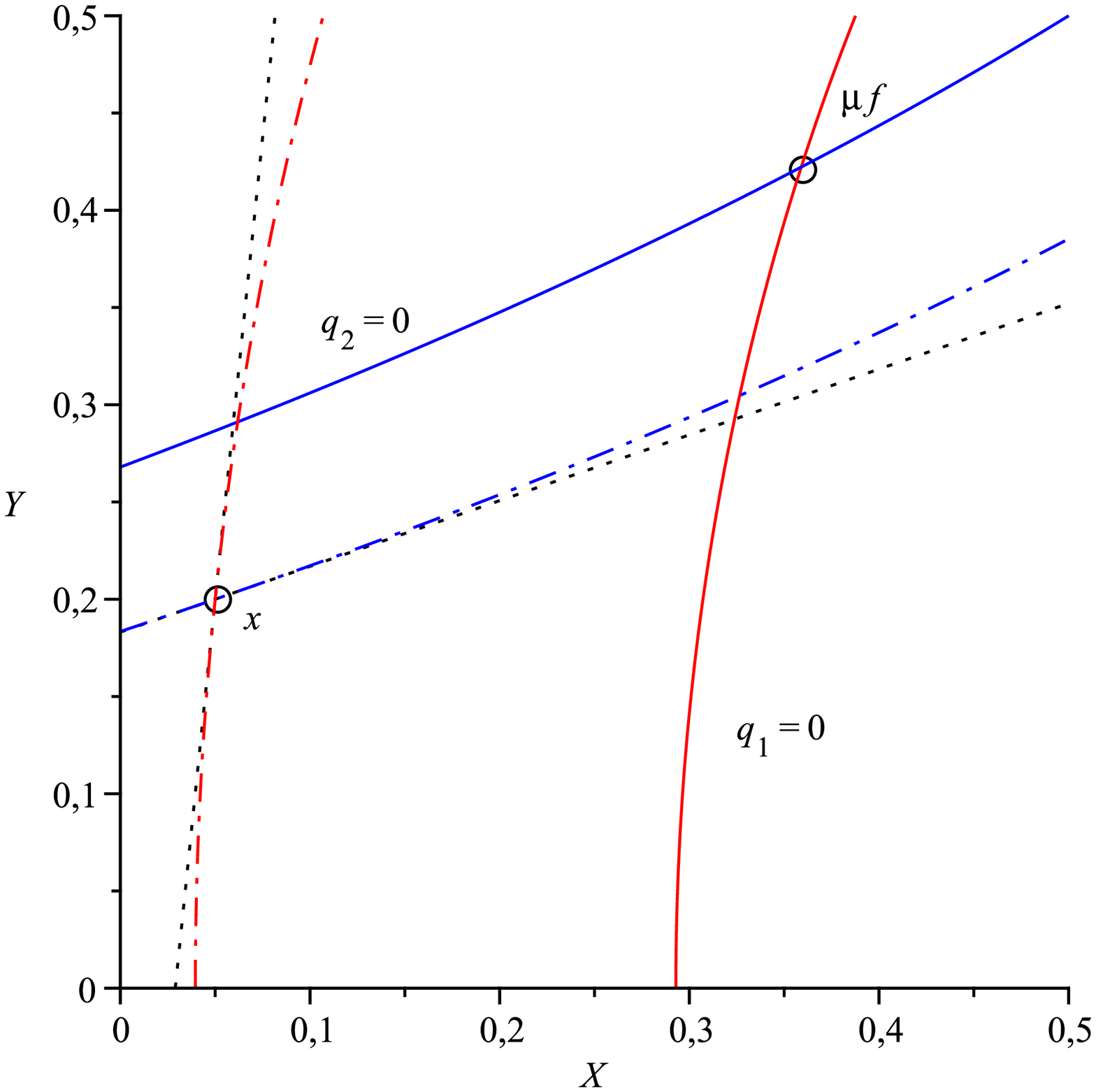}}\\
  (a) & (b)\\
  \scalebox{0.3}{\includegraphics{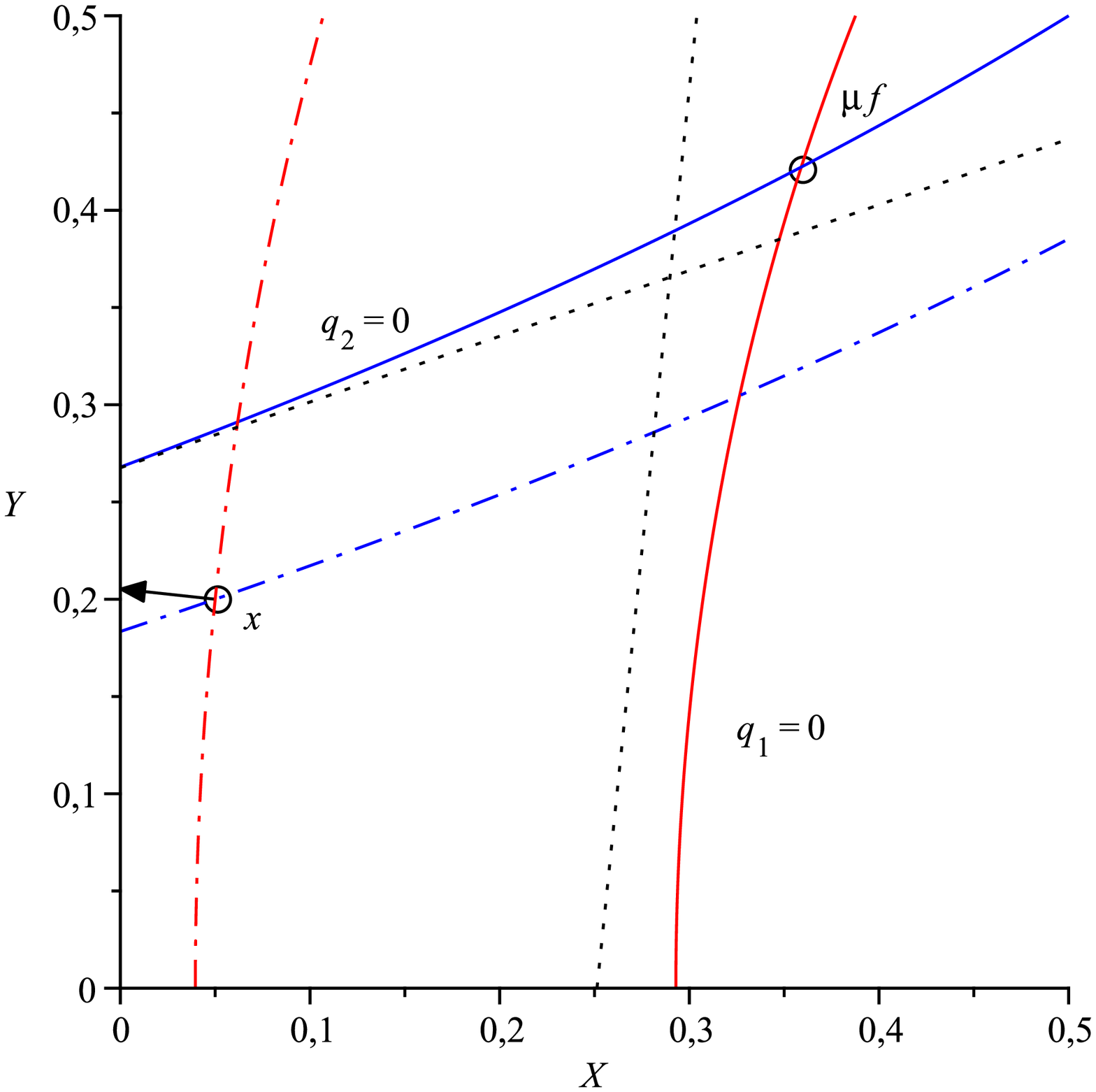}} & \scalebox{0.3}{\includegraphics{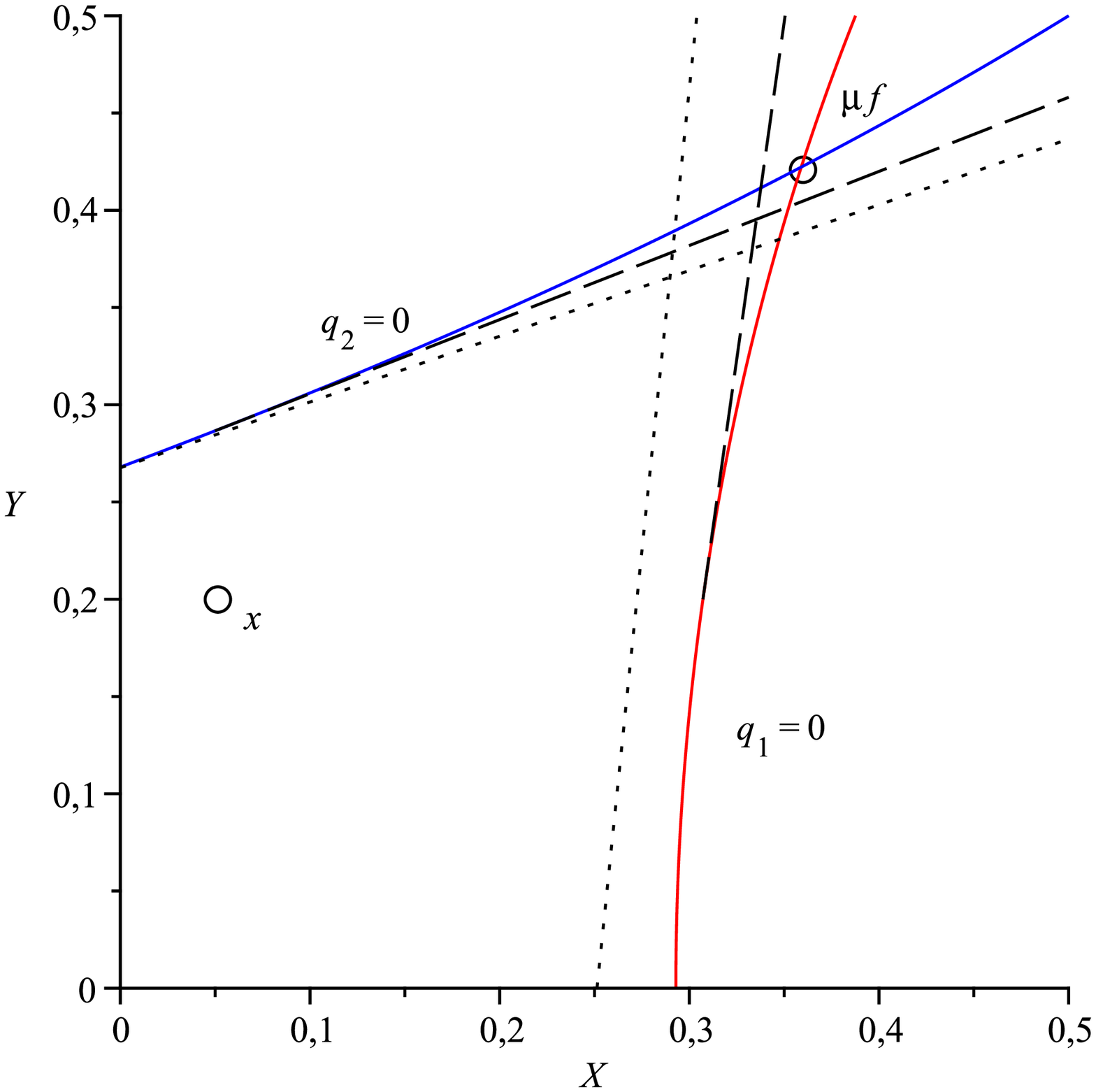}}\\
  (c) & (d)
\end{tabular}
\end{center}
\caption{Geometrical interpretation of Newton's method: (a) Given a point $\vx\in R$ Newton's method first
considers the ``enlarged'' quadrics defined by $q_i(\vX)=q_i(\vx)$ (drawn dashed and dotted) which contain the
current approximation $\vx$.
(b) Then the tangents in $\vx$ at these
enlarged quadrics are computed (drawn dotted), i.e., $\nqix \cdot(\vX-\vx) =0$.
(c) Finally, these tangents are corrected by moving them towards the actual
quadrics, i.e.\ $\nqix \cdot(\vX-\vx) = -q_i(\vx)$. The intersection of these corrected tangents gives the next Newton approximation. (d) A comparison between $\Ne(\vx)$ and $\Ta(\vx)$: $\Ne(\vx)$, resp.\ $\Ta(\vx)$ is given by the
intersection of the dotted, resp.\ dashed lines. Clearly, we have $\Ne(\vx) \le \Ta(\vx)$.}\label{fig:geo-newton}
\end{figure}
\ifthenelse{\equal{0}{1}}{
Let us compare this to the Newton approximation. Given some $\vx\in R$ the next Newton approximation $\Ne(\vx)$ is defined to be
\[
\begin{array}{lcl}
\Ne(\vx)
& = & \vx + \Jfx^\ast \cdot \left( \vf(\vx) - \vx \right)\\[2mm]
& = & \vx + \left( \text{Id} - \Jfx \right)^{-1} \cdot \vq(\vx) \\[2mm]
& = & \vx - \Jqx^{-1} \cdot \vq(\vx) \\[2mm]
& = & \Jqx^{-1} \cdot \left( \Jqx \cdot \vx - \vq(\vx) \right).
\end{array}
\]
This means the new Newton approximant is the unique solution of the linear equation system
\[
\begin{array}{rcl}
 \nqox \cdot \vX & = & \nqox \cdot \vx - q_1(\vx)\\
                             &\vdots&\\
 \nqnx \cdot \vX & = & \nqnx \cdot \vx - q_n(\vx).
\end{array}
\]
As $q_i(\vp_i(\vx_{-i})) = 0$, the two systems result from each
other by interchanging $\vx$ and $\vp_i(\vx_{-i})$ in the $i$th
equation. This gives us the following interpretation of Newton's
method for SPP (see Figure~\ref{fig:geo-newton}):
Newton's method considers in every step the quadrics defined by $q_i(\vX) = q_i(\vx)$,
i.e., it ``inflates'' the quadrics $Q_i$ so that $\vx$ is located on them.
For every $i$ it then takes the tangent at the ``inflated'' quadric $q_i(\vX) = q_i(\vx)$
and moves these tangents towards the original quadric $q_i(\vX) = 0$ by
adjusting the offset of these tangents by $-q_i(\vx)$.
When comparing both methods one has to observe that Newton only requires to solve a linear system in each iteration,
whereas the tangent method also requires the computation of $\vp_{i}(\vx_{-i})$, i.e.,
it needs to solve $n$ quadratic equations in a single variable in order to obtain the values
$h_i(\vx_{-i})$. In particular, the values $h_i(\vx_{-i})$ do not need to be rational, whereas
all Newton steps $\ns{i}$ are rational (assuming that $\vf$ has only rational coefficients).

As a heuristic one can combine both methods:
\begin{itemize}
\item In a first step one approximates $h_i(\vx_{-i})$ using
Newton's method, i.e., as $f_i(\vx_{-i},X_i) = X_i$ is a univariate
SPP, Newton converges monotonically from below, and at least
linearly to $h_i(\vx_{-i})$. Denote by $\eta_i$ this approximation
of $h_i(\vx_{-i})$ with $x_i \le \eta_i \le h_i(\vx_{-i})$. With
the help of $\eta_i$ we obtain an approximation of
$\vp_{i}(\vx_{-i})$ by setting $\vpi_i := (\vx_{-i}, \eta_i)$.
\item As in Newton's method, one then approximates the tangent at
$S_i$ in $\vp_{i}(\vx_{-i})$ using the plane defined by
\[
    \nqiat{\vpi_i}(\vX - \vpi_i) = - q_i(\vpi_i).
\]
The new approximation of $\mu\vf$ is then given by the intersection of these planes.
\end{itemize}
\begin{definition}\label{def:tang-approx}
Let $\vx\in R$.
For $i=1,\ldots,n$ fix some $\eta_i \in [x_i,h_i(\vx_{-i})]$, and set $\veta = (\eta_1,\ldots,\eta_n)$.
We then let $\Ta_{\veta}(\vx)$ denote the solution of
\[
 \nqiat{(\vx_{-i},\eta_i)}(\vX - (\vx_{-i},\eta_i)) = - q_i((\vx_{-i},\eta_i)) \quad (i=1,\ldots,n).
\]
\end{definition}
Note that we have $\Ne(\vX) = \Ta_{\vx}(\vX)$, and $\Ta(\vX)= \Ta_{(h_1(\vx_{-1}),\ldots,h_n(\vx_{-n}))}(\vX)$.
\begin{theorem}\label{thm:tang-approx}
Let $\vx\in R$.
For $i=1,\ldots,n$ fix some $\eta_i \in [x_i,h_i(\vx_{-i})]$, and set $\veta = (\eta_1,\ldots,\eta_n)$.
We then have
\[
\vx \le \Ne(\vx) \le \Ta_{\veta}(\vx) \le \Ta(\vx) \le \mu\vf
\]
\end{theorem}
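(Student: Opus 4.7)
The plan is to establish the four-step chain $\vx \le \Ne(\vx) \le \Ta_\veta(\vx) \le \Ta(\vx) \le \mu\vf$ and then the monotonicity of $\Ta$ on $R$. The first inequality is immediate from Theorem~\ref{thm:well-defined}. For the last one, $\Ta_\veta(\vx) \le \mu\vf$ (which also subsumes $\Ta(\vx) \le \mu\vf$), I would apply Taylor's theorem to $q_i$ at $\vw_i := (\vx_{-i},\eta_i)$: because the quadratic part of $q_i$ has non-negative coefficients and $\mu\vf \ge \vw_i$ componentwise (using $\eta_i \le h_i(\vx_{-i}) \le h_i(\mu\vf_{-i}) = \mu\vf_i$ from Proposition~\ref{prop:h}), the quadratic remainder is non-negative, giving $0 = q_i(\mu\vf) \ge q_i(\vw_i) + \nqiat{\vw_i}(\mu\vf - \vw_i)$. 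Assembling these inequalities over $i$ yields $A_\veta \mu\vf \le A_\veta \Ta_\veta(\vx)$, where $A_\veta$ is the coefficient matrix of the linear system defining $\Ta_\veta(\vx)$. Since every $\vw_i$ lies in $[\vzero,\mu\vf)$, the argument in the proof of Lemma~\ref{lem:tangents-invertable} shows that each entry of $A_\veta^{-1}$ is non-positive, and the inequality $\mu\vf \ge \Ta_\veta(\vx)$ follows.

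For the two middle inequalities I would show that $\veta \mapsto \Ta_\veta(\vx)$ is componentwise non-decreasing in each $\eta_i$ on $[x_i, h_i(\vx_{-i})]$; the chain then follows since the two extremal choices of $\veta$ give $\Ne(\vx)$ and $\Ta(\vx)$. Writing $q_i(\vX) = a_i X_i^2 + b_i(\vX_{-i}) X_i + c_i(\vX_{-i})$ with $a_i \ge 0$ and $b_i$, $c_i$ polynomials with non-negative coefficients, and differentiating $A_\veta \vy(\veta) = \vb_\veta$ with respect to $\eta_i$ --- only the $i$-th row of $A_\veta$ and the $i$-th entry of $\vb_\veta$ depend on $\eta_i$ --- one finds after simplification
\[
 \frac{\partial \vy}{\partial \eta_i} \;=\; - A_\veta^{-1} \ve_i \cdot \Bigl( 2 a_i (y_i - \eta_i) + \sum_{j \ne i} (y_j - x_j)\, \pd{b_i}{X_j}(\vx_{-i}) \Bigr).
\]
The bracketed scalar is $\ge 0$ whenever $\vy \ge \vw_i$ componentwise, and combined with the non-positivity of $A_\veta^{-1}$ this yields $\partial \vy/\partial \eta_i \ge \vzero$; integrating over $\eta_i$ then delivers the desired monotonicity.

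The crux --- which I expect to be the main obstacle --- is establishing the side condition $\vy(\veta) \ge \vw_i$, i.e.\ $y_j \ge x_j$ for $j \ne i$ and $y_i \ge \eta_i$. At $\veta = (x_1,\ldots,x_n)$ both halves reduce to $\Ne(\vx) \ge \vx$, which is Theorem~\ref{thm:well-defined}. As $\eta_i$ grows I would bootstrap: the $j$-components ($j\ne i$) of $\vy$ can only increase while the side condition persists (by the ODE sign above), and $y_i \ge \eta_i$ can be read off the $i$-th defining equation using $\pd{q_i}{X_i}|_{\vw_i} < 0$ from Lemma~\ref{lem:normal} together with $q_i(\vw_i) \ge 0$, which follows from $\vx \in R$ and $\eta_i \le h_i(\vx_{-i})$ via Proposition~\ref{prop:R}. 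Finally, the monotonicity of $\Ta$ on $R$ would come from a direct geometric comparison: for $\vx \le \vy$ in $R$, apply Lemma~\ref{lem:upconv} to dominate the tangent plane at $\vp_i(\vx_{-i})$ by the one at $\vp_i(\vy_{-i})$ on the relevant portion of $S_i$, and use the monotonicity of $\pd{h_i}{X_j}$ in its argument from Lemma~\ref{lem:h-diff} to conclude that the intersection point rises coordinate-wise, yielding $\Ta(\vx) \le \Ta(\vy)$.
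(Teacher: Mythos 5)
Your proposal is correct in outline but takes a genuinely different route from the paper's for the two middle inequalities. The paper shifts the origin to $\vx$, solves the $i$-th tangent equation for $X_i$, and observes that the resulting system is a \emph{linear SPP} $\vf_\veta$ with $\mu\vf_\veta = \Ta_\veta(\vx)-\vx$; it then shows, by an explicit derivative computation on the constant term, that every coefficient of $\vf_\veta$ is non-decreasing in each $\eta_i$, and concludes $\Ta_\veta(\vx)\le\Ta_{\veta'}(\vx)$ for $\veta\le\veta'$ from monotonicity of least fixed points of SPPs. You instead differentiate the solution $\vy(\veta)$ of the tangent system itself and exploit the entrywise non-positivity of the inverse coefficient matrix. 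The two computations are of comparable weight, but the paper's detour through linear SPPs buys the side conditions for free (Kleene iteration from $\vzero$ stays below the fixed point automatically). Conversely, your treatment of the upper bound is more direct than the paper's: the paper proves only $\Ta(\vx)\le\mu\vf$, via Lemma~\ref{lem:upconv} and Kleene iteration of $\vf_{\vh}$, and covers general $\veta$ through the monotonicity chain, whereas your Taylor expansion of $q_i$ at $(\vx_{-i},\eta_i)$ combined with $A_\veta^{-1}\le 0$ handles every admissible $\veta$ at once.

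The one step I would not let stand as written is the bootstrap for the side condition $\vy(\veta)\ge(\vx_{-i},\eta_i)$. A path-invariance argument of the kind you sketch is delicate precisely at parameters where some $y_j$ touches $x_j$: a non-negative derivative there does not by itself prevent $y_j$ from dipping below $x_j$ immediately afterwards, so one would have to invoke analyticity or strict inequalities. Fortunately the bootstrap is unnecessary. Writing every equation in the solved-for form you already use for component $i$, one gets, for all $i$ simultaneously,
\[
 y_i-x_i \;=\; (\eta_i-x_i)+\frac{q_i\bigl((\vx_{-i},\eta_i)\bigr)}{-\pdat{q_i}{X_i}{(\vx_{-i},\eta_i)}}
 +\sum_{j\neq i}\frac{\pdat{q_i}{X_j}{(\vx_{-i},\eta_i)}}{-\pdat{q_i}{X_i}{(\vx_{-i},\eta_i)}}\,(y_j-x_j)\;,
\]
so $\vy-\vx$ is the unique (hence least) fixed point of a linear SPP with non-negative coefficients (non-negativity of the constant terms uses $q_i\bigl((\vx_{-i},\eta_i)\bigr)\ge 0$, which follows from $(\vx_{-i},\eta_i)\in R$ and Proposition~\ref{prop:R}). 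This gives $\vy(\veta)\ge\vx$ for \emph{all} admissible $\veta$ outright, and $y_i\ge\eta_i$ then drops out of the same display; your derivative computation then applies globally. This is exactly the paper's first display, so the fix costs nothing. Your closing argument for the monotonicity of $\Ta$ on $R$ coincides with the paper's.
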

\ifthenelse{\equal{\geoproofs}{true}}{%
\begin{prf}
Set
\[ \vpi_i := (\vx_{-i},\eta_i) \text{ and } \vh := (h_1(\vx_{-1}),\ldots, h_n(\vx_{-n})). \]
We first show that $\vx \le \Ta_{\veta}(\vx)$:
\[
\begin{array}{lcl}
\Ta_{\veta}(\vx)
& = & \bigl( \nqiat{\vpi_i} \bigr))^{-1}_{i=1,\ldots,n} \cdot \bigl( \nqiat{\vpi_i} \cdot \vpi_i - q_i(\vpi_i) \bigr)_{i=1,\ldots,n}\\[2mm]
& = & \bigl( \nfiat{\vpi_i} \bigr)^\ast_{i=1,\ldots,n} \cdot \bigl( - \nqiat{\vpi_i} \cdot \vpi_i + q_i(\vpi_i) \bigr)_{i=1,\ldots,n}\\[2mm]
& = & \bigl( \nfiat{\vpi_i} \bigr)^\ast_{i=1,\ldots,n} \cdot \bigl( - \nqiat{\vpi_i} \cdot \left(\vx + (\eta_i-x_i) \cdot \ve_i \right) + q_i(\vpi_i) \bigr)_{i=1,\ldots,n}\\[2mm]
& = & \underbrace{\bigl( \nfiat{\vpi_i} \bigr)^\ast_{i=1,\ldots,n}}_{\ge 0 \text{ in every comp.}}
      \cdot \bigl( - \nqiat{\vpi_i} \cdot \vx - \underbrace{\pdat{q_i}{X_i}{\vpi_i}}_{<0} \cdot  \underbrace{(\eta_i-x_i)}_{\ge 0} + \underbrace{q_i(\vpi_i)}_{\ge 0} \bigr)_{i=1,\ldots,n}\\[2mm]
& \ge & \vx.
\end{array}
\]
$\Ta_{\veta}(\vx)$ is by definition the (unique) solution of the equation system defined by
\[
 \nqiat{\vpi_{i}} ( \vX - \vpi_{i} ) = -q_i(\vpi_i) \quad (i=1,\ldots,n).
\]
As $\Ta_{\veta}(\vx) \ge \vx$ we can also consider this system with the origin of the coordinate system
moved into $\vx$, i.e.\
\[
    \nqiat{\vpi_{i}} ( \vX + \vx - \vpi_{i}) = -q_i(\vpi_i) \quad (i=1,\ldots,n).
\]
We show that this system is equivalent to an SPP. For this, we
solve these equations for $X_i$:
\[
 \begin{array}{cl}
                 & \nqiat{\vpi_{i}} ( \vX + \vx - \vpi_{i}) = -q_i(\vpi_i)\\[2mm]
 \Leftrightarrow & \nqiat{\vpi_i} \vX = -q_i(\vpi_i) + \nqiat{\vpi_i} \underbrace{( \vpi_i - \vx )}_{=(\eta_i-x_i)\cdot \ve_i}\\[2mm]
 \Leftrightarrow & X_i = \sum_{j\neq i} \frac{\pdat{q_i}{X_j}{\vpi_i}}{-\pdat{q_i}{X_i}{\vpi_i}} \cdot X_j + \frac{q_i(\vpi_i)}{-\pdat{q_i}{X_i}{\vpi_i}} + (\eta_i-x_i).
 \end{array}
\]
Again, we have $\pdat{q_i}{X_i}{\vpi_i} < 0 \le
\pdat{q_i}{X_j}{\vpi_i}$ as $\vpi_i\in R$, and $\nqiat{\vpi_i}$
monotonically increases with $\eta_i$. Hence, the above linear
equation for $X_i$ is indeed a polynomial with non-negative
coefficients. Denote by $\vf_{\veta}$ the SPP defined by these
linear equations. We then have $\mu \vf_{\veta} =
\Ta_{\veta}(\vx)-\vx$ as the above equation system has
$\Ta_{\veta}(\vx)-\vx \ge \vzero$ as its unique solution. Further,
we know that the Kleene sequence
$\bigl(\vf_{\veta}^k(\vzero)\bigr)_{k\in\N}$ converges to
$\mu\vf_{\veta}$. We show that all coefficients of $\vf_{\veta}$
increase with $\veta \to \vh$. This is straight-forward for
\[
\frac{\pdat{q_i}{X_j}{\vpi_i}}{-\pdat{q_i}{X_i}{\vpi_i}}
\]
as $\pdat{q_i}{X_i}{\vpi_i} < 0 \le \pdat{q_i}{X_j}{\vpi_i}$, and all these terms increase with $\eta_i \to h_i(\vx_{-i})$.
Consider therefore
\[
  0\ge \frac{q_i(\vpi_i)}{-\pdat{q_i}{X_i}{\vpi_i}} + (\eta_i-x_i) = \frac{q_i(\vpi_i) - \pdat{q_i}{X_i}{\vpi_i} (\eta_i - x_i)}{-\pdat{q_i}{X_i}{\vpi_i}}.
\]
We show that this term increases with $\eta_i$. Set $\delta_i := \eta_i - x_i$.
We can find a non-negative, symmetric square-matrix $A$, a vector $\vb$, and constant $c$ such that
\[
 q_i(\vX) = \vX^\top A \vX + \vb^\top \vX + c \text{ and } \nqiat{\vX} = 2 \vX^\top A + \vb^\top.
\]
As $\vpi_i = \vx + \delta_i \ve_i$ we have
\[
    q_i(\vpi_i) = q_i(\vx + \delta_i \ve_i ) = q_i(\vx) + \pdat{q_i}{X_i}{\vx} \delta_i + \delta_i^2 A_{ii},
\]
and
\[
 \pdat{q_i}{X_i}{\vpi_i} \cdot \delta_i = \nqiat{\vx + \delta_i \ve_i} \delta_i\ve_i = \pdat{q_i}{X_i}{\vx} \delta_i + 2
 \delta_i^2 A_{ii}.
\]
This leads to
\[
\frac{q_i(\vpi_i) - \pdat{q_i}{X_i}{\vpi_i} \delta_i}{-\pdat{q_i}{X_i}{\vpi_i}} = \frac{ q_i(\vx) - \delta_i^2 A_{ii}}{-\pdat{q_i}{X_i}{\vx} - 2 \delta_i A_{ii}}.
\]
Deriving this w.r.t.~$\delta_i$ yields:
\[
\begin{array}{cl}
  & \frac{-2 A_{ii} \delta_i}{-\pdat{q_i}{X_i}{\vx} + 2 A_{ii} \delta_i} - \frac{q_i(\vx) - A_{ii} \delta_i^2}{( -\pdat{q_i}{X_i}{\vx} - 2 A_{ii} \delta_i)^2} (-2 A_{ii} )\\[2mm]
= & \frac{2 A_{ii} \pdat{q_i}{X_i}{\vx} \delta_i + 4 A_{ii}^2 \delta_i^2 + 2A_{ii} q_i(\vx) - 2 A_{ii}^2 \delta_i^2}{(-\pdat{q_i}{X_i}{\vx} - 2 A_{ii} \delta_i)^2}\\[2mm]
= & 2 A_{ii} \frac{ A_{ii} \delta_i^2 + \pdat{q_i}{X_i}{\vx} \delta_i + q_i(\vx)}{(-\pdat{q_i}{X_i}{\vx} - 2 A_{ii} \delta_i)^2}\\[2mm]
= & 2 A_{ii} \frac{ q_i(\vpi_i)}{(-\pdat{q_i}{X_i}{\vpi_i})^2}.
\end{array}
\]
As $q_i(\vpi_i) \ge 0$ and $A_{ii} \ge 0$, it follows that
\[
  \frac{q_i(\vpi_i)}{-\pdat{q_i}{X_i}{\vpi_i}} + (\eta_i-x_i)
\]
increases with $\eta_i \to h_i(\vx_{-i})$.
Thus, all coefficients of $\vf_{\veta}$ increase with $\eta_i \to h_i(\vx_{-i})$,
and so for any $\veta'\in [\veta, \vh]$ it follows that
\[
 \vf_{\veta}(\vy) \le \vf_{\veta'}(\vy) \text{ for all } \vy \ge \vzero,
\]
and
\[
 \Ta_{\veta}(\vx)-\vx = \mu\vf_{\veta} \le \mu\vf_{\veta'} = \Ta_{\veta'}(\vx) - \vx.
\]
As $\Ne(\vX) = \Ta_{\vx}(\vX)$ and $\Ta(\vX) = \Ta_{\vh}(\vX)$ we may therefore conclude that
\[
    \Ne(\vx) \le \Ta_{\veta}(\vx) \le \Ta_{\veta'}(\vx) \le \Ta(\vx).
\]
It remains to show that $\Ta(\vx) \le \mu\vf$. This is equivalent to showing
that $\mu \vf_{\vh} \le \mu \vf -\vx$. For $\vf_{\vh}(\vX)$ we have by definition and Lemma~\ref{lem:h-diff}
\[
  \bigl(\vf_{\vh}(\vX)\bigr)_i = \sum_{j\neq i} \frac{\pdat{q_i}{X_j}{\vp_{i}(\vx_{-i})}}{-\pdat{q_i}{X_i}{\vp_{i}(\vx_{-i})}} X_j + (h_i(\vx_{-i})-x_i)
  = \sum_{j\neq i} \pdat{h_i}{X_j}{\vx_{-i}} X_j + (h_i(\vx_{-i})-x_i).
\]
By virtue of Lemma~\ref{lem:upconv} it follows that $\mu\vf$ is above all the tangents, i.e.\
\[
 \vf_{\vh}(\mu\vf - \vx ) \le \mu\vf - \vx.
\]
By monotonicity of $\vf_{\vh}$ we also have
\[
    \vf_{\vh}(\vzero) \le \vf_{\vh}(\mu\vf-\vx).
\]
A straight-forward induction therefore shows that
\[
 \vf^k_{\vh}(\vzero) \le \mu\vf - \vx \quad (\forall k\in \N),
\]
and, thus,
\[
    \Ta(\vx) - \vx = \mu\vf_{\vh} \le \mu\vf -\vx.
\]
\end{prf}
}{%
}
}{}

\ifthenelse{\equal{0}{1}}{%
This geometrical interpretation of Newton's method, resp. its
generalization ($\Ta_{\veta}$) which we have just presented,
readily lends itself to a further generalization to the setting of
min-SPPs. In a min-SPP for every variable $X_i$ we have several
polynomials $f_i^{(1)},\ldots,f_i^{(k_i)}$ all of them having
non-negative coefficients, and $X_i$ is required to be the minimum
of these, i.e.\
\[
  X_i = \min( f_i^{(1)}(\vX), \ldots, f_i^{(k_i)}(\vX) ).
\]
The right-hand side of such a min-SPP defines again a monotone
operator such that Kleene's theorem holds again, and the existence
of the least fixed point is always guaranteed over $\Rp\cup
\{\infty\}$.

For generalizing the operator $\Ta_{\veta}$ to min-SPPs, let us
first reinterpret the tangent method ($\Ta$). Given an
approximation $\vx \in R$ we may also obtain $\Ta(\vx)$ as
follows:
\begin{itemize}
\item Approximate the region $[\vx,\mu\vf] \cap R$ by the convex set defined by
\[
\begin{array}{rcl}
 \nqoat{\vp_1(\vx_{-1})} \cdot \vX & \ge & \nqoat{\vp_1(\vx_{-1})} \cdot \vp_1(\vx_{-1})\\
                             &\vdots&\\
 \nqnat{\vp_n(\vx_{-n})} \cdot \vX & \ge & \nqnat{\vp_n(\vx_{-n})} \cdot \vp_n(\vx_{-n})\\
 \vX & \ge & \vx.
\end{array}
\]
As every tangent safely under-approximates its quadric, the convex set is completely contained in $R$,
i.e., this convex set under-approximates $R$.
See Figure~\ref{fig:tangents} for an example.
\item
  As all tangents are monotonically increasing and intersect in a uniquely determined point
  (see the previous results),
  $\Ta(\vx)$ is then that point from this convex set which
  maximizes the sum of coefficients, i.e.\
  \[
    \sum_{i=1}^{n} X_i \to \max.
  \]
\end{itemize}
To summarize, we can also obtain $\Ta(\vx)$ as the unique solution of the linear program just described.
It is now intuitively clear how the operator $\Ta_{\veta}$ should
be generalized to the setting of min-SPP:
\begin{itemize}
\item
  Let $\vx$ be an approximation of $\mu\vf$ with $f_i^{(j)}(\vx) \ge x_i$ for all
  $1\le i \le n$ and $1\le j \le k_i$. Set $q_i^{(j)} = f_i^{(j)} - X_i$.
\item
  Fix for all $(i,j)$ with $1\le i \le n$, and $1\le j \le k_i$ some $\eta_{i}^{(j)} \in [x_i, h_{i}(\vx_{-i})]$,
  and set $\vpi_i^{(j)} = (\vx_{-i},\eta_i^{(j)})$.
\item
  Take as new approximation of $\mu\vf$ the solution of the linear program defined by (if it exists, otherwise the
  min-SPP is not feasible):
  \[
    \begin{array}{rcl}
      (q_i^{(j)})'( \vpi_i^{(j)} ) \vX & \ge & (q_i^{(j)})'( \vpi_i^{(j)} ) \cdot \vpi_i^{(j)} - q_i^{(j)}(\vpi_i^{(j)})\\
      \vX & \ge & \vx\\
      \sum_{i=1}^{n} X_i & \rightarrow & \max.
    \end{array}
  \]
\end{itemize}

\begin{example}\label{ex:min-SPP}
Consider the min-SPP given by
\[
\begin{array}{rcl}
    X_1 & =& \min( \frac{1}{2}X^2 + \frac{1}{4}Y^2 + \frac{1}{4}, 2Y^2+\frac{1}{4} ),\\
    X_2 & =& \min( \frac{1}{4}X+\frac{1}{4}Y^2+\frac{1}{4}XY, 4 X^2 + 2XY +\frac{1}{8} ).
\end{array}
\]
From this min-SPP we obtain the quadrics defined by
\[
\begin{array}{rcl}
  q_1 & = & \frac{1}{2}X^2 + \frac{1}{4}Y^2 + \frac{1}{4}-X,\\
  q_2 & = & \frac{1}{4}X+\frac{1}{4}Y^2+\frac{1}{4}XY-Y,\\
  q_3 & = & 2Y^2+\frac{1}{4}-X,\\
  q_4 & = & 4 X^2 + 2XY +\frac{1}{8}-Y.
\end{array}
\]
and depicted in Fig~\ref{fig:min-SPP}~(a). The least non-negative
solution $\mu\vf$ of this min-SPP is again the unique non-negative
point contained in the region delimited by the quadrics maximizing
the $1$-norm. Given some approximation $\vx$ we can safely
underapproximate this region by either taking the tangents at
these quadrics (moving from $\vx$ onto the quadrics, see
Figure~\ref{fig:min-SPP}~(b)), or by using the Newton-approximation
of these tangents  (Figure~\ref{fig:min-SPP}~(c)). As new
approximation of $\mu\vf$ we then take the point from the
approximated (convex) region maximizing the $1$-norm.
\end{example}

\begin{figure}[ht]
\begin{center}
\begin{tabular}{ccc}
  \scalebox{0.2}{\includegraphics{sec-geo-min-SPP-ex.eps}} & \scalebox{0.2}{\includegraphics{sec-geo-min-SPP-ex-tangents.eps}} & \scalebox{0.2}{\includegraphics{sec-geo-min-SPP-ex-tangents-Newton.eps}}\\
  (a) & (b) & (c)\\
\end{tabular}
\end{center}
\caption{Approximating the feasible region of a min-SPP: (a) The
quadrics $q_1,q_2,q_3,q_4$ from Example~\ref{ex:min-SPP} and the
region by them ($\{ \vx \in [\vzero,\vinfty] \mid q_i(\vx) \ge 0
\}$). Approximate the region delimited by the quadrics and the
current approximation by (b) the correct tangents ($\vpi_i^{(j)} =
\vp_i^{(j)}(\vx_{-i})$), resp. (c) the approximated tangents used
by Newton's method ($\vpi_i^{(j)} = \vx$). The new approximation
of $\mu\vf$ is encircled in (b), resp. (c), and is obtained by
maximizing the function $X+Y$ on the respective regions.}
\label{fig:min-SPP}
\end{figure}

Let us consider the constrains obtained by the Newton-approximation of the tangents (i.e.\ $\vpi_i^{(j)} = \vx$) at a quadric $q(\vX) = f(\vX) - X_i$ in a bit more detail:
\[
  q'(\vx) \vX \ge  q'(\vx) \vx - q(\vx).
\]
We may write this also as
\[
f'(\vx)\vX  - X_i \ge f'(\vx)\vx - x_i - f(\vx) + x_i \Leftrightarrow X_i \le  f(\vx) + f'(\vx) (\vX-\vx).
\]
In the case of the Newton approximation of the tangents, the
proposed method therefore corresponds to simply replacing each
polynomial on the right-hand side of a min-SPP by its
linearization at the current approximation $\vx$. In
\cite{EGKS08:icalp} it is shown that this approach indeed always
works, and, further, how it can be extended to also handle the
case of min-max-SPPs.

The formal proof of convergence for arbitrary $\eta_i^{(j)} \in [x_i,h_i(\vx_{-i})]$ is future work.
\michael{
VERMUTUNG: wenn $\eta_i$ immer durch genau $k_i$ Newton-/Kleene-Schritte bestimmt wird, dann ist der Operator
$\Ta_{\veta}(\cdot)$ auch wieder monoton.

Hoffnung: $\Ta(\cdot)$ konvergiert sofort linear, $\Ta_{\veta}$ approximiert dieses Verhalten, d.h.
der Threshold nimmt ab, je besser $\veta$ die Punkte auf den Quadriken approximiert.
}
}{}

\section{Conclusions}
\label{sec:conclusions}

We have studied the convergence order and convergence rate of Newton's method
for fixed-point equations of systems of positive polynomials (SPP equations). These equations
appear naturally in the analysis of several stochastic computational
models that have been intensely studied in recent years, and they also play
a central r\^ole in the theory of stochastic branching processes.

The restriction to positive coefficients leads to strong results. For arbitrary
polynomial equations Newton's method
may not converge or converge only locally, i.e., when started at a point sufficiently
close to the solution. We have extended a result by Etessami and Yannakakis
\cite{EYstacs05Extended}, and shown that for SPP equations the method always converges starting at~$\vzero$.
Moreover, we have proved that
the method has at least linear convergence
order, and have determined the asymptotic convergence rate. To the best of our
knowledge, this is the first time that
a lower bound on the convergence order is proved for a significant class
of equations with a trivial membership test.\footnote{Notice the contrast
with the classical result stating that if
$(\Id -\vf'(\mu \vf))$ is non-singular, then Newton's method has exponential
convergence order; here the membership test is highly
non-trivial, and, for what we know, as hard as computing $\mu \vf$ itself.}
Finally, in the case of strongly connected SPPs we have also obtained upper bounds on
the threshold, i.e., the number of iterations necessary to reach the ``steady state'' in which
valid bits are computed at the asymptotic rate. These results
lead to practical tests for checking whether the
least fixed point of a strongly connected SPP exceeds a given bound.

It is worth mentioning that in a recent paper we study the behavior of Newton's
method when arithmetic operations only have a fixed
accuracy~\cite{EGK10:stacs}. We develop an algorithm for a relevant class of SPPs
that computes iterations of Newton's method increasing the accuracy on demand. A simple test
applied after each iteration decides if the round-off errors 
have become too large, in which case the accuracy is increased.

There are still at least two important open questions. The first one is,
can one provide a bound on the threshold valid for arbitrary SPPs, and not only for strongly
connected ones? Since SPPs cannot be solved exactly in general, we cannot
first compute the exact solution for the bottom SCCs, insert it in the SCCs above them, and iterate.
We can only compute an approximation, and we are not currently able to bound the propagation of the error.
For the second question, say that Newton's method
is {\em polynomial} for a class of SPP equations if
there is a polynomial $p(x,y,z)$ such that
for every $k \geq 0$ and for every system in the class with $n$ equations
and coefficients of size $m$,
the $p(n,m,k)$-th Newton approximant $\ns{p(n,m,k)}$ has $k$ valid bits.
We have proved in Theorem \ref{thm:estimate-cor} that Newton's method
is polynomial for strongly connected SPPs $\vf$ satisfying
$\vf(\vzero) \succ \vzero$; for this class
one can take $p(n,m,k) = 7mn+k$. We have also exhibited in
\S~\ref{sec:upper-bounds} a class for which computing the first
bit of the least solution takes $2^n$ iterations.
The members of this class, however, are not strongly connected, and
this is the fact we have exploited to construct them. So the following question
remains open: Is Newton's method polynomial for strongly connected SPPs?

\iftechrep{
\mbox{} \par \vspace{6mm} {\bf Acknowledgments.}
We thank Kousha Etessami for several 
illuminating discussions,
and two anonymous referees for helpful suggestions.
}{} 

\appendix

\section{Proof of Lemma~\ref{lem:propagation-error}} \label{app:proof-lem-propagation-error}

The proof of Lemma~\ref{lem:propagation-error} is by a sequence of lemmata.
The proof of Lemma~\ref{lem:miss-lower-bound-quadratic} and, consequently, the proof of Lemma~\ref{lem:propagation-error} are non-constructive
 in the sense that we cannot give a particular~$C_\vf$.
Therefore, we often use the equivalence of norms, disregard the constants that link them, and state the results in terms of an arbitrary norm.

\iftechrep{}{
First we prove the following lemma on cone vectors.
\lemvdvzeroisenough
}

The following two Lemmata \ref{lem:miss-lower-bound-quadratic} and~\ref{lem:miss-lower-bound}
 provide a lower bound on $\norm{\vf(\vx) - \vx}$ for an ``almost-fixed-point''~$\vx$.

\begin{lemma} \label{lem:miss-lower-bound-quadratic}
 Let $\vf$ be a quadratic, clean and feasible SPP without linear terms,
  i.e., $\vf(\vX) = B(\vX,\vX) + \vc$ where $B$ is a bilinear map, and $\vc$ is a constant vector.
 Let $\vf(\vX)$ be non-constant in every component.
 Let $R \dotcup S = \{1,\ldots,n\}$ with $S \ne \emptyset$.
 Let every component depend on every $S$-component and not on any $R$-component.
 Then there is a constant~$C_\vf > 0$ such that
 \[
  \norm{\vf(\mu\vf - \vdelta) - (\mu\vf - \vdelta)} \ge C_\vf \cdot \norm{\vdelta}^2
 \]
 for all $\vdelta$ with $\vzero \le \vdelta \le \mu\vf$.
\end{lemma}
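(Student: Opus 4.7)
The plan is to exploit that $\vf$ is quadratic with no linear terms so that the Taylor expansion of $\vf$ at $\mu\vf$ is \emph{exact}. Writing $\vf(\vX) = B(\vX,\vX) + \vc$, a direct calculation gives, with no higher-order remainder,
\[
 \vr(\vdelta) \;:=\; \vf(\mu\vf - \vdelta) - (\mu\vf - \vdelta) \;=\; (\Id - \vf'(\mu\vf))\,\vdelta \;+\; B(\vdelta,\vdelta)\,.
\]
Both summands are tractable: $B(\vdelta,\vdelta) \ge \vzero$ because $B$ has nonnegative coefficients, and $\vr(\vdelta) = \vzero$ would make $\mu\vf - \vdelta$ a fixed point of $\vf$ in $[\vzero,\mu\vf]$, which by minimality of $\mu\vf$ forces $\vdelta = \vzero$.

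The next step is a contradiction-and-compactness argument. If no constant $C_\vf$ works, pick a sequence $\vdelta_n \in [\vzero,\mu\vf] \setminus \{\vzero\}$ with $\norm{\vr(\vdelta_n)}/\norm{\vdelta_n}^2 \to 0$, set $t_n := \norm{\vdelta_n}$ and $\vu_n := \vdelta_n/t_n$, and extract subsequences so that $t_n \to t_\infty \in [0,\norm{\mu\vf}]$ and $\vu_n \to \vu_\infty$, where $\vu_\infty \ge \vzero$ and $\norm{\vu_\infty} = 1$. If $t_\infty > 0$, continuity gives $\vr(t_\infty\vu_\infty) = \vzero$, forcing $t_\infty\vu_\infty = \vzero$ and contradicting $\norm{\vu_\infty} = 1$. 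For $t_\infty = 0$, rescale:
\[
 \vr(\vdelta_n)/t_n^2 \;=\; (\Id - \vf'(\mu\vf))\,\vu_n/t_n \;+\; B(\vu_n,\vu_n)\,.
\]
If $(\Id - \vf'(\mu\vf))\,\vu_\infty \ne \vzero$, the norm of the first summand grows like $1/t_n$ while the second stays bounded, so $\norm{\vr(\vdelta_n)}/t_n^2 \to \infty$ --- a contradiction.

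The remaining case, $(\Id - \vf'(\mu\vf))\,\vu_\infty = \vzero$, is the crux, and here the dependency structure finally pays off. Because no component depends on any $R$-component, $\vf'(\mu\vf)$ has block form $\bigl(\begin{smallmatrix} M_{SS} & \mathbf{0} \\ M_{RS} & \mathbf{0} \end{smallmatrix}\bigr)$, and because $S$ is a non-trivial SCC, $M_{SS}$ is nonnegative and irreducible. The equation $\vu_\infty = \vf'(\mu\vf)\vu_\infty$ read blockwise gives $\vu_{\infty,S} = M_{SS}\vu_{\infty,S}$ and $\vu_{\infty,R} = M_{RS}\vu_{\infty,S}$. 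If $\vu_{\infty,S} = \vzero$ then $\vu_{\infty,R} = \vzero$ too, contradicting $\norm{\vu_\infty} = 1$; otherwise Perron--Frobenius forces $\vu_{\infty,S} \succ \vzero$ to be the Perron eigenvector of $M_{SS}$ at eigenvalue~$1$. The no-linear-terms hypothesis, combined with the nonconstancy of every $f_i$, ensures that each $B_i$ is a nonzero bilinear form in the $\vX_S$-variables, so $B_i(\vu_{\infty,S},\vu_{\infty,S}) > 0$ for all~$i$. Letting $\vw_S \succ \vzero$ be the \emph{left} Perron eigenvector of $M_{SS}$ and extending by zeros on $R$ to obtain $\vw$ yields $\vw^\top(\Id - \vf'(\mu\vf)) = \vzero^\top$, hence
\[
 \vw^\top \vr(\vdelta_n)/t_n^2 \;=\; \vw^\top B(\vu_n,\vu_n) \;\longrightarrow\; \vw_S^\top B(\vu_{\infty,S},\vu_{\infty,S}) \;>\; 0\,,
\]
which is $\ge c_0 > 0$ for large~$n$; then the inequality $\vw^\top \vr(\vdelta_n) \le \norm{\vw}_1 \norm{\vr(\vdelta_n)}_\infty$ forces $\norm{\vr(\vdelta_n)}/t_n^2$ to be bounded below by a positive constant, contradicting the original assumption. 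The main obstacle is precisely this singular case, where the linear piece of $\vr$ vanishes along $\vu_\infty$ and only the bilinear $B(\vdelta,\vdelta)$ can provide the required quadratic lower bound; Perron--Frobenius applied on both sides of $M_{SS}$ is the crucial tool that makes this work.
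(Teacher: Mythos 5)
Your proof is correct, and while the overall skeleton (exact second-order expansion $\vf(\mu\vf-\vdelta)-(\mu\vf-\vdelta)=(\Id-\vf'(\mu\vf))\vdelta+B(\vdelta,\vdelta)$, followed by normalization, compactness and a contradiction argument that produces a limit direction $\vd^*$ with $(\Id-\vf'(\mu\vf))\vd^*=\vzero$ and $\vd^*_S\succ\vzero$) matches the paper's, the decisive final step is genuinely different. The paper stays order-theoretic: from $\frac{1}{r^{(i)}}(\Id-\vf'(\mu\vf))\vd^{(i)}_S\to -B_S(\vd^*_S,\vd^*_S)\prec\vzero$ it picks a concrete index $j$ and a small scalar $r$ so that $\vg(\mu\vg-r\vd^{(j)}_S)\prec\mu\vg-r\vd^{(j)}_S$ for the top scSPP $\vg=\vf_S[\ldots]$, contradicting that $\mu\vg$ is the \emph{least} pre-fixed point; it also deliberately avoids Perron--Frobenius, obtaining $\vd^*_S\succ\vzero$ from the elementary Lemma~\ref{lem:vd>vzero-is-enough}. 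You instead pair the residual with the left Perron eigenvector $\vw_S\succ\vzero$ of the irreducible block $M_{SS}$ (whose spectral radius is forced to be $1$ by the existence of the nonnegative right eigenvector $\vu_{\infty,S}$), which annihilates the linear part exactly and isolates the strictly positive quadratic contribution $\vw_S^\top B_S(\vu_{\infty,S},\vu_{\infty,S})>0$. Your route is a clean, standard linear-algebraic duality trick that yields the contradiction in one line but imports Perron--Frobenius theory for both left and right eigenvectors; the paper's route is more self-contained and fits its stated goal of keeping the scSPP machinery elementary. Both arguments rely on the same structural facts — that the bilinear forms involve only $S$-variables, that $M_{SS}$ is irreducible because no dependency chain can pass through $R$, and that non-constancy plus absence of linear terms makes every $B_i$ strictly positive on $\vu_{\infty,S}\succ\vzero$ — and you verify each of these correctly.
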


\begin{proof}
 With the given component dependencies we can write $\vf(\vX)$ as follows:
 \[
  \vf(\vX) =
  \begin{pmatrix}
   \vf_R(\vX) \\
   \vf_S(\vX)
  \end{pmatrix}
  =
  \begin{pmatrix}
   B_R(\vX_S,\vX_S) + \vc_R \\
   B_S(\vX_S,\vX_S) + \vc_S
  \end{pmatrix}
 \]
 A straightforward calculation shows
  \[
   \ve(\vdelta) := \vf(\mu\vf - \vdelta) - (\mu\vf - \vdelta) = (\Id - \vf'(\mu\vf)) \vdelta + B(\vdelta,\vdelta)\;.
  \]
 Furthermore, $\pd{\vf}{\vX_R}$ is constant zero in all entries, so
  \begin{align*}
   \ve_R(\vdelta) & = \vdelta_R - \pd{\vf_R}{\vX_S}(\mu\vf) \cdot \vdelta_S + B_R(\vdelta_S,\vdelta_S)  \text{\qquad and} \\
   \ve_S(\vdelta) & = \vdelta_S - \pd{\vf_S}{\vX_S}(\mu\vf) \cdot \vdelta_S + B_S(\vdelta_S,\vdelta_S) \;.
  \end{align*}
 Notice that for every real number $r > 0$ we have
  \[
   \min_{\vzero \le \vdelta \le \mu\vf, \norm{\vdelta} \ge r} \frac{\norm{\ve(\vdelta)}}{\norm{\vdelta}^2} > 0 \;,
  \]
  because otherwise $\mu\vf - \vdelta < \mu\vf$ would be a fixed point of~$\vf$.
 We have to show:
  \[
   \inf_{\vzero \le \vdelta \le \mu\vf, \norm{\vdelta} > 0} \frac{\norm{\ve(\vdelta)}}{\norm{\vdelta}^2} > 0
  \]
 Assume, for a contradiction, that this infimum equals zero.
\newcommand{\rs}[1]{r^{(#1)}}
 Then there exists a sequence~$(\ds{i})_{i\in\N}$ with $\vzero \le \ds{i} \le \mu\vf, \norm{\ds{i}} > 0$ such that
  $\lim_{i \to \infty} \norm{\ds{i}} = 0$ and $\lim_{i \to \infty} \frac{\norm{\ve(\ds{i})}}{\norm{\ds{i}}^2} = 0$.
 Define $\rs{i} := \norm{\ds{i}}$ and $\des{i} := \frac{\ds{i}}{\norm{\ds{i}}}$.
 Notice that $\des{i} \in \{ \vd \in \Rp^n \mid \norm{\vd} = 1 \} =: D$ where $D$ is compact.
 So some subsequence of~$(\des{i})_{i\in\N}$, say w.l.o.g.\ the sequence $(\des{i})_{i\in\N}$ itself,
  converges to some vector~$\vd^* \in D$.
 By our assumption we have
  \begin{equation} \label{eq:proof-miss-lower-bound-quadratic}
   \norm{\ve(\ds{i})}/\norm{\ds{i}}^2 = \norm{ \frac{1}{\rs{i}} (\Id - \vf'(\mu\vf)) \des{i} + B(\des{i},\des{i})} \longrightarrow 0 \;.
  \end{equation}
 As $B(\des{i},\des{i})$ is bounded, $\frac{1}{\rs{i}} (\Id - \vf'(\mu\vf)) \des{i}$ must be bounded, too.
 Since $\rs{i}$ converges to 0, $\norm{(\Id - \vf'(\mu\vf)) \des{i}}$ must converge to~$0$, so
  \[ (\Id - \vf'(\mu\vf))\vd^* = \vzero \;.
  \]
 In particular,
  $\left( (\Id - \vf'(\mu\vf)) \vd^* \right)_R = \vd^*_R - \pd{\vf_R}{\vX_S}(\mu\vf) \cdot \vd^*_S = \vzero$.
 So we have $\vd^*_S > \vzero$, because $\vd^*_S = \vzero$ would imply $\vd^*_R = \vzero$
  which would contradict $\vd^* > \vzero$.

 In the remainder of the proof we focus on $\vf_S$.
 Define the scSPP $\vg(\vX_S) := \vf_S(\vX)$.
 Notice that $\mu\vg = \mu\vf_S$.
 We can apply Lemma~\ref{lem:vd>vzero-is-enough} to $\vg$ and~$\vd^*_S$ and obtain $\vd^*_S \succ \vzero$.
 As $\vf_S(\vX)$ is non-constant we get $B_S(\vd^*_S,\vd^*_S) \succ \vzero$.
 By~\eqref{eq:proof-miss-lower-bound-quadratic},
  $\frac{1}{\rs{i}} (\Id - \vg'(\mu\vg)) \des{i}_S$ converges to~$-B_S(\vd^*_S,\vd^*_S) \prec \vzero$.
 So there is a $j\in\N$ such that $(\Id - \vg'(\mu\vg)) \des{j}_S \prec \vzero$.
 \newcommand{\tvdelta}{\widetilde{\vdelta}}
 Let $\tvdelta := r \des{j}$ for some small enough $r > 0$ such that $\vzero < \tvdelta_S \le \mu\vg$ and
  \begin{align*}
   \ve_S(\tvdelta) & = (\Id - \vg'(\mu\vg)) \tvdelta_S + B_S(\tvdelta_S,\tvdelta_S) \\
                   & = r (\Id - \vg'(\mu\vg)) \des{j}_S + r^2 B_S(\des{j}_S, \des{j}_S) \prec \vzero \;.
  \end{align*}
 So we have $\vg(\mu\vg - \tvdelta_S) \prec \mu\vg - \tvdelta_S$.
 However, $\mu\vg$ is the least point $\vx$ with $\vg(\vx) \le \vx$.
 Thus we get the desired contradiction.
\qed
\end{proof}

\begin{lemma} \label{lem:miss-lower-bound}
 Let $\vf$ be a quadratic, clean and feasible scSPP.
 Then there is a constant~$C_\vf > 0$ such that
 \[
  \norm{\vf(\mu\vf - \vdelta) - (\mu\vf - \vdelta)} \ge C_\vf \cdot \norm{\vdelta}^2
 \]
 for all $\vdelta$ with $\vzero \le \vdelta \le \mu\vf$.
\end{lemma}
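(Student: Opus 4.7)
Plan: Write $\vf(\vX) = A\vX + B(\vX,\vX) + \vc$ with $A$ the linear part, $B$ the symmetric bilinear form collecting the quadratic coefficients (all nonnegative), and $\vc \ge \vzero$, and set $M := \Id - \vf'(\mu\vf)$. Using $\vf(\mu\vf) = \mu\vf$ together with bilinearity of $B$ one obtains $\ve(\vdelta) := \vf(\mu\vf - \vdelta) - (\mu\vf - \vdelta) = M\vdelta + B(\vdelta,\vdelta)$. I will run a contradiction argument in the same style as Lemma~\ref{lem:miss-lower-bound-quadratic}: assume $\inf_{\vzero < \vdelta \le \mu\vf} \norm{\ve(\vdelta)}/\norm{\vdelta}^2 = 0$, let $\ds{i}$ be a witnessing sequence, and set $\rs{i} := \norm{\ds{i}}$ and $\des{i} := \ds{i}/\rs{i}$. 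Compactness combined with the minimality of $\mu\vf$ (any subsequence of $\ds{i}$ bounded away from $\vzero$ would converge to some $\vd \neq \vzero$ with $\ve(\vd) = \vzero$, i.e.\ a fixed point of $\vf$ strictly below $\mu\vf$) forces $\rs{i} \to 0$; along a further subsequence $\des{i} \to \vd^*$ with $\norm{\vd^*} = 1$ and $\vd^* > \vzero$.

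From $\ve(\ds{i}) = \rs{i} M\des{i} + \rs{i}^2 B(\des{i},\des{i}) = o(\rs{i}^2)$, dividing by $\rs{i}$ and letting $i \to \infty$ yields $M\vd^* = \vzero$; since $\vf'(\mu\vf)\vd^* = \vd^*$, Lemma~\ref{lem:vd>vzero-is-enough} upgrades $\vd^* > \vzero$ to $\vd^* \succ \vzero$. At this point the essential new ingredient, replacing the $R/S$-decomposition of Lemma~\ref{lem:miss-lower-bound-quadratic}, is Perron--Frobenius. Cleanness of $\vf$ forces $\mu\vf \succ \vzero$, so the positivity pattern of $\vf'(\mu\vf)$ coincides with the direct-dependence graph of $\vf$, and strong connectivity then makes $\vf'(\mu\vf)$ an irreducible nonnegative matrix. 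Because it admits the strictly positive right eigenvector $\vd^*$ for eigenvalue $1$, the value $1$ must be its spectral radius, and Perron--Frobenius (applied to the transpose) supplies a strictly positive left eigenvector $\vw^* \succ \vzero$ satisfying $(\vw^*)^\top M = \vzero$.

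Dividing $\ve(\ds{i})$ by $\rs{i}^2$ gives $M\des{i}/\rs{i} + B(\des{i},\des{i}) \to \vzero$; multiplying from the left by $(\vw^*)^\top$ kills the first summand, leaving $(\vw^*)^\top B(\vd^*,\vd^*) = 0$ in the limit. Because $\vw^* \succ \vzero$ and $B(\vd^*,\vd^*) \ge \vzero$, this forces $B(\vd^*,\vd^*) = \vzero$ componentwise, and then $\vd^* \succ \vzero$ together with the nonnegativity of the coefficients of $B$ forces $B \equiv \vzero$; i.e.\ $\vf(\vX) = A\vX + \vc$ is purely linear. But then $A\vd^* = \vf'(\mu\vf)\vd^* = \vd^*$ yields $\vf(\mu\vf - t\vd^*) = \mu\vf - t\vd^*$ for every $t$, and choosing $t > 0$ small enough that $\mu\vf - t\vd^* \ge \vzero$ (possible because $\mu\vf \succ \vzero$) produces a fixed point strictly below $\mu\vf$, contradicting minimality. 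The hard part, I expect, is cleanly justifying the Perron--Frobenius step---verifying irreducibility of $\vf'(\mu\vf)$ and pinning the eigenvalue $1$ as its spectral radius so that a strictly positive left eigenvector $\vw^*$ is indeed available; once this is in hand, the identity $(\vw^*)^\top B(\vd^*,\vd^*) = 0$ plays exactly the role that $B_S(\vd^*_S,\vd^*_S) \succ \vzero$ played in the structured case, and the remainder of the contradiction is a direct transcription.
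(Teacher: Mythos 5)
Your proof is correct, but it reaches the contradiction by a genuinely different route than the paper. The paper first factors out the linear part: writing $\vf(\vX)=B(\vX,\vX)+L\vX+\vc$, it passes to $\widetilde{\vf}(\vX)=L^*B(\vX,\vX)+L^*\vc$ (using that $L^*=(\Id-L)^{-1}$ exists), pays a smallest-singular-value factor for the change, verifies that $\widetilde{\vf}$ satisfies the structural hypotheses of the preceding lemma (every component non-constant, depending on exactly the variables of a distinguished set $S$), and then, inside that lemma, derives $B_S(\vd^*_S,\vd^*_S)\succ\vzero$ and exhibits an explicit point $\mu\vf-\widetilde{\vdelta}$ with $\vf(\mu\vf-\widetilde{\vdelta})\prec\mu\vf-\widetilde{\vdelta}$, contradicting that $\mu\vf$ is the least pre-fixed point. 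You instead keep the linear part, and after the (identical) compactness step yielding $(\Id-\vf'(\mu\vf))\vd^*=\vzero$ with $\vd^*\succ\vzero$, you invoke Perron--Frobenius: irreducibility of $\vf'(\mu\vf)$ (which does follow from strong connectivity and $\mu\vf\succ\vzero$, exactly as the paper itself notes in its alternative proof of the cone-vector proposition) pins the spectral radius at $1$ and supplies a left eigenvector $\vw^*\succ\vzero$ annihilating $\Id-\vf'(\mu\vf)$; pairing it with the second-order expansion forces $B(\vd^*,\vd^*)=\vzero$, hence $B\equiv\vzero$, hence a fixed point strictly below $\mu\vf$ in the linear case. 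Your version is shorter and dispenses with both the $L^*$-reduction and the $R/S$ decomposition of the auxiliary lemma, at the price of importing Perron--Frobenius theory (positive left eigenvector, and the fact that an eigenvalue admitting a positive eigenvector of an irreducible nonnegative matrix is the spectral radius), which the paper deliberately keeps out of this part of the development; the paper's argument is more elementary and self-contained but correspondingly longer. All the individual steps you flag as delicate (forcing $r^{(i)}\to 0$ via minimality of $\mu\vf$, upgrading $\vd^*>\vzero$ to $\vd^*\succ\vzero$ via the paper's Lemma on cone-like vectors, and the irreducibility check) do go through.
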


\begin{proof}
 Write $\vf(\vX) = B(\vX,\vX) + L \vX + \vc$ for a bilinear map~$B$, a matrix~$L$ and a constant vector~$\vc$.
 \newcommand{\tvf}{\widetilde{\vf}}
 By Theorem~\ref{thm:well-defined}.2.\ the matrix $L^* = (\Id - L)^{-1} = (\Id - \vf'(\vzero))^{-1}$ exists.
 Define the SPP~$\tvf(\vX) := L^* B(\vX,\vX) + L^* \vc$.
 A straightforward calculation shows that the sets of fixed points of $\vf$ and $\tvf$ coincide and that
  \[
   \vf(\mu\vf - \vdelta) - (\mu\vf - \vdelta) = (\Id - L) \left( \tvf(\mu\vf - \vdelta) - (\mu\vf - \vdelta) \right) \;.
  \]
 Further, if $\sigma_n(\Id - L)$ denotes the smallest singular value of~$\Id - L$, we have by basic facts about singular values
  (see~\cite{HornJ91:book}, Chapter~3) that
 \[
  \norm{(\Id - L) \left( \tvf(\mu\vf - \vdelta) - (\mu\vf - \vdelta) \right)}_2 \ge
  \sigma_n(\Id - L) \norm{ \tvf(\mu\vf - \vdelta) - (\mu\vf - \vdelta) }_2\;.
 \]
 Note that $\sigma_n(\Id - L) > 0$ because $\Id - L$ is invertible.
 So it suffices to show that
  \[ 
   \norm{ \tvf(\mu\vf - \vdelta) - (\mu\vf - \vdelta) } \ge C_\vf \cdot \norm{\vdelta}^2 \;.
  \] 
 If $\vf(\vX)$ is linear (i.e.\ $B(\vX,\vX) \equiv \vzero$) then $\tvf(\vX)$ is constant
  and we have $\norm{ \tvf(\mu\vf - \vdelta) - (\mu\vf - \vdelta) } = \norm{\vdelta}$, so we are done in that case.
 Hence we can assume that some component of~$B(\vX,\vX)$ is not the zero polynomial.
 It remains to argue that $\tvf$ satisfies the preconditions of Lemma~\ref{lem:miss-lower-bound-quadratic}.
 By definition, $\tvf$ does not have linear terms.
 Define
  \[
   S := \{ i \mid 1 \le i \le n, \ X_i \text{ is contained in a component of } B(\vX,\vX) \} \;.
  \]
 Notice that $S$ is non-empty.
 Let $i_0, i_1, \ldots, i_m, i_{m+1}$ ($m \ge 0$) be any sequence such that, in~$\vf$, for all $j$ with $0 \le j < m$
  the component~$i_j$ depends directly on~$i_{j+1}$ via a linear term and $i_m$ depends directly on~$i_{m+1}$ via a quadratic term.
 Then $i_0$ depends directly on~$i_{m+1}$ via a quadratic term in~$L^mB(\vX,\vX)$ and hence also in~$\tvf$.
 So all components are non-constant and depend (directly or indirectly) on every $S$-component.
 Furthermore, no component depends on a component that is not in~$S$, because $L^*B(\vX,\vX)$ contains only $S$-components.
 Thus, Lemma~\ref{lem:miss-lower-bound-quadratic} can be applied, and the statement follows.
\qed
\end{proof}

The following lemma gives a bound on the propagation error for the case that $\vf$ has a single top SCC.
\newcommand{\mus}{\widetilde{\vmu}_S}
\begin{lemma} \label{lem:prop-error-top}
 Let $\vf$ be a quadratic, clean and feasible SPP.
 Let $S \subseteq \{1,\ldots,n\}$ be the single top SCC of~$\vf$.
 Let $L := \{1,\ldots,n\} \setminus S$.
 Then there is a constant~$C_\vf \ge 0$ such that
 \[
  \norm{\mu\vf_S - \mus} \le C_\vf \cdot \sqrt{\norm{\mu\vf_L - \vx_L}}
 \]
 for all $\vx_L$ with $\vzero \le \vx_L \le \mu\vf_L$ where $\mus := \mu\left( \vf_S[\vX_L / \vx_L] \right)$.
\end{lemma}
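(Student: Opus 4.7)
The plan is to reduce the statement to the situation covered by Lemma~\ref{lem:miss-lower-bound}, applied to the scSPP obtained by freezing the lower components at their true value~$\mu\vf_L$. Concretely, I would define
\[
 \vg(\vX_S) \;:=\; \vf_S[\vX_L/\mu\vf_L](\vX_S),
\]
observe that $\mu\vg = \mu\vf_S$, and then relate $\mus$ to $\mu\vg$ by showing that $\mus$ is an ``approximate fixed point'' of~$\vg$ with defect controlled by $\norm{\mu\vf_L - \vx_L}$.

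First I would handle the easy trivial-SCC case $|S|=1$, $S=\{s\}$ with $f_s$ not containing $X_s$: then $f_s$ is a polynomial purely in~$\vX_L$, so $\mu\vf_s - \mus = f_s(\mu\vf_L) - f_s(\vx_L)$, and Lipschitz continuity of $f_s$ on the compact box~$[\vzero,\mu\vf_L]$ yields even a linear (hence a fortiori square-root) bound. In the non-trivial case, $\vg$ is a clean feasible scSPP: feasibility and cleanliness follow since $\vf$ is clean and feasible (so $\mu\vg = \mu\vf_S \succ \vzero$ and Kleene iteration of~$\vg$ from $\vzero$ converges to this strictly positive vector, making every component eventually positive). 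Monotonicity of the coefficients in $\vx_L$ gives $\mus \le \mu\vg$, so we may set
\[
 \vdelta \;:=\; \mu\vg - \mus \;\ge\; \vzero.
\]

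Now I would compute the defect:
\[
 \vg(\mus) - \mus \;=\; \vf_S(\mus,\mu\vf_L) - \vf_S(\mus,\vx_L),
\]
which is nonnegative by monotonicity of $\vf_S$ in $\vX_L$. Since $\vf_S$ is a fixed polynomial in $(\vX_S,\vX_L)$ and $(\mus,\vx_L),(\mus,\mu\vf_L)$ both lie in the compact box $[\vzero,\mu\vf]$, there is a constant $K>0$ (a Lipschitz constant of $\vf_S$ in $\vX_L$ over this box) with
\[
 \norm{\vg(\mus) - \mus} \;\le\; K\,\norm{\mu\vf_L - \vx_L}.
\]
On the other hand, Lemma~\ref{lem:miss-lower-bound} applied to the clean feasible scSPP~$\vg$ provides a constant $C_{\vg}>0$ such that
\[
 \norm{\vg(\mu\vg-\vdelta) - (\mu\vg-\vdelta)} \;\ge\; C_{\vg}\,\norm{\vdelta}^2,
\]
i.e., $\norm{\vg(\mus)-\mus}\ge C_{\vg}\,\norm{\mu\vg-\mus}^2$. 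Combining the two bounds yields
\[
 \norm{\mu\vf_S - \mus}^2 \;\le\; \tfrac{K}{C_{\vg}}\,\norm{\mu\vf_L - \vx_L},
\]
and taking square roots produces the desired estimate with $C_\vf := \sqrt{K/C_{\vg}}$.

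The only genuinely delicate step is the lower bound on the defect, which is exactly what Lemma~\ref{lem:miss-lower-bound} supplies; everything else is monotonicity and polynomial Lipschitz continuity on a compact set. The inevitable non-constructiveness of $C_\vf$ comes entirely from the non-constructiveness of $C_{\vg}$ in Lemma~\ref{lem:miss-lower-bound}, which is the reason the present lemma (and hence Lemma~\ref{lem:propagation-error}) cannot yield explicit constants either. The main conceptual obstacle that I had to clear in the plan was checking that $\vg$ really is a \emph{clean} scSPP so that Lemma~\ref{lem:miss-lower-bound} is applicable, and verifying the sign $\mus \le \mu\vg$ so that the quantity $\vdelta$ lives in the range $[\vzero,\mu\vg]$ required by that lemma.
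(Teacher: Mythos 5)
Your proposal is correct and follows essentially the same route as the paper's proof: split off the trivial-SCC case via a Lipschitz/Taylor bound, and in the non-trivial case apply Lemma~\ref{lem:miss-lower-bound} to $\vg(\vX_S) := \vf_S(\vX_S,\mu\vf_L)$ together with the identity $\vg(\mus)-\mus = \vf_S(\mus,\mu\vf_L)-\vf_S(\mus,\vx_L)$ and a Lipschitz bound on that defect. The only (immaterial) difference is that the paper takes square roots immediately rather than squaring and extracting the root at the end, and your explicit checks that $\vg$ is clean and that $\vzero \le \mu\vg - \mus \le \mu\vg$ are welcome details the paper leaves implicit.
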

\begin{proof}
 We write $\vf_S(\vX) = \vf_S(\vX_S,\vX_L)$ in the following.

 If $S$ is a trivial SCC then $\mu\vf_S = \vf_S(\vzero,\mu\vf_L)$ and $\mus = \vf_S(\vzero,\vx_L)$.
 In this case we have with Taylor's theorem (cf.~Lemma~\ref{lem:taylor})
  \begin{align*}
   \norm{\mu\vf_S - \mus}
     & =   \norm{\vf_S(\vzero,\mu\vf_L) - \vf_S(\vzero,\vx_L)} \\
     & \le \norm{\pd{\vf_S}{\vX}(\vzero,\mu\vf_L) \cdot (\mu\vf_L - \vx_L)} \\
     & \le \norm{\pd{\vf_S}{\vX}(\vzero,\mu\vf_L)} \cdot \norm{\mu\vf_L - \vx_L} \\
     & =   \norm{\pd{\vf_S}{\vX}(\vzero,\mu\vf_L)} \cdot \sqrt{\norm{\mu\vf_L - \vx_L}} \cdot \sqrt{\norm{\mu\vf_L - \vx_L}} \\
     & \le \norm{\pd{\vf_S}{\vX}(\vzero,\mu\vf_L)} \cdot \sqrt{\norm{\mu\vf_L}} \cdot \sqrt{\norm{\mu\vf_L - \vx_L}}
  \end{align*}
  and the statement follows by setting $C_\vf := \norm{\pd{\vf_S}{\vX}(\vzero,\mu\vf_L)} \cdot \sqrt{\norm{\mu\vf_L}}$.

 Hence, in the following we can assume that $S$ is a non-trivial SCC.
 Set $\vg(\vX_S) := \vf_S(\vX_S, \mu\vf_L)$.
 Notice that $\vg$ is an scSPP with $\mu\vg = \mu\vf_S$.
 By applying Lemma~\ref{lem:miss-lower-bound} to $\vg$ and setting $c := 1 / \sqrt{C_\vg}$
  (the $C_\vg$ from Lemma~\ref{lem:miss-lower-bound}) we get
 \begin{align*}
   \norm{\mu\vf_S - \mus}
     & \le c \cdot \sqrt{\norm{ \vg(\mu\vg - (\mu\vf_S - \mus)) - (\mu\vg - (\mu\vf_S - \mus)) }} \\
     & =   c \cdot \sqrt{\norm{ \vf_S(\mus,\mu\vf_L) - \mus }} \\
     & =   c \cdot \sqrt{\norm{ \vf_S(\mus,\mu\vf_L) - \vf_S(\mus,\vx_L) }}
  \intertext{and with Taylor's theorem (cf.~Lemma~\ref{lem:taylor})}
     & \le c \cdot \sqrt{\norm{ \pd{\vf_S}{\vX_L}(\mus,\mu\vf_L) (\mu\vf_L - \vx_L) }} \\
     & \le c \cdot \sqrt{\norm{ \pd{\vf_S}{\vX_L}(\mu\vf_S,\mu\vf_L) (\mu\vf_L - \vx_L) }} \\
     & \le c \cdot \sqrt{\norm{ \pd{\vf_S}{\vX_L}(\mu\vf_S,\mu\vf_L) }} \cdot \sqrt{\norm{ \mu\vf_L - \vx_L }}\,.
 \end{align*}
 So the statement follows by setting $C_\vf := c \cdot \sqrt{\norm{ \pd{\vf_S}{\vX_L}(\mu\vf_S,\mu\vf_L) }}$.
\qed
\end{proof}

Now we can extend Lemma~\ref{lem:prop-error-top} to Lemma~\ref{lem:propagation-error}, restated here.

\vspace{3mm}
\begin{qlemma}{\ref{lem:propagation-error}}
 \stmtlempropagationerror
\end{qlemma}

\begin{proof}
 Observe that $\mu\vf_{[t]}$, $\tvmu_{[t]}$, $\mu\vf_{[\mathord{>}t]}$ and $\vrho_{[\mathord{>}t]}$ do not depend on the components of depth $<t$.
 So we can assume w.l.o.g.\ that $t=0$.
 Let $\SCC(0) = \{S_1, \ldots, S_k\}$.

 For any $S_i$ from $\SCC(0)$, let $\vf^{(i)}$ be obtained from~$\vf$ by removing all top SCCs except for~$S_i$.
 Lemma~\ref{lem:miss-lower-bound} applied to~$\vf^{(i)}$ guarantees a $C^{(i)}$ such that
  \[ \norm{\mu\vf_{S_i} - \tvmu_{S_i}} \le C^{(i)} \cdot \sqrt{\norm{\mu\vf_{[>0]} - \vrho_{[>0]}}}
  \]
  holds for all $\vrho_{[>0]}$ with $\vzero \le \vrho_{[>0]} \le \mu\vf_{[>0]}$.
 Using the equivalence of norms let w.l.o.g.\ the norm $\norm{\cdot}$ be the maximum-norm~$\norm{\cdot}_\infty$.
 Let $C_\vf := \max_{1 \le i \le k} C^{(i)}$.
 Then we have
  \[ \norm{\mu\vf_{[0]} - \tvmu_{[0]}}
       =   \max_{1 \le i \le k} \norm{\mu\vf_{S_i} - \tvmu_{S_i}}
       \le C_\vf \cdot \sqrt{\norm{\mu\vf_{[>0]} - \vrho_{[>0]}}}
  \]
  for all $\vrho_{[>0]}$ with $\vzero \le \vrho_{[>0]} \le \mu\vf_{[>0]}$.
\qed
\end{proof}

\ifthenelse{\equal{\geoproofs}{false}}{\section{Proofs of \S~\ref{sec:geo}}
\subsection{Proof of Lemma~\ref{lem:normal}}\hspace{1mm} \\
\begin{qlemma}{\ref{lem:normal}}
For every quadric $q_i$ induced by a clean and feasible scSPP $\vf$ we have
\[
    \nqix = \left(\pdat{q_i}{X_1}{\vx},\pdat{q_i}{X_2}{\vx},\ldots,\pdat{q_i}{X_n}{\vx}\right) \neq \vzero \text{ and } \pdat{q_i}{X_i}{\vx} < 0 \quad \forall \vx \in [\vzero,\mu\vf).
\]
\end{qlemma}
\begin{proof}
As shown by Etessami and Yannakakis in \cite{EYstacs05Extended} under the above preconditions
it holds for all $\vx\in [\vzero,\mu\vf)$ that
$\bigl(\text{Id} - \Jfx\bigl)$ is invertible with
\[
  \bigl(\text{Id} - \Jfx\bigl)^{-1} = \Jfx^\ast.
\]
Thus, we have
\[
    \Jqx^{-1} = \bigl(\Jfx - \text{Id}\bigr)^{-1} = - \bigl( \Jfx^\ast \bigr),
\]
implying that $\nqix \neq \vzero$ for all $\vx\in [\vzero,\mu\vf)$ as $\Jqx$ has to have full rank $n$
in order for $\Jqx^{-1}$ to exist.
Furthermore, it follows that all entries of $\Jqx^{-1}$ are non-positive as $\Jfx^\ast$ is non-negative.
Now, as $q_i(\vX) = f_i(\vX)- X_i$ and $f_i(\vX)$ is a polynomial with non-negative coefficients, it holds that
\[
 \nqix \cdot \ve_j = \pdat{q_i}{X_j}{\vx} = \pdat{f_i}{X_j}{\vx} \ge 0
\]
for all $j \neq i$ and $\vx\ge \vzero$.
With every entry of $\Jqx^{-1}$ non-positive, and
\[
\nqix \cdot \Jqx^{-1} = \ve_i^\top,
\]
we conclude $\pdat{q_i}{X_i}{\vx} < 0$.
\qed
\end{proof}

\subsection{Proof of Lemma~\ref{lem:h-diff}}\hspace{1mm}\\

We first summarize some properties of the functions $h_i$:
\begin{proposition}\label{prop:h}
Let $\vf$ be a clean and feasible scSPP.
Let $\vx,\vy \in [\vzero, \mu\vf]$ with $\vx \le \vy$.
\begin{itemize}
\item[(a)] $0 \le h_i^{(k)}(\vx_{-i}) \le \mu \vf_i$.
\item[(b)] $h_i^{(k)}(\vx_{-i}) \le h_i^{(k+1)}(\vx_{-i})$ for all $k\in\N$.
\item[(c)] $h_i^{(k)}(\vx_{-i}) \le h_i^{(k)}(\vy_{-i})$ for all $k\in\N$.
\item[(d)] $h_i(\vx_{-i}) \le \mu\vf_i$, and $h_i$ is
           a map from $[\vzero, \mu\vf_{-i}]$ to $[0,\mu\vf_i]$.

           If $f_i$ depends on at least one other variable except $X_i$, we also have
           $h_i( [\vzero, \mu\vf_{-i} ) ) \subseteq [\vzero,\mu\vf_i)$.
\item[(e)] $h_i(\vx_{-i}) \le h_i(\vy_{-i})$.
\item[(f)] $f_i( \vx_{-i}, h_i(\vx_{-i}) ) = h_i(\vx_{-i})$.
\item[(g)] For $x_i = f_i(\vx)$ we have $h_i(\vx_{-i}) \le x_i$.
\item[(h)] $h_i(\mu\vf_{-i}) = \mu\vf_i$.
\end{itemize}
\end{proposition}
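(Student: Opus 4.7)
The plan is to prove the seven assertions in the order stated, with (a)--(c) established by simultaneous induction on $k$ using only the monotonicity of $f_i$ over $\Rp^n$, and (d)--(h) then following by taking limits and using the continuity of $f_i$.

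For (a), the base case $k=0$ is immediate: $0 \le h_i^{(0)}(\vx_{-i}) = f_i(0,\vx_{-i}) \le f_i(\mu\vf) = \mu\vf_i$ by monotonicity of $f_i$ and $\vx_{-i} \le \mu\vf_{-i}$. The inductive step is the same estimate with $0$ replaced by $h_i^{(k)}(\vx_{-i}) \in [0,\mu\vf_i]$. For (b), the base case uses $0 \le h_i^{(0)}(\vx_{-i})$ together with monotonicity of $f_i$ in the $i$-th slot; the inductive step applies $f_i(\cdot,\vx_{-i})$ to the previous inequality. For (c), use monotonicity of $f_i$ in all slots, noting that by (b) the $h_i^{(k)}$ values involved can be compared as required.

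For (d), the sequence $(h_i^{(k)}(\vx_{-i}))_{k\in\N}$ is monotone by (b) and bounded above by $\mu\vf_i$ by (a), so the pointwise limit $h_i(\vx_{-i}) \in [0,\mu\vf_i]$ exists. For the strict-inequality clause, observe that if $f_i$ contains some variable other than $X_i$, then $h_i^{(k)}$ is a polynomial in $\vX_{-i}$ with at least one nonconstant component for all large enough $k$; on the compact set $[\vzero,\mu\vf_{-i}]$ one can then conclude $h_i(\vx_{-i}) < h_i(\mu\vf_{-i}) \le \mu\vf_i$ whenever $\vx_{-i} \prec \mu\vf_{-i}$, which is guaranteed by the strong connectedness of $\vf$ (so that every variable genuinely appears somewhere the fixed point iteration can exploit). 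Statement (e) is immediate by taking $k \to \infty$ in (c). Statement (f) follows by continuity of $f_i$: $f_i(\vx_{-i},h_i(\vx_{-i})) = \lim_k f_i(\vx_{-i},h_i^{(k)}(\vx_{-i})) = \lim_k h_i^{(k+1)}(\vx_{-i}) = h_i(\vx_{-i})$.

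For (g), with $x_i = f_i(\vx)$, an induction like that in (a) gives $h_i^{(k)}(\vx_{-i}) \le x_i$ for all $k$: the base case is $f_i(0,\vx_{-i}) \le f_i(\vx) = x_i$, and the inductive step is $f_i(h_i^{(k)}(\vx_{-i}),\vx_{-i}) \le f_i(x_i,\vx_{-i}) = x_i$. Taking limits yields the claim. Finally, for (h), one shows by induction that $(\vf^k(\vzero))_i \le h_i^{(k)}(\mu\vf_{-i})$: the base case is $0 \le f_i(0,\mu\vf_{-i})$, and the step uses monotonicity of $f_i$ together with $\vf^k(\vzero)_{-i} \le \mu\vf_{-i}$. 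Passing to the limit gives $\mu\vf_i \le h_i(\mu\vf_{-i})$ by Kleene's theorem, while (g) applied at $\vx := \mu\vf$ gives the reverse inequality.

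The main obstacle is the strict-inequality part of (d), since it is the only place where the non-triviality of the SCC (i.e.\ that $f_i$ really depends on some other variable, not just $X_i$) must be used, and where one must rule out the pathological case that $h_i$ is constant equal to $\mu\vf_i$ on the interior. Everything else is routine induction plus continuity, so the bulk of the proof is bookkeeping.
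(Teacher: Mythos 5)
Your proposal is correct and follows essentially the same route as the paper: simultaneous induction on $k$ for (a)--(c) using only monotonicity of $f_i$, then limits and continuity for (d)--(h), with (h) obtained by comparing the Kleene iterates $(\vf^k(\vzero))_i$ with $h_i^{(k)}(\mu\vf_{-i})$ and invoking (g) at $\vx=\mu\vf$ for the reverse inequality. Even the slightly informal treatment of the strict-inequality clause in (d) (via $h_i$ being a non-constant monotone limit in a variable on which $f_i$ genuinely depends) matches the paper's own argument.
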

%
\begin{prf}
Let $\vzero \le \vx \le \vy \le \mu\vf$.
Using the monotonicity of $f_i$ over $\Rp^n$ we proceed by induction on $k$.
\begin{itemize}
\item[(a)]
  For $k=0$ we have
  \[
    0 \le h_i^{(0)}(\vx_{-i}) = f_i(0,\vx_{-i}) \le f_i( \mu\vf) = \mu\vf_i.
  \]
  We then get
  \[
    0 \le h_i^{(k+1)}(\vx_{-i}) = f_i( h_i^{(k)}(\vx_{-i}), \vx_{-i} ) \le f_i( \mu\vf ) = \mu\vf_i.
  \]
\item[(b)]
  For $k=0$ we have
  \[
    h_i^{(0)}(\vx_{-i}) = f_i( 0, \vx_{-i} ) \le f_i( h_i^{(0)}(\vx_{-i}),\vx_{-i} ) = h_i^{(1)}(\vx_{-i}).
  \]
  Thus
  \[
   h_i^{(k+1)}(\vx_{-i}) = f_i( h_i^{(k)}(\vx_{-i}), \vx_{-i} ) \le f_i( h_i^{(k+1)}(\vx_{-i}),\vx_{-i} ) = h_i^{(k+2)}(\vx_{-i})
  \]
  follows.
\item[(c)]
  As $\vx \le \vy$, we have for $k=0$
  \[
    h_i^{(0)}(\vx_{-i}) = f_i(0,\vx_{-i}) \le f_i(0,\vy_{-i}) = h_i^{(0)}(\vy_{-i}).
  \]
  Hence, we get
  \[
    h_i^{(k+1)}(\vx_{-i}) = f_i( h_i^{(k)}(\vx_{-i}),\vx_{-i}) \le f_i( h_i^{(k)}(\vy_{-i}),\vy_{-i}) = h_i^{(k+1)}(\vy_{-i}).
  \]
\item[(d)]
  As the sequence $(h_i^{(k)}(\vx_{-i}))_{k\in\N}$ is monotonically increasing and bounded from above
  by $\mu\vf_i$, the sequence converges.
  Thus, for every $\vx$ the value
  \[
    h_i(\vx_{-i}) = \lim_{k\to\infty} h_i^{(k)}(\vx_{-i})
  \]
  is well-defined, i.e., $h_i$ is a map from $[\vzero,\mu\vf_{-i}]$ to $[0,\mu f_i]$.

  If $f_i$ depends on at least one other variable except $X_i$,
  then $h_i$ is a non-constant power series in this variable with non-negative coefficients.
  For $\vx_{-i} \in [\vzero,\mu\vf_{-i})$ we thus always have
  \[
    h_i( \vx_{-i} ) < h_i( \mu\vf_{-i} )  = \mu\vf_i
  \]
  as $\vx_{-i} \prec \mu\vf_{-i}$.
\item[(e)]
  This follows immediately from (b).
\item[(f)]
  As $f_i$ is continuous, we have
 \[ f_i( h_i(\vx_{-i}), \vx_{-i} ) = f_i( \lim_{k\to\infty} h_i^{(k)}(\vx_{-i}), \vx_{-i}) = \lim_{k\to\infty} h_i^{(k+1)}(\vx_{-i}) = h_i(\vx_{-i}),
 \]
 where the last equality holds because of (b).
\item[(g)]
  Using induction similar to (a) replacing $\mu\vf$ by $\vx$,
  one gets $h_i^{(k)}(\vx_{-i}) \le x_i$ for all $k\in\N$ as $f_i(\vx_{-i}) = x_i$.
  Thus, $h_i(\vx_{-i}) \le x_i$ follows similarly to (d).
\item[(h)]
  By definition, we have $\mu \vf = \lim_{k\to\infty} \vf^k(\vzero)$.
  For $k=0$, we have
  \[ (\vf^{0}(\vzero))_i = 0 \le f_i(0,\mu\vf_{-i}) = h_i^{(0)}(\mu\vf_{-i}). \]
  We thus get by induction
  \[
    (\vf^{(k+1)}(\vzero))_i = f_i( \vf^{k}(\vzero) ) \le f_i( h_i^{(k)}(\mu\vf_{-i}), \mu\vf_{-i}) = h_i^{(k+1)}(\mu\vf_{-i}).
  \]
  Thus, we may conclude $\mu\vf_i \le h_i(\mu\vf_{-i})$.
  As $\mu\vf_i = f_i(\mu\vf)$, we get by virtue of (g) that $h_i(\mu\vf_{-i}) \le \mu\vf_i$, too.
\end{itemize}
\end{prf}

With Proposition~\ref{prop:h} at hand, we now can show Lemma~\ref{lem:h-diff}:

\begin{qlemma}{\ref{lem:h-diff}}
$h_i$ is continuously differentiable with
\[
    \pdat{h_i}{X_j}{\vx_-i} = \frac{\pdat{f_i}{X_j}{\vx}}{-\pdat{q_i}{X_i}{\vx}}
    = \frac{\pdat{q_i}{X_j}{\vx}}{-\pdat{q_i}{X_i}{\vx}} \text{ for } \vx \in S_i \text{ and } j \neq i.
\]
In particular, $\pd{h_i}{X_j}$ is monotonically increasing with $\vx$.
\end{qlemma}
\begin{proof}
By Lemma~\ref{lem:normal} the implicit function theorem is applicable for every $\vx\in S_i$.
We therefore find for every $\vx\in S_i$ a local parametrization
$h_{\vx} : U \mapsto V$ with $h_{\vx}(\vx_{-i}) = x_i$.
Thus $h_{\vx}(\vx_{-i})$ is the least non-negative solution of $q_i(X_i,\vx_{-i}) = 0$.
By continuity of $q_i$ it is now easily shown that for all $\vy_{-i} \in U$ it has to hold
that $h_{\vx}(\vy_{-i})$ is also the least non-negative solution of $q_i(X_i,\vy_{-i})=0$ (see below).
By uniqueness we therefore have $h_{\vx} = h_i$ and that $h_i$ is continuously differentiable
for all $\vx_{-i} \in [\vzero,\mu\vf_{-i})$.

For every $\vx_{-i}\in [\vzero,\mu\vf_{-i})$ we can solve the (at most) quadratic equation
$q_i(X_i,\vx_{-i}) = 0$. We already know that $h_i(\vx_{-i})$ is the least non-negative solution of
this equation. So, if there exists another solution, it has to be real, too.

Assume first that this equation has two distinct solutions for some fixed $\vx_{-i}\in [\vzero,\mu\vf_{-i})$.
Solving $q_i(X_i,\vx_{-i})=0$ thus leads to an expression of the form
\[
  \frac{-b(\vx_{-i}) \pm \sqrt{b(\vx_{-i})^2 - 4a\cdot c(\vx_{-i})}}{2a}
\]
for the solutions where $b,c$ are (at most) quadratic polynomials in $\vX_{-i}$, $c$ having non-negative coefficients,
and $a$ is a positive constant (leading coefficient of $X_i^2$ in $q_i(\vX)$).
As $b$ and $c$ are continuous, the discriminant $b(\cdot)^2 - 4a\cdot c(\cdot)$ stays positive
for some open ball around $\vx_{-i}$ included inside of $U$ (it is positive in $\vx_{-i}$ as we assume
that we have two distinct solutions).
By making $U$ smaller, we may assume that $U$ is this open ball.
One of the two solutions must then be the least nonnegative solution.
As $h_{\vx}$ is the least non-negative solution for $\vx_{-i}$, and $h_{\vx}$ is continuous,
this also has to hold for some open ball centered at $\vx_{-i}$. W.l.o.g., $U$ is this ball.
So, $h_{\vx}$ and $h_i$ coincide on $U$.

We turn to the case that $q_i(X_i,\vx_{-i})=0$ has only a single solution, i.e.\ $h_i(\vx_{-i})$.
Note that $q_i(\vX)$ is linear in $X_i$ if and only if $q_i(X_i,\vx_{-i})$ is linear in $X_i$.
Obviously, if $q_i$ linear in $X_i$, then $h_i$ and $h_{\vx}$ coincide on $U$.
Thus, consider the case that $q_i(\vX)$ is quadratic in $X_i$, but $q_i(X_i,\vx_{-i})$ has only
a single solution.
This means that $\vx_{-i}$ is a root of the discriminant, i.e.\ $b(\vx_{-i}) - 4ac(\vx_{-i})=0$.
As $h_i(\vy_{-i})$ is a solution of $q_i(X_i,\vy_{-i})=0$ for all $\vy_{-i}\in U$,
the discriminant is non-negative on $U$.
If it equal to zero on $U$, then we again have that $h_i$ is equal to $h_{\vx}$ on $U$.
Therefore assume that is positive in some point of $U$.
As the discriminant is continuous, the solutions change continuously with $\vx_{-i}$.
But this implies that for some $\vy_{-i}\in U$ there are at least two $y_i,y_i^\ast\in V$
such that $(\vy_{-i},y_i)$ and $(\vy_{-i},y_i^\ast)$ are both located on the quadric $q_i(\vX)=0$.
But this contradicts the uniqueness of $h_{\vx}$ guaranteed by the implicit function theorem.

Assume now that $\vx \in S_i$. We then have
\[
    q_i( \vx ) = q_i(\vx_{-i}, h_i(\vx_{-i})) = 0,
\]
or equivalently
\[
  f_i(\vx_{-i},h_i(\vx_{-i})) = h_i(\vx_{-i}).
\]
Calculating the gradient of both in $\vx$ yields
\[
    \nfix \cdot \Jpiat{\vx_{-i}} = \nhiat{\vx_{-i}}.
\]
For the Jacobian of $\vp_i$ we obtain
\[
 \Jpiat{\vx_{-i}} = \begin{pmatrix} \ve_1^\top \\ \vdots \\ \ve_{i-1}^{\top} \\ \nhiat{\vx_{-i}} \\ \ve_{i+1}^\top \\ \vdots \\ \ve_{n}^{\top} \end{pmatrix}.
\]
This leads to
\[
  \pdat{f_i}{X_j}{\vx} + \pdat{f_i}{X_i}{\vx} \cdot \pdat{h_i}{X_j}{\vx_{-i}} = \pdat{h_i}{X_j}{\vx_{-i}}
\]
which solved for $\pd{h_i}{X_j}$ yields
\[
    \pdat{h_i}{X_j}{\vx_{-i}} = \frac{\pdat{f_i}{X_j}{\vx}}{-\pdat{q_i}{X_i}{\vx}}.
\]
As $\pdat{q_i}{X_i}{\vx} < 0$ and both $\pd{f_i}{X_j}$ and $\pd{q_i}{X_i}$ monotonically increase with $\vx$,
it follows that $\pd{h_i}{X_j}$ also monotonically increases with $\vx$. Finally, for $j\neq i$ we have
that $\pd{q_i}{X_j} = \pd{f_i}{X_j}$ as $q_i = f_i - X_i$.
\qed
\end{proof}

\subsection{Proof of Lemma~\ref{lem:upconv}}$ $\\
\begin{qlemma}{\ref{lem:upconv}}
For all $\vx\in S_i$ we have
\[
    \forall \vy \in S_i \cap [\vx,\mu\vf] : \nqix \cdot (\vy-\vx) \le 0.
\]
In particular
\[
\forall \vy \in S_i \cap [\vx,\mu\vf] : y_i \ge x_i + \sum_{j\neq i} \pdat{h_i}{X_j}{\vx_{-i}} \cdot (y_j - x_j).
\]
\end{qlemma}
\begin{prf}
Let $\vx\in S_i$, i.e.\ $f_i(\vx) = x_i$.
We want to show that
\[
    \nqix \cdot( \vy - \vx ) \le 0
\]
for all $\vy\in S_i \cap [\vx,\mu\vf)$.
As $f_i$ is quadratic in $\vX$, we may write
\[
\begin{array}{lcl}
    0 & = & q_i(\vy) \\
      & = & - y_i + f_i(\vy)\\
      & = & - y_i + \underbrace{f_i(\vx)}_{= x_i} + \nfix \cdot (\vy - \vx) + \underbrace{(\vy-\vx)^\top \cdot A \cdot (\vy-\vx)}_{\ge 0} \\
      & \ge & -y_i + x_i + \nfix \cdot (\vy-\vx)\\
      & =   & \nfix\cdot (\vy-\vx) - \ve_i^\top \cdot(\vy-\vx)\\
      & = & \nqix\cdot(\vy-\vx)
\end{array}
\]
where $A$ is a symmetric square-matrix with non-negative components such that the quadric terms of $f_i$ are given by
$\vX^\top A \vX$.

The second claim is easily obtained by solving this inequality for $y_i$ and recalling that by Lemma~\ref{lem:h-diff}
we have $\pdat{h_i}{X_j}{\vx_{-i}} = \frac{\pdat{q_i}{X_j}{\vp_i(\vx_{-i})}}{-\pdat{q_i}{X_i}{\vp_i(\vx_{-i})}}$
and $\pdat{q_i}{X_i}{\vp_i(\vx_{-i})} < 0$.
\end{prf}

\subsection{Proof of Proposition~\ref{prop:R}}$ $\\
\begin{qproposition}{\ref{prop:R}}
It holds
\[
    \vx \in R \Leftrightarrow \vx\in[\vzero,\mu\vf) \wedge \vq(\vx) \ge \vzero.
\]
\end{qproposition}
\begin{prf}
Let $\vx \in R$ and $i\in\{1,\ldots,n\}$.
Consider the function
\[
    g(t) := q_i(\vp_{i}(\vx_{-i}) + t \ve_i).
\]
As $q_i$ is a quadratic polynomial in $\vX$ there exists a symmetric square-matrix $A$ with non-negative entries, a vector $\vb$,
and a constant $c$ such that
\[
    q_i(\vX) = \vX^\top A \vX + \vb^\top \vX + c.
\]
It then follows that
\[
    q_i(\vX+\vY) = q_i(\vX) + \nqiat{\vX} \vY + \vY^\top A \vY.
\]
With $q_i(\vp_{i}(\vx_{-i}))=0$ this implies
\[
 g(t) = \nqiat{\vp_{i}(\vx_{-i})} t\ve_i + t^2 \underbrace{\ve_i^\top A \ve_i}_{:=a \ge 0} = t\cdot \left( \pdat{q_i}{X_i}{\vp_{i}(\vx_{-i})} + a\cdot t\right).
\]
As $\vp_i(\vx_{-i}) \prec \mu\vf$ ($\vf$ is strongly connected and $\vx\in [\vzero,\mu\vf)$), we know
that $\pdat{q_i}{X_i}{\vp_{i}(\vx_{-i})} < 0$. Thus, $g(t)$ has at most two zeros, one at $0$, the other
for some $t^\ast \ge 0$.

For the direction $(\Rightarrow)$ we only have to show that $x_i \le h_i(\vx_{-i})$ implies that $q_i(\vx) \ge 0$.
This now easily follows as $x_i \le h_i(\vx_{-i})$ implies that there is a $t' \le 0$ with $p_{i}(\vx_{-i}) + t\ve_i = \vx$.
But for this $t' \le 0$ we have $q_i(\vx) = g(t') \ge 0$.

Consider therefore the other direction $(\Leftarrow)$, that is $\vx\in [\vzero,\mu\vf)$ with $\vq(\vx) \ge \vzero$.
Assume that $\vx\not\in R$, i.e., for at least one $i$ we have $x_i > h_i(\vx_{-i})$.
As $q_i(\vx)\ge 0$ there has to be a $t'' > 0$ with $p_{i}(\vx_{-i}) + t''\ve_i = \vx$ and $g(t'') \ge 0$.
This implies that $a>0$ has to hold as otherwise $g(t)$ would be linear in $t$ and negative for $t>0$.
But then the second root $t^\ast$ of $g(t)$ has to be positive.
Set $\vx^\ast = p_i(\vx_{-i}) + t^\ast \ve_i$ with $q_i(\vx^\ast) = 0$, too.

A calculation similar to the one from above leads to
\[
  g(t+t^\ast) = q_i(\vx^\ast + t\ve_i) = t\cdot \left( \pdat{q_i}{X_i}{\vx^\ast} + a\cdot t\right).
\]
It follows that $\pdat{q_i}{X_i}{\vx^\ast}$ has to be greater than zero for $-t^\ast$ to be a root (as $a>0$).
But we have shown that $\pdat{q_i}{X_i}{\vx} < 0$ for all $\vx\in[\vzero,\mu\vf)$.
%
\end{prf}

\subsection{Proof of Lemma~\ref{lem:tangents-invertable}}$ $\\
\begin{qlemma}{\ref{lem:tangents-invertable}}
Let $\vf$ be a clean and feasible scSPP.
Let $\vx^{(1)},\ldots,\vx^{(n)}\in [\vzero,\mu\vf)$.
Then the matrix
\[
    \begin{pmatrix}
      \nqoat{\vx^{(1)}}\\
      \vdots\\
      \nqnat{\vx^{(n)}}
      \end{pmatrix}
\]
is regular, i.e., the vectors $\{ \nqiat{\vx^{(i)}} | i = 1,\ldots,n \}$ are linearly independent.
\end{qlemma}
\begin{prf}
%
%
%
Define $\vx\in [\vzero,\mu\vf)$ by setting
\[
        x_i := \max\{ x^{(j)}_i \mid j = 1,\ldots,n \}.
\]
We then have $\vx^{(i)} \le \vx$ for all $i$, and $\vx\prec \mu\vf$.
As mentioned above, we therefore have that $\Jqx$ is regular with
\[
    \Jqx^{-1} = - \sum_{k\in\N} \Jfx^k.
\]
%
As $\vx^{(i)} \le \vx$ it follows that
\[
    \begin{pmatrix}
    \nfoat{\vx^{(1)}} \\
    \vdots\\
    \nfnat{\vx^{(n)}}
    \end{pmatrix}
    \le
    \Jfx.
\]
Hence, we also have
\[
  \sum_{k=0}^l \begin{pmatrix}
    \nfoat{\vx^{(1)}} \\
    \vdots\\
    \nfnat{\vx^{(n)}}
    \end{pmatrix}^l \le \sum_{k=0}^l \Jfx
\]
implying that
\[
  \begin{pmatrix}
    \nfoat{\vx^{(1)}} \\
    \vdots\\
    \nfnat{\vx^{(n)}}
    \end{pmatrix}^{\ast}
    \text{ and, thus, }
    \begin{pmatrix}
    \nqoat{\vx^{(1)}} \\
    \vdots\\
    \nqnat{\vx^{(n)}}
    \end{pmatrix}^{-1}
    \text{ exist.}
\]
So, the vectors $\{ \nqoat{\vx^{(1)}}, \ldots, \nqnat{\vx^{(n)}} \}$ have to be linearly independent.
\end{prf}

\ifthenelse{\equal{0}{1}}{
\subsection{Proof of Theorem~\ref{thm:tangent-method}}$ $\\
\begin{qtheorem}{\ref{thm:tangent-method}}
We have
\[
 \vx \le \Ta(\vx) \text{ and } \Ne(\vx) \le \Ta(\vx) \le \mu\vf.
\]
Further, for $\vy\in R$ with $\vx \le \vy$
\[
  \Ta(\vx) \le \Ta(\vy).
\]
\end{qtheorem}
\begin{proof}
As $\vx \le \Ne(\vx)$ holds, we only need to show that $\Ne(\vx) \le \Ta(\vx) \le \mu\vf$. We refer the reader to the proof of Theorem~\ref{thm:tang-approx} for
this.

We turn to the monotonicity of $\Ta$.
Let $\vy \in R$ with $\vx \le \vy$.
Assume first that $\vx$ and $\vy$ are located on the surface $S_i$, i.e.\
\[
 h_i(\vx_{-i}) = x_i \text{ and } h_i(\vy_{-i}) = y_i.
\]
The tangent $T_i|_{\vx}$ at $S_i$ in $\vx$ is spanned by the partial derivatives of $\vp_i$
in $\vx$. The part $T_i|_{\vx}\cap [\vx,\mu\vf]$ relevant for $\Ta(\vx)$ can therefore be parameterized by
\[
    \vx + \sum_{j\neq i} \pdat{\vp_{i}}{X_j}{\vx} \cdot ( u_j - x_j ) \text{ with } \vu_{-i} \in [\vx_{-i},\mu\vf_{-i}].
\]
Similarly for $T_i|_{\vy}$.

In particular, for $\vu_{-i}\in [\vy_{-i},\mu\vf_{-i}]$ both points on the tangents defined by $\vu_{-i}$
differ only in the $i$th coordinate being (the remaining coordinates are simply $\vu_{-i}$)
\[
    t_{\vy} = y_i + \sum_{j\neq i} \pdat{h_i}{X_j}{\vy} \cdot ( u_j - y_j) \text{, resp. }
    t_{\vx} = x_i + \sum_{j\neq i} \pdat{h_i}{X_j}{\vx} \cdot ( u_j - x_j).
\]
By Lemma~\ref{lem:upconv} we have
\[
    y_i \ge x_i + \sum_{j\neq i} \pdat{h_i}{X_j}{\vx} \cdot ( y_j - x_j).
\]
From Lemma~\ref{lem:h-diff} it follows that $\pdat{h_i}{X_j}{\vy} \ge \pdat{h_i}{X_j}{\vx}$.
Thus $t_{\vy} \ge t_{\vx}$ immediately follows.

Now for $\vx,\vy\in R$ with $\vx\le \vy$ we can apply this result to the tangents at $S_i$ in $\vp_i(\vx_{-i})$,
resp. $\vp_i(\vy_{-i})$, and $\Ta(\vx) \le \Ta(\vy)$ follows.
\qed
\end{proof}


\begin{qtheorem}{\ref{thm:tang-approx}}
Let $\vx\in R$.
For $i=1,\ldots,n$ fix some $\eta_i \in [x_i,h_i(\vx_{-i})]$, and set $\veta = (\eta_1,\ldots,\eta_n)$.
We then have
\[
\vx \le \Ne(\vx) = \Ta_{\vx}(\vx) \le \Ta_{\veta}(\vx) \le \Ta_{(h_1(\vx_{-1}),\ldots,h_n(\vx_{-n}))}(\vx) = \Ta(\vx) \le \mu\vf
\]
\end{qtheorem}
\begin{prf}
Set
\[ \vpi_i := (\vx_{-i},\eta_i) \text{ and } \vh := (h_1(\vx_{-1}),\ldots, h_n(\vx_{-n})). \]
We first show that $\vx \le \Ta_{\veta}(\vx)$:
\[
\begin{array}{lcl}
\Ta_{\veta}(\vx)
& = & \bigl( \nqiat{\vpi_i} \bigr))^{-1}_{i=1,\ldots,n} \cdot \bigl( \nqiat{\vpi_i} \cdot \vpi_i - q_i(\vpi_i) \bigr)_{i=1,\ldots,n}\\[2mm]
& = & \bigl( \nfiat{\vpi_i} \bigr)^\ast_{i=1,\ldots,n} \cdot \bigl( - \nqiat{\vpi_i} \cdot \vpi_i + q_i(\vpi_i) \bigr)_{i=1,\ldots,n}\\[2mm]
& = & \bigl( \nfiat{\vpi_i} \bigr)^\ast_{i=1,\ldots,n} \cdot \bigl( - \nqiat{\vpi_i} \cdot \left(\vx + (\eta_i-x_i) \cdot \ve_i \right) + q_i(\vpi_i) \bigr)_{i=1,\ldots,n}\\[2mm]
& = & \underbrace{\bigl( \nfiat{\vpi_i} \bigr)^\ast_{i=1,\ldots,n}}_{\ge 0 \text{ in every comp.}}
      \cdot \bigl( - \nqiat{\vpi_i} \cdot \vx - \underbrace{\pdat{q_i}{X_i}{\vpi_i}}_{<0} \cdot  \underbrace{(\eta_i-x_i)}_{\ge 0} + \underbrace{q_i(\vpi_i)}_{\ge 0} \bigr)_{i=1,\ldots,n}\\[2mm]
& \ge & \vx.
\end{array}
\]
$\Ta_{\veta}(\vx)$ is by definition the (unique) solution of the equation system defined by
\[
 \nqiat{\vpi_{i}} ( \vX - \vpi_{i} ) = -q_i(\vpi_i) \quad (i=1,\ldots,n).
\]
As $\Ta_{\veta}(\vx) \ge \vx$ we can also consider this system with the origin of the coordinate system
moved into $\vx$, i.e.\
\[
    \nqiat{\vpi_{i}} ( \vX + \vx - \vpi_{i}) = -q_i(\vpi_i) \quad (i=1,\ldots,n).
\]
We show that this system is equivalent to an SPP. For this, we
solve these equations for $X_i$:
\[
 \begin{array}{cl}
                 & \nqiat{\vpi_{i}} ( \vX + \vx - \vpi_{i}) = -q_i(\vpi_i)\\[2mm]
 \Leftrightarrow & \nqiat{\vpi_i} \vX = -q_i(\vpi_i) + \nqiat{\vpi_i} \underbrace{( \vpi_i - \vx )}_{=(\eta_i-x_i)\cdot \ve_i}\\[2mm]
 \Leftrightarrow & X_i = \sum_{j\neq i} \frac{\pdat{q_i}{X_j}{\vpi_i}}{-\pdat{q_i}{X_i}{\vpi_i}} \cdot X_j + \frac{q_i(\vpi_i)}{-\pdat{q_i}{X_i}{\vpi_i}} + (\eta_i-x_i).
 \end{array}
\]
Again, we have $\pdat{q_i}{X_i}{\vpi_i} < 0 \le
\pdat{q_i}{X_j}{\vpi_i}$ as $\vpi_i\in R$, and $\nqiat{\vpi_i}$
monotonically increases with $\eta_i$. Hence, the above linear
equation for $X_i$ is indeed a polynomial with non-negative
coefficients. Denote by $\vf_{\veta}$ the SPP defined by these
linear equations. We then have $\mu \vf_{\veta} =
\Ta_{\veta}(\vx)-\vx$ as the above equation system has
$\Ta_{\veta}(\vx)-\vx \ge \vzero$ as its unique solution. Further,
we know that the Kleene sequence
$\bigl(\vf_{\veta}^k(\vzero)\bigr)_{k\in\N}$ converges to
$\mu\vf_{\veta}$. We show that all coefficients of $\vf_{\veta}$
increase with $\veta \to \vh$. This is straight-forward for
\[
\frac{\pdat{q_i}{X_j}{\vpi_i}}{-\pdat{q_i}{X_i}{\vpi_i}}
\]
as $\pdat{q_i}{X_i}{\vpi_i} < 0 \le \pdat{q_i}{X_j}{\vpi_i}$, and all these terms increase with $\eta_i \to h_i(\vx_{-i})$.
Consider therefore
\[
  0\ge \frac{q_i(\vpi_i)}{-\pdat{q_i}{X_i}{\vpi_i}} + (\eta_i-x_i) = \frac{q_i(\vpi_i) - \pdat{q_i}{X_i}{\vpi_i} (\eta_i - x_i)}{-\pdat{q_i}{X_i}{\vpi_i}}.
\]
We show that this term increases with $\eta_i$. Set $\delta_i := \eta_i - x_i$.
We can find a non-negative, symmetric square-matrix $A$, a vector $\vb$, and constant $c$ such that
\[
 q_i(\vX) = \vX^\top A \vX + \vb^\top \vX + c \text{ and } \nqiat{\vX} = 2 \vX^\top A + \vb^\top.
\]
As $\vpi_i = \vx + \delta_i \ve_i$ we have
\[
    q_i(\vpi_i) = q_i(\vx + \delta_i \ve_i ) = q_i(\vx) + \pdat{q_i}{X_i}{\vx} \delta_i + \delta_i^2 A_{ii},
\]
and
\[
 \pdat{q_i}{X_i}{\vpi_i} \cdot \delta_i = \nqiat{\vx + \delta_i \ve_i} \delta_i\ve_i = \pdat{q_i}{X_i}{\vx} \delta_i + 2
 \delta_i^2 A_{ii}.
\]
This leads to
\[
\frac{q_i(\vpi_i) - \pdat{q_i}{X_i}{\vpi_i} \delta_i}{-\pdat{q_i}{X_i}{\vpi_i}} = \frac{ q_i(\vx) - \delta_i^2 A_{ii}}{-\pdat{q_i}{X_i}{\vx} - 2 \delta_i A_{ii}}.
\]
Deriving this w.r.t.~$\delta_i$ yields:
\[
\begin{array}{cl}
  & \frac{-2 A_{ii} \delta_i}{-\pdat{q_i}{X_i}{\vx} + 2 A_{ii} \delta_i} - \frac{q_i(\vx) - A_{ii} \delta_i^2}{( -\pdat{q_i}{X_i}{\vx} - 2 A_{ii} \delta_i)^2} (-2 A_{ii} )\\[2mm]
= & \frac{2 A_{ii} \pdat{q_i}{X_i}{\vx} \delta_i + 4 A_{ii}^2 \delta_i^2 + 2A_{ii} q_i(\vx) - 2 A_{ii}^2 \delta_i^2}{(-\pdat{q_i}{X_i}{\vx} - 2 A_{ii} \delta_i)^2}\\[2mm]
= & 2 A_{ii} \frac{ A_{ii} \delta_i^2 + \pdat{q_i}{X_i}{\vx} \delta_i + q_i(\vx)}{(-\pdat{q_i}{X_i}{\vx} - 2 A_{ii} \delta_i)^2}\\[2mm]
= & 2 A_{ii} \frac{ q_i(\vpi_i)}{(-\pdat{q_i}{X_i}{\vpi_i})^2}.
\end{array}
\]
As $q_i(\vpi_i) \ge 0$ and $A_{ii} \ge 0$, it follows that
\[
  \frac{q_i(\vpi_i)}{-\pdat{q_i}{X_i}{\vpi_i}} + (\eta_i-x_i)
\]
increases with $\eta_i \to h_i(\vx_{-i})$.
Thus, all coefficients of $\vf_{\veta}$ increase with $\eta_i \to h_i(\vx_{-i})$,
and so for any $\veta'\in [\veta, \vh]$ it follows that
\[
 \vf_{\veta}(\vy) \le \vf_{\veta'}(\vy) \text{ for all } \vy \ge \vzero,
\]
and
\[
 \Ta_{\veta}(\vx)-\vx = \mu\vf_{\veta} \le \mu\vf_{\veta'} = \Ta_{\veta'}(\vx) - \vx.
\]
As $\Ne(\vX) = \Ta_{\vx}(\vX)$ and $\Ta(\vX) = \Ta_{\vh}(\vX)$ we may therefore conclude that
\[
    \Ne(\vx) \le \Ta_{\veta}(\vx) \le \Ta_{\veta'}(\vx) \le \Ta(\vx).
\]
It remains to show that $\Ta(\vx) \le \mu\vf$. This is equivalent to showing
that $\mu \vf_{\vh} \le \mu \vf -\vx$. For $\vf_{\vh}(\vX)$ we have by definition and Lemma~\ref{lem:h-diff}
\[
  \bigl(\vf_{\vh}(\vX)\bigr)_i = \sum_{j\neq i} \frac{\pdat{q_i}{X_j}{\vp_{i}(\vx_{-i})}}{-\pdat{q_i}{X_i}{\vp_{i}(\vx_{-i})}} X_j + (h_i(\vx_{-i})-x_i)
  = \sum_{j\neq i} \pdat{h_i}{X_j}{\vx_{-i}} X_j + (h_i(\vx_{-i})-x_i).
\]
By virtue of Lemma~\ref{lem:upconv} it follows that $\mu\vf$ is above all the tangents, i.e.\
\[
 \vf_{\vh}(\mu\vf - \vx ) \le \mu\vf - \vx.
\]
By monotonicity of $\vf_{\vh}$ we also have
\[
    \vf_{\vh}(\vzero) \le \vf_{\vh}(\mu\vf-\vx).
\]
A straight-forward induction therefore shows that
\[
 \vf^k_{\vh}(\vzero) \le \mu\vf - \vx \quad (\forall k\in \N),
\]
and, thus,
\[
    \Ta(\vx) - \vx = \mu\vf_{\vh} \le \mu\vf -\vx.
\]
\end{prf}
}{ 
\ifthenelse{\equal{0}{1}}{
\subsection{Proof of Theorem~\ref{thm:tangent-method}}$ $\\
We want to show the following Theorem:
\begin{qtheorem}{\ref{thm:tangent-method}}
Let $\vf$ be a clean and feasible scSPP.
We have
\[
 \vx \le \Ta(\vx) \text{ and } \Ne(\vx) \le \Ta(\vx) \le \mu\vf.
\]
Further, for $\vy\in R$ with $\vx \le \vy$
\[
  \Ta(\vx) \le \Ta(\vy).
\]
\end{qtheorem}
We split the proof of this theorem into one part concerning the claim that the operator $\Ta$ improves a given $\vx\in R$ at least as much as the Newton operator $\Ne$ does, and another part concerning the claim that the operator $\Ta$ is monotone on $R$.
We start with the monotonicity claim:
\begin{proof} (Monotonicity of $\Ta$.)
%
Let $\vy \in R$ with $\vx \le \vy$.
Assume first that $\vx$ and $\vy$ are located on the surface $S_i$, i.e.\
\[
 h_i(\vx_{-i}) = x_i \text{ and } h_i(\vy_{-i}) = y_i.
\]
The tangent $T_i|_{\vx}$ at $S_i$ in $\vx$ is spanned by the partial derivatives of $\vp_i$
in $\vx$. The part $T_i|_{\vx}\cap [\vx,\mu\vf]$ relevant for $\Ta(\vx)$ can therefore be parameterized by
\[
    \vx + \sum_{j\neq i} \pdat{\vp_{i}}{X_j}{\vx} \cdot ( u_j - x_j ) \text{ with } \vu_{-i} \in [\vx_{-i},\mu\vf_{-i}].
\]
Similarly for $T_i|_{\vy}$.

In particular, for $\vu_{-i}\in [\vy_{-i},\mu\vf_{-i}]$ both points on the tangents defined by $\vu_{-i}$
differ only in the $i$th coordinate being (the remaining coordinates are simply $\vu_{-i}$)
\[
    t_{\vy} = y_i + \sum_{j\neq i} \pdat{h_i}{X_j}{\vy} \cdot ( u_j - y_j) \text{, resp. }
    t_{\vx} = x_i + \sum_{j\neq i} \pdat{h_i}{X_j}{\vx} \cdot ( u_j - x_j).
\]
By Lemma~\ref{lem:upconv} we have
\[
    y_i \ge x_i + \sum_{j\neq i} \pdat{h_i}{X_j}{\vx} \cdot ( y_j - x_j).
\]
From Lemma~\ref{lem:h-diff} it follows that $\pdat{h_i}{X_j}{\vy} \ge \pdat{h_i}{X_j}{\vx}$.
Thus $t_{\vy} \ge t_{\vx}$ immediately follows.

Now for $\vx,\vy\in R$ with $\vx\le \vy$ we can apply this result to the tangents at $S_i$ in $\vp_i(\vx_{-i})$,
resp.\ $\vp_i(\vy_{-i})$, and $\Ta(\vx) \le \Ta(\vy)$ follows.
\qed
\end{proof}

We turn to the proof of the first paart of Theorem~\ref{thm:tangent-method},
stating that $\Ne(\vx)\le \Ta(\vx)$ for $\vx\in R$. For technical reasons,
we first introduce a generalization of the operator $\Ta$
where we do not require to know the exact values $h_i(\vx_{-i})$, but only some under-approximation.
\begin{definition}\label{def:tang-approx}
Let $\vx\in R$.
For $i=1,\ldots,n$ fix some $\eta_i \in [x_i,h_i(\vx_{-i})]$, and set $\veta = (\eta_1,\ldots,\eta_n)$.
We then let $\Ta_{\veta}(\vx)$ denote the solution of
\[
 \nqiat{(\vx_{-i},\eta_i)}(\vX - (\vx_{-i},\eta_i)) = - q_i((\vx_{-i},\eta_i)) \quad (i=1,\ldots,n).
\]
\end{definition}
Note that we obtain the Newton operator $\Ne$ be setting $\veta:=\vx$. Similarly, we obtain the operator $\Ta$ by setting $\veta_i = h_i(\vx_{-i})$. We now show for the operator $\Ta_{\veta}$ a generalization of the first claim stated in Theorem~\ref{thm:tangent-method} thereby completing the proof of Theorem~\ref{thm:tangent-method}.
\begin{theorem}\label{thm:tang-approx}
Let $\vf$ be a clean and feasible scSPP.
Let $\vx\in R$.
For $i=1,\ldots,n$ fix some $\eta_i \in [x_i,h_i(\vx_{-i})]$, and set $\veta = (\eta_1,\ldots,\eta_n)$.
We then have
\[
\vx \le \Ne(\vx) \le \Ta_{\veta}(\vx) \le \Ta(\vx) \le \mu\vf
\]
\end{theorem}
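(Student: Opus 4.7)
The plan is to reformulate the defining linear system for $\Ta_\veta(\vx)$ as the fixed-point equation of a carefully chosen linear SPP, and then exploit monotonicity of least fixed points in the coefficients. Write $\vpi_i := (\vx_{-i},\eta_i)$; note $\vpi_i \in R$ (since $\eta_i \le h_i(\vx_{-i})$ and $x_j \le h_j(\vx_{-j}) \le h_j((\vpi_i)_{-j})$ by monotonicity of $h_j$), so by Proposition~\ref{prop:R} we have $q_i(\vpi_i) \ge 0$. After the change of variables $\vY := \vX - \vx$ the defining equations become $\nqiat{\vpi_i}\,\vY = -q_i(\vpi_i) + \pdat{q_i}{X_i}{\vpi_i}(\eta_i - x_i)$. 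Solving the $i$-th equation for $Y_i$ (which is legal because $\pdat{q_i}{X_i}{\vpi_i}<0$ by Lemma~\ref{lem:normal}) yields
\[
 Y_i \;=\; \sum_{j\neq i}\frac{\pdat{q_i}{X_j}{\vpi_i}}{-\pdat{q_i}{X_i}{\vpi_i}}\,Y_j \;+\; \frac{q_i(\vpi_i)}{-\pdat{q_i}{X_i}{\vpi_i}}+(\eta_i-x_i).
\]
All coefficients on the right-hand side are nonnegative, so this is a linear SPP $\vf_\veta$ whose least fixed point (which exists because Lemma~\ref{lem:tangents-invertable} guarantees uniqueness of the solution) is exactly $\Ta_\veta(\vx)-\vx$. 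Hence $\Ta_\veta(\vx) \ge \vx$.

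Next I would prove that every coefficient of $\vf_\veta$ is monotonically nondecreasing in $\veta$. For the linear coefficients $\pdat{q_i}{X_j}{\vpi_i}/(-\pdat{q_i}{X_i}{\vpi_i})$ with $j\neq i$, the numerator is monotone nondecreasing in $\vpi_i$ (it is a polynomial in the remaining variables with nonnegative coefficients), while the denominator $-\pdat{q_i}{X_i}{\vpi_i}$ is monotone nonincreasing (since $\pdat{q_i}{X_i}$ increases with $\vpi_i$). The constant term is the delicate piece; here I would write $q_i(\vX) = \vX^\top A \vX + \vb^\top \vX + c$ with $A$ symmetric nonnegative, and use $q_i(\vpi_i) = q_i(\vx) + \pdat{q_i}{X_i}{\vx}\delta_i + A_{ii}\delta_i^2$ and $\pdat{q_i}{X_i}{\vpi_i}\delta_i = \pdat{q_i}{X_i}{\vx}\delta_i + 2A_{ii}\delta_i^2$ (where $\delta_i := \eta_i - x_i$) to obtain the simplification
\[
 \frac{q_i(\vpi_i)}{-\pdat{q_i}{X_i}{\vpi_i}}+\delta_i \;=\; \frac{q_i(\vx) - A_{ii}\delta_i^2}{-\pdat{q_i}{X_i}{\vx} - 2A_{ii}\delta_i}.
\]
A quick derivative with respect to $\delta_i$ collapses (after cancellations) to $2A_{ii}\,q_i(\vpi_i)/(-\pdat{q_i}{X_i}{\vpi_i})^2$, which is nonnegative since $A_{ii}\ge 0$ and $q_i(\vpi_i)\ge 0$. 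So all coefficients of $\vf_\veta$ grow with $\veta$; by monotonicity of least fixed points in the coefficients we conclude $\mu\vf_\veta \le \mu\vf_{\veta'}$ whenever $\veta \le \veta'$, and specializing $\veta := \vx$ and $\veta := \vh := (h_1(\vx_{-1}),\ldots,h_n(\vx_{-n}))$ gives $\Ne(\vx) \le \Ta_\veta(\vx) \le \Ta(\vx)$.

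The final inequality $\Ta(\vx) \le \mu\vf$ reduces to showing $\mu\vf_\vh \le \mu\vf - \vx$. Using Lemma~\ref{lem:h-diff} to identify $\pdat{q_i}{X_j}{\vp_i(\vx_{-i})}/(-\pdat{q_i}{X_i}{\vp_i(\vx_{-i})})$ with $\pdat{h_i}{X_j}{\vx_{-i}}$, the $i$-th component of $\vf_\vh(\vY)$ becomes $\sum_{j\neq i}\pdat{h_i}{X_j}{\vx_{-i}}\,Y_j + (h_i(\vx_{-i}) - x_i)$. Since $\mu\vf \in S_i$ (by Proposition~\ref{prop:h}(h) applied to $\mu\vf$), Lemma~\ref{lem:upconv} applied with $\vp_i(\vx_{-i})\in S_i$ in place of $\vx$ and $\mu\vf$ in place of $\vy$ gives
\[
 \mu\vf_i \;\ge\; h_i(\vx_{-i}) + \sum_{j\neq i}\pdat{h_i}{X_j}{\vx_{-i}}(\mu\vf_j - x_j),
\]
which says precisely $\vf_\vh(\mu\vf - \vx) \le \mu\vf - \vx$. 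Since $\mu\vf_\vh$ is the least such point (and $\mu\vf - \vx \ge \vzero$), we get $\mu\vf_\vh \le \mu\vf - \vx$, i.e.\ $\Ta(\vx) \le \mu\vf$.

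The main obstacle is verifying monotonicity of the constant term in $\vf_\veta$: naively one might expect the term $q_i(\vpi_i)/(-\pdat{q_i}{X_i}{\vpi_i})$ alone to be monotone, but it is not; the extra summand $(\eta_i - x_i)$ is exactly what is needed to make the combined expression monotone, and showing this requires the algebraic identity above followed by the derivative computation. Once this is in hand, every other step is either a direct application of the earlier geometric lemmata (Lemmata~\ref{lem:normal}, \ref{lem:h-diff}, \ref{lem:upconv}, \ref{lem:tangents-invertable} and Proposition~\ref{prop:R}) or standard facts about least fixed points of linear SPPs.
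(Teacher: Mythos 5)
Your proposal is correct and follows essentially the same route as the paper's own proof: translating the origin to $\vx$, recasting the tangent system as a linear SPP $\vf_\veta$, establishing coefficient monotonicity in $\veta$ (including the same algebraic identity and derivative computation for the constant term), and deriving $\Ta(\vx)\le\mu\vf$ from Lemma~\ref{lem:upconv} via the pre-fixed-point property of $\mu\vf_\vh$. The only cosmetic difference is that the paper reaches $\mu\vf$ in Lemma~\ref{lem:upconv} by a continuity remark rather than asserting $\mu\vf\in S_i$ outright, and verifies $\mu\vf_\vh\le\mu\vf-\vx$ by explicit Kleene induction.
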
\begin{proof}
Set
\[ \vpi_i := (\vx_{-i},\eta_i) \text{ and } \vh := (h_1(\vx_{-1}),\ldots, h_n(\vx_{-n})). \]
We first show that $\vx \le \Ta_{\veta}(\vx)$:
\[
\begin{array}{lcl}
\Ta_{\veta}(\vx)
& = & \bigl( \nqiat{\vpi_i} \bigr))^{-1}_{i=1,\ldots,n} \cdot \bigl( \nqiat{\vpi_i} \cdot \vpi_i - q_i(\vpi_i) \bigr)_{i=1,\ldots,n}\\[2mm]
& = & \bigl( \nfiat{\vpi_i} \bigr)^\ast_{i=1,\ldots,n} \cdot \bigl( - \nqiat{\vpi_i} \cdot \vpi_i + q_i(\vpi_i) \bigr)_{i=1,\ldots,n}\\[2mm]
& = & \bigl( \nfiat{\vpi_i} \bigr)^\ast_{i=1,\ldots,n} \cdot \bigl( - \nqiat{\vpi_i} \cdot \left(\vx + (\eta_i-x_i) \cdot \ve_i \right) + q_i(\vpi_i) \bigr)_{i=1,\ldots,n}\\[2mm]
& = & \underbrace{\bigl( \nfiat{\vpi_i} \bigr)^\ast_{i=1,\ldots,n}}_{\ge 0 \text{ in every comp.}}
      \cdot \bigl( - \nqiat{\vpi_i} \cdot \vx - \underbrace{\pdat{q_i}{X_i}{\vpi_i}}_{<0} \cdot  \underbrace{(\eta_i-x_i)}_{\ge 0} + \underbrace{q_i(\vpi_i)}_{\ge 0} \bigr)_{i=1,\ldots,n}\\[2mm]
& \ge & \vx.
\end{array}
\]
$\Ta_{\veta}(\vx)$ is by definition the (unique) solution of the equation system defined by
\[
 \nqiat{\vpi_{i}} ( \vX - \vpi_{i} ) = -q_i(\vpi_i) \quad (i=1,\ldots,n).
\]
As $\Ta_{\veta}(\vx) \ge \vx$ we can also consider this system with the origin of the coordinate system
moved into $\vx$, i.e.\
\[
    \nqiat{\vpi_{i}} ( \vX + \vx - \vpi_{i}) = -q_i(\vpi_i) \quad (i=1,\ldots,n).
\]
We show that this system is equivalent to an SPP. For this, we
solve these equations for $X_i$:
\[
 \begin{array}{cl}
                 & \nqiat{\vpi_{i}} ( \vX + \vx - \vpi_{i}) = -q_i(\vpi_i)\\[2mm]
 \Leftrightarrow & \nqiat{\vpi_i} \vX = -q_i(\vpi_i) + \nqiat{\vpi_i} \underbrace{( \vpi_i - \vx )}_{=(\eta_i-x_i)\cdot \ve_i}\\[2mm]
 \Leftrightarrow & X_i = \sum_{j\neq i} \frac{\pdat{q_i}{X_j}{\vpi_i}}{-\pdat{q_i}{X_i}{\vpi_i}} \cdot X_j + \frac{q_i(\vpi_i)}{-\pdat{q_i}{X_i}{\vpi_i}} + (\eta_i-x_i).
 \end{array}
\]
Again, we have $\pdat{q_i}{X_i}{\vpi_i} < 0 \le
\pdat{q_i}{X_j}{\vpi_i}$ as $\vpi_i\in R$, and $\nqiat{\vpi_i}$
monotonically increases with $\eta_i$. Hence, the above linear
equation for $X_i$ is indeed a polynomial with non-negative
coefficients. Denote by $\vf_{\veta}$ the SPP defined by these
linear equations. We then have $\mu \vf_{\veta} =
\Ta_{\veta}(\vx)-\vx$ as the above equation system has
$\Ta_{\veta}(\vx)-\vx \ge \vzero$ as its unique solution. Further,
we know that the Kleene sequence
$\bigl(\vf_{\veta}^k(\vzero)\bigr)_{k\in\N}$ converges to
$\mu\vf_{\veta}$. We show that all coefficients of $\vf_{\veta}$
increase with $\veta \to \vh$. This is straight-forward for
\[
\frac{\pdat{q_i}{X_j}{\vpi_i}}{-\pdat{q_i}{X_i}{\vpi_i}}
\]
as $\pdat{q_i}{X_i}{\vpi_i} < 0 \le \pdat{q_i}{X_j}{\vpi_i}$, and all these terms increase with $\eta_i \to h_i(\vx_{-i})$.
Consider therefore
\[
  0\ge \frac{q_i(\vpi_i)}{-\pdat{q_i}{X_i}{\vpi_i}} + (\eta_i-x_i) = \frac{q_i(\vpi_i) - \pdat{q_i}{X_i}{\vpi_i} (\eta_i - x_i)}{-\pdat{q_i}{X_i}{\vpi_i}}.
\]
We show that this term increases with $\eta_i$. Set $\delta_i := \eta_i - x_i$.
We can find a non-negative, symmetric square-matrix $A$, a vector $\vb$, and constant $c$ such that
\[
 q_i(\vX) = \vX^\top A \vX + \vb^\top \vX + c \text{ and } \nqiat{\vX} = 2 \vX^\top A + \vb^\top.
\]
As $\vpi_i = \vx + \delta_i \ve_i$ we have
\[
    q_i(\vpi_i) = q_i(\vx + \delta_i \ve_i ) = q_i(\vx) + \pdat{q_i}{X_i}{\vx} \delta_i + \delta_i^2 A_{ii},
\]
and
\[
 \pdat{q_i}{X_i}{\vpi_i} \cdot \delta_i = \nqiat{\vx + \delta_i \ve_i} \delta_i\ve_i = \pdat{q_i}{X_i}{\vx} \delta_i + 2
 \delta_i^2 A_{ii}.
\]
This leads to
\[
\frac{q_i(\vpi_i) - \pdat{q_i}{X_i}{\vpi_i} \delta_i}{-\pdat{q_i}{X_i}{\vpi_i}} = \frac{ q_i(\vx) - \delta_i^2 A_{ii}}{-\pdat{q_i}{X_i}{\vx} - 2 \delta_i A_{ii}}.
\]
Taking the derivative w.r.t.~$\delta_i$ yields:
\[
\begin{array}{cl}
  & \frac{-2 A_{ii} \delta_i}{-\pdat{q_i}{X_i}{\vx} + 2 A_{ii} \delta_i} - \frac{q_i(\vx) - A_{ii} \delta_i^2}{( -\pdat{q_i}{X_i}{\vx} - 2 A_{ii} \delta_i)^2} (-2 A_{ii} )\\[2mm]
= & \frac{2 A_{ii} \pdat{q_i}{X_i}{\vx} \delta_i + 4 A_{ii}^2 \delta_i^2 + 2A_{ii} q_i(\vx) - 2 A_{ii}^2 \delta_i^2}{(-\pdat{q_i}{X_i}{\vx} - 2 A_{ii} \delta_i)^2}\\[2mm]
= & 2 A_{ii} \frac{ A_{ii} \delta_i^2 + \pdat{q_i}{X_i}{\vx} \delta_i + q_i(\vx)}{(-\pdat{q_i}{X_i}{\vx} - 2 A_{ii} \delta_i)^2}\\[2mm]
= & 2 A_{ii} \frac{ q_i(\vpi_i)}{(-\pdat{q_i}{X_i}{\vpi_i})^2}.
\end{array}
\]
As $q_i(\vpi_i) \ge 0$ and $A_{ii} \ge 0$, it follows that
\[
  \frac{q_i(\vpi_i)}{-\pdat{q_i}{X_i}{\vpi_i}} + (\eta_i-x_i)
\]
increases with $\eta_i \to h_i(\vx_{-i})$.
Thus, all coefficients of $\vf_{\veta}$ increase with $\eta_i \to h_i(\vx_{-i})$,
and so for any $\veta'\in [\veta, \vh]$ it follows that
\[
 \vf_{\veta}(\vy) \le \vf_{\veta'}(\vy) \text{ for all } \vy \ge \vzero,
\]
and
\[
 \Ta_{\veta}(\vx)-\vx = \mu\vf_{\veta} \le \mu\vf_{\veta'} = \Ta_{\veta'}(\vx) - \vx.
\]
As $\Ne(\vX) = \Ta_{\vx}(\vX)$ and $\Ta(\vX) = \Ta_{\vh}(\vX)$ we may therefore conclude that
\[
    \Ne(\vx) \le \Ta_{\veta}(\vx) \le \Ta_{\veta'}(\vx) \le \Ta(\vx).
\]
It remains to show that $\Ta(\vx) \le \mu\vf$. This is equivalent to showing
that $\mu \vf_{\vh} \le \mu \vf -\vx$. For $\vf_{\vh}(\vX)$ we have by definition and Lemma~\ref{lem:h-diff}
\[
  \bigl(\vf_{\vh}(\vX)\bigr)_i = \sum_{j\neq i} \frac{\pdat{q_i}{X_j}{\vp_{i}(\vx_{-i})}}{-\pdat{q_i}{X_i}{\vp_{i}(\vx_{-i})}} X_j + (h_i(\vx_{-i})-x_i)
  = \sum_{j\neq i} \pdat{h_i}{X_j}{\vx_{-i}} X_j + (h_i(\vx_{-i})-x_i).
\]
By virtue of Lemma~\ref{lem:upconv} it follows that $\mu\vf$ is above all the tangents, i.e.\
\[
 \vf_{\vh}(\mu\vf - \vx ) \le \mu\vf - \vx.
\]
By monotonicity of $\vf_{\vh}$ we also have
\[
    \vf_{\vh}(\vzero) \le \vf_{\vh}(\mu\vf-\vx).
\]
A straight-forward induction therefore shows that
\[
 \vf^k_{\vh}(\vzero) \le \mu\vf - \vx \quad (\forall k\in \N),
\]
and, thus,
\[
    \Ta(\vx) - \vx = \mu\vf_{\vh} \le \mu\vf -\vx.
\]
\end{proof}
}
}{
\subsection{Proof of Theorem~\ref{thm:tang-approx}}$ $\\
\begin{qtheorem}{\ref{thm:tang-approx}}
Let $\vf$ be a clean and feasible scSPP.
Let $\vx\in R$.
For $i=1,\ldots,n$ fix some $\eta_i \in [x_i,h_i(\vx_{-i})]$, and set $\veta = (\eta_1,\ldots,\eta_n)$.
We then have
\[
\vx \le \Ne(\vx) \le \Ta_{\veta}(\vx) \le \Ta(\vx) \le \mu\vf
\]
Further, the operator $\Ta$ is monotone on $R$, i.e., for any $\vy\in R$ with $\vx\le \vy$ it holds that $\Ta(\vx)\le \Ta(\vy)$.
\end{qtheorem}
\begin{proof}
Set
\[ \vpi_i := (\vx_{-i},\eta_i) \text{ and } \vh := (h_1(\vx_{-1}),\ldots, h_n(\vx_{-n})). \]
We first show that $\vx \le \Ta_{\veta}(\vx)$:
\[
\begin{array}{lcl}
\Ta_{\veta}(\vx)
& = & \bigl( \nqiat{\vpi_i} \bigr))^{-1}_{i=1,\ldots,n} \cdot \bigl( \nqiat{\vpi_i} \cdot \vpi_i - q_i(\vpi_i) \bigr)_{i=1,\ldots,n}\\[2mm]
& = & \bigl( \nfiat{\vpi_i} \bigr)^\ast_{i=1,\ldots,n} \cdot \bigl( - \nqiat{\vpi_i} \cdot \vpi_i + q_i(\vpi_i) \bigr)_{i=1,\ldots,n}\\[2mm]
& = & \bigl( \nfiat{\vpi_i} \bigr)^\ast_{i=1,\ldots,n} \cdot \bigl( - \nqiat{\vpi_i} \cdot \left(\vx + (\eta_i-x_i) \cdot \ve_i \right) + q_i(\vpi_i) \bigr)_{i=1,\ldots,n}\\[2mm]
& = & \underbrace{\bigl( \nfiat{\vpi_i} \bigr)^\ast_{i=1,\ldots,n}}_{\ge 0 \text{ in every comp.}}
      \cdot \bigl( - \nqiat{\vpi_i} \cdot \vx - \underbrace{\pdat{q_i}{X_i}{\vpi_i}}_{<0} \cdot  \underbrace{(\eta_i-x_i)}_{\ge 0} + \underbrace{q_i(\vpi_i)}_{\ge 0} \bigr)_{i=1,\ldots,n}\\[2mm]
& \ge & \vx.
\end{array}
\]
$\Ta_{\veta}(\vx)$ is by definition the (unique) solution of the equation system defined by
\[
 \nqiat{\vpi_{i}} ( \vX - \vpi_{i} ) = -q_i(\vpi_i) \quad (i=1,\ldots,n).
\]
As $\Ta_{\veta}(\vx) \ge \vx$ we can also consider this system with the origin of the coordinate system
moved into $\vx$, i.e.\
\[
    \nqiat{\vpi_{i}} ( \vX + \vx - \vpi_{i}) = -q_i(\vpi_i) \quad (i=1,\ldots,n).
\]
We show that this system is equivalent to an SPP. For this, we
solve these equations for $X_i$:
\[
 \begin{array}{cl}
                 & \nqiat{\vpi_{i}} ( \vX + \vx - \vpi_{i}) = -q_i(\vpi_i)\\[2mm]
 \Leftrightarrow & \nqiat{\vpi_i} \vX = -q_i(\vpi_i) + \nqiat{\vpi_i} \underbrace{( \vpi_i - \vx )}_{=(\eta_i-x_i)\cdot \ve_i}\\[2mm]
 \Leftrightarrow & X_i = \sum_{j\neq i} \frac{\pdat{q_i}{X_j}{\vpi_i}}{-\pdat{q_i}{X_i}{\vpi_i}} \cdot X_j + \frac{q_i(\vpi_i)}{-\pdat{q_i}{X_i}{\vpi_i}} + (\eta_i-x_i).
 \end{array}
\]
Again, we have $\pdat{q_i}{X_i}{\vpi_i} < 0 \le
\pdat{q_i}{X_j}{\vpi_i}$ as $\vpi_i\in R$, and $\nqiat{\vpi_i}$
monotonically increases with $\eta_i$. Hence, the above linear
equation for $X_i$ is indeed a polynomial with non-negative
coefficients. Denote by $\vf_{\veta}$ the SPP defined by these
linear equations. We then have $\mu \vf_{\veta} =
\Ta_{\veta}(\vx)-\vx$ as the above equation system has
$\Ta_{\veta}(\vx)-\vx \ge \vzero$ as its unique solution. Further,
we know that the Kleene sequence
$\bigl(\vf_{\veta}^k(\vzero)\bigr)_{k\in\N}$ converges to
$\mu\vf_{\veta}$. We show that all coefficients of $\vf_{\veta}$
increase with $\veta \to \vh$. This is straight-forward for
\[
\frac{\pdat{q_i}{X_j}{\vpi_i}}{-\pdat{q_i}{X_i}{\vpi_i}}
\]
as $\pdat{q_i}{X_i}{\vpi_i} < 0 \le \pdat{q_i}{X_j}{\vpi_i}$, and all these terms increase with $\eta_i \to h_i(\vx_{-i})$.
Consider therefore
\[
  0\ge \frac{q_i(\vpi_i)}{-\pdat{q_i}{X_i}{\vpi_i}} + (\eta_i-x_i) = \frac{q_i(\vpi_i) - \pdat{q_i}{X_i}{\vpi_i} (\eta_i - x_i)}{-\pdat{q_i}{X_i}{\vpi_i}}.
\]
We show that this term increases with $\eta_i$. Set $\delta_i := \eta_i - x_i$.
We can find a non-negative, symmetric square-matrix $A$, a vector $\vb$, and constant $c$ such that
\[
 q_i(\vX) = \vX^\top A \vX + \vb^\top \vX + c \text{ and } \nqiat{\vX} = 2 \vX^\top A + \vb^\top.
\]
As $\vpi_i = \vx + \delta_i \ve_i$ we have
\[
    q_i(\vpi_i) = q_i(\vx + \delta_i \ve_i ) = q_i(\vx) + \pdat{q_i}{X_i}{\vx} \delta_i + \delta_i^2 A_{ii},
\]
and
\[
 \pdat{q_i}{X_i}{\vpi_i} \cdot \delta_i = \nqiat{\vx + \delta_i \ve_i} \delta_i\ve_i = \pdat{q_i}{X_i}{\vx} \delta_i + 2
 \delta_i^2 A_{ii}.
\]
This leads to
\[
\frac{q_i(\vpi_i) - \pdat{q_i}{X_i}{\vpi_i} \delta_i}{-\pdat{q_i}{X_i}{\vpi_i}} = \frac{ q_i(\vx) - \delta_i^2 A_{ii}}{-\pdat{q_i}{X_i}{\vx} - 2 \delta_i A_{ii}}.
\]
Taking the derivative w.r.t.~$\delta_i$ yields:
\[
\begin{array}{cl}
  & \frac{-2 A_{ii} \delta_i}{-\pdat{q_i}{X_i}{\vx} + 2 A_{ii} \delta_i} - \frac{q_i(\vx) - A_{ii} \delta_i^2}{( -\pdat{q_i}{X_i}{\vx} - 2 A_{ii} \delta_i)^2} (-2 A_{ii} )\\[2mm]
= & \frac{2 A_{ii} \pdat{q_i}{X_i}{\vx} \delta_i + 4 A_{ii}^2 \delta_i^2 + 2A_{ii} q_i(\vx) - 2 A_{ii}^2 \delta_i^2}{(-\pdat{q_i}{X_i}{\vx} - 2 A_{ii} \delta_i)^2}\\[2mm]
= & 2 A_{ii} \frac{ A_{ii} \delta_i^2 + \pdat{q_i}{X_i}{\vx} \delta_i + q_i(\vx)}{(-\pdat{q_i}{X_i}{\vx} - 2 A_{ii} \delta_i)^2}\\[2mm]
= & 2 A_{ii} \frac{ q_i(\vpi_i)}{(-\pdat{q_i}{X_i}{\vpi_i})^2}.
\end{array}
\]
As $q_i(\vpi_i) \ge 0$ and $A_{ii} \ge 0$, it follows that
\[
  \frac{q_i(\vpi_i)}{-\pdat{q_i}{X_i}{\vpi_i}} + (\eta_i-x_i)
\]
increases with $\eta_i \to h_i(\vx_{-i})$.
Thus, all coefficients of $\vf_{\veta}$ increase with $\eta_i \to h_i(\vx_{-i})$,
and so for any $\veta'\in [\veta, \vh]$ it follows that
\[
 \vf_{\veta}(\vy) \le \vf_{\veta'}(\vy) \text{ for all } \vy \ge \vzero,
\]
and
\[
 \Ta_{\veta}(\vx)-\vx = \mu\vf_{\veta} \le \mu\vf_{\veta'} = \Ta_{\veta'}(\vx) - \vx.
\]
As $\Ne(\vX) = \Ta_{\vx}(\vX)$ and $\Ta(\vX) = \Ta_{\vh}(\vX)$ we may therefore conclude that
\[
    \Ne(\vx) \le \Ta_{\veta}(\vx) \le \Ta_{\veta'}(\vx) \le \Ta(\vx).
\]
It remains to show that $\Ta(\vx) \le \mu\vf$. This is equivalent to showing
that $\mu \vf_{\vh} \le \mu \vf -\vx$. For $\vf_{\vh}(\vX)$ we have by definition and Lemma~\ref{lem:h-diff}
\[
  \bigl(\vf_{\vh}(\vX)\bigr)_i = \sum_{j\neq i} \frac{\pdat{q_i}{X_j}{\vp_{i}(\vx_{-i})}}{-\pdat{q_i}{X_i}{\vp_{i}(\vx_{-i})}} X_j + (h_i(\vx_{-i})-x_i)
  = \sum_{j\neq i} \pdat{h_i}{X_j}{\vx_{-i}} X_j + (h_i(\vx_{-i})-x_i).
\]
By virtue of Lemma~\ref{lem:upconv} it follows that $\mu\vf$ is above all the tangents, i.e.\
\[
 \vf_{\vh}(\mu\vf - \vx ) \le \mu\vf - \vx.
\]
By monotonicity of $\vf_{\vh}$ we also have
\[
    \vf_{\vh}(\vzero) \le \vf_{\vh}(\mu\vf-\vx).
\]
A straight-forward induction therefore shows that
\[
 \vf^k_{\vh}(\vzero) \le \mu\vf - \vx \quad (\forall k\in \N),
\]
and, thus,
\[
    \Ta(\vx) - \vx = \mu\vf_{\vh} \le \mu\vf -\vx.
\]
We turn to the monotonicity of $\Ta$.
Let $\vy \in R$ with $\vx \le \vy$.
Assume that $\vx$ and $\vy$ are located on the surface $S_i$, i.e.\
\[
 h_i(\vx_{-i}) = x_i \text{ and } h_i(\vy_{-i}) = y_i.
\]
The tangent $T_i|_{\vx}$ at $S_i$ in $\vx$ is spanned by the partial derivatives of $\vp_i$
in $\vx$. The part $T_i|_{\vx}\cap [\vx,\mu\vf]$ relevant for $\Ta(\vx)$ can therefore be parameterized by
\[
    \vx + \sum_{j\neq i} \pdat{\vp_{i}}{X_j}{\vx} \cdot ( u_j - x_j ) \text{ with } \vu_{-i} \in [\vx_{-i},\mu\vf_{-i}].
\]
Similarly for $T_i|_{\vy}$.

In particular, for $\vu_{-i}\in [\vy_{-i},\mu\vf_{-i}]$ both points on the tangents defined by $\vu_{-i}$
differ only in the $i$th coordinate being (the remaining coordinates are simply $\vu_{-i}$)
\[
    t_{\vy} = y_i + \sum_{j\neq i} \pdat{h_i}{X_j}{\vy} \cdot ( u_j - y_j) \text{, resp. }
    t_{\vx} = x_i + \sum_{j\neq i} \pdat{h_i}{X_j}{\vx} \cdot ( u_j - x_j).
\]
By Lemma~\ref{lem:upconv} we have
\[
    y_i \ge x_i + \sum_{j\neq i} \pdat{h_i}{X_j}{\vx} \cdot ( y_j - x_j).
\]
From Lemma~\ref{lem:h-diff} it follows that $\pdat{h_i}{X_j}{\vy} \ge \pdat{h_i}{X_j}{\vx}$.
Thus $t_{\vy} \ge t_{\vx}$ immediately follows.

Now for $\vx,\vy\in R$ with $\vx\le \vy$ we can apply this result to the tangents at $S_i$ in $\vp_i(\vx_{-i})$,
resp.\ $\vp_i(\vy_{-i})$, and $\Ta(\vx) \le \Ta(\vy)$ follows.
\end{proof}
}
}{}
\iftechrep{}{} 
\bibliographystyle{alpha} 
\bibliography{db}
\end{document}